\renewcommand*{\p@section}{\,}
\renewcommand*{\p@subsection}{\S\,}
\renewcommand*{\p@subsubsection}{\S\,}
\newtheorem{thm}{Theorem}[section]
\newtheorem{cor}[thm]{Corollary}
\newtheorem{lem}[thm]{Lemma}
\newtheorem{prop}[thm]{Proposition}
\newtheorem{exmp}[thm]{Example}
\newtheorem{rem}[thm]{Remark}
\newtheorem{conj}[thm]{Conjecture}
\newtheorem{defn}[thm]{Definition}
\numberwithin{equation}{section}
\newcommand{\CC}{\ensuremath{\mathbb{C}}}
\newcommand{\Z}{\ensuremath{\mathbb{Z}}}
\newcommand{\kk}{\ensuremath{\Bbbk}}
      \newcommand{\out}{\operatorname{out}}
\newcommand{\tr}{\operatorname{tr}}
\newcommand{\Hom}{\operatorname{Hom}}
\newcommand{\Id}{\operatorname{Id}}
\newcommand{\Gl}{\operatorname{GL}}
\newcommand{\GL}{\operatorname{GL}}
\newcommand{\Rep}{\operatorname{Rep}}
\newcommand{\End}{\operatorname{End}}
\newcommand{\Spec}{\operatorname{Spec}}
\newcommand{\VdB}{\operatorname{VdB}}
\newcommand{\sgn}{\operatorname{sgn}}
\newcommand\dgal[1]{  \left\{\!\!\left\{#1\right\}\!\!\right\} }
\newcommand{\lr}[1]{% \llrrparen{..}
  \{\mkern-6mu\{#1\}\mkern-6mu\}}
\newcommand{\qP}{\ensuremath{\mathrm{qP}}}
\newcommand{\Ac}{\ensuremath{\mathcal{A}}}
\newcommand{\Bc}{\ensuremath{\mathcal{B}}}
\newcommand{\Fc}{\ensuremath{\mathcal{F}}}
\newcommand{\rc}{\ensuremath{\mathfrak{c}}}
\newcommand*\circled[1]{\tikz[baseline=(char.base)]{
            \node[shape=circle,draw,inner sep=2pt] (char) {#1};}}
\begin{document}

\title[NC Poisson geometry of certain wild character varieties]{On the noncommutative Poisson geometry of certain wild character varieties}

\author{Maxime Fairon}
 \address[Maxime Fairon]{ 
  Institut de Mathématiques de Bourgogne.
Université Bourgogne Europe, CNRS, IMB UMR 5584, F-21000 Dijon, France
}
 \email{maxime.fairon@u-bourgogne.fr}
 
\author{David Fern\'andez}

 \address[David Fern\'andez]{ 
Departamento de Matem\'atica e Informática Aplicadas a las Ingenier\'ias Civil y Naval, 
ETSI Caminos, Canales y Puertos. Universidad Politécnica de Madrid. 
Calle del Profesor Aranguren 3, 
28040 Madrid, Spain
}
\email{david.fernandezalv@upm.es}

\begin{abstract}  
To show that certain wild character varieties are multiplicative analogues of quiver varieties, 
Boalch introduced colored multiplicative quiver varieties. 
They form a class of (nondegenerate) Poisson varieties attached to colored quivers whose representation theory is controlled by fission algebras: noncommutative algebras generalizing the multiplicative preprojective algebras of Crawley-Boevey and Shaw. 
Previously, Van den Bergh exploited the Kontsevich--Rosenberg principle to prove that the natural Poisson structure of any  multiplicative quiver variety with tautological coloring is induced by an $H_0$-Poisson structure on the underlying multiplicative preprojective algebra; indeed, it turns out that this noncommutative structure comes from a Hamiltonian double quasi-Poisson algebra constructed from the quiver itself. 
In this article we conjecture that, via the Kontsevich--Rosenberg principle, the natural Poisson structure on each colored multiplicative quiver variety is induced by an $H_0$-Poisson structure on the underlying fission algebra which, in turn, is obtained from a Hamiltonian double quasi-Poisson algebra attached to the colored quiver. 
We study some consequences of this conjecture and we prove it in two significant cases: 
the interval and the triangle. \\

\noindent \textbf{Mathematics Subject Classification} 16G20 $\cdot$ 17B63 $\cdot$ 14A22 $\cdot$ 53D30 $\cdot$ 53D20 \\

\noindent \textbf{Keywords:}  double quasi-Poisson brackets, Van den Bergh spaces, multiplicative quiver varieties, fission algebras, multiplicative preprojective algebras, quivers, multiplicative moment maps, Kontsevich--Rosenberg principle, wild character varieties, $H_0$-Poisson structures.
\end{abstract}

\addcontentsline{toc}{subsection}{Contents}  

\maketitle

 \setcounter{tocdepth}{2} %%% Put it to 2 when we will only want to show up to subsections but no subsubsections // to 3 for everything
 
\renewcommand{\baselinestretch}{0.85}\normalsize
\tableofcontents
\renewcommand{\baselinestretch}{1.0}\normalsize
 
%%%%%%%%%%%%%%%%%%%%%%%%%%%%%%%%%% New Section %%%%%%%%%%%%%%%%%%%%%%%%%%%%%

 \section{Introduction}
 
 The Kontsevich--Rosenberg principle \cite{KR00} states that a noncommutative structure on an associative algebra $A$ has algebro-geometric meaning if it induces the corresponding standard algebro-geometric structures on its representation schemes.
A significant application of this principle has yielded a program to explain symplectic/Poisson structures on interesting moduli spaces in terms of canonical noncommutative structures on associative algebras. 

In \cite{VdB1,VdB2}, Van den Bergh defined double Poisson algebras and double quasi-Poisson algebras to develop noncommutative analogues of Poisson geometry and quasi-Poisson geometry \cite{AMM98,AKSM02}. 
Using the first of these newly introduced structures, he was able to translate the Poisson structure of quiver varieties directly at the level of the quivers. 
More importantly, he defined a \emph{Hamiltonian double quasi-Poisson algebra}\footnote{We follow the terminology from \cite{VdB2}. This is also called a \emph{quasi-Hamiltonian algebra} \cite{F19,F20,VdB1}.} 
structure on (a localization of) the path algebra of the quiver $\overline{A}_2$ given by 
two vertices and two arrows $a,a^*$. 
Then, in agreement with the Kontsevich--Rosenberg principle, the corresponding representation schemes are Hamiltonian quasi-Poisson manifolds \cite{AKSM02}. 
Furthermore, using fusion as well as quasi-Hamiltonian reduction, Van den Bergh obtained a natural Poisson structure on each multiplicative quiver variety \cite{CBShaw,Ya}. This follows from the fact that the canonical Hamiltonian double quasi-Poisson algebra on the attached quiver induces an $H_0$-Poisson structure \cite{CB11} on the corresponding multiplicative preprojective algebra \cite{CBShaw} ---\,see Table \ref{Table} for an overview and Theorem \ref{tm:VdB-mult-preproj-quasi-Hamilt} for the precise statements. 
In another situation, Massuyeau and Turaev \cite{MT14} proved that the well-known Poisson structure on $\Hom\big(\pi,\GL_N(\mathbb{R})\big)/\GL_N(\mathbb{R})$ (here, $\pi$ stands for the fundamental group of an oriented surface with boundary) is determined by a double quasi-Poisson bracket on the group algebra $\mathbb{R}\pi$.

In this article, we show a new instance of this far-reaching program by revealing that certain Poisson structures on wild character varieties are induced by noncommutative Poisson structures encoded on fission algebras.

\subsection{Previous results}
\label{sec:intro-Boalch-work}

\subsubsection{Fission varieties}
Tame character varieties, usually simply called  character varieties, are spaces of complex fundamental group representations of Riemann surfaces. 
Wild character varieties form a class of algebraic varieties that generalize them,
since they enrich
representations to so-called Stokes representations that fix extra monodromy data around each puncture.
From the analytic perspective, 
they parametrize more general classes of connections where irregular singularities are allowed along the boundary. 
Interestingly, wild character varieties are symplectic/Poisson algebraic varieties. This result was derived by Boalch \cite{Bo01} as an adaptation of the infinite-dimensional Atiyah--Bott symplectic quotient to this irregular/wild setting. Later, more algebraic finite-dimensional  approaches were privileged, based on convenient extensions of the quasi-Hamiltonian geometry of \cite{AMM98} from the tame to the wild setting \cite{Bo07}. 
We recommend the excellent \cite{Bo13,Bo14a} as introductions to this exciting area of research.

All (tame) character varieties may be constructed inductively by gluing together conjugacy classes and pairs of pants. Analogously, all wild character varieties may be obtained inductively by gluing together conjugacy classes, pairs of pants, and \emph{higher fission spaces} ${}_G\mathcal{A}^r_H$ with $r\geq 1$. 
As explained in \cite[\S3]{Bo14}, the latter can be described in a pretty simple way and carry a quasi-Hamiltonian $(G\times H)$-structure, see \cite[Theorem 3.1]{Bo14}; if $r=1$ the space ${}_G\mathcal{A}^1_H$ is just the double of $G$ from \cite{AMM98}. 
However, the converse is not true: if we consider the class of symplectic/Poisson or quasi-Hamiltonian varieties that arise by gluing together these pieces and performing reduction, we will obtain a larger class of spaces than wild character varieties: \emph{fission varieties} \cite[\S3.2]{Bo14}.

\subsubsection{Multiplicative quiver varieties (after Boalch)} \label{sec:intro-Boalch-MQV}

As mentioned above, the work of Van den Bergh \cite{VdB1} was motivated by the multiplicative analogues of Nakajima's quiver varieties, as introduced by Crawley-Boevey and Shaw in \cite{CBShaw}. 
To sketch his result, we define the \emph{Van den Bergh space} (see \cite{BK16,Bo14,BEF,Ya})
\begin{equation}
\mathcal{B}^{\VdB}(V,W):=\Big\{(a,b)\in \Hom(W,V)\oplus \Hom(V,W)\mid (1+ab)\text{ is invertible}\Big\}
\label{eq:VdB-spaces-intro}
\end{equation}
for two finite dimensional complex vector spaces $V$ and $W$.
The key observation is that $\mathcal{B}^{\VdB}(V,W)$ is a nondegenerate Hamiltonian quasi-Poisson $\GL(V)\times\GL(W)$-space \cite{AKSM02} (that is, a quasi-Hamiltonian  $\GL(V)\times\GL(W)$-space). %, such that the 
The corresponding Poisson variety obtained by quasi-Hamiltonian reduction is the multiplicative quiver variety associated with the quiver $\overline{A}_2$. 
By fusion of different copies of \eqref{eq:VdB-spaces-intro} and then performing reduction, we can obtain a Poisson structure on any multiplicative quiver variety.    
Thus, we can regard the Van den Bergh spaces as the building blocks that underlie the (quasi-)Poisson geometry of multiplicative quiver varieties. 

In fact, the Hamiltonian quasi-Poisson spaces $\mathcal{B}^{\VdB}(V,W)$ 
are examples of Boalch's fission varieties. In \cite[\S4]{Bo14}, Boalch observed that if $G=\GL(V\oplus W)$ and $H=\GL(V)\times\GL(W)$, there exists an isomorphism of (nondegenerate) Hamiltonian quasi-Poisson $H$-spaces between $\mathcal{B}^{\VdB}(V,W)$ and $\mathcal{A}^2(V,W)\sslash_1 G$, the quasi-Hamiltonian reduction of the higher fission space $\mathcal{A}^2(V,W):={}_G\mathcal{A}^2_H$ with respect to $G$ at the identity $1 \in G$. 
Next, Boalch makes an observation of the utmost importance: since the Van den Bergh spaces \eqref{eq:VdB-spaces-intro} are the basic building blocks for the multiplicative quiver varieties of \cite{CBShaw}, his results suggest how to find other building blocks, 
such as $\mathcal{A}^2(V_1,\ldots,V_k)\sslash_1 G$ (the higher fission space is $\mathcal{A}^2(V_1,\ldots,V_k):={}_G\mathcal{A}^2_H$, $G=\GL(\oplus_{i=1}^kV_i)$ and $H=\times_{i=1}^k\GL(V_i)$). %, to construct 
Then, these additional building blocks allow to construct more general multiplicative quiver varieties by quasi-Hamiltonian reduction. The varieties hence obtained will again be fission varieties. 

\subsubsection{Colored quivers and colored multiplicative quiver varieties} \label{sec:intro-color-things}
Inspired by the construction of Nakajima quiver varieties (see Table \ref{Table}), Boalch also realized that constructing more general multiplicative quiver varieties
amounts to attaching an algebraic symplectic manifold to a graph and some data on the graph. 
From this point of view, he came up with \emph{colored graphs}.  
In brief, these are finite graphs $\Upsilon$ with nodes $I$ and a coloring of each edge 
(\emph{i.e.,} a map $\mathfrak{c}\colon \Upsilon\to C$ to the set $C$ of colors) 
such that each monochromatic subgraph is a complete $k$-partite graph ---\,see Definition \ref{Def:ColQ}.
Now, given a colored quiver $\Upsilon$, fix a finite dimensional graded complex vector space $V=\bigoplus_{i\in I}V_i$. %, by \cite[Corollary 5.7]{B15},
By \cite[Corollary 5.7]{B15}, 
 there exists a canonical nonempty $H$-invariant open subset $\Rep^*(\Upsilon,V)\subset\Rep(\Upsilon,V)$
which is a quasi-Hamiltonian $H$-space and admits a group-valued moment map $\widehat{\Phi}:\Rep^*(\Upsilon,V) \to H$.

Next, given parameters $d\in\mathbb{Z}^I_{\geq 0}$ (corresponding to the vector space $V=\bigoplus_{i\in I}V_i$ through $\dim V_i=d_i$), and $q\in(\mathbb{C}^*)^I$, 
the \emph{colored multiplicative quiver variety} $\mathcal{M}(\Upsilon,q,d)$ (attached to the colored quiver $\Upsilon$) 
is the quasi-Hamiltonian reduction of $\Rep^*(\Upsilon,V)$ at the value $q$ of the moment map. That is 
\[
 \mathcal{M}(\Upsilon,q,d):=\Rep^*(\Upsilon,V)\sslash_q H :=\widehat{\Phi}^{-1}\Big(\prod_I q_i \Id_{V_i}\Big)/\!\!/H\,.
\]
We work in the algebraic category, so that the quotient on the right-hand side is taken according to affine geometric invariant theory (GIT). 
Consequently, we get that the open set $\mathcal{M}^{\text{st}}(\Upsilon,q,d)\subset\mathcal{M}(\Upsilon,q,d)$ of stable points is a smooth algebraic symplectic manifold.
It is important to note \cite[Proposition 6.8]{B15} that for two inequivalent ways to color $\Upsilon$, the corresponding colored multiplicative quiver varieties will not be isomorphic in general. Furthermore, if each monochromatic subgraph $\Upsilon_c=\mathfrak{c}^{-1}(c)$, where $c$ is a color, just consists of one arrow (this is the ``tautological coloring''), then $\mathcal{M}(\Upsilon,q,d)$ is a multiplicative quiver variety in the usual sense of \cite{CBShaw}. 
In consequence, colored multiplicative quiver varieties generalize standard multiplicative quiver varieties.

\subsubsection{Fission algebras} \label{sec:intro-fission-algebras}
Finally, we can introduce one of the main objects of study in this article: \emph{fission algebras}.  
As shown in Table \ref{Table}, deformed (additive) preprojective algebras govern Nakajima quiver varieties, while multiplicative preprojective algebras control multiplicative preprojective varieties. Furthermore, Boalch \cite[\S12]{B15} introduced fission algebras, a class of associative noncommutative algebras linked to colored quivers, that regulate colored multiplicative quiver varieties. As a result, fission algebras serve as a generalization of the multiplicative quiver algebras presented in \cite{CBShaw}
---as a byproduct, the former are linked to the generalized double affine Hecke algebras of Etingof, Oblomkov and Rains \cite{EOR07}.
We define the \emph{Boalch algebra} $\mathcal{B}(\Upsilon)$ as the path algebra of a quiver with relations obtained from  the input colored quiver $\Upsilon$ by adding arrows and loops, see Definition \ref{Def:Boalch-algebra}. Then, the fission algebra $\mathcal{F}^q(\Upsilon)$ is the quotient of $\mathcal{B}(\Upsilon)$ by the two-sided ideal essentially generated by setting the loops equal to $q$; they will be carefully introduced in \ref{sec:fission-algebras}. 
Remarkably, as explained in \cite[\S12]{B15} and illustrated in Table \ref{Table}, the representation theory of $\mathcal{F}^q(\Upsilon)$ governs the geometry of the colored multiplicative quiver varieties $\mathcal{M}(\Upsilon,q,d)$ for each dimension vector $d$. 

\subsubsection{Van den Bergh's noncommutative perspective} \label{sec:intro-VdB-perspective}
The key point that motivated the present article is that the Hamiltonian quasi-Poisson structure on the Van den Bergh spaces $\mathcal{B}^{\VdB}(V,W)$ introduced in \eqref{eq:VdB-spaces-intro} can be obtained from a natural noncommutative structure. Indeed, 
this structure can be derived from a (nondegenerate) double quasi-Poisson bracket and a multiplicative moment map on a suitable localization of $\kk\overline{A}_2$ (see \cite[Theorem 6.5.1]{VdB1} and \cite[Proposition 8.3.1]{VdB2}), as well as the application of the Kontsevich--Rosenberg principle (see \cite[\S 7.12]{VdB1}). 
In other words, the Hamiltonian quasi-Poisson $\Gl(V)\times\Gl(W)$-structure on $\mathcal{B}^{\VdB}(V,W)$ is induced by a Hamiltonian double quasi-Poisson structure on a certain quiver path algebra. If we perform quasi-Hamiltonian reduction and consider the corresponding multiplicative quiver variety, its Poisson structure can be understood in terms of an  $H_0$-Poisson structure \cite{CB11} on the multiplicative preprojective algebra $\Lambda^q(A_2)$ associated with $A_2$, see \cite[Theorem 6.8.1]{VdB1}. Furthermore, the same statements hold for an arbitrary quiver and multiplicative quiver variety. This viewpoint is summarized in Table \ref{Table}.

In complete analogy with this original case, we would like to encode the Poisson structure of \emph{all} colored multiplicative quiver varieties introduced by Boalch directly into the fission algebra $\mathcal{F}^q(\Upsilon)$, as we explain now.

 \subsection{Main results} 
 \label{sec:intro-our-results} 
 
Our aim is to make a step towards the far-reaching program of understanding the different classes of Poisson varieties introduced in \ref{sec:intro-Boalch-work}  using noncommutative structures and the Kontsevich--Rosenberg principle. Moreover, since such varieties can be obtained from a Hamiltonian quasi-Poisson space, we want to encode the structure of these spaces in terms of their noncommutative counterparts: Hamiltonian  double quasi-Poisson algebras, see Definition \ref{defn:double-quasi-Hamiltonian}.   
 More precisely, the initial motivation for this article was to deepen Boalch's insight on the ``noncommutative quasi-Hamiltonian spaces''  that yield the higher fission spaces.
 One of the themes in \ref{sec:intro-Boalch-work} was the successive generalization of the quiver $\overline{A}_2$ (and consequently of the Van den Bergh spaces \eqref{eq:VdB-spaces-intro}) in terms of complete $k$-partite graphs and colored quivers. Hence, as displayed in Table \ref{Table}, in this article we want to extend the noncommutative picture that was originally unveiled by Van den Bergh in this direction. 
 
It is interesting to observe that the quiver $\overline{A}_2$ (and $\mathcal{B}^{\VdB}(V,W)$) may be alternatively generalized in terms of the double of the quiver $\Gamma_n$ on two vertices and $n$ equioriented arrows.
This direction has been undertaken in \cite{FF}. In that article, we prove that an appropriate localization of the path algebra $\kk\overline{\Gamma}_n$ is endowed with an explicit Hamiltonian double quasi-Poisson structure, whose multiplicative moment map is given in terms of Euler continuants, which were introduced by Euler in 1764. This result generalizes the Hamiltonian double quasi-Poisson algebra associated with the quiver $\overline{A}_2$ by Van den Bergh because  $\overline{\Gamma}_1=\overline{A}_2$.
This shows that some of the geometric structures found by Boalch for a particular class of wild character varieties in \cite{B18} have a noncommutative origin in application of the Kontsevich--Rosenberg principle.

 \subsubsection{The Conjecture for colored multiplicative quiver varieties} 
 Recall from \ref{sec:intro-Boalch-MQV} that Boalch introduced colored multiplicative quiver varieties
as generalizations of multiplicative quiver varieties, being attached to colored quivers.  
In this article, we formulate Conjecture \ref{Conj:Main} which states that for each colored quiver $\Upsilon$ over $I$ vertices, the Boalch algebra $\Bc(\Upsilon)$ is endowed with a Hamiltonian double quasi-Poisson structure. 
The significance of this conjecture relies on the following two consequences:
\begin{enumerate}
\item [(i)]
Let $V=\bigoplus_I V_i$ be an $I$-graded vector space, and form $H=\prod_I\GL(V_i)$.
The combination of Conjecture \ref{Conj:Main} and the Kontsevich--Rosenberg principle (see Theorem \ref{thm:KR-theorem}) induces a Hamiltonian quasi-Poisson $H$-structure on $\Rep(\Bc(\Upsilon),V)\simeq\Rep^*(\Upsilon, V)$ in the sense of \cite{AKSM02}.
\item [(ii)]
By Conjecture \ref{Conj:Main} and \cite[Proposition 5.1.5]{VdB1}, the fission algebra $\mathcal{F}^q(\Upsilon)$ carries an $H_0$-Poisson structure (in the sense of \cite{CB11}). Then, the Kontsevich--Rosenberg principle induces a standard Poisson structure on the colored multiplicative quiver varieties.
\end{enumerate}
As a byproduct of this construction, establishing the conjecture would extend the dictionary summarized in Table \ref{Table} that relates various algebro-geometric structures defined either on associative algebras constructed from quivers, or on associated moduli spaces of representations.
To ascertain Conjecture \ref{Conj:Main}, we first note that we can simplify the task: 
if one proves the conjecture for monochromatic quivers,
then it holds for any colored quiver. This is proved in Lemma \ref{Lem:MonochromaticConjecture}. We will also prove Conjecture \ref{Conj:Main} in two interesting cases, which we explain in \ref{sec:intro-our-111}. 
Let us also note that (smooth loci of) colored multiplicative quiver varieties are symplectic varieties, 
obtained from quasi-Hamiltonian spaces in the sense of \cite{AMM98}. Thus, we believe that 
the double quasi-Poisson structures appearing in Conjecture \ref{Conj:Main} are nondegenerate, giving rise to \emph{quasi-bisymplectic algebras} (see \cite[\S 6]{VdB2}). 
We shall pursue this direction in the future.

Before delving into our results, let us address an important remark.
It is tempting to say that, to tackle Conjecture \ref{Conj:Main}, one only needs to consider the  formulas that exist in the known geometric context \cite{Bo14,B15} and make them ``noncommutative''.  
However, there are two substantial issues in this simple idea. 
First, while we consider these spaces as equipped with a Poisson bracket, the latter is in fact nondegenerate and the varieties are usually defined as symplectic varieties. They are obtained by reduction from quasi-Hamiltonian spaces endowed with a ``quasi-symplectic'' form \cite{AMM98}, and the corresponding  nondegenerate quasi-Poisson bracket is unknown in general. Hence, one would need to properly work out the quasi-symplectic -- quasi-Poisson yoga for these varieties, which is tricky. 
Second, understanding the quasi-Poisson variety as the representation space associated with a particular algebra is not obvious; see the proof of \cite[Proposition 5.3]{B15}.  
Therefore, the naive approach of writing the quasi-Poisson bracket in a noncommutative form gives rise to two nontrivial steps. This is the reason why in this article we preferred to stay at the level of noncommutative algebras and we tried to obtain a Hamiltonian double quasi-Poisson algebra structure by analyzing the algebras $\Bc(\Upsilon)$ and $\mathcal{F}^q(\Upsilon)$ directly.

 \subsubsection{Towards the conjecture: the monochromatic interval and triangle} \label{sec:intro-our-111}
We first undertake the study of Conjecture \ref{Conj:Main} by considering the monochromatic interval case. If we consider the partition $\{1,2\}=\{1\}\sqcup \{2\}$, the input is the complete $2$-partite graph $\mathcal{I}$ with two vertices $\{1,2\}$ and one colored edge.
Then, applying the construction in \ref{sec:fission-algebras}, the output is the Boalch algebra $\mathcal{B}(\mathcal{I})$, which is explicitly described in \ref{sec:the-11-monochromatic-case}. 
The first result of this article is Proposition \ref{Pr:OneArrow}. It states that the algebra $\Bc(\mathcal{I})$ admits a double quasi-Poisson bracket, explicitly defined on generators in Lemma \ref{Lemma:OneArrow}, with multiplicative moment map given by $\gamma_1+\gamma_2$, the sum of two loops. Thus, $\Bc(\mathcal{I})$ is a Hamiltonian double quasi-Poisson algebra, confirming Conjecture \ref{Conj:Main} and its consequences for the monochromatic interval. The idea of the proof is to reexpress $\Bc(\mathcal{I})$ in terms of Van den Bergh's Hamiltonian double quasi-Poisson algebra \cite{VdB1} associated with the one-arrow quiver $v_{12}:2\to 1$.  

Next, we address the case that will be expectedly significant in the general proof of Conjecture \ref{Conj:Main}.
It corresponds to the Boalch algebra $\mathcal{B}(\Delta)$ attached to the monochromatic triangle, which is the complete $3$-partite graph $\Delta$ over three vertices $\{1,2,3\}$ with the partition $\{1\}\sqcup \{2\}\sqcup \{3\}$. In this case, $\mathcal{B}(\Delta)$ is generated by the symbols $\{e_i, v_{ij}, v_{ji}, w_{ij}, w_{ji},  \gamma^{\pm 1}_i\}$, 
where $1\leq i<j\leq 3$, that is, we have 12 arrows and 3 loops (with their inverses); see Figure \ref{fig:Quiver111}. 
These symbols are subject to the intricate defining relation \eqref{eq:relation-Boalch-algebra-big}. 
This relation can be further decoupled into 9 highly non-linear equations written in \eqref{eq:Boalch-eq-111}, attending to the occurring idempotents.

Theorem \ref{thm:triangle} is the most fundamental result of this article: we succeed in explicitly proving that $\mathcal{B}(\Delta)$ is a Hamiltonian double quasi-Poisson algebra, whose multiplicative moment map is given by $\gamma_1+\gamma_2+\gamma_3$. So, Conjecture \ref{Conj:Main} and its consequences hold for the monochromatic triangle ---\,see Corollaries \ref{cor:1} and \ref{cor:2}.

We should emphasize that the double quasi-Poisson bracket found in Theorem \ref{thm:triangle} is quite surprising because some counter-intuitive linear terms appear in some (but not all of the) double brackets. However, these terms are crucial to prove that $\gamma_1+\gamma_2+\gamma_3$ is a multiplicative moment map for $\lr{-,-}$ using the non-linear identities \eqref{eq:Boalch-eq-111}.
In this sense, the double brackets dramatically depend on the involved indices. 
Furthermore, the proof of the quasi-Poisson property (\emph{i.e.}, equation \eqref{qPabc} holds) for $\lr{-,-}$ on $\Bc(\Delta)$ is intricate. 
Indeed, in Section \ref{sec:proof-quasi-Poisson-property} we derive general conditions for a particular class of double brackets to be quasi-Poisson, and we end up by showing that the double bracket defined on $\Bc(\Delta)$ as part of Theorem \ref{thm:triangle} satisfies those conditions. We hope that the derivations performed in Section \ref{sec:proof-quasi-Poisson-property} will be crucial to tackle the remaining cases of Conjecture \ref{Conj:Main}.

\medskip

To conclude, we would like to draw the reader's attention to a potential application of Theorem \ref{thm:triangle}. It was observed in \cite{BEF,CF1,F} that tautologically colored multiplicative quiver varieties attached to an extension of the cyclic quiver can naturally be regarded as the phase spaces of some integrable systems.  Furthermore, the double quasi-Poisson bracket associated with these quivers was an essential tool to prove the Poisson-commutativity of the functions defining the integrable systems under consideration in \cite{CF1,F}. 
Therefore, we expect that these works can be adapted to the case of colored multiplicative quiver varieties and yield new classes of integrable systems. 
The first non-trivial case to consider may be the extension of the monochromatic triangle, seen as  a cyclic quiver on $3$ vertices (with a \emph{different} color), by one arrow as in \cite{CF1}. One  would certainly benefit from the double quasi-Poisson bracket unveiled in Theorem \ref{thm:triangle} for calculations.

\medskip

\noindent\textbf{Layout of the article.} In Section \ref{sec:nc-Poisson-geom}, we start by introducing double quasi-Poisson brackets and Hamiltonian double quasi-Poisson algebras following Van den Bergh \cite{VdB1}, which are central throughout this work. We also explain how they are related to the (noncommutative) $H_0$-Poisson structure of Crawley-Boevey \cite{CB11}, and their importance with respect to the Kontsevich--Rosenberg principle. We define colored quivers and fission algebras in Section \ref{sec:conjecture} following Boalch \cite{B15}, and we also introduce the \emph{Boalch algebra}, from which fission algebras can be obtained as quotients. We can then turn to the main subject of this article: the statement of Conjecture \ref{Conj:Main} about the existence of a Hamiltonian double quasi-Poisson structure on Boalch algebras.  We prove two instances of the conjecture in Section \ref{sec:Towards}, which are associated with the monochromatic interval and the monochromatic triangle. Section \ref{sec:proof-quasi-Poisson-property} deals with a general result needed to prove the Conjecture in the triangle case. In Appendix \ref{sec:the-complete-list}, for the reader's convenience, we give the complete list of double brackets for the monochromatic triangle case.

 \medskip

\noindent\textbf{Comment on the field.} 
We always assume the field $\kk$ to be of characteristic zero.  
When considering GIT quotients, $\kk$ needs also to be algebraically closed so that invariant elements are trace functions, cf.~\ref{sec:KR-principle}. 
For comparison with results of Boalch, one needs $\kk=\CC$. 
Note that, for computations involving double quasi-Poisson algebras, one could simply ask for $\kk$ to be a general field where $2,3$ are invertible.  

 \medskip

\noindent\textbf{Acknowledgments.} M. F. was supported by a Rankin-Sneddon Research Fellowship of the University of Glasgow.  
D. F. was supported by the Alexander von Humboldt Stiftung in the framework of an Alexander von Humboldt professorship endowed by the German Federal Ministry of Education and Research. 
The authors wish to thank Luis \'Alvarez-C\'onsul, Damien Calaque, Oleg Chalykh and William Crawley-Boevey for useful discussions and interesting comments. 
Special thanks are due to the referee for helping us improve the text, and to Philip Boalch for enlightening discussions and email exchanges.

\begin{center}
\begin{landscape}
\begin{small}
\begin{table}[t]
\begin{tabular}[center]{|c|c|c|c|c||c|c|}  
\hline
&  \multicolumn{4}{|c|}{\textbf{\textsf{Noncommutative setting}}}  &  \multicolumn{2}{|c|}{\textbf{\textsf{Commutative Setting}} }  
\\
\hline
& \textbf{Graphs}  &    \begin{tabular}{@{}c@{}}\textbf{Associative} \\  \textbf{algebra}\end{tabular}& \textbf{Moment map}  &  \begin{tabular}{@{}c@{}}\textbf{Ruling associative}\\  \textbf{algebra}\end{tabular}&  \begin{tabular}{@{}c@{}}\textbf{Representation}\\ \textbf{variety}\end{tabular} &   \begin{tabular}{@{}c@{}}\textbf{Quiver}\\ \textbf{variety}\end{tabular}
\\ 
\hline
 {\scriptsize\textbf{Nakajima}} &
    \begin{tabular}{@{}c@{}}Quiver \\  $ Q$\end{tabular}
&      \begin{tabular}{@{}@{}c@{}}  {\tiny \circled{A}}\\ Path  algebra \\   $\kk \overline{Q}$\end{tabular}
& \begin{tiny}$\mu=\sum_{a\in Q}[a,a^*]$\end{tiny}
 &      \begin{tabular}{@{}@{}c@{}} Deformed (additive) \\ preprojective algebra\\  $ \Pi^\lambda(Q)= \sfrac{\kk \overline{Q}}{(\mu-\lambda)}   $\end{tabular}
&   \begin{tabular}{@{}c@{}}{\tiny \boxed{A}}\\ \\ $\Rep(\kk \overline{Q},V)$\end{tabular}
&    \begin{tabular}{@{}@{}c@{}}  Nakajima \\quiver variety \\ $ \mathcal{N}(Q,\lambda, d)=$  \\ \begin{tiny}$\Rep(\Pi^\lambda(Q),V))/\!\!/H   $\end{tiny}\end{tabular}
 \\
\hline
    \begin{tabular}{@{}@{}c@{}}{\scriptsize\textbf{Crawley-Boevey}} \\{\scriptsize\textbf{--Shaw};}\\  
    {\scriptsize\textbf{Van den Bergh}} \end{tabular} &
    \begin{tabular}{@{}c@{}}Quiver\\   $Q$\end{tabular}
&      \begin{tabular}{@{}@{}c@{}} {\tiny \circled{B}  }\\Localized \\ path algebra \\  $\mathcal{A}(Q)$ \end{tabular}
 & \begin{tiny}$ \Phi=\prod\limits_{a\in \overline{Q}}(1+aa^*)^{\epsilon(a)}$\end{tiny}
&  \begin{tabular}{@{}@{}c@{}}Multiplicative \\ preprojective algebra \\   $ \Lambda^q(Q)= \sfrac{\mathcal{A}(Q)}{(\Phi-q)}   $ \end{tabular}
 &   \begin{tabular}{@{}c@{}} {\tiny \boxed{B}} \\ \\ $\Rep(\mathcal{A}(Q),V)$ \end{tabular}
 &     \begin{tabular}{@{}@{}@{}@{}c@{}} Multiplicative\\ quiver variety\\  $ \mathbb{M}(Q,q, d)=$ \\\begin{tiny} $\Rep\big(\Lambda^q(Q),V\big)/\!\!/ H $ \end{tiny} \end{tabular}
 \\ 
\hline
{\scriptsize\textbf{Boalch}} &
    \begin{tabular}{@{}@{}@{}@{}c@{}}Colored\\ quiver\\  $\Upsilon$\\ \\ \emph{Definition \ref{Def:ColQ}} \end{tabular}
    &  \cellcolor{cyan!50}  \begin{tabular}{@{}@{}@{}c@{}}  {\tiny \circled{C}}\\ Boalch algebra \\  $\mathcal{B}(\Upsilon)$ \\  \\ \emph{Definition \ref{Def:Boalch-algebra}}\end{tabular}
&  \cellcolor{cyan!50}  \begin{tabular}{@{}@{}c@{}} \begin{tiny}$\Phi=\prod\limits_{\substack{\gamma\text{ loop}\\ c\text{ color}}}\gamma_c$ \end{tiny} \\ \\ \emph{Eqs. \eqref{Eq:wvg} and  \eqref{eq:moment-map-state,ment-conj}} \end{tabular}
 &   \cellcolor{cyan!50}  \begin{tabular}{@{}@{}c@{}c@{}}Fission algebra\\  
 $ \mathcal{F}^q(\Upsilon)= \sfrac{\mathcal{B}(\Upsilon)}{(\Phi-q)}$ \\ \\  \emph{Definition \ref{def:def-fission-algebras}} \end{tabular}
  &    \begin{tabular}{@{}c@{}c@{}} {\tiny \boxed{C}}\\ \\ $\Rep(\mathcal{B}(\Upsilon),V)$ \\ \\ Lemma \ref{lem:rep-non-empty}\end{tabular}
   &    \begin{tabular}{@{}@{}@{}@{}@{}@{}@{}c@{}} Colored\\  multiplicative\\  quiver variety\\ $ \mathcal{M}(\Upsilon,q, d)= $  \\  \begin{tiny}$\Rep\big(\mathcal{F}^q(\Upsilon),V\big)/\!\!/H   $\end{tiny}\\ \ref{sec:KR-principle}\end{tabular}
  \\ 
\hline
\hline
       \begin{tabular}{@{}@{}c@{}}\textbf{Algebro-}\\\textbf{geometric}\\   \textbf{structures}\end{tabular}
       &
   &   \begin{tabular}{@{}@{}@{}c@{}}\begin{tiny}Hamiltonian double\end{tiny}\\  \begin{tiny}Poisson: \end{tiny} \begin{tiny}\circled{A} \end{tiny}\\ \begin{tiny}quasi-Poisson:\end{tiny} \begin{tiny}\circled{B}\end{tiny}, 
   \begin{tiny}\circled{C}\end{tiny}\end{tabular}
& 
& $H_0$-Poisson
&    \begin{tabular}{@{}@{}c@{}}\begin{tiny}(Non-degenerate)\end{tiny}\\ \begin{tiny}Hamiltonian\end{tiny} \\ \begin{tiny}Poisson: \boxed{A}\end{tiny} \\  \begin{tiny}quasi-Poisson: \boxed{B}\end{tiny},\begin{tiny} \boxed{C}\end{tiny}    \end{tabular}
& Poisson
   \\
   \hline
\end{tabular}
\caption{Comparison between (additive) preprojective algebras, multiplicative preprojective algebras and fission algebras, as well as their inputs and representation theories (over an algebraically closed field $\kk$ of characteristic $0$). The blue cells highlight new results obtained (or conjectured) on these objects in the present work. \\
\medskip
For multiplicative preprojective algebras, given a quiver $Q=(I,Q)$, we define $\epsilon\colon\overline{Q}\to\{\pm 1\}$ as $\epsilon(a)=+1$ if $a\in Q$, and $\epsilon(a)=-1$ if $a\in\overline{Q}\setminus Q$. For representations, we pick a vector space $V=\bigoplus_{s\in I}V_s$ with dimension vector $\dim V_s=d_s$, $(d_s)\in\Z_{\geq0}^I$, and we reduce with respect to $H=\prod_{s\in I}\GL_{d_s}(\kk)$ after a choice of trivializations $V_s\cong \kk^{d_s}$.}
  \label{Table}
  \end{table}
\end{small}
\end{landscape}
\end{center}

%\newpage

\section{Noncommutative Quasi-Poisson Geometry}
\label{sec:nc-Poisson-geom}

In this section we introduce the noncommutative structures that will appear in relation to Boalch's fission algebras and colored multiplicative quiver varieties: Hamiltonian double quasi-Poisson algebras (\ref{sec:sec-double-quasi-Ham}) and $H_0$-Poisson structures (\ref{sec:H0-Poisson}). The former induce Hamiltonian quasi-Poisson structures \cite{AKSM02} on the representation schemes, while the latter induce usual Poisson structures on them. Thus, they satisfy the Kontsevich--Rosenberg principle, which will be explained in \ref{sec:KR-principle}. 
Finally, in \ref{sec:CBS-multiplicative} we review Van den Bergh's works \cite{VdB1,VdB2}, where these noncommutative structures naturally appear in the setting of multiplicative preprojective algebras.

 \subsection{Hamiltonian double quasi-Poisson algebras}
 \label{sec:sec-double-quasi-Ham}

In this section we follow \cite{VdB1}, see also \cite{CBEG07,F19}.
We fix a finitely generated associative unital algebra $A$ over a field $\kk$ of characteristic zero, and we write $\otimes=\otimes_\kk$ for brevity. Unless otherwise stated, given $n\in\Z_{\geq0}$, we will consider the $A$-bimodule $A^{\otimes n}$ endowed with its \emph{outer} bimodule structure $A^{\otimes n}_{\out}$:
\[
b_1(a_1\otimes\cdots\otimes a_n)b_2=b_1a_1\otimes \cdots\otimes a_nb_2\quad  \text{in } A^{\otimes n}_{\out},
\]
where $a_1\otimes\cdots\otimes a_n\in A^{\otimes n}$ and $b_1,b_2\in A$. Moreover, if $\mathbb{S}_n$ denotes the group of permutations of $n$ elements $\{1,\dots, n\}$, given $s\in\mathbb{S}_n$ and $a=a_1\otimes\cdots\otimes a_n\in A^{\otimes n}$, we define
\[
\tau_{s}(a)=a_{s^{-1}(1)}\otimes \cdots \otimes a_{s^{-1}(n)}.
\]

An \emph{$n$-bracket} \cite[Definition 2.2.1]{VdB1} is a linear map $\lr{-,\dots,-}\colon A^{\otimes n}\to A^{\otimes n}$ (or equivalently a map $A^{\times n}\to A^{\otimes n}$ linear in all the arguments)  satisfying 
\begin{align*}
&\tau_{(1\dots n)}\circ \lr{-,\dots,-}\circ \tau^{-1}_{(1\dots n)}=(-1)^{n+1}\lr{-,\dots,-},
\\
&\lr{a_1,a_2,\dots, a_{n-1},a_na'_n}=a_n\lr{a_1,a_2,\dots, a_{n-1},a'_n}+\lr{a_1,a_2,\dots, a_{n-1},a_n}a'_n,
\end{align*}
for all $a_1,\dots,a_n,a'_n\in A$. 
In other words, the first identity means that the $n$-bracket $\lr{-,\dots,-}$ is cyclically antisymmetric, and the second one states that it is a derivation $A\to A^{\otimes n}$ in its last argument for $A^{\otimes n}_{\out}$.  
We will call a 2-bracket (resp. a 3-bracket) a \emph{double} bracket (resp. a \emph{triple} bracket).
Since double brackets will be essential in this article, we provide their explicit definition:

\begin{defn}[\cite{VdB1}]
A \emph{double bracket} on $A$ is a $\kk$-bilinear map $\dgal{-,-}:A\times A \to A \otimes A$, satisfying for any $a,b,c \in A$,
\begin{align} 
 \dgal{a,b}&=-\tau_{(12)}\dgal{b,a} &&\text{(cyclic antisymmetry)},  \label{Eq:cycanti}
\\
 \dgal{a,bc}&=\dgal{a,b}c+b\dgal{a,c}  &&\text{(right Leibniz rule)}. \label{Eq:outder}
\end{align}
\end{defn}
Using \eqref{Eq:cycanti}, it is straightforward to see that \eqref{Eq:outder} is equivalent to 
\begin{equation}\label{Eq:inder}
 \dgal{bc,a}=\dgal{b,a}\ast c+b\ast\dgal{c,a} \qquad \text{\emph{(left Leibniz rule)}}, 
\end{equation}
where $\ast$ denotes the \emph{inner} $A$-bimodule structure on $A\otimes A$, given by $a \ast (b'\otimes b'')\ast a'=( b' a') \otimes (a b'')$; throughout this article, we shall use Sweedler's notation.
From \eqref{Eq:outder} and \eqref{Eq:inder}, note that it suffices to define double brackets on generators of $A$.

Next, recall that given an associative $\kk$-algebra $B$, a \emph{$B$-algebra} will mean an associative algebra $A$ together with a unit preserving algebra morphism $B\to A$.
From now on, we assume that the unit in $A$ admits a decomposition $1=\sum_{s\in I} e_s$ in terms of a finite set of orthogonal idempotents, i.e. $|I|\in \Z_{>0}$ and $e_s e_t = \delta_{st} e_s$.  In that case, we view $A$ as a $B$-algebra for $B=\oplus_{s\in I} \kk e_s$. Then, we naturally extend the definition of an $n$-bracket to require $B$-bilinearity: it vanishes if one of its arguments is in $B$.

Given a double bracket $\lr{-,-}$ on $A$, let us introduce the following extension:
\[
\lr{a,b\otimes c}_L:=\lr{a,b}\otimes c\in A^{\otimes 3},
\]
for all $a,b,c\in A$. This allows us to define the following operation:
\begin{equation}
\label{Eq:TripBr}
 \lr{a,b,c}:=\lr{a,\lr{b,c}}_L+\tau_{(123)}\lr{b,\lr{c,a}}_L+\tau_{(132)}\lr{c,\lr{a,b}}_L\,; 
\end{equation}
it is easy to see that \eqref{Eq:TripBr} is a triple bracket. 
Let us also define the following operation 
\begin{equation}
\begin{aligned}
&\quad \lr{a,b,c}_{\qP}
\\
&:=\frac14 \sum_{s\in I} \Big(
c e_s a \otimes e_s b \otimes e_s  - c e_s a \otimes e_s \otimes b e_s - c e_s \otimes a e_s b \otimes e_s 
+ c e_s \otimes a e_s \otimes b e_s \\
&\qquad \quad - e_s a \otimes e_s b \otimes e_s c + e_s a \otimes e_s \otimes b e_s c + e_s \otimes a e_s b \otimes e_s c - e_s \otimes a e_s \otimes b e_s c \Big),
\end{aligned}
\label{eq:triple-bracket-E3}
\end{equation}
which is also a triple bracket. 

\begin{defn}[\cite{VdB1}, Definition 5.1.1]
Let $A$ be an associative $B$-algebra equipped with a double bracket $\lr{-,-}$.
The double bracket is called \emph{quasi-Poisson} (or we say the \emph{quasi-Poisson property} holds) if we can equate the triple brackets \eqref{Eq:TripBr} and \eqref{eq:triple-bracket-E3}, that is, 
\begin{equation}
  \dgal{a,b,c}=\dgal{a,b,c}_{\qP}
   \label{qPabc}
\end{equation}
for all $a,b,c\in A$. The pair $(A,\lr{-,-})$ is called a \emph{double quasi-Poisson algebra}.
\end{defn}

There is a related construction with a condition simpler than \eqref{qPabc}. Namely, a {\emph{double Poisson bracket} \cite[Definition 2.3.2]{VdB1} on $A$ is a double bracket $\lr{-,-}\colon A^{\otimes 2}\to A^{\otimes 2}$ such that \eqref{Eq:TripBr} vanishes (that is, $\lr{-,-,-}=0$). 
In that case, the pair $(A,\lr{-,-})$ is called a \emph{double Poisson algebra}. 

\begin{rem}
By construction, the operations \eqref{Eq:TripBr} and \eqref{eq:triple-bracket-E3} are triple brackets. Thus, it is a simple computation to check that if  \eqref{qPabc} holds on generators of $A$, then it holds for any triple of elements in $A$. 
\label{rem:modified-Jacobi-only-gen}
\end{rem}

Finally, Van den Bergh adapted the important notion of multiplicative moment maps to the noncommutative context given by double quasi-Poisson algebras.

\begin{defn}[\cite{VdB1}, Definition 5.1.4]
Let $(A,\dgal{-,-})$ be a double quasi-Poisson algebra. A \emph{multiplicative moment map} is an invertible element $\Phi=\sum_{s\in I}\Phi_s$ with $\Phi_s\in e_sAe_s$ such that, for all $a\in A$ and $s\in I$, we have
\begin{equation} \label{Phim}
 \dgal{\Phi_s,a}=\frac12 \Big(ae_s\otimes \Phi_s-e_s \otimes \Phi_s a +  a \Phi_s \otimes e_s-\Phi_s \otimes e_s a\Big).
\end{equation}
Then we call the triple $(A,\dgal{-,-},\Phi)$ a \emph{Hamiltonian double quasi-Poisson algebra}. 
\label{defn:double-quasi-Hamiltonian}
\end{defn}
We observe that \eqref{Phim} and the invertibility of $\Phi_s\in e_s A e_s$ imply for all $a\in A$
\begin{equation*} \label{PhimInv}
 \dgal{\Phi_s^{-1},a}=-\frac12 \Big(a \Phi_s^{-1}\otimes e_s-\Phi_s^{-1} \otimes e_s a +  a e_s \otimes \Phi_s^{-1}-e_s \otimes \Phi_s^{-1} a\Big).
\end{equation*}

\subsection{\texorpdfstring{$H_0$}{H_0}-Poisson algebras}
\label{sec:H0-Poisson}
Recall that the zeroth Hochschild homology of $A$ is the vector space $H_0(A)=A/[A,A]$, where $[A,A]$ is the subset of $A$ spanned by the commutators. We write $\overline{a}$ for the image of $a\in A$ in $A/[A,A]$.

\begin{defn}[\cite{CB11}]\label{defn:H0-Poisson}
An \emph{$H_0$-Poisson structure} on $A$ is a Lie bracket $\{-,-\}$ on $H_0 (A)$ such that, for all $\overline{a}\in H_0(A)$, the map $\{\overline{a},-\}\colon H_0(A)\to H_0(A)$ lifts to a derivation $A\to A$.
\label{def:H0-Poisson-str}
\end{defn}

Though there are examples of $H_0$-Poisson structures that do not come from double Poisson structures (see \cite[p. 208]{CB11}), double Poisson algebras induce $H_0$-Poisson algebras in a direct way. As proved in \cite[Corollary 2.4.6]{VdB1}, if $(A,\lr{-,-})$ is a double Poisson algebra and we define the \emph{associated bracket} $\{-,-\}:=m\circ\lr{-,-}$, then $H_0(A)$ equipped with the bracket $\{-,-\}$ is a Lie bracket, thus obtaining an $H_0$-Poisson structure. 

Remarkably, $H_0$-Poisson structures can be obtained from Hamiltonian double quasi-Poisson algebras,  as in Definition \ref{defn:double-quasi-Hamiltonian}. This is due to the following noncommutative counterpart of quasi-Hamiltonian reduction:

\begin{prop}[\cite{VdB1}, Proposition 5.1.5]
Consider a Hamiltonian double quasi-Poisson algebra $(A,\lr{-,-},\Phi)$, and fix an invertible element $q\in B^\times$. We define $\overline{A}=A/(\Phi-q)$. Then the associated bracket $\{-,-\}$ descends to an $H_0$-Poisson structure on $\overline{A}$.
\label{prop:double-quasi-Ham-red}
\end{prop}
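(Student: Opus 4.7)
I would take as candidate the \emph{associated bracket} $\{a,b\} := m(\lr{a,b})$ on $A$, where $m\colon A\otimes A\to A$ is multiplication, and show that it descends to $H_0(\overline{A})$ and satisfies the $H_0$-Poisson axioms. There are three things to verify: (a) the bracket is well defined on $H_0(\overline{A})\times H_0(\overline{A})$; (b) it is antisymmetric and satisfies the Jacobi identity; and (c) for every class $\overline{a}\in H_0(\overline{A})$, the operator $\{\overline{a},-\}$ lifts to a derivation of $\overline{A}$.

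The cornerstone of the proof is the identity $\{a,\Phi_s\}=0$ in $A$ for all $a\in A$ and $s\in I$. To obtain it, apply cyclic antisymmetry \eqref{Eq:cycanti} to the moment map relation \eqref{Phim} in order to compute $\lr{a,\Phi_s}$, and then multiply the two tensor factors: using $e_s\Phi_s=\Phi_s=\Phi_s e_s$, the four resulting summands cancel in pairs. Since $\{a,q\}=0$ by $B$-bilinearity, this yields $\{a,\Phi-q\}=0$. Combined with the right Leibniz rule $\{a,bc\}=\{a,b\}c+b\{a,c\}$ (inherited from \eqref{Eq:outder}), the derivation $\{a,-\}\colon A\to A$ preserves the two-sided ideal $(\Phi-q)$ and therefore descends to a derivation of $\overline{A}$. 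The general identity $\{a,b\}+\{b,a\}\in[A,A]$, immediate from \eqref{Eq:cycanti}, upgrades this to the first argument modulo $[A,A]$, so $\{-,-\}$ descends to a well-defined antisymmetric bracket on $H_0(\overline{A})$. Step (c) is then automatic: given $\overline{a}\in H_0(\overline{A})$, choose any lift $a\in A$; the descent of $\{a,-\}$ constructed above induces $\{\overline{a},-\}$ on $H_0(\overline{A})$.

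It remains to establish the Jacobi identity on $H_0(\overline{A})$. Writing $m^{(3)}\colon A^{\otimes 3}\to A$ for the triple multiplication $x\otimes y\otimes z\mapsto xyz$, I would invoke the general Jacobiator identity, valid for \emph{any} double bracket,
\[
\{a,\{b,c\}\}+\{b,\{c,a\}\}+\{c,\{a,b\}\}\;\equiv\;m^{(3)}\!\big(\lr{a,b,c}\big)\pmod{[A,A]},
\]
which is a Sweedler chase using both Leibniz rules and forms part of the standard double-bracket calculus from \cite{VdB1}. Substituting the quasi-Poisson condition \eqref{qPabc} replaces $\lr{a,b,c}$ by $\lr{a,b,c}_{\qP}$, and a direct inspection of \eqref{eq:triple-bracket-E3} shows that upon applying $m^{(3)}$ each of the eight summands indexed by $s$ collapses to either $\pm\,c e_s a e_s b e_s$ (first four, with signs $+,-,-,+$) or $\pm\,e_s a e_s b e_s c$ (last four, with signs $-,+,+,-$); both groups cancel identically, so $m^{(3)}(\lr{a,b,c}_{\qP})=0$. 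Consequently the Jacobi identity holds in $H_0(A)$ and a fortiori in $H_0(\overline{A})$. The main obstacle, in terms of technical effort, is the bookkeeping needed to establish (or carefully cite) the Jacobiator identity displayed above; once that is in hand, the quasi-Poisson miracle $m^{(3)}\lr{-,-,-}_{\qP}=0$ reduces to the elementary cancellation just described.
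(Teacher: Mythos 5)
Your reduction scheme follows the same route as Van den Bergh's proof of [VdB1, Proposition~5.1.5], which the paper cites without reproducing: the moment-map cancellation $\{a,\Phi_s\}=0$ (multiplying the tensor factors of $\lr{a,\Phi_s}=-\tau_{(12)}\lr{\Phi_s,a}$ gives $-\tfrac12(\Phi_sae_s-\Phi_sae_s+e_sa\Phi_s-e_sa\Phi_s)=0$), the descent via the Leibniz rule and cyclic antisymmetry, and the vanishing $m^{(3)}\lr{a,b,c}_{\qP}=0$ are all correctly identified and computed.

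However, the ``Jacobiator identity'' you invoke is not correct as written. Carrying out the Sweedler chase with $\lr{b,c}=u'\otimes u''$ and the derivation property of $\{a,-\}$ gives $\{a,\{b,c\}\}=\{a,u'\}u''+u'\{a,u''\}$; only the first summand is $m^{(3)}\lr{a,\lr{b,c}}_L$, while the second equals $-m^{(3)}\lr{a,\lr{c,b}}_L$ modulo $[A,A]$ by cyclic antisymmetry of the inner double bracket. Summing cyclically produces an extra subtrahend:
\[
\{a,\{b,c\}\}+\{b,\{c,a\}\}+\{c,\{a,b\}\}\;\equiv\;m^{(3)}\lr{a,b,c}-m^{(3)}\lr{a,c,b}\pmod{[A,A]}.
\]
Your version, without $-m^{(3)}\lr{a,c,b}$, already fails on the free algebra $\kk\langle x,y\rangle$ with $\lr{x,y}=x\otimes y$ and $\lr{x,x}=\lr{y,y}=0$: the cyclic sum for $(x,y,x)$ equals $-xyx+x^2y$, which is zero in $H_0$, whereas $m^{(3)}\lr{x,y,x}=-xyx\equiv -x^2y$ is a nonzero class in $H_0$. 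Fortunately this slip does not derail the proof: $m^{(3)}\lr{a,b,c}_{\qP}=0$ holds for \emph{every} ordered triple, so under the quasi-Poisson hypothesis both $m^{(3)}\lr{a,b,c}$ and $m^{(3)}\lr{a,c,b}$ vanish and the corrected right-hand side is still $0-0=0$. Replace the formula with the corrected one and your argument closes.
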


\subsection{The Kontsevich--Rosenberg principle}
\label{sec:KR-principle}
The \emph{Kontsevich--Rosenberg principle}~\cite{KR00} states that a noncommutative structure on an associative algebra $A$ has algebro-geometric meaning if it induces the corresponding standard algebro-geometric structures on the representation schemes $\Rep(A,d)$.

From now on, let $\kk$ be an algebraically closed field of characteristic zero, $B=\oplus^n_{s=1}\kk e_s$ be a semisimple $\kk$-algebra, and $A$ be an associative $B$-algebra, which is finitely generated (over $B$). Following \cite[\S7]{VdB1}, let $d=(d_1,\dots,d_n)\in\Z_{\geq0}^n$, and we put $N:=\sum^n_{s=1} d_s$. Also, we assume that $B$ is diagonally embedded in $M_N(\kk):=M_{N\times N}(\kk)$---\,the idempotent $e_s$ is nothing but the identity matrix in $M_{d_s\times d_s}(\kk)$.
 Now, we define the functor on the category of commutative $\kk$-algebras 
\begin{equation}
\Rep_d (A)\colon \texttt{CommAlg}_\kk\longrightarrow \texttt{Sets}, \quad C\longmapsto \Hom_{B}\Big(A, M_N(C)\Big).
\label{eq:functor-of-representation}
\end{equation}
Building on the work of Bergman \cite{Bergman} and Cohn \cite{Cohn} (see \cite{BKR,BFR} for excellent expositions and insightful generalizations), it is well known that the functor \eqref{eq:functor-of-representation} is representable: we have an adjunction (see \cite[Proposition 3]{BFR})
\begin{equation*}
\Hom_\kk(A_d,C)=\Hom_{B}\Big(A,M_N(C)\Big).
\label{adjunction-1}
\end{equation*}
Consequently, we define the \emph{representation scheme} as the
 affine scheme $\Rep(A,d):=\Spec(A_d)$. 
More explicitly, $A_d$ can be described as the commutative algebra generated by the symbols $\{a_{ij}\mid a\in A,\; 1\leq  i,j\leq N\}$ subject to the relations (for $a,a'\in A$, $\lambda\in\kk$, $i,j\in\{1,\dots, N\}$)
\[
(\lambda a)_{ij}=\lambda a_{ij},\quad (a+a')_{ij}=a_{ij}+a'_{ij},\quad (aa')_{ij}= \sum_{1\leq k\leq N} a_{ik}a'_{k j},\quad (e_s)_{ij}=\Delta^s_{ij},
\]
where $\Delta^s_{ij}=1$, if $i=j$ and $\sum_{t<s}d_t<i\leq \sum_{t\leq s}d_t$, and $\Delta^s_{ij}=0$ otherwise, see \cite[p. 211]{CB11}. 

Next, for $a\in A$, we define the element $\tr (a)=\sum^{N}_{s=1}a_{ss}\in A_d$. Since $\tr(ab)=\tr(ba)$, it induces a map $A/[A,A]\to A_d$, which we also denote $\tr$. 
The group $\GL_{d}:=\prod^n_{s=1}\GL_{d_s}(\kk)$ acts on $A_d$ via $g.a_{ij}=\sum^N_{k=1}\sum^N_{\ell=1}g_{ik}a_{k\ell}g^{-1}_{\ell j}$, for all $g\in\GL_{d}$, $a\in A$, $1\leq i,j\leq N$.
The celebrated Le Bruyn--Procesi theorem states that $A^{\GL_{d}}_d$ is the algebra generated by the functions $\tr(a)$ for $a\in A$. On the geometric side, $A^{\GL_{d}}_d$ is the coordinate algebra of the GIT quotient $\Rep(A,d)/\!/\GL_{d}$ classifying isomorphism classes of semi-simple $A$-modules of dimension vector $d$.
The following result will be important in this article:

\begin{thm}[\cite{VdB1}, Theorem 7.12.2, Proposition 7.13.2; \cite{CB11}, Theorem 4.5]
 \hfill
\begin{enumerate}
\item [\textup{(i)}]
Let $\lr{-,-}$ be a double quasi-Poisson bracket on a $B$-algebra $A$. We define 
\[
\{a_{ij},b_{uv}\}=\lr{a,b}^\prime_{uj}\lr{a,b}''_{iv},
\]
for $a,b\in A$. Then $\{-,-\}$ defines a quasi-Poisson bracket on $A_d$. Furthermore, a Hamiltonian double quasi-Poisson algebra induces a Hamiltonian quasi-Poisson  $\GL_{d}$-structure on $\Rep(A,d)$ in the sense of \cite{AKSM02}.
\item [\textup{(ii)}]
If $A$ is equipped with an $H_0$-Poisson structure with associated Lie bracket $\{-,-\}$ then $A^{\GL_{d}}_d$ has a unique Poisson structure with the property
\[
\{\tr(a),\tr(b)\}=\tr\{\overline{a},\overline{b}\},
\]
where $a,b\in A$, and $\overline{a},\overline{b}\in A/[A,A]$. 
\end{enumerate}
\label{thm:KR-theorem}
\end{thm}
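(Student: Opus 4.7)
The plan is to reconstruct the arguments of Van den Bergh and Crawley--Boevey: prove (i) first by transferring the double quasi-Poisson structure to $A_d$ entrywise, then derive (ii) from the Le Bruyn--Procesi description of $A^{\GL_{d}}_d$. For (i), I would begin by verifying that the formula $\{a_{ij},b_{uv}\}=\lr{a,b}'_{uj}\lr{a,b}''_{iv}$ is compatible with the defining relations of $A_d$ listed after \eqref{adjunction-1}. Linearity in $a$ is immediate; compatibility with $(ab)_{ij}=\sum_{k}a_{ik}b_{kj}$ follows from the outer Leibniz rule \eqref{Eq:outder} applied in the second argument and, after \eqref{Eq:cycanti}, also from the inner Leibniz rule \eqref{Eq:inder} in the first; compatibility with $(e_s)_{ij}=\Delta^s_{ij}$ follows from $B$-linearity of $\lr{-,-}$. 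Antisymmetry $\{a_{ij},b_{uv}\}+\{b_{uv},a_{ij}\}=0$ is a direct transcription of \eqref{Eq:cycanti}, and $\GL_{d}$-equivariance is manifest from the index structure. The bracket then extends uniquely to a biderivation on all of $A_d$.

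The heart of (i) is the quasi-Jacobi identity. I would compute the Jacobiator of $\{-,-\}$ on matrix entries and match it, index by index, with the entrywise image of the triple bracket $\lr{a,b,c}$ from \eqref{Eq:TripBr}; after expanding the three summands of \eqref{Eq:TripBr} and using the contraction rule above, the "pure Jacobi" part $\lr{a,\lr{b,c}}_L+\mathrm{cyc.}$ reproduces exactly the Jacobiator of $\{-,-\}$ on $A_d$. By the quasi-Poisson condition \eqref{qPabc}, the remaining obstruction to ordinary Jacobi must therefore equal the entrywise image of $\lr{-,-,-}_{\qP}$ as given by \eqref{eq:triple-bracket-E3}. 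The decisive combinatorial check is to show that the eight idempotent-weighted summands of \eqref{eq:triple-bracket-E3}, once contracted, reassemble into the canonical Cartan three-tensor of the Levi $\prod_{s}\gl_{d_s}\subset\gl_N$ acting by conjugation on $\Rep(A,d)$; this is precisely the AKSM quasi-Poisson tensor. For the Hamiltonian refinement, I would plug \eqref{Phim} into the contraction formula and identify the resulting expression for $\{(\Phi_s)_{ij},a_{uv}\}$ with the infinitesimal conjugation action of $\Phi_s$ mandated by the AKSM moment map axiom.

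For part (ii), the Le Bruyn--Procesi theorem reduces the problem to defining the Poisson bracket on the generators $\tr(a)$, $a\in A$, of $A^{\GL_{d}}_d$. Since the trace map factors through $H_0(A)=A/[A,A]$, the formula $\{\tr(a),\tr(b)\}:=\tr\{\overline{a},\overline{b}\}$ is unambiguous on generators. To extend it to products by the Leibniz rule I would invoke Definition \ref{def:H0-Poisson-str}: each $\{\overline{a},-\}$ lifts to a derivation $D_a\colon A\to A$, which induces a derivation of $A_d$ entrywise by $D_a(b_{ij})=(D_a(b))_{ij}$; this derivation commutes with the $\GL_d$-action, hence restricts to a derivation of $A^{\GL_{d}}_d$ that extends $\{\tr(a),-\}$. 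Antisymmetry and the Jacobi identity on generators descend from the Lie structure on $H_0(A)$, and uniqueness is clear because the trace generators exhaust $A^{\GL_{d}}_d$.

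The principal obstacle throughout is the quasi-Jacobi verification in (i): matching the cyclic expansion of iterated matrix-entry brackets against the eight idempotent-weighted terms in \eqref{eq:triple-bracket-E3} is a delicate but essentially mechanical bookkeeping exercise that hinges on the block-diagonal structure of the Levi $\prod_{s}\GL_{d_s}\subset\GL_N$ and on the completeness relation $\sum_{s}\Delta^{s}_{ij}=\delta_{ij}$. Everything else in the statement is either immediate from the double-bracket axioms or follows formally once this identity is in place.
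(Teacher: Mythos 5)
This statement is a citation (to Van den Bergh's Theorem 7.12.2 and Proposition 7.13.2, and to Crawley-Boevey's Theorem 4.5); the paper gives no proof of its own. Your reconstruction follows the approach of those two sources faithfully: for (i), you define the entrywise bracket, check it is a well-defined antisymmetric biderivation on $A_d$ via the inner/outer Leibniz rules and $B$-linearity, then match the Jacobiator of $\{-,-\}$ against the image of the Van den Bergh triple bracket and identify the remaining obstruction $\lr{-,-,-}_{\qP}$ with the Cartan trivector of $\prod_s\mathfrak{gl}_{d_s}$; for (ii), you invoke Le Bruyn--Procesi, define the bracket on trace generators, and extend by lifting $\{\overline{a},-\}$ to a derivation of $A$ and hence to an invariant derivation of $A_d$. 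This is precisely Van den Bergh's and Crawley-Boevey's strategy, so I have nothing to add beyond one small caveat you leave implicit: for the induced entrywise map $b_{ij}\mapsto (D_a(b))_{ij}$ to be well-defined on $A_d$ one needs the lifting derivation to annihilate the idempotents (a $B$-derivation), which is part of Crawley-Boevey's definition and worth stating explicitly when you extend $\{\tr(a),-\}$ to $A^{\GL_d}_d$.
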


\subsection{Application: multiplicative preprojective algebras}
\label{sec:CBS-multiplicative}
A \emph{quiver} $Q$ is an oriented graph, with set of vertices $I$ and set of arrows $Q$. 
We form the double $\overline{Q}$ of $Q$ with the same vertex set $I$ by adding an arrow going in the opposite direction for each $a\in Q$. More precisely, define on $Q$ the tail (resp. head) map $t:Q\to I$ (resp. $h:Q\to I$) which sends an arrow $a\in Q$ to its tail/starting vertex $t(a)\in I$ (resp. head/ending vertex $h(a)\in I$). 
Then $\overline{Q}$ is obtained by adding an arrow $a^\ast : h(a)\to t(a)$ for each $a\in Q$. We naturally extend $h,t$ to $\overline{Q}$, and set $(a^\ast)^\ast=a$ for each $a\in Q$ in order to get an involution $a \mapsto a^\ast$ on $\overline{Q}$. Finally, define $\epsilon:\overline{Q}\to \{\pm1\}$ as the map which takes value $+1$ (resp. $-1$) on arrows originally in $Q$ (resp. on arrows in $\overline{Q}\setminus Q$). 

Fix a field $\kk$ of characteristic zero, and form $\kk\overline{Q}$ as the path algebra of the double $\overline{Q}$ by reading paths from right to left.  We also form the algebra $\Ac(Q)$ obtained by universal localization from the set $S=\{1+a a^\ast \,|\, a \in \overline{Q}\}$. This is equivalent to add local inverses $( e_{h(a)}+a a^\ast)^{-1}$ for each $a \in \overline{Q}$ (i.e. they are inverses to $e_{h(a)}+aa^\ast$ in $e_{h(a)}\Ac(Q) e_{h(a)}$). The algebras $\kk \overline{Q}$ and  $\Ac(Q)$ are seen as $B$-algebras for $B=\oplus_{s\in I}\kk e_s$. 

In their study of a multiplicative version of the important Deligne--Simpson problem in representation theory, Crawley-Boevey--Shaw introduced multiplicative preprojective algebras, whose representation schemes are multiplicative quiver varieties.
 
\begin{defn}[\cite{CBShaw}] Let $Q$ be a quiver with vertex set $I$. Fix a total order $<$ of the arrows of $\overline Q$. The \emph{multiplicative preprojective algebra} with parameter $q\in (\kk^\times)^I$ is the algebra 
\begin{equation*}
 \Lambda^q(Q):=\Ac(Q)/R_q\,,
\end{equation*}
where $R_q$ is the ideal generated by 
\begin{equation} \label{Eq:MQV1}
 \prod_{a\in \overline{Q}}(1+aa^\ast)^{\epsilon(a)} -\sum_{s\in I} q_s e_s\,,
\end{equation}
and the product is taken with respect to the chosen total order.  
\end{defn}

Quite strikingly, Van den Bergh \cite{VdB1} realized that the natural Poisson structure on multiplicative quiver varieties attached to $Q$ is induced, via the Kontsevich--Rosenberg principle, from an $H_0$-Poisson structure (Definition \ref{def:H0-Poisson-str}) on $\Lambda^q(Q)$, which in turn is induced by a Hamiltonian double quasi-Poisson algebra (Definition \ref{defn:double-quasi-Hamiltonian}) on $\Ac(Q)$. 
As emphasized in the introduction, our article can be regarded as a generalization of this point of view to fission algebras, which generalize multiplicative preprojective algebras as we will explain in Lemma \ref{L:AlgIso}. Therefore, for the reader's convenience, it shall be convenient to recall Van den Bergh's results first.

\begin{thm}[\cite{VdB1}, Theorems 6.5.1 and 6.7.1, and Proposition 6.8.1]
\hfill
\begin{enumerate}
\item [\textup{(i)}]
Let $A_2$ be the quiver with vertices $\{1,2\}$, and one arrow $a\colon 1\to 2$.
 Then $\Ac(A_2)$ carries a Hamiltonian double quasi-Poisson structure given by the double quasi-Poisson bracket
\begin{align*}
\lr{a,a}&=\;\;\,0\,,\qquad \quad \lr{a^*,a^*}=0\,;
\\
\lr{a,a^*}&=\;\;\,e_{1}\otimes e_{2} + \frac{1}{2}\Big(a^*a\otimes e_{2}+e_{1}\otimes aa^*\Big)\,;
\\
\lr{a^*,a}&=-e_{2}\otimes e_{1}-\frac{1}{2}\Big(e_{2}\otimes a^*a+aa^*\otimes e_{1}\Big)\,,
\end{align*}
and by the multiplicative moment map $\Phi=(1+aa^*)(1+a^*a)^{-1}$.
\item [\textup{(ii)}]
Let $Q$ be a quiver with vertex set $I$. Fix a total order $<$ on the arrows of $\overline{Q}$ ending at $s$, for each $s\in I$.
Then $\Ac(Q)$ carries a Hamiltonian double quasi-Poisson structure, 
whose multiplicative moment map is given by 
\begin{align*}
\Phi= \prod_{a\in \overline{Q}}(1+aa^\ast)^{\epsilon(a)}\,.
\end{align*}
Here, the product is taken so that the factors $\{e_s(1+aa^\ast)^{\epsilon(a)}e_s \mid h(a)=s\}$ appearing in $e_s\Phi e_s$ respect the chosen total order. 
\item [\textup{(iii)}]
Let $Q$ be a quiver with vertex set $I$. Fix a total order $<$ on the arrows of $\overline{Q}$ ending at $s$, for each $s\in I$. Then the multiplicative preprojective algebra (with parameter $q$) $\Lambda^q(Q)$ is endowed with an $H_0$-Poisson structure, as defined in \cite{CB11}.
\end{enumerate}
\label{tm:VdB-mult-preproj-quasi-Hamilt}
\end{thm}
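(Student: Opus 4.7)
The three statements are proved in sequence: (i) by direct verification on $\overline{A}_2$, (ii) by fusion of copies of (i), and (iii) as a formal consequence of Proposition \ref{prop:double-quasi-Ham-red}. For part (i), I would first extend the prescribed double bracket from the generators $a, a^*$ to all of $\kk\overline{A}_2$ by invoking $B$-bilinearity together with the Leibniz rules \eqref{Eq:outder} and \eqref{Eq:inder}; this extension is uniquely determined. The further extension to the localization $\mathcal{A}(A_2)$ is forced, since applying Leibniz to $u u^{-1} = e_i$ expresses $\lr{c, u^{-1}}$ in terms of $\lr{c, u}$ for any locally invertible $u$. Cyclic antisymmetry \eqref{Eq:cycanti} can be read off the displayed formulas, and the moment map identity \eqref{Phim} for $\Phi = (1 + aa^*)(1 + a^*a)^{-1}$ is then verified separately on $a$ and $a^*$ by combining the Leibniz rule with the induced formulas for $\lr{(1 + aa^*)^{\pm 1}, -}$ and $\lr{(1 + a^*a)^{\pm 1}, -}$.

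The substantive step in (i) is the verification of the quasi-Poisson property \eqref{qPabc}. By Remark \ref{rem:modified-Jacobi-only-gen}, it suffices to evaluate both triple brackets \eqref{Eq:TripBr} and \eqref{eq:triple-bracket-E3} on all ordered triples drawn from $\{a, a^*\}$; this is a finite but delicate bookkeeping exercise in $A^{\otimes 3}$ in which the linear correction terms in $\lr{a,a^*}$ and $\lr{a^*,a}$ conspire with the cyclic rotation sum to reproduce the idempotent expansion \eqref{eq:triple-bracket-E3}.

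For part (ii), the strategy is \emph{fusion}. For each arrow $a \in Q$ I would introduce a separate copy $\mathcal{A}(A_2)_a$ with its own pair of idempotents labelled by $\{t(a), h(a)\}$, and form the free product of all these copies over the resulting semisimple base; this free product inherits a Hamiltonian double quasi-Poisson structure from (i), with moment map equal to the product of the local ones. Iteratively applying the fusion procedure of \cite[Section 5.3]{VdB1} to identify, at each vertex $s \in I$, all duplicated copies of $e_s$ in the order dictated by $<$ then yields $\mathcal{A}(Q)$ equipped with the claimed Hamiltonian double quasi-Poisson structure. Part (iii) is now immediate from Proposition \ref{prop:double-quasi-Ham-red}: the relation \eqref{Eq:MQV1} identifies $\mathcal{A}(Q)/(\Phi - q)$ with $\Lambda^q(Q)$, so the associated bracket $m \circ \lr{-,-}$ descends to an $H_0$-Poisson structure. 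The most delicate steps in the whole argument are the explicit quasi-Poisson verification for $\overline{A}_2$ in (i) and the combinatorial tracking of idempotents during fusion in (ii); (iii) is then formal.
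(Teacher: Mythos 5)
This theorem is not proved in the paper at all --- it is a recollection of Van den Bergh's results, cited directly from \cite[Theorems 6.5.1, 6.7.1 and Proposition 6.8.1]{VdB1}, so there is no internal proof to compare against. Your reconstruction follows the structure of Van den Bergh's original argument faithfully: part (i) is a direct (and nontrivial) verification on $\overline{A}_2$ of the quasi-Poisson property \eqref{qPabc} and of the moment map identity \eqref{Phim}; part (ii) is obtained by fusion of copies of (i) along the vertex identifications dictated by $Q$, in the order fixed at each vertex; and part (iii) is a formal consequence of Proposition \ref{prop:double-quasi-Ham-red}. Two small remarks. First, in part (ii) the object fed into the fusion machinery of \cite[\S 5.3]{VdB1} is the direct sum $\bigoplus_{a\in Q}\Ac(A_2)_a$ over pairwise disjoint vertex sets; your ``free product over the resulting semisimple base'' amounts to the same thing once the idempotents are declared orthogonal, but the direct-sum phrasing makes it immediate that the Hamiltonian double quasi-Poisson structure is inherited componentwise before fusion. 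Second, note Remark \ref{rem:convention-on-arrows}: the displayed formulas in the statement are already transposed to the right-to-left path-reading convention used in this paper and in \cite{B15,CBShaw}, which differs from Van den Bergh's; the direct verification in (i) must of course be carried out consistently in one convention, a bookkeeping point your sketch correctly leaves implicit.
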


\begin{rem}\label{rem:convention-on-arrows}
Note that  we follow the convention of reading paths from right to left in this article, as in the works of Boalch \cite{B15} and Crawley-Boevey--Shaw \cite{CBShaw}. This is \emph{different} from the convention of Van den Bergh  \cite{VdB1}. Therefore, the double quasi-Poisson bracket from Theorem \ref{tm:VdB-mult-preproj-quasi-Hamilt} \emph{(i)} looks different from the one given in   \cite[Theorem 6.5.1]{VdB1}, and in \emph{(ii)} we consider the ordering of the arrows ending (not starting) at a vertex. 
\end{rem}

%%%%%%%%%%%%%%%%%%%%%%%%%%%%%%%%%%%%%%%%%%%%%%%%%%%%%%%%%%%%%%%%%%%%%%%%%%%
%%%%%%%%%%%%%%%%%%%%%%%%%%%%%%%%%%%%%%%%%%%%%%%%%%%%%%%%%%%%%%%%%%%%%%%%%%%
%%%%%%%%%%%%%%%%%%%%%%% NEW SECTION %%%%%%%%%%%%%%%%%%%%%%%%%%%%%%%%%%%%%%%
%%%%%%%%%%%%%%%%%%%%%%%%%%%%%%%%%%%%%%%%%%%%%%%%%%%%%%%%%%%%%%%%%%%%%%%%%%%
%%%%%%%%%%%%%%%%%%%%%%%%%%%%%%%%%%%%%%%%%%%%%%%%%%%%%%%%%%%%%%%%%%%%%%%%%%%

\section{The Conjecture on fission algebras} \label{sec:conjecture}

In this section, we give a precise formulation of the conjecture on the noncommutative Poisson geometry of the colored multiplicative quiver varieties introduced by Boalch.  
In the first two subsections, we present the algebraic structures from \cite{B15} that are necessary to state Conjecture \ref{Conj:Main}.

 \subsection{Colored graphs and quivers} \label{ss:Gr}

Throughout this subsection we partly follow \cite[\S~3,\, 5]{B15}. 

\subsubsection{(Colored) graphs}
\label{ss:Gr-coloured}
\allowdisplaybreaks

Let $\Upsilon$ be a (non-oriented) graph, whose set of vertices is denoted by $I$, while its set of edges is denoted by $\Upsilon$.  

We say that $\Upsilon$ is a \emph{complete $k$-partite graph} if there is a partition of its vertices $I=\sqcup_{j\in J} I_j$ into $k$ non-empty subsets labeled by $J$, $|J|=k$, and such that two vertices $i,i'\in I$ are connected by a single edge if and only if $i\in I_j$ and $i'\in I_{j'}$ for $j,j'\in J$ satisfying $j\neq j'$. In other words, two vertices are connected by one edge if and only if they are in different parts of the graph.

Given $\Upsilon$ a $k$-partite graph, we can define on it an ordering by a choice of a total order of its parts $(I_j)_{j\in J}$ which we thus label by $I_1,\ldots,I_k$, and a total order $<_j$ on the elements of each $I_j$. This induces a total order on the vertex set $I$ of $\Upsilon$ by putting 
$i<i'$ if $i\in I_j,i' \in I_{j'}$ with $j<j'$, or if $i,i'\in I_j$ with $i<_j i'$. We then say that $\Upsilon$ is an \emph{ordered} $k$-partite graph.

\begin{defn} \label{Def:ColG}
 Let $\Upsilon$ be a graph. We say that $\Upsilon$ is a \emph{colored graph} if there exists  
  a finite set $C$ whose elements are called \emph{colors}, and a map $\rc:\Upsilon\to C$ such that for each $c\in C$ the subgraph 
  \begin{equation*}
   \Upsilon_c=\rc^{-1}(c)\subset \Upsilon
  \end{equation*}
is a  complete $k_c$-partite graph for some $k_c\geq 1$.   
\end{defn}
\begin{rem}
 Note that in \cite{B15} each preimage $\rc^{-1}(c)$ is allowed to be a union of complete $k$-partite graphs. Up to refining the choice of colors, this is equivalent to our definition. 
\end{rem}

\subsubsection{Colored quivers} 
\label{ss:Quiver}
Given an ordered $k$-partite graph $\Upsilon$, we can see it as a quiver $\Upsilon$ by replacing each edge $e$ between vertices $i,j\in I$ by an arrow $i\to j$ if $i>j$, or an arrow $j\to i$ if $j>i$. (Since the graph is $k$-partite, this operation is well-defined). The quiver $\Upsilon$ obtained in this way is called a \emph{monochromatic quiver}, or a \emph{simple colored quiver}. The name `color' here can be thought to refer to the chosen ordering.

\begin{defn}  \label{Def:ColQ}
 Let $\Upsilon$ be a quiver. We say that $\Upsilon$ is a \emph{colored quiver}  if there exists  
  a finite set $C$ whose elements are called colors, and a map $\rc:\Upsilon\to C$ such that for each $c\in C$ the subquiver 
  \begin{equation*}
   \Upsilon_c=\rc^{-1}(c)\subset \Upsilon
  \end{equation*}
is a monochromatic quiver obtained from a complete $k_c$-partite graph for some $k_c\geq 1$.  
\end{defn}
Equivalently, we can define a colored quiver $\Upsilon$  as a colored graph whose subgraphs $\Upsilon_c$ are ordered, and hence the edges can be replaced by arrows.

\begin{exmp}
A $2$-partite graph consists of one edge between two arrows, and the associated simple colored quiver is of the form $2\longrightarrow 1$. We can then see that any quiver $Q$ (in the sense of \ref{sec:CBS-multiplicative}) is a colored quiver. Indeed, if $Q$ is a quiver with vertex set $I$, we let $C=Q$ and $\rc:Q\to C$ be the identity. Then $Q_c=\rc^{-1}(c)$ is just $t(c)\overset{c}{\longrightarrow} h(c)$, which is a simple colored quiver. 
\end{exmp}

\subsection{Boalch algebras and fission algebras}
\label{sec:fission-algebras}

Following Boalch \cite[\S~12]{B15}, we assume that $\Upsilon$ is a colored quiver, with vertex set $I$ and color set $C$. We let $\overline{\Upsilon}$ denote the double of $\Upsilon$, obtained by adding a new opposite arrow  for each arrow initially in $\Upsilon$. Specifically, let $I_c$ denote the vertex set of the subgraph $\Upsilon_c=\rc^{-1}(c)$, hence of $\overline{\Upsilon}_c$. Since this subgraph is complete $k_c$-partite, $I_c=\sqcup_{1\leq j_c\leq k_c} I_{c,j_c}$ and there is exactly one arrow $i\to i'$ if $i\in I_{c,j_c}$, $i'\in I_{c,j'_c}$, and $i> i'$ with $j_c\neq j'_c$. This implies that when we consider the double $\overline{\Upsilon}$, we have for each color $c\in C$ and distinct indices $j_c,j_c'\in \{1,\ldots,k_c\}$ precisely one arrow $v_{c,i'i}:i \to i'$ for each $i\in I_{c,j_c}$ and $i'\in I_{c,j_c'}$.

We construct an extension of $\overline{\Upsilon}$, denoted $\widetilde \Upsilon$, by doing the following procedure for each color $c\in C$ : 
\begin{enumerate}
 \item For all $i,j\in I_c$ distinct, we add an edge $w_{c,ij}:j\to i$; 
 \item For all $i\in I_c$, we add a loop $\gamma_{c,i}:i\to i$. 
\end{enumerate}
The extension  $\widetilde \Upsilon$ is a quiver, but it is \emph{not} seen as a colored quiver.  An example of extension for the complete $3$-partite graph on $3$ vertices is depicted in Figure \ref{fig:Quiver111}. 

Next, we fix a field of characteristic zero denoted by $\kk$ and we form the path algebra $\kk\widetilde \Upsilon$. 
Note that our convention on reading paths (see Remark \ref{rem:convention-on-arrows}) implies in the path algebra that for any $c\in C$, 
\begin{equation*}
 v_{c,i'i}=e_{i'}v_{c,i'i}e_i\,, \quad w_{c,ij}=e_i w_{c,ij} e_j \,,  \quad \gamma_{c,i} = e_i \gamma_{c,i} e_i\,,
\end{equation*}
where $i,j\in I_c$ are distinct, and if $i\in I_{c,j_c}$,  
$i'$ is taken in a subpart $I_{c,j'_c}\subset I_c$ distinct from $I_{c,j_c}$.  
We will use  the path algebra $\kk\widetilde \Upsilon$ of the extended double $\widetilde\Upsilon$ to form a new algebra, denoted $\Bc(\Upsilon)$. To state its definition, we introduce for any $c\in C$ the elements\footnote{We sum over indices $i,j\in I_c$. Note that there are elements $w_{c,ij},w_{c,ji}$ for each $i<j$. We do not necessarily have elements $v_{c,ij},v_{c,ji}$ for each $i<j$, as we have such arrows if and only if $i,j$ belong to distinct elements of the partition $I_c=\sqcup_{j\in J}I_{c,j}$. Thus, we set $v_{c,ij}=0$ if $i,j\in I_{c,j_c}$ for some $j_c\in J$.} 
\begin{equation}
 \begin{aligned} \label{Eq:wvg}
  w_{c+}= 1_{I_c}+\sum_{i<j} w_{c,ij}\,, \quad &w_{c-}= 1_{I_c}+\sum_{i>j} w_{c,ij}\,, \quad  \gamma_c=\sum_{i\in I_c}\gamma_{c,i}\,,\\
  v_{c+}= 1_{I_c}+\sum_{i<j} v_{c,ij}\,, \quad &v_{c-}= 1_{I_c}+\sum_{i>j} v_{c,ij}\,,
 \end{aligned}
\end{equation}
where $1_{I_c}:=\sum_{s\in I_c}e_s$ is the idempotent corresponding to the unit  of the subgraph $\Upsilon_c$.  

\begin{defn} \label{Def:Boalch-algebra}
The \emph{Boalch algebra} $\Bc(\Upsilon)$ is the algebra obtained by constructing the universal localization $(\kk\widetilde \Upsilon)_S$ of the path algebra  $\kk\widetilde \Upsilon$  from the set $S=\{(1-1_{I_c})+\gamma_c \,|\, c \in C\}$, and taking  the quotient  by the ideal generated by the following $|C|$ relations 
\begin{equation} \label{Eq:RelB}
 v_{c-}v_{c+}=w_{c+}\gamma_c w_{c-}\,, \quad c\in C\,.
\end{equation}
\end{defn}
The localization is equivalent to adding local inverses $\gamma_{c,i}^{-1}\in e_i \kk\widetilde \Upsilon e_i$ satisfying 
\[
\gamma_{c,i}^{-1}\gamma_{c,i}=e_i=\gamma_{c,i}\gamma_{c,i}^{-1}\]
for each $i\in I_c$ and $c\in C$.
Using the idempotents, note that we can decompose \eqref{Eq:RelB} in three different ways:
\begin{subequations}
\label{Eq:RelB-decompo}
 \begin{align}
&e_i + \sum_{k<i} v_{c,ik}v_{c,ki} = \gamma_{c,i} + \sum_{\ell>i} w_{c,i\ell}\gamma_{c,\ell} w_{c,\ell i}\,, \label{Eq:RelB-decompo-ii}\\  
& v_{c,ij} + \sum_{\substack{k\in I_c \text{ s.t.}\\k<i \text{ and }k<j}} v_{c,ik}v_{c,kj} = 
 w_{c,ij}\gamma_{c,j} + \sum_{\substack{\ell\in I_c \text{ s.t.}\\ \ell>i \text{ and }\ell> j}}  w_{c,i\ell}\gamma_{c,\ell} w_{c,\ell j}\,, \quad \text{ for }i<j \,,   \label{Eq:RelB-decompo-less}\\ 
&  v_{ij} + \sum_{\substack{k\in I_c \text{ s.t.}\\k<i \text{ and }k<j}} v_{c,ik}v_{c,kj} = 
 \gamma_{c,i}w_{c,ij} + \sum_{\substack{\ell\in I_c \text{ s.t.}\\ \ell>i \text{ and }\ell> j}}  w_{c,i\ell}\gamma_{c,\ell} w_{c,\ell j}\,, \quad \text{ for }i>j \,.  \label{Eq:RelB-decompo-greater}
 \end{align}
\end{subequations}

While the algebra $\Bc(\Upsilon)$ is defined from an extension $\widetilde \Upsilon$ of the double quiver 
$\overline{\Upsilon}$, we are in position to explain that it can be directly obtained from the path algebra $\kk\overline{\Upsilon}$. 
This observation should be crucial to prove Conjecture \ref{Conj:Main}, stated below. 
Indeed, we think that the desired double quasi-Poisson bracket on $\Bc(\Upsilon)$ is always induced by one on $\kk\overline{\Upsilon}$ through localization. Therefore the double quasi-Poisson bracket only needs to be defined on each couple of generators taken from the arrows $v_{c,ij}\in \overline{\Upsilon}$. This is the strategy that we use to prove the new case of the monochromatic triangle $\Upsilon=\Delta$ in \ref{sec:the-monochromatic-111}.

\begin{lem}
\label{lem:localisation-Bgamma}
If $\Upsilon$ is monochromatic, i.e. $|C|=1$, there exists a chain of algebra homomorphisms over $\oplus_{s\in I}\kk e_s$ 
\begin{equation*}
 \kk \overline{\Upsilon}=: A_n \to A_{n-1} \to\ldots \to A_1 \to A_0:= \Bc(\Upsilon)\,, \quad n:=|I|\,, 
\end{equation*}
 such that $A_{k-1}$ is obtained by localization of $A_k$ at one element. 
In particular, if $\Upsilon$ is an arbitrary colored quiver, the Boalch algebra $\Bc(\Upsilon)$ is a localization of $\kk\overline{\Upsilon}$. 
\end{lem}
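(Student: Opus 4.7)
My plan is to realize $\Bc(\Upsilon)$ as an iterative universal localization of $\kk\overline\Upsilon$ by reading the decomposition \eqref{Eq:RelB-decompo-ii}--\eqref{Eq:RelB-decompo-greater} of the Boalch relation as a set of recursive formulas. The key observation is that these relations \emph{define} the auxiliary generators $\gamma_m$ and $w_{ij}$ in terms of the $v_{ij}$'s, provided the relevant loop $\gamma_m$ has been rendered invertible.

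Assume first $|C|=1$, drop the color index, and label $I=\{1<\cdots<n\}$ by the chosen total order. Set $A_n:=\kk\overline\Upsilon$ and construct the chain $A_n\to A_{n-1}\to\cdots\to A_0$ by processing vertices in decreasing order. At step $k\in\{1,\dots,n\}$, write $m:=n-k+1$ and form in $A_{n-k+1}$ the element
\[
x_m\;:=\;e_m\;+\;\sum_{j<m}v_{m,j}v_{j,m}\;-\;\sum_{\ell>m}\hat w_{m,\ell}\,x_\ell\,\hat w_{\ell,m},
\]
where $x_\ell$ and $\hat w_{m,\ell},\hat w_{\ell,m}$ for $\ell>m$ have been manufactured in earlier steps (the second sum being empty when $m=n$). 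Define $A_{n-k}$ as the universal localization of $A_{n-k+1}$ at the single element $(1-e_m)+x_m$; inside $A_{n-k}$, $x_m$ becomes invertible in $e_m A_{n-k}e_m$, so one can introduce $\hat w_{m,j}$ for $j<m$ by solving \eqref{Eq:RelB-decompo-greater} under the substitution $\gamma_\bullet\rightsquigarrow x_\bullet$, $w_\bullet\rightsquigarrow\hat w_\bullet$, and likewise $\hat w_{i,m}$ for $i<m$ from \eqref{Eq:RelB-decompo-less}.

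To conclude, I identify $A_0\cong\Bc(\Upsilon)$ as $B$-algebras via two mutually inverse morphisms. Since the relations \eqref{Eq:RelB-decompo-ii}--\eqref{Eq:RelB-decompo-greater} are built into $A_0$ by construction (with $\gamma_m\leftrightarrow x_m$, $w_{ij}\leftrightarrow\hat w_{ij}$), the assignment $v_{ij}\mapsto v_{ij}$, $w_{ij}\mapsto\hat w_{ij}$, $\gamma_m\mapsto x_m$ descends to a morphism $\Bc(\Upsilon)\to A_0$. In the opposite direction, one shows by induction on $k$ that the canonical map $\kk\overline\Upsilon\to\Bc(\Upsilon)$ extends through each successive localization: the element $(1-e_m)+x_m\in A_{n-k+1}$ is sent to $(1-e_m)+\gamma_m$, which is invertible in $\Bc(\Upsilon)$ by definition, so the universal property yields a further extension $A_{n-k}\to\Bc(\Upsilon)$. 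The two morphisms agree on generators and are inverse. The colored case then follows by applying this procedure independently to each monochromatic subquiver $\Upsilon_c$, since the defining relations \eqref{Eq:RelB} decouple across colors.

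The main obstacle is the inductive identification $x_m\mapsto\gamma_m$ in the second morphism: it relies on the images of the $\hat w_{m,\ell},\hat w_{\ell,m}$ for $\ell>m$ having already been identified with $w_{m,\ell},w_{\ell,m}$, which requires the inductive hypothesis at steps $1,\dots,k-1$ combined with the reading of \eqref{Eq:RelB-decompo-ii} as giving $\gamma_m$. A subsidiary bookkeeping check is that at each step the formula for $x_m$ and the expressions for $\hat w_{m,j},\hat w_{i,m}$ only invoke data already present in $A_{n-k+1}$ respectively $A_{n-k}$.
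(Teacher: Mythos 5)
Your proposal is correct and takes essentially the same approach as the paper: descending induction over the vertices, localizing at each candidate loop element (you write $(1-e_m)+x_m$, the paper abbreviates as $\gamma_m$) and solving the decomposed Boalch relations \eqref{Eq:RelB-decompo} recursively for the $w$- and $\gamma$-generators. The only difference is that you make the final identification $A_0\cong\Bc(\Upsilon)$ explicit via two mutually inverse $B$-algebra morphisms, where the paper asserts it directly after observing that all relations are built in; this is a careful spelling-out of the same argument rather than a different route.
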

\begin{proof}
We first consider the case where $\Upsilon$ is monochromatic, and we identify its vertex set $I$ with $\{1,\ldots,n\}$ so that we identify the total orders on $I$ and  $\{1,\ldots,n\}$ as well, see \ref{ss:Gr-coloured}.  
We denote the elements of $\overline{\Upsilon}$ as $\{v_{ij}\}$, i.e. we drop the color index $c$ from the discussion made above. 
The proof is a noncommutative analogue of \cite[Proposition~5.3]{B15}.

We show by descending induction that, starting with $A_n$, we can form an algebra $A_k$ by localization of $A_{k+1}$ containing elements 
\begin{equation} \label{Eq:lem-localisation-cond}
\gamma_k\in A_k\,, \quad
\gamma_\ell^{\pm 1}\in A_k\,\, \forall .\ell >k\,, \quad 
 w_{\ell i},w_{i\ell}\in A_k\,\, \forall \,i<\ell \text{ with }\ell>k\,, 
\end{equation}
satisfying the relations from \eqref{Eq:RelB-decompo} that depend only on these elements (we drop the color index $c$).
For the base case, we note that $A_n=\kk \overline{\Upsilon}$ contains elements satisfying \eqref{Eq:lem-localisation-cond}  because 
\begin{equation*}
 \gamma_n:=e_n+\sum_{k<n} v_{nk}v_{kn} \in A_n
\end{equation*}
satisfies \eqref{Eq:RelB-decompo-ii} since there is no $\ell>n$. Next,  $A_{n-1}$ is defined from $A_n$ by localization at $\gamma_n$. We can introduce for each $i<n$ 
\begin{align*}
w_{in}:=& \Big( v_{in} + \sum_{j<i} v_{ij}v_{jn}  \Big) \gamma_{n}^{-1}\,\in A_{n-1}\,,  \\
w_{ni}:=& \gamma_{n}^{-1} \Big( v_{ni} + \sum_{j<i} v_{nj}v_{ji}  \Big) \,\in A_{n-1}\,, 
\end{align*}
and then 
\begin{equation*}
 \gamma_{n-1}:= e_{n-1} + \sum_{j<n-1} v_{n-1,j}v_{j,n-1} - w_{n-1,n}\gamma_{n} w_{n ,n-1}\,\in A_{n-1} \,.
\end{equation*}
We can observe that these elements satisfy the relations in \eqref{Eq:RelB-decompo}, hence the base case holds. 

Next, we assume that the induction hypothesis holds for $A_k$, and we define $A_{k-1}$ from $A_k$ by localization at $\gamma_k$. Thus, we only need to find elements $\gamma_{k-1},w_{k i},w_{ik}\in A_{k-1}$ having the required properties, because we already have  elements 
\begin{equation*} 
\gamma_\ell^{\pm 1}\,\, \forall \ell\geq k\,, \quad 
 w_{\ell i},w_{i\ell}\,\, \forall \,i<\ell \text{ with }\ell> k\,, 
\end{equation*}
belonging to $A_{k-1}$ with the desired properties by induction and localization. 

If we introduce for all $i<k$ 
\begin{equation*}
w_{ik}:= \Big( v_{ik} + \sum_{\substack{j\in I \text{ s.t.}\\j<i \text{ and }j<k}} v_{ij}v_{jk}  
 - \sum_{\substack{\ell\in I \text{ s.t.}\\ \ell>i \text{ and }\ell> k}}  w_{i\ell}\gamma_{\ell} w_{\ell k} \Big) \gamma_{k}^{-1}\,\in A_{k-1}\,,  
\end{equation*}
we easily see that \eqref{Eq:RelB-decompo-less} is satisfied for $i<j$ with $j=k$. In the same way, we can introduce an element $w_{kj}$ for all $j<k$ such that \eqref{Eq:RelB-decompo-greater} is satisfied for $i>j$ with $i=k$. It remains to find an element $\gamma_{k-1}\in A_{k-1}$, which we define from the previously obtained elements as  
\begin{equation*}
\gamma_{k-1}:= e_{k-1} + \sum_{j<k-1} v_{k-1,j}v_{j,k-1} - \sum_{\ell> k-1} w_{k-1,\ell}\gamma_{\ell} w_{\ell ,k-1} \,\in A_{k-1}\,.
\end{equation*}
This element satisfies \eqref{Eq:RelB-decompo-ii}, so we are done. 

By induction, we can obtain $A_1$, which we then localize at $\gamma_1$ to get $A_0$. All the elements have been constructed in order to satisfy \eqref{Eq:RelB-decompo}, which is equivalent to the defining relation \eqref{Eq:RelB} in $\Bc(\Upsilon)$. Thus $A_0=\Bc(\Upsilon)$ can be obtained by a chain of localizations, as expected. 

\medskip

If $\Upsilon$ is an arbitrary colored quiver, we repeat the above construction for each color  $c\in C$ in such a way that we introduce elements $(\gamma_{c,i},w_{c,ij})$ satisfying \eqref{Eq:RelB}. To conclude, it suffices to observe that we end up with $\Bc(\Upsilon)$. 
\end{proof}

\begin{rem}
Since path algebras of quivers are formally smooth, their localization is formally smooth too (see \cite[Proposition 5.3(2)]{CQ} or \cite[\S1.2, (C3)]{KR00}).
%Recall that the localization of a path algebra is formally smooth \cite[\S1.2]{KR00}. }
Consequently, Lemma \ref{lem:localisation-Bgamma} implies that Boalch algebras $\Bc(\Upsilon)$ are formally smooth.  
\end{rem}

\begin{defn}[\cite{B15}, \S~12] Let $\Upsilon$ be a colored quiver. Construct the extended double $\widetilde \Upsilon$ and the algebra $\Bc(\Upsilon)$ as above. 
Fix an ordering of the colors at each vertex $s\in I$. 
The \emph{fission algebra} with parameter $q\in (\kk^\times)^I$ is the algebra 
\begin{equation*}
 \Fc^q(\Upsilon):=\Bc(\Upsilon)/R_q\,,
\end{equation*}
where $R_q$ is the ideal generated by the $|I|$ elements
\begin{equation} \label{Eq:Fiss1}
 \prod_{\{c\in C \mid  s\in I_c\}} \gamma_{c,s}  - q_s e_s\,, \quad s\in I\,,
\end{equation}
where the product is taken with respect to the chosen order at the vertex $s\in I$.  
\label{def:def-fission-algebras}
\end{defn}

Fix a colored quiver $\Upsilon$ such that each monochromatic subquiver $\Upsilon_c=\rc^{-1}(c)\subset \Upsilon$ consists of exactly one arrow; we call this the \emph{tautological coloring}. 
Then, it is mentioned in \cite[\S~12]{B15} that fission algebras and multiplicative preprojective algebras are isomorphic. In fact, the Boalch algebra $\Bc(\Upsilon)$ and the algebra $\Ac(\Upsilon)$ defined by Van den Bergh (see \ref{sec:CBS-multiplicative}) are also isomorphic. 
We will prove this result as these isomorphisms will be important to establish the simplest case of Conjecture \ref{Conj:Main}, 
see \ref{sec:the-11-monochromatic-case}.   

\begin{lem} \label{L:AlgIso}
If $\Upsilon$ is endowed with the tautological coloring, 
then 
\begin{equation*}
 \Bc(\Upsilon)\simeq \Ac(\Upsilon)\quad \text{ and } \quad \Fc^q(\Upsilon)\simeq \Lambda^q(\Upsilon)\,.
\end{equation*}
\end{lem}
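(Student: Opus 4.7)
The plan is to exploit the single Boalch relation \eqref{Eq:RelB} for each color to eliminate the auxiliary generators $\gamma_{c,i}$ and $w_{c,ij}$ in favor of the doubled-arrow generators $v_{c,ij}$. This will identify $\Bc(\Upsilon)$ with a universal localization of $\kk\overline{\Upsilon}$ that coincides with Van den Bergh's algebra $\Ac(\Upsilon)$, and then the fission relation \eqref{Eq:Fiss1} will be shown to be a restatement of the multiplicative preprojective relation \eqref{Eq:MQV1}.

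\emph{Step 1 (solving the local relations).} Fix a color $c\in C$. By hypothesis $\Upsilon_c$ consists of a single arrow, so $I_c=\{a,b\}$ with $a<b$ and $\overline{\Upsilon}_c=\{v_{c,ab}\colon b\to a,\,v_{c,ba}\colon a\to b\}$. The elements from \eqref{Eq:wvg} reduce to $v_{c\pm}=e_a+e_b+v_{c,ab/ba}$, $w_{c\pm}=e_a+e_b+w_{c,ab/ba}$ and $\gamma_c=\gamma_{c,a}+\gamma_{c,b}$. Expanding $v_{c-}v_{c+}=w_{c+}\gamma_c w_{c-}$ and projecting onto the four components $e_i\cdot e_j$ with $i,j\in\{a,b\}$ gives the system
\begin{align*}
e_b+v_{c,ba}v_{c,ab}&=\gamma_{c,b}, & v_{c,ab}&=w_{c,ab}\gamma_{c,b},\\
v_{c,ba}&=\gamma_{c,b}w_{c,ba}, & e_a&=\gamma_{c,a}+w_{c,ab}\gamma_{c,b}w_{c,ba}.
\end{align*}
Once $\gamma_{c,b}$ is invertible these solve uniquely as $w_{c,ab}=v_{c,ab}\gamma_{c,b}^{-1}$, $w_{c,ba}=\gamma_{c,b}^{-1}v_{c,ba}$, and by the standard identity $e_a-X(e_b+YX)^{-1}Y=(e_a+XY)^{-1}$ (applied with $X=v_{c,ab}$, $Y=v_{c,ba}$) also $\gamma_{c,a}=(e_a+v_{c,ab}v_{c,ba})^{-1}$.

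\emph{Step 2 (the algebra isomorphism).} Inverting $(1-1_{I_c})+\gamma_c$ in $\Bc(\Upsilon)$ is equivalent to inverting $\gamma_{c,s}$ for each $s\in I_c$, which by Step 1 amounts to inverting each element of $\{e_s+xx^{\ast}\mid x\in\overline{\Upsilon}_c \text{ ends at }s\}$. Ranging over all $c\in C$, this is exactly the localization set $S=\{1+xx^{\ast}\mid x\in\overline{\Upsilon}\}$ used in defining $\Ac(\Upsilon)$ in \ref{sec:CBS-multiplicative}. The assignment $v_{c,ij}\mapsto v_{c,ij}$, supplemented by the formulas of Step 1 for the remaining generators, therefore defines a $B$-algebra homomorphism $\Bc(\Upsilon)\to\Ac(\Upsilon)$ (the relation \eqref{Eq:RelB} holding by construction), and the converse assignment $v_{c,ij}\mapsto v_{c,ij}$ yields an inverse. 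Both compositions are the identity on generators, so $\Bc(\Upsilon)\simeq\Ac(\Upsilon)$.

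\emph{Step 3 (matching the quotients).} Under this identification, each $\gamma_{c,s}$ equals one of the factors $(e_s+xx^{\ast})^{\pm 1}$ appearing in Van den Bergh's moment map $\Phi$ from Theorem \ref{tm:VdB-mult-preproj-quasi-Hamilt}(ii): $\gamma_{c,b}=e_b+v_{c,ba}v_{c,ab}$ is of type $1+xx^{\ast}$ for $x=v_{c,ba}$ ending at $b$, while $\gamma_{c,a}=(e_a+v_{c,ab}v_{c,ba})^{-1}$ is of type $(1+xx^{\ast})^{-1}$ for $x=v_{c,ab}$ ending at $a$. Choosing the color ordering at each vertex used in Definition \ref{def:def-fission-algebras} compatibly with the ordering of $\overline{\Upsilon}$-arrows ending at that vertex which defines $\Lambda^q(\Upsilon)$, and choosing the sign map $\epsilon$ on $\overline{\Upsilon}$ so that $\epsilon(v_{c,ba})=+1$ and $\epsilon(v_{c,ab})=-1$, each $\gamma_{c,s}$ literally coincides with the corresponding factor $(e_s+xx^{\ast})^{\epsilon(x)}$ of $\Phi_s$. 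Therefore the product $\prod_{c:s\in I_c}\gamma_{c,s}$ equals $\Phi_s$, and the fission relation $\prod\gamma_{c,s}=q_s e_s$ becomes the relation $\Phi_s=q_s e_s$ defining $\Lambda^q(\Upsilon)$; the isomorphism of Step 2 thus descends to $\Fc^q(\Upsilon)\simeq\Lambda^q(\Upsilon)$. The only delicate point in the whole argument is the bookkeeping here: if the orientation convention of Definition \ref{Def:ColQ} and the sign convention of $\epsilon$ are not aligned carefully, one ends up identifying $\Fc^q$ with $\Lambda^{q^{-1}}$ (equivalently, with $\Lambda^q(\Upsilon^{\mathrm{op}})$, which are isomorphic by the standard reorientation invariance of multiplicative preprojective algebras).
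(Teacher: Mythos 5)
Your proof is correct and follows essentially the same route as the paper's: decompose the single Boalch relation for each color into its four idempotent components, solve for $w_{c,ij}$ and $\gamma_{c,s}$ in terms of the $v$'s, identify the resulting localization with Van den Bergh's $\Ac(\Upsilon)$, and match the quotient relations up to orientation reversal. The one small remark concerns Step 3: the sign map $\epsilon$ is fixed by the quiver rather than being a free choice, so the ``delicate point'' you relegate to a parenthetical is in fact what always occurs --- the fission relation is literally the defining relation of $\Lambda^q(\Upsilon^{op})$ --- and the paper, exactly as you do, closes the gap by invoking the orientation invariance of $\Lambda^q$ from Crawley-Boevey--Shaw.
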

\begin{proof}
We make the identification $C\simeq \Upsilon$.  
 For a fixed color $c\in C$, $\Upsilon_c=\rc^{-1}(c)$ consists of exactly one arrow which we denote by $c:t(c)\to h(c)$. Going to the double $\overline{\Upsilon}$, we add $c^\ast: h(c)\to t(c)$. Let us simplify notations and put $t=t(c),h=h(c)$, so that $c=v_{c,ht},c^\ast=v_{c,th}$ with the above notations.  We must take  the ordering such that $h<t$, since by definition there is an arrow in $\Upsilon_c$ from $i=t$ to $j=h$ if and only if $i>j$. 
 
 Going to the extended double $\widetilde \Upsilon$, we add $w_{c,ht}:t\to h$, $w_{c,th}:h\to t$, $\gamma_{c,t}:t\to t$ and $\gamma_{c,h}:h\to h$. 
 The elements defined in \eqref{Eq:wvg} are 
 \begin{equation*}
 \begin{aligned} 
  w_{c+}= e_h+&e_t+ w_{c,ht}\,, \quad w_{c-}= e_h+e_c+ w_{c,th}\,, \quad  \gamma_c=\gamma_{c,t}+\gamma_{c,h}
  \\
&  v_{c+}= e_h+e_t+c\,, \quad v_{c-}= e_h+e_t+c^\ast\,. 
 \end{aligned}
\end{equation*}
Hence, the algebra $\Bc(\Upsilon)$ is obtained from $\kk \widetilde \Upsilon$ by inverting 
$\gamma_{c,t}$ (resp. $\gamma_{c,h}$) in $e_t \kk \widetilde \Upsilon e_t $ (resp. $e_h \kk \widetilde \Upsilon e_h$), then performing the quotient by the ideal generated by the following relations \eqref{Eq:RelB}: 
\begin{equation*}
 (e_h+e_t+c^\ast)(e_h+e_t+c)=(e_h+e_t+ w_{c,ht}) (\gamma_{c,t}+\gamma_{c,h}) (e_h+e_t+ w_{c,th})\,, 
\end{equation*}
for any $c\in C$. 
Decomposing with the idempotents, this amounts to 
%\begin{subequations*}
 \begin{align*}
  e_h=&\gamma_{c,h}+ w_{c,ht}\gamma_{c,t}w_{c,th}\,, \\
  e_t+c^\ast c=& \gamma_{c,t}\,, \\
  c=& w_{c,ht}\gamma_{c,t}\,, \\
  c^\ast=& \gamma_{c,t}w_{c,th}\,.
 \end{align*}
%\end{subequations*}
By invertibility of $\gamma_{c,t}$ in $e_t \Bc(\Upsilon)e_t$ and $\gamma_{c,h}$ in $e_h \Bc(\Upsilon)e_h$, we find that 
\begin{equation*}
 w_{c,ht}=c\gamma_{c,t}^{-1}\,, \quad w_{c,th}=\gamma_{c,t}^{-1} c^\ast\,. 
\end{equation*}
In particular, we can omit these elements from the generators of $\Bc(\Upsilon)$. 
Furthermore, 
\begin{equation*}
 \gamma_{c,t}=e_t+c^\ast c\,, \quad 
 \gamma_{c,h}=e_h-w_{c,ht}\gamma_{c,t}w_{c,th} = e_h-c(e_t+c^\ast c)^{-1}c^\ast\,.
\end{equation*}
Noting that $[e_h-c(e_t+c^\ast c)^{-1}c^\ast](e_h+cc^\ast)=e_h$ and the same holds when we multiply $ \gamma_{c,h}$ on the left hand-side by $(e_h+cc^\ast)$, we get that 
\begin{equation*}
 \gamma_{c,h}=(e_h+cc^\ast)^{-1}\,.
\end{equation*}
In other words, we have obtained that $\Bc(\Upsilon)$ can be seen as the path algebra $\kk \overline{\Upsilon}$ localized for each $c\in C$ at 
\begin{equation*}
 (1-1_{I_c})+\gamma_{c,h}+\gamma_{c,t}=(1-1_{I_c})+ (e_h+cc^\ast)^{-1}+ (e_t+c^\ast c)=(1+c^\ast c)(1+cc^\ast)^{-1}\,,
\end{equation*}
or equivalently localized for each $c\in \overline{\Upsilon}$ at $1+cc^\ast$.  
Thus $\Bc(\Upsilon)\simeq \Ac(\Upsilon)$.

To get the second isomorphism, we rewrite  the defining relations \eqref{Eq:Fiss1} of $\Bc(\Upsilon)$ as 
\begin{equation} \label{Eq:lem-Alg-Iso-1}
\prod_{\substack{c\in C \text{ s.t.}\\ s=t(c)\text{ or } s=h(c)}} \gamma_{c,s} 
=   q_s e_s\,,
\end{equation}
where the product is taken with respect to the ordering at the vertex $s$. 
Let us extend the operation $(-)^\ast: \Upsilon\to \overline{\Upsilon}\setminus\Upsilon$, $c\mapsto c^\ast$, to an involution on $\overline{\Upsilon}$. 
Introduce $\eta:\overline{\Upsilon}\to \{\pm1\}$ by $\eta(c)=-1$ if $c\in \Upsilon$, and $\eta(c)=+1$ if $c\in \overline{\Upsilon}\setminus\Upsilon$. 
From the discussion made above to get the isomorphism $\Bc(\Upsilon)\simeq \Ac(\Upsilon)$, \eqref{Eq:lem-Alg-Iso-1} can be written as   
\begin{equation*} 
\prod_{\substack{c\in \overline{\Upsilon}\text{ s.t.}\\ s=t(c)\text{ or } s=h(c)}} (e_{h(c)}+cc^\ast)^{\eta(c)} =   q_s e_s\,,
\end{equation*}
or, after gathering all these relations together, 
\begin{equation}  \label{Eq:MQV3} 
\prod_{c\in \overline{\Upsilon}} (1+cc^\ast)^{\eta(c)} = \sum_{s\in I}  q_s e_s\,.
\end{equation}
This is the defining relation\footnote{What differs between \eqref{Eq:MQV3} and \eqref{Eq:MQV1} is that the exponents are opposite, i.e. $\eta(c)=-\epsilon(c)$.} of $\Lambda^q(\Upsilon^{op})$. Here, the multiplicative preprojective algebra  $\Lambda^q(\Upsilon^{op})$ is constructed from the (double $\overline{\Upsilon}^{op}$ of the) quiver $\Upsilon^{op}=\overline{\Upsilon} \setminus \Upsilon$ obtained by changing the direction of all the arrows in $\Upsilon$. 
But those algebras are independent of the direction of the arrows in the initial quiver by \cite[Theorem 1.4]{CBShaw}, hence 
\begin{equation*}
 \Fc^q(\Upsilon)\simeq \Lambda^q(\Upsilon^{op}) \simeq \Lambda^q(\Upsilon)\,,
\end{equation*}
as desired.
\end{proof}

\subsection{Noncommutative Poisson geometry of fission algebras: the conjecture} 
\label{sec:the conjecture}

Recall from  \cite{B15} that, for any colored quiver $\Upsilon$, 
the corresponding colored multiplicative quiver varieties $\mathcal{M}(\Upsilon,q,d)$ are  parametrized by semi-simple modules of the fission algebras $\Fc^q(\Upsilon)$. Indeed, they are GIT quotients of the spaces $\Rep(\Fc^q(\Upsilon),d)$. Furthermore, these varieties are obtained by quasi-Hamiltonian reduction from $\Rep(\Bc(\Upsilon),d)$, since the subspace $\Rep(\Fc^q(\Upsilon),d)$ corresponds to fixing the value of a multiplicative moment map. This suggests that there could exist a Hamiltonian double quasi-Poisson algebra structure on $\Bc(\Upsilon)$, for which 
$\prod_c \gamma_{c,s}$ is the component of the multiplicative moment map supported at the vertex $s\in I$.

\begin{conj} \label{Conj:Main}
For each colored quiver $\Upsilon$, the Boalch algebra $\Bc(\Upsilon)$ is endowed with a double quasi-Poisson bracket $\dgal{-,-}$ for which 
\begin{equation}
 \Phi=\sum_{s\in I}\Phi_s\,, \quad \Phi_s= \prod_{\{c\in C \mid  s\in I_c\}} \gamma_{c,s} \in e_s \Bc(\Upsilon) e_s\,,
 \label{eq:moment-map-state,ment-conj}
\end{equation}
 is a multiplicative moment map. 
 In other words, the triple $\big(\Bc(\Upsilon),\lr{-,-},\Phi\big)$ is a Hamiltonian double quasi-Poisson algebra.
\end{conj}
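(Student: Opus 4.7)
The plan is to combine the two structural reductions already highlighted in the paper and then address the remaining monochromatic case by a single inductive construction on the path algebra of the double quiver.

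First, by Lemma~\ref{Lem:MonochromaticConjecture} it suffices to prove Conjecture~\ref{Conj:Main} when $\Upsilon$ is monochromatic, i.e.\ a single complete $k$-partite graph with a chosen ordering; the general colored case is then recovered by a fusion argument at the common vertices of the monochromatic pieces, producing the required moment map $\Phi_s=\prod_{c}\gamma_{c,s}$ of \eqref{eq:moment-map-state,ment-conj}. Second, by Lemma~\ref{lem:localisation-Bgamma}, such a monochromatic $\Bc(\Upsilon)$ is a successive localization of the path algebra $\kk\overline{\Upsilon}$. Since double quasi-Poisson brackets (together with their multiplicative moment maps) extend uniquely to universal localizations (cf.\ \cite[\S2.5]{VdB1}), it is enough to define a double bracket on $\kk\overline{\Upsilon}$ by prescribing $\lr{v_{ij},v_{k\ell}}$ for pairs of arrows of $\overline{\Upsilon}$, and then verify \eqref{qPabc} and \eqref{Phim} for the bracket induced on $\Bc(\Upsilon)$.

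The ansatz for $\lr{v_{ij},v_{k\ell}}$ would be guided by Theorem~\ref{thm:triangle}: its ``expected'' quadratic terms in the $v_{\cdot\cdot}$'s are inherited from Van den Bergh's one-arrow bracket of Theorem~\ref{tm:VdB-mult-preproj-quasi-Hamilt}, while linear correction terms are added whose shape depends sensitively on the relative order of the four indices $i,j,k,\ell$ within the partition $I=\sqcup_{j}I_j$. The free coefficients would be pinned down by requiring that, under the recursive construction of $\gamma_s$ and of $w_{s i},w_{i s}$ carried out in the proof of Lemma~\ref{lem:localisation-Bgamma}, each $\gamma_s$ satisfies the moment map identity \eqref{Phim}. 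Concretely one would run an induction on $s$ decreasing from $|I|$ to $1$: assuming \eqref{Phim} for $\gamma_{s+1},\ldots,\gamma_{|I|}$, compute $\lr{\gamma_s,a}$ for a generator $a$ using \eqref{Eq:RelB-decompo-ii} and the Leibniz rules \eqref{Eq:outder}--\eqref{Eq:inder}, and check the cancellations. The triangle case strongly suggests that the linear corrections of the ansatz are exactly what is needed to make those cancellations work, and that \eqref{PhimInv} will then take care of the local inverses $\gamma_s^{-1}$ introduced by the localization.

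The main obstacle will be the quasi-Poisson identity \eqref{qPabc} itself. By Remark~\ref{rem:modified-Jacobi-only-gen} it is enough to check it on triples of arrows in $\overline{\Upsilon}$; however, the number of case distinctions (according to how the six involved indices distribute across the parts $I_1,\ldots,I_k$ and according to their relative orderings) grows rapidly with $k$. The strategy I would adopt here is to lift the abstract framework developed in Section~\ref{sec:proof-quasi-Poisson-property} for the triangle to a family of structural axioms on the coefficients of the ansatz that implies \eqref{qPabc} automatically, and then verify those axioms combinatorially. The expectation is that the resulting higher-$k$ brackets will still be polynomial of degree at most two in the $v_{\cdot\cdot}$, so that the outstanding work is a structured bookkeeping problem rather than a conceptual one; but controlling all index configurations uniformly in $k$ is clearly the delicate step, and it is here that the bulk of the effort would lie.
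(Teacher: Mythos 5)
The statement you are addressing is a \emph{conjecture}, and the paper does not prove it; it only establishes the two special cases of the monochromatic interval (Proposition~\ref{Pr:OneArrow}) and the monochromatic triangle (Theorem~\ref{thm:triangle}), together with the reduction Lemma~\ref{Lem:MonochromaticConjecture}. Your plan correctly reproduces the framework the paper sets up for those cases: reduce to monochromatic $\Upsilon$ via fusion, use Lemma~\ref{lem:localisation-Bgamma} to work on $\kk\overline{\Upsilon}$, propose an ansatz for $\lr{v_{ij},v_{k\ell}}$ modeled on the triangle, and then invoke the Section~\ref{sec:proof-quasi-Poisson-property} machinery for the quasi-Poisson identity. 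In that sense you are tracking the authors' intended strategy accurately.

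However, as a \emph{proof}, the proposal has two genuine gaps. First, the decisive step is not carried out: you neither exhibit an explicit coefficient family $(\alpha,\beta,\mu,\nu,\kappa)$ for general $n$ nor verify that it satisfies the conditions derived in Lemmas~\ref{Lem:strategy-1}--\ref{Lem:strategy-4.2} and the moment map identities~\eqref{Phim}. Those conditions are highly over-determined and interlocking (especially the $\kappa$-dependent ones in Cases~3.2, 3.3, 4.2), and it is not established --- and far from obvious --- that a consistent choice exists for $n\geq 4$; calling this ``a structured bookkeeping problem rather than a conceptual one'' is a hope, not an argument. Second, and more structurally, the Section~\ref{sec:proof-quasi-Poisson-property} setup treats only the complete $n$-partite graph on $n$ vertices, i.e.\ the case where every part of the partition is a singleton, so that an arrow $v_{ij}$ exists for all $i\neq j$. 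The monochromatic reduction of Lemma~\ref{Lem:MonochromaticConjecture} also requires arbitrary complete $k$-partite graphs with parts of size $>1$, where $v_{ij}$ is identically zero for $i,j$ in the same part. The ansatz~\eqref{vv0}--\eqref{vvopp} and the conditions derived from it would need nontrivial modification in that regime, which the proposal does not address. Until those two steps are supplied, the statement remains a conjecture under your plan just as it does in the paper.
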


\begin{lem}  \label{Lem:MonochromaticConjecture}
 If Conjecture \ref{Conj:Main} holds on monochromatic quivers, then it holds for any colored quiver. 
\end{lem}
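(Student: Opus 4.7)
The plan is to build the colored case from the monochromatic pieces by applying Van den Bergh's fusion procedure for Hamiltonian double quasi-Poisson algebras (\cite[\S 5.3]{VdB1}). This mirrors the route used to obtain Theorem \ref{tm:VdB-mult-preproj-quasi-Hamilt} (ii) from part (i): the structure on $\Ac(Q)$ for an arbitrary quiver $Q$ is assembled from the one-arrow case $\Ac(A_2)$ by iterated fusion. Crucially, the defining relations \eqref{Eq:RelB} of $\Bc(\Upsilon)$ are indexed by colors and only involve generators of a single color, so there is no ``cross-color'' algebraic obstruction to such an assembly.

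First, I would view each monochromatic Boalch algebra $\Bc(\Upsilon_c)$, for $c \in C$, as a Hamiltonian double quasi-Poisson algebra over the base $B_c = \bigoplus_{s \in I_c} \kk e_s^{(c)}$ with multiplicative moment map $\Phi^{(c)} = \sum_{s \in I_c} \gamma_{c,s}$, which exists by the assumption of the lemma. Then I would form the ``disjoint'' product algebra $\Ac = \prod_{c \in C} \Bc(\Upsilon_c)$ over $\tilde{B} = \bigoplus_c B_c$, extending each double bracket by zero on pairs of generators belonging to distinct colors, and taking the sum of moment maps. Since the quasi-Poisson identity \eqref{qPabc} and the moment map identity \eqref{Phim} involve a single idempotent at a time in their right-hand sides, both conditions are preserved, so $\Ac$ is Hamiltonian double quasi-Poisson over $\tilde{B}$.

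Next, for each vertex $s \in I$, I would iteratively fuse in the sense of \cite[\S 5.3]{VdB1} the idempotents $\{e_s^{(c)} : c \in C,\ s \in I_c\}$ into a single $e_s$, respecting the total order on colors at $s$ chosen in Definition \ref{def:def-fission-algebras}. By Van den Bergh's fusion theorem, each such step yields a new Hamiltonian double quasi-Poisson algebra in which the moment map component at the fused vertex is the ordered product of the two components being merged. Iterating over all colors at $s$ produces exactly the component $\Phi_s = \prod_{\{c\in C \,\mid\, s \in I_c\}} \gamma_{c,s}$ demanded by Conjecture \ref{Conj:Main}. Performing this at every $s \in I$ lands us in an algebra with base $B = \bigoplus_{s \in I} \kk e_s$ which, since it has the same generators and the same color-indexed relations \eqref{Eq:RelB} as $\Bc(\Upsilon)$, is canonically isomorphic to $\Bc(\Upsilon)$; transport of structure yields the desired Hamiltonian double quasi-Poisson structure with moment map \eqref{eq:moment-map-state,ment-conj}.

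The main obstacle is the compatibility bookkeeping in the third step. One must verify that Van den Bergh's fusion procedure commutes with the universal localizations at $(1-1_{I_c})+\gamma_c$ (equivalently, that the inverses $\gamma_{c,i}^{-1}$ remain inverses after fusion, so that the fusion corrections to the double bracket are well-defined on $\Bc(\Upsilon)$), and that the fusion corrections --- which couple generators of different colors via the moment map components $\gamma_{c,s}$ at shared vertices --- reproduce precisely those terms needed for \eqref{Phim} to hold with $\Phi_s$ the ordered product. Once this ordered-product identity is recognized as the iterated version of the two-idempotent formula from \cite[\S 5.3]{VdB1}, everything follows from Van den Bergh's results.
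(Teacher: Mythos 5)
Your proposal matches the paper's proof: both arguments assemble $\Bc(\Upsilon)$ from the monochromatic pieces $\Bc(\Upsilon_c)$ by Van den Bergh's fusion of idempotents at shared vertices, invoke \cite[Theorems 5.3.1 and 5.3.2]{VdB1} to transport the Hamiltonian double quasi-Poisson structure, and observe that choosing the fusion order at each vertex to match the fixed ordering of colors gives the moment map component $\Phi_s = \prod_{\{c \mid s\in I_c\}}\gamma_{c,s}$. The compatibility with localization that you flag is indeed the one point the paper leaves as a check, but the overall route is the same.
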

\begin{proof}
 Assume that $\Upsilon$ is an arbitrary colored quiver with color set $C$. Then $\Upsilon$ can be obtained from the monochromatic subquivers $\Upsilon_c=\rc^{-1}(c)$, $c\in C$, by suitably identifying their common vertices, see for example Figure \ref{fig:M1}. Recall the process of fusion described in \cite[\S2.5, 5.3]{VdB1}, which allows to identify idempotents in an algebra. We check that $\Bc(\Upsilon)$ can be obtained from the algebras $\Bc(\Upsilon_c)$ by successively performing fusion of their idempotents corresponding to the identification of the vertices in the quivers $\Upsilon_c$. 
 
 \vspace{-0.5cm} 
 
\begin{center}
 \rule{12cm}{0.1pt}
\end{center}

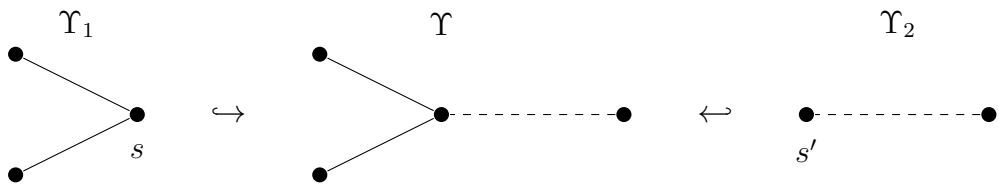
\begin{figure}[h]
\centering
   \begin{tikzpicture}[scale=0.8]
  \node[circle,thick,fill=black,inner sep=2pt]  (v0) at (0,0) {};
  \node[circle,thick,fill=black,inner sep=2pt]  (vu) at (-2,1) {};
   \node[circle,thick,fill=black,inner sep=2pt] (vd) at (-2,-1) {};
   \node[circle,thick,fill=black,inner sep=2pt] (vr) at (3,0) {};
 %% edges
\path (vu) edge  (v0) ;
\path (vd) edge  (v0) ;
\path (vu) edge  (vd) ;
\path[dashed] (vr) edge  (v0) ;
 %% other 
\node[circle,thick,fill=black,inner sep=2pt]  (vL0) at (-5,0) {};
\node[circle,thick,fill=black,inner sep=2pt]  (vR0) at (6,0) {};
\node[circle,thick,fill=black,inner sep=2pt]  (vLu) at (-7,1) {};
\node[circle,thick,fill=black,inner sep=2pt] (vLd) at (-7,-1) {};
\node[circle,thick,fill=black,inner sep=2pt] (vR) at (9,0) {};
\node (w0) at (-5,-0.6) {$s$};
\node (w0b) at (6,-0.6) {$s'$};
% %% other edges
\path (vLu) edge  (vL0) ;
\path (vLd) edge  (vL0) ;
\path (vLd) edge  (vLu) ;
\path[dashed] (vR) edge  (vR0) ;
% %% embeddings  + names
\node (la) at (-3.5,0) {$\hookrightarrow$};
\node (ra) at (4.5,0) {$\hookleftarrow$};
\node (n1) at (-6,1.5) {$\Upsilon_1$};
\node (nmid) at (0,1.5) {$\Upsilon$};
\node (n2) at (7.5,1.5) {$\Upsilon_2$};
   \end{tikzpicture}
\caption{The quiver $\Upsilon$ contains two monochromatic pieces, whose colors are represented by the solid and dashed styles of the lines. The quiver $\Upsilon$  can be obtained from $\Upsilon_1$ and $\Upsilon_2$ by identifying the vertices $s$ and $s'$.} 
\label{fig:M1}
\end{figure}
\vspace{-0.5cm}

\begin{center}
 \rule{12cm}{0.1pt}
\end{center}

The process of fusion yields a Hamiltonian double quasi-Poisson structure if the original algebras are endowed with one by \cite[Theorems 5.3.1 and 5.3.2]{VdB1} (see \cite[Theorems 2.14 and 2.15]{F19} in full generality). Thus, we get a Hamiltonian double quasi-Poisson algebra structure on $\Bc(\Upsilon)$ if there is one on each $\Bc(\Upsilon_c)$, which is our assumption. In particular, the component of the multiplicative moment map in $e_s \Bc(\Upsilon)e_s$, $s\in I$, is a product of the monochromatic moment maps $\gamma_{c,s}$ for each $\{c\in C \mid  I_c\ni s\}$. The latter product depends on the order in which the fusion of the idempotents is performed (see again the work of Van den Bergh \cite{VdB1}). Hence, we can take the order in which we perform fusion to be such that it coincides with the order fixed at each vertex of $\Upsilon$, which finishes the proof.  
\end{proof}

\begin{rem}
 Note that up to isomorphism, the order in which the fusion operation is performed is irrelevant \cite[Theorem 4.10]{F20}. Thus, up to isomorphism, the Hamiltonian double quasi-Poisson algebra structure on $\Bc(\Upsilon)$ only depends on the Hamiltonian double quasi-Poisson algebra structure on each $\Bc(\Upsilon_c)$, which are the Boalch algebras associated with the monochromatic subquivers.
\end{rem}

\begin{rem}
We expect that, as a consequence of Conjecture \ref{Conj:Main}, we get noncommutative versions of Corollary 6.6 and Theorem 6.7 of \cite{B15}. Regarding the first result, it means that the Hamiltonian double quasi-Poisson algebra structure should only depend on $\Upsilon$ seen as a graph \emph{without ordering}, up to isomorphism. Regarding the second result, it means that for any bipartite colored graph $\Upsilon(1,n)$ associated with the partition $(1,n)$ (it has one  color and is called star-shaped quiver), the Hamiltonian double quasi-Poisson algebra structure is conjecturally isomorphic to the one associated with the same graph where each arrow is assigned a different color. 

%Another possible direction of research related to Conjecture \ref{Conj:Main} would be to find a noncommutative version of the Riemann--Hilbert--Birkhoff maps  which yield many Poisson isomorphisms between additive/Nakajima and multiplicative quiver varieties by \cite[Theorem 15]{B18}. 
%As explained in the introduction, the fission algebra $\mathcal{F}^q(\Upsilon)$ carries an $H_0$-Poisson structure if the conjecture holds, due to Proposition \ref{prop:double-quasi-Ham-red}. 
%This, in turn, induces a Poisson structure on colored multiplicative quiver varieties by Theorem \ref{thm:KR-theorem}, which we expect to be the Poisson structure constructed by Boalch \cite{B15}. 
%In the additive case, it follows from Van den Bergh's work \cite{VdB1} that an $H_0$-Poisson structure on deformed preprojective algebras \cite{CBH} induces the Poisson structure of Nakajima quiver varieties. 
%Thus, it would be interesting to realize the Riemann--Hilbert--Birkhoff  maps as isomorphisms between the corresponding deformed preprojective algebras and fission algebras which also preserve $H_0$-Poisson structures. 
\end{rem}

To close this section, let us recall from \ref{sec:KR-principle} that a Hamiltonian double quasi-Poisson algebra structure induces a Hamiltonian quasi-Poisson structure  on representation schemes. Therefore, if the conjecture holds and the representation scheme associated with $\Bc(\Upsilon)$ is not empty, it carries a quasi-Poisson bracket and a group-valued moment map. 
\begin{lem}\label{lem:rep-non-empty}
Let $\Upsilon$ be a colored quiver with vertex set $I$, and set $B=\oplus_{s\in I} \kk e_s$. 
Given a dimension vector $d=(d_s)\in \Z_{\geq 0}^I$, define the representation scheme (relative to $B$) $\Rep(\Bc(\Upsilon),d)$ as in \ref{sec:KR-principle}.
 Then $\Rep(\Bc(\Upsilon),d)$ is not empty, and we have 
\begin{equation} \label{Eq:dim-rep-BoalchAlgebra}
 \dim_\kk\Rep(\Bc(\Upsilon),d) = 2\sum_{a\in \Upsilon} d_{t(a)}d_{h(a)}\,.
\end{equation}
\end{lem}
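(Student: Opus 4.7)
The plan is to leverage Lemma~\ref{lem:localisation-Bgamma}, which asserts that $\Bc(\Upsilon)$ is a (repeated) universal localization of the path algebra $\kk\overline{\Upsilon}$. Such a localization induces an open immersion of representation schemes
\[
\Rep(\Bc(\Upsilon),d)\hookrightarrow \Rep(\kk\overline{\Upsilon},d),
\]
where the right-hand side is the affine space $\bigoplus_{a\in\overline{\Upsilon}}\Hom(V_{t(a)},V_{h(a)})$, with $V_s=\kk^{d_s}$. Because $\overline{\Upsilon}$ is obtained from $\Upsilon$ by adding a formal reverse $a^\ast$ for every $a\in\Upsilon$, and $d_{t(a^\ast)}d_{h(a^\ast)}=d_{h(a)}d_{t(a)}$, this affine space has $\kk$-dimension $2\sum_{a\in\Upsilon}d_{t(a)}d_{h(a)}$, which is the number claimed in \eqref{Eq:dim-rep-BoalchAlgebra}. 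Since $\Rep(\kk\overline{\Upsilon},d)$ is irreducible, any non-empty open subscheme has the same dimension, so it suffices to establish non-emptiness.

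For non-emptiness, I would exhibit the \emph{zero representation} $\rho_0\colon \kk\overline{\Upsilon}\to \Mat_N(\kk)$, defined by $\rho_0(e_s)=\Id_{V_s}$ for each $s\in I$ and $\rho_0(v_{c,ij})=0$ for every arrow $v_{c,ij}\in\overline{\Upsilon}$. The task is to check that $\rho_0$ sends every element inverted in the construction of $\Bc(\Upsilon)$ to an invertible matrix, so that $\rho_0$ factors through $\Bc(\Upsilon)$. This can be done by tracing through the chain of localizations in the proof of Lemma~\ref{lem:localisation-Bgamma}: working color-by-color and by downward induction on the vertex index, the explicit recursive formulas
\[
w_{c,i\ell}=\Big(v_{c,i\ell}+\sum_{j<i,\,j<\ell}v_{c,ij}v_{c,j\ell}-\sum_{\ell'>i,\,\ell'>\ell}w_{c,i\ell'}\gamma_{c,\ell'}w_{c,\ell'\ell}\Big)\gamma_{c,\ell}^{-1}
\]
force $\rho_0(w_{c,i\ell})=0$, and then \eqref{Eq:RelB-decompo-ii} collapses to $\rho_0(\gamma_{c,i})=\Id_{V_i}$. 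Consequently $\rho_0\bigl((1-1_{I_c})+\gamma_c\bigr)=\Id_N$ for every $c\in C$, which is certainly invertible, so $\rho_0$ indeed defines a $\kk$-point of $\Rep(\Bc(\Upsilon),d)$.

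There is no genuine obstacle here; the only subtlety is making the open immersion argument precise, namely noting that a universal localization $A\to A[S^{-1}]$ representing the functor of inverting $S$ gives rise to the open subscheme $\{\rho\mid \rho(s)\in\GL_N(\kk)\;\forall s\in S\}$ of $\Rep(A,d)$ through the adjunction \eqref{adjunction-1}. Combining this with the non-emptiness at $\rho_0$ and the irreducibility of the ambient affine space yields both that $\Rep(\Bc(\Upsilon),d)\neq\emptyset$ and the dimension formula \eqref{Eq:dim-rep-BoalchAlgebra}.
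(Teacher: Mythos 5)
Your proposal is correct and follows essentially the same route as the paper: the dimension formula comes from Lemma~\ref{lem:localisation-Bgamma} (so that $\Rep(\Bc(\Upsilon),d)$ is open in the affine space $\Rep(\kk\overline{\Upsilon},d)$, hence of the same dimension once non-empty), and non-emptiness is established by the trivial representation sending all arrows to zero. The only cosmetic difference is that the paper defines the trivial assignment directly on the generators of $\widetilde\Upsilon$ (with $w_{c,ij}\mapsto 0$, $\gamma_{c,i}\mapsto\Id$), whereas you start on $\kk\overline{\Upsilon}$ and trace through the localization chain to see that the same values emerge — both arrive at the same representation.
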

\begin{proof}
Set $N=\sum_{s\in I} d_s$. 
For the first part, note that the assignment $\rho_{\textrm{triv}}$ given by 
\begin{align*}
 \rho_{\textrm{triv}}(w_{c,ij})=0_N,\,\,\, \rho_{\textrm{triv}}(v_{c,ij})=0_N, \,\,\, \rho_{\textrm{triv}}(1-e_i+\gamma_{c,i})=\Id_N, \quad i,j\in I_c,\,\, c\in C\,,
\end{align*}
completely determines a representation $\rho_{\textrm{triv}}:\Bc(\Upsilon)\to \End(\CC^N)$ relative to $B$ by definition of $\Bc(\Upsilon)$. 

For the second part, we have seen in Lemma \ref{lem:localisation-Bgamma} that $\Bc(\Upsilon)$ can be obtained by localization of $\kk \overline{\Upsilon}$. Thus, if  $\Rep(\Bc(\Upsilon),d)$ is not empty, it has the dimension of $\Rep(\kk \overline{\Upsilon},d)$ which is given  by the right-hand side of \eqref{Eq:dim-rep-BoalchAlgebra}.
\end{proof}

%%%%%%%%%%%%%%%%%%%%%%%%%%%%%%%%%%%%%%%%%%%%%%%%%%%%%%%%%%%%%%%%%%%%%%%%%%%
%%%%%%%%%%%%%%%%%%%%%%%%%%%%%%%%%%%%%%%%%%%%%%%%%%%%%%%%%%%%%%%%%%%%%%%%%%%
%%%%%%%%%%%%%%%%%%%%%%% NEW SECTION %%%%%%%%%%%%%%%%%%%%%%%%%%%%%%%%%%%%%%%
%%%%%%%%%%%%%%%%%%%%%%%%%%%%%%%%%%%%%%%%%%%%%%%%%%%%%%%%%%%%%%%%%%%%%%%%%%%
%%%%%%%%%%%%%%%%%%%%%%%%%%%%%%%%%%%%%%%%%%%%%%%%%%%%%%%%%%%%%%%%%%%%%%%%%%%

\section{Towards the Conjecture: the monochromatic interval and triangle} \label{sec:Towards}

\subsection{The monochromatic interval \texorpdfstring{$\mathcal{I}$}{}}
\label{sec:the-11-monochromatic-case}

We consider the simplest colored quiver $\mathcal{I}$, which consists of one arrow $v_{12}:2\to 1$.
By definition, the algebra $\Bc(\mathcal{I})$ is generated by the symbols 
\begin{equation} \label{eq:generators-B-11}
 e_1,\,\, e_2,\,\, v_{12},\,\, v_{21},\,\, w_{12},\,\, w_{21},\,\, \gamma_1^{\pm 1},\,\, \gamma_2^{\pm 1},
\end{equation}
subject to the idempotent decomposition $e_1+e_2=1$, $e_1 e_2=0=e_2 e_1$, and 
\begin{equation*}
 \begin{aligned}
  v_{21}&=e_2 v_{21} e_1,\,\, & v_{12} &= e_1 v_{12} e_2,\, \, & w_{12}&=e_1 w_{12}e_2,\qquad  w_{21}=e_2 w_{21} e_1, 
  \\
  \gamma_1^{\pm 1}&=e_1 \gamma_1^{\pm 1} e_1,\,\, & \gamma_2^{\pm 1}&= e_2\gamma_2^{\pm 1} e_2, \, \,  & \gamma_j \gamma_j^{-1}&=e_j=\gamma_j^{-1}\gamma_j\,,
 \end{aligned}
\end{equation*}
together with the relation 
\begin{equation*}
\label{eq:relation-B-11-case}
 (1+v_{21})(1+v_{12})=(1+w_{12})(\gamma_1+\gamma_2)(1+w_{21})\,.
\end{equation*}
As part of Lemma \ref{L:AlgIso}, we have seen that\footnote{The vertices $1,2$ correspond to $h,t$ while $v_{12},v_{21}$ correspond to $c,c^\ast$ respectively.} 
this relation  amounts to 
\begin{equation} \label{Eq:RelBA}
\begin{aligned}
 &\gamma_2=e_2+v_{21}v_{12},\quad 
 w_{12}=v_{12} \gamma_2^{-1},\quad
 w_{21}=\gamma_2^{-1} v_{21}\,, 
 \\
 &\gamma_1=e_1 -w_{12}\gamma_2w_{21}=e_1-v_{12}(e_2+v_{21}v_{12})^{-1}v_{21}=(e_1+v_{12}v_{21})^{-1}\,.
\end{aligned}
\end{equation}
In particular, $e_1,e_2,v_{12},v_{21}$ (and the inverses $\gamma_j^{-1}$) generate the Boalch algebra  $\Bc(\mathcal{I})$. 

Meanwhile, as we pointed out in Theorem \ref{tm:VdB-mult-preproj-quasi-Hamilt}\,(i), we know from Van den Bergh's theory \cite{VdB1} (which is written for the opposite convention ---\,see Remark \ref{rem:convention-on-arrows}) that $\Ac(\mathcal{I})$ is a Hamiltonian double quasi-Poisson algebra for the double quasi-Poisson bracket 
\begin{equation}
\begin{aligned} \label{Eq:VdBfission}
  &\dgal{v_{12},v_{12}}=0\,, \quad \dgal{v_{21},v_{21}}=0\,, \\
 &\dgal{v_{21},v_{12}}=e_1\otimes e_2 + \frac12 \Big(v_{12}v_{21} \otimes e_2 + e_1 \otimes v_{21}v_{12}\Big)\,, \\
 &\dgal{v_{12},v_{21}}=-e_2\otimes e_1 - \frac12 \Big(e_1 \otimes v_{12}v_{21} + v_{21}v_{12} \otimes e_1\Big)\,.
\end{aligned}
\end{equation}
Note that the last equality is equivalent to the third one using the cyclic antisymmetry  \eqref{Eq:cycanti}. It is a simple exercise using \eqref{Eq:RelBA} to show that 
$\Phi:=\gamma_1 + \gamma_2$ is a multiplicative moment map, since $\gamma_1$ and $\gamma_2$ satisfy \eqref{Phim} with $s=1,2$ respectively. 
Indeed, it suffices to prove the claim on $v_{12},v_{21}$, and we can compute from the above double brackets that 
\begin{equation*}
\begin{aligned}
  \dgal{\gamma_2,v_{12}}&=\frac12\Big(v_{12}\gamma_2 \otimes e_2 + v_{12}\otimes \gamma_2\Big)\,, &
  \dgal{\gamma_2,v_{21}}&=-\frac12 \Big(e_2 \otimes \gamma_2 v_{21} + \gamma_2 \otimes v_{21}\Big)\,, 
  \\
 \dgal{\gamma_1,v_{12}}&=-\frac12 \Big(e_1 \otimes \gamma_1 v_{12} + \gamma_1 \otimes v_{12}\Big)\,, &  \dgal{\gamma_1,v_{21}}&=\frac12\Big(v_{21}\gamma_1 \otimes e_1 + v_{21}\otimes \gamma_1\Big)\,.
\end{aligned}
\end{equation*}
Gathering these observations together, we obtain the first instance of Conjecture \ref{Conj:Main}:

\begin{prop}  \label{Pr:OneArrow}
Let $\mathcal{I}$ be the monochromatic graph with two vertices and one edge, with partition of the set of vertices $\{1\}\sqcup\{2\}$, and $B=\kk e_1\oplus\kk e_2$. We construct the associated Boalch algebra $\Bc(\mathcal{I})$ as above. 
We define  a $B$-linear double bracket on $\Bc(\mathcal{I})$ from \eqref{Eq:VdBfission} by using the cyclic antisymmetry \eqref{Eq:cycanti} and the Leibniz rule \eqref{Eq:outder}.
Furthermore, consider the element
\begin{equation*}
\Phi:=\gamma_1+\gamma_2.
\end{equation*}
Then the triple $\big(\Bc(\mathcal{I}),\lr{-,-},\Phi\big)$ is a Hamiltonian double quasi-Poisson algebra.
\end{prop}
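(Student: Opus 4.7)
The plan is to deduce Proposition~\ref{Pr:OneArrow} directly from Van den Bergh's Hamiltonian double quasi-Poisson structure on $\Ac(\mathcal{I})$ recorded in Theorem~\ref{tm:VdB-mult-preproj-quasi-Hamilt}(i), by transporting it along the algebra isomorphism $\Bc(\mathcal{I}) \simeq \Ac(\mathcal{I})$ established in Lemma~\ref{L:AlgIso}.

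First, I would fix the identification of generators given by that lemma: with $h=1,\, t=2$, the arrow $v_{21}: 1\to 2$ of $\Bc(\mathcal{I})$ corresponds to Van den Bergh's arrow $a: 1\to 2$, while $v_{12}: 2\to 1$ corresponds to $a^{*}$; the remaining generators $w_{12},\, w_{21},\, \gamma_1^{\pm 1},\, \gamma_2^{\pm 1}$ are the explicit rational expressions in $v_{12},\, v_{21}$ recorded in \eqref{Eq:RelBA}. Under this identification, the double bracket on $v_{12},\, v_{21}$ prescribed in \eqref{Eq:VdBfission} is exactly the one appearing in Theorem~\ref{tm:VdB-mult-preproj-quasi-Hamilt}(i), once the right-to-left convention on path composition (Remark~\ref{rem:convention-on-arrows}) is accounted for. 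The quasi-Poisson property \eqref{qPabc} on $\Bc(\mathcal{I})$ is then inherited verbatim from its validity on $\Ac(\mathcal{I})$.

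Next, I would verify that $\Phi = \gamma_1 + \gamma_2$ coincides with Van den Bergh's multiplicative moment map under the isomorphism. From \eqref{Eq:RelBA}, $\gamma_2 = e_2 + v_{21} v_{12} \in e_2\Bc(\mathcal{I})e_2$ corresponds to $1+aa^{*}$, while $\gamma_1 = (e_1 + v_{12} v_{21})^{-1}\in e_1\Bc(\mathcal{I})e_1$ corresponds to $(1+a^{*}a)^{-1}$; their sum is the vertex-idempotent decomposition of the moment map $\prod_{a\in\overline{\mathcal{I}}}(1+aa^{*})^{\epsilon(a)}$ appearing in Theorem~\ref{tm:VdB-mult-preproj-quasi-Hamilt}(ii). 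Alternatively, both sides of \eqref{Phim} are $\kk$-linear derivations in the second argument with respect to the outer bimodule structure on $\Bc(\mathcal{I})^{\otimes 2}$, so it suffices to test \eqref{Phim} against the two generators $v_{12}$ and $v_{21}$: the brackets $\dgal{\gamma_s, v_{ij}}$ computed from \eqref{Eq:VdBfission} via the Leibniz rule (and \eqref{PhimInv} for the inverse in $\gamma_1$) give the four expressions displayed just before the proposition statement, each matching the right-hand side of \eqref{Phim} term by term.

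I do not expect any serious obstacle in this case, since the entire argument reduces to unwinding the isomorphism of Lemma~\ref{L:AlgIso} and invoking Van den Bergh's theorem. The only points requiring care are the right-to-left versus left-to-right convention on path composition, which accounts for the cosmetic reshuffling between \eqref{Eq:VdBfission} and \cite[Theorem~6.5.1]{VdB1}, and the short check that the right-hand side of \eqref{Phim} is indeed a derivation in its second argument, which is what reduces the moment map verification to the two generators.
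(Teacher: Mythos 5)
Your proposal is correct and follows essentially the same route as the paper: invoke Van den Bergh's Hamiltonian double quasi-Poisson structure on $\Ac(\overline{A}_2)$ (Theorem~\ref{tm:VdB-mult-preproj-quasi-Hamilt}(i)), transport it across the algebra identification $\Bc(\mathcal{I})\simeq\Ac(\mathcal{I})$ from Lemma~\ref{L:AlgIso}, and verify the moment map condition on the generating arrows $v_{12},v_{21}$ using \eqref{Eq:RelBA}. The paper's proof is precisely the ``alternative'' you mention at the end --- the explicit check of the four brackets $\lr{\gamma_s,v_{ij}}$ against~\eqref{Phim} --- so the only cosmetic difference is that you foreground the abstract isomorphism argument (matching $\gamma_1+\gamma_2$ to $(1+aa^*)(1+a^*a)^{-1}$ under $a=v_{21}$, $a^*=v_{12}$), whereas the paper foregrounds the direct computation.
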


If we want to understand the double quasi-Poisson bracket in terms of all the generators \eqref{eq:generators-B-11} of $\Bc(\mathcal{I})$, it suffices to perform some elementary computations. In particular, we will need the following identities that follow from \eqref{Eq:RelBA}:
\begin{equation*}
 w_{12}\gamma_2 w_{21}=v_{12}w_{21}=w_{12}v_{21}=e_1-\gamma_1\,, \quad 
 v_{21}v_{12}=\gamma_2 w_{21}v_{12}=v_{21}w_{12}\gamma_2=\gamma_2-e_2\,.
\end{equation*}

\begin{lem} \label{Lemma:OneArrow}
 The Boalch algebra $\Bc(\mathcal{I})$ equipped with its double quasi-Poisson bracket from Proposition \ref{Pr:OneArrow} is such that 
   \begin{align*}
      &\dgal{v_{12},v_{12}}=0=\dgal{v_{21},v_{21}}\,, 
      \\
  &\dgal{v_{21},v_{12}}=e_1\otimes e_2 + \frac12 \Big(v_{12}v_{21} \otimes e_2 +  e_1 \otimes v_{21}v_{12}\Big)\,,
   \\
  &  \dgal{w_{21},v_{21}} = \frac12 (w_{21}\otimes v_{21} + v_{21} \otimes w_{21}) \,, 
  \\
   &\dgal{w_{12}, v_{12} } = -\frac12 (w_{12}\otimes v_{12} + v_{12} \otimes w_{12}) \,,
    \\
   &\dgal{w_{21}, v_{12} } = \frac12 e_1 \otimes \gamma_2^{-1} +\frac12 \gamma_1\otimes e_2 \,, 
   \\
   &\dgal{w_{12},v_{21}} = -\frac12 \big(\gamma_2^{-1}\otimes e_1  + e_2 \otimes \gamma_1\big)\,,
    \\
      &\dgal{w_{12},w_{12}}=0=\dgal{w_{21},w_{21}}\,, 
      \\
 & \dgal{w_{21},w_{12}}=  \gamma_1 \otimes \gamma_2^{-1} -\frac12 \Big(w_{12}w_{21} \otimes e_2 + e_1 \otimes w_{21}w_{12}\Big)\, .
   \end{align*}
 \normalsize
\end{lem}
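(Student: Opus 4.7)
The plan is to leverage the identities \eqref{Eq:RelBA} that express $w_{12},w_{21},\gamma_1$ in terms of $v_{12},v_{21},\gamma_2^{\pm 1}$, and then to derive all the double brackets stated in the lemma by successive applications of the Leibniz rules \eqref{Eq:outder}--\eqref{Eq:inder} from the double bracket \eqref{Eq:VdBfission} on $v_{12}$ and $v_{21}$. The first two formulas are nothing but \eqref{Eq:VdBfission} itself (together with the convention that $\dgal{-,-}$ vanishes when an argument is an idempotent).

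First, I would compute $\dgal{\gamma_2,v_{12}}$ and $\dgal{\gamma_2,v_{21}}$ by applying the left Leibniz rule \eqref{Eq:inder} to $\gamma_2=e_2+v_{21}v_{12}$, using \eqref{Eq:VdBfission}; the resulting expressions are exactly those recalled just before Proposition \ref{Pr:OneArrow}. From the basic identity $\dgal{a,\gamma_2^{-1}}=-\gamma_2^{-1}\,\dgal{a,\gamma_2}\,\gamma_2^{-1}$ (valid in $A_{\out}^{\otimes 2}$ since $\gamma_2\gamma_2^{-1}=e_2$ lies in the base ring $B$), combined with the cyclic antisymmetry \eqref{Eq:cycanti}, one immediately obtains $\dgal{\gamma_2^{-1},v_{12}}$ and $\dgal{\gamma_2^{-1},v_{21}}$.

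Second, I would apply the right Leibniz rule \eqref{Eq:outder} to the products $w_{12}=v_{12}\gamma_2^{-1}$ and $w_{21}=\gamma_2^{-1}v_{21}$, together with the cyclic antisymmetry, to read off the mixed brackets $\dgal{w_{ij},v_{k\ell}}$. At this stage, each bracket will be an explicit sum of tensors in $v_{ij}$ and $\gamma_2^{\pm 1}$ that needs to be simplified using the consequences of \eqref{Eq:RelBA} collected just before the lemma, namely $v_{21}v_{12}=\gamma_2-e_2$ and $w_{12}v_{21}=v_{12}w_{21}=e_1-\gamma_1$, in order to reach the compact form stated in the lemma. Finally, for the brackets involving two $w$'s, I would apply Leibniz once more to either of the decompositions of $w_{12}$ and $w_{21}$, reusing the brackets already derived in the previous steps.

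The anticipated main obstacle is the last formula for $\dgal{w_{21},w_{12}}$: expanding $w_{21}=\gamma_2^{-1}v_{21}$ and $w_{12}=v_{12}\gamma_2^{-1}$ through the Leibniz rules produces an expression containing several occurrences of $\gamma_2^{-1}$ sandwiching combinations of $v_{12}v_{21}$ and $v_{21}v_{12}$; the subtlety is to show that, after systematic use of the identities $v_{21}v_{12}=\gamma_2-e_2$ and $v_{12}\gamma_2^{-1}v_{21}=e_1-\gamma_1$, all terms collapse to the stated leading piece $\gamma_1\otimes\gamma_2^{-1}$ plus the symmetric quadratic correction in $w$'s. The remaining identities $\dgal{v_{ij},v_{ij}}=0$ and $\dgal{w_{ij},w_{ij}}=0$ are obtained by the same procedure; the vanishing of $\dgal{w_{ij},w_{ij}}$ in particular uses that the derivation of $\gamma_2^{-1}$ against itself is forced to be cyclically antisymmetric, so the ensuing expansion must collapse. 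Once all formulas are verified on the generators, $B$-linearity and the Leibniz rules ensure that $\dgal{-,-}$ is uniquely and consistently extended to the whole of $\Bc(\mathcal{I})$.
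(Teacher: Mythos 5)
Your strategy matches the paper's proof: both establish the brackets on $w_{12},w_{21}$ by rewriting $w_{12}=v_{12}\gamma_2^{-1}$, $w_{21}=\gamma_2^{-1}v_{21}$, applying the Leibniz rules and cyclic antisymmetry to the base double bracket \eqref{Eq:VdBfission}, and simplifying via the relations $v_{21}v_{12}=\gamma_2-e_2$ and $v_{12}w_{21}=w_{12}v_{21}=e_1-\gamma_1$ from \eqref{Eq:RelBA}. The only cosmetic difference is that you rederive $\dgal{\gamma_2^{\pm1},-}$ from the Leibniz/inverse rule, while the paper invokes the already-established moment map property for $\gamma_2$ (which was computed the same way just before Proposition \ref{Pr:OneArrow}); the computations are identical in substance.
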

\begin{proof}
 The first three identities are just \eqref{Eq:VdBfission}. The next two are obtained by a direct computation using those identities and the moment map property for $\gamma_2$. For example, using the decomposition 
\begin{equation*}
 \dgal{w_{21},v_{21}} =\gamma_2^{-1}\ast \dgal{v_{21},v_{21}} -  \gamma_2^{-1} \ast \dgal{\gamma_2,v_{21}}\ast w_{21}\,.
\end{equation*}
 We then find $\dgal{w_{12},w_{12}}$ and $\dgal{w_{21},w_{21}}$ easily. 
 Next, we note that 
 \begin{equation*}
  \begin{aligned}
\dgal{w_{21}, v_{12} } =& \dgal{\gamma_2^{-1}v_{21},v_{12}} \\
=& e_1 \otimes \gamma_2^{-1}+\frac12 \Big(e_1 \otimes \underbrace{w_{21}v_{12}}_{e_2-\gamma_2^{-1}}-\underbrace{v_{12}w_{21}}_{e_1-\gamma_1}\otimes e_2\Big) 
=\frac12 e_1 \otimes \gamma_2^{-1} +\frac12 \gamma_1 \otimes e_2\,.
  \end{aligned}
 \end{equation*}
We can also find $\dgal{w_{12},v_{21}} $ in this way. Finally, we can get from these expressions
 \begin{equation*}
  \begin{aligned}
\dgal{w_{21},w_{12}}=& \dgal{w_{21},v_{12}}\gamma_2^{-1} - w_{12} \dgal{w_{21},\gamma_2}\gamma_2^{-1} \\
=& \frac12 \Big( e_1 \otimes \gamma_2^{-2} + \gamma_1 \otimes \gamma_2^{-1}\Big) -\frac12 \Big(w_{12}\gamma_2 w_{21}\otimes \gamma_2^{-1} + w_{12}w_{21}\otimes e_2\Big)\\
=&\frac12 e_1 \otimes \Big[\gamma_2^{-1}-w_{21}w_{12}\Big] + \frac12 \gamma_1 \otimes \gamma_2^{-1}  - \frac12 \Big[e_1-\gamma_1\Big]\otimes \gamma_2^{-1}  -\frac12 w_{12} w_{21}\otimes e_2\\
=&\gamma_1 \otimes \gamma_2^{-1} -\frac12 \Big(w_{12}w_{21} \otimes e_2 + e_1 \otimes w_{21}w_{12}\Big)\,,
  \end{aligned}
 \end{equation*}
 which is the last identity. 
\end{proof}

\begin{rem}
Combining Lemma \ref{Lem:MonochromaticConjecture} and Proposition \ref{Pr:OneArrow}, we get that if $\Upsilon$ is an arbitrary colored graph with the tautological coloring (that is, its edges have all different colors), then $\Bc(\Upsilon)$ carries a Hamiltonian double quasi-Poisson structure. By construction, it is obtained by fusion of the Hamiltonian double quasi-Poisson algebras associated with the  disjoint arrows of $\Upsilon$, hence this is equivalent to Van den Bergh's result \cite[Theorem 6.7.1]{VdB1}. 

Using the Kontsevich--Rosenberg principle, we then get a quasi-Poisson bracket and a group-valued moment map on representation schemes. This induces the Poisson structure on multiplicative quiver varieties (in the original sense of \cite{CBShaw,Ya}) uncovered by Van den Bergh \cite[Theorem 1.1]{VdB1}. 
\end{rem}

 \subsection{The monochromatic triangle \texorpdfstring{$\Delta$}{}}
 \label{sec:the-monochromatic-111}

 Consider  the monochromatic triangle $\Delta$ whose set of vertices $\{1,2,3\}$ has the partition $\{1\}\sqcup\{2\}\sqcup\{3\}$.
Following Boalch's convention as in \ref{ss:Gr}, we take the arrows such that $v_{23}:3\to 2$, $v_{13}:3\to 1$, $v_{12}:2\to 1$. Next, as explained in \ref{sec:fission-algebras} and depicted in Figure \ref{fig:Quiver111},
  \begin{enumerate}
  \item [\textup{(i)}]
We add the opposites $v_{ij}:j\to i$ for any $i\neq j$, giving the double quiver $\overline{\Delta}$.  
\item [\textup{(ii)}]
 We add the elements $w_{ij}:j\to i$ for all $j\neq i$ and $\gamma_i:i\to i$ for all $i$ to form $\widetilde \Delta$. 
 \end{enumerate}
This means that we can see the Boalch algebra $\Bc(\Delta)$ as being generated by the symbols
 \begin{align*}
e_1,\quad e_2,\quad e_3, \quad &v_{12}, \quad v_{21}, \quad v_{13},\quad v_{31}, \quad v_{23},\quad v_{32},
\\
\gamma^{\pm 1}_1,\quad \gamma^{\pm 1}_2,\quad \gamma^{\pm 1}_3, \quad & w_{12},\quad w_{21}, \quad w_{13},\quad w_{31}, \quad w_{23},\quad w_{32},
\end{align*}
 subject to the relations induced by the idempotent decomposition $1=e_1+e_2+e_3$,
\begin{align*}
 e_ie_k=\delta_{ik}e_i\,, \quad v_{ij}=e_i v_{ij}e_j,\,\,  w_{ij}=e_i w_{ij}e_j,\,\,  \gamma_{i}=e_i \gamma_{i}e_i,\,\; \,i,j,k\in \{1,2,3\},\, i\neq j\,, 
\end{align*}
 the invertibility conditions $\gamma_i\gamma_i^{-1}=e_i=\gamma_i^{-1}\gamma_i$ for $i\in \{1,2,3\}$,
 as well as the following relation obtained from \eqref{Eq:RelB}:
 \begin{equation}
\begin{aligned}
  (1+v_{21}+v_{31}+v_{32})&(1+v_{12}+v_{13}+v_{23}) \\
  =&(1+w_{12}+w_{13}+w_{23})(\gamma_1+\gamma_2+\gamma_3)(1+w_{21}+w_{31}+w_{32}).
 \label{eq:relation-Boalch-algebra-big}
\end{aligned}
 \end{equation}

% \vspace{-0.5cm} 

Decomposing \eqref{eq:relation-Boalch-algebra-big} with respect to the idempotents, it is equivalent to the following nine identities in $\Bc(\Delta)$:
 \begin{subequations} \label{eq:Boalch-eq-111}
  \begin{align}
 e_1&=\gamma_1+w_{12}\gamma_2 w_{21} + w_{13}\gamma_3 w_{31}\, , \label{eq:Boalch-eq-111.a}
 \\
 v_{12}&=w_{12}\gamma_2 + w_{13} \gamma_3 w_{32}\, , \label{eq:Boalch-eq-111.b}
 \\
 v_{13}&=w_{13}\gamma_3\, ,  \label{eq:Boalch-eq-111.c}
 \\
v_{21}&=\gamma_2w_{21} + w_{23}\gamma_3w_{31}\, , \label{eq:Boalch-eq-111.d}
\\
e_2+v_{21}v_{12}&=\gamma_2+w_{23}\gamma_3w_{32}\, , \label{eq:Boalch-eq-111.e}
\\
 v_{21}v_{13}+v_{23}&=w_{23}\gamma_3\, , \label{eq:Boalch-eq-111.f}
\\
v_{31}&=\gamma_3w_{31}\, , \label{eq:Boalch-eq-111.g}
\\
 v_{32}+v_{31}v_{12}&=\gamma_3 w_{32}\, , \label{eq:Boalch-eq-111.h}
 \\
 e_3+v_{31}v_{13}+v_{32}v_{23}&=\gamma_3\, . \label{eq:Boalch-eq-111.i}
  \end{align}
 \end{subequations}
 
 \begin{rem}
Note that by Lemma \ref{lem:localisation-Bgamma}, we can conclude that $e_i$, $v_{ij}$, $v_{ji}$ and $\gamma^{-1}_i$ for $1\leq i<j\leq 3$ generate the Boalch algebra $\Bc(\Delta)$. Furthermore, we can observe from \eqref{eq:Boalch-eq-111} that the chain of localizations $\kk\overline{\Delta}\to \Bc(\Delta)$  is obtained by iteratively introducing and then localizing at the following elements: 
\begin{align*}
\gamma_3&=e_3+v_{31}v_{13}+v_{32}v_{23}\,, \\
\gamma_2&=e_2+v_{21}v_{12}-(v_{21}v_{13}+v_{23})\gamma_3^{-1}(v_{32}+v_{31}v_{12})\,, \\
\gamma_1&=e_1-v_{13}\gamma^{-1}_3 v_{31} 
-\big[v_{12}-v_{13}\gamma_3^{-1}(v_{32}+v_{31}v_{12})\big]\gamma_2^{-1} \big[v_{21}- (v_{21}v_{13}+v_{23})\gamma_3^{-1} v_{31}\big]\,.
\end{align*}
\label{rem:arrows-111-generators}
 \end{rem}

 \vspace{-0.5cm}
 
\begin{center}
 \rule{12cm}{0.1pt}
\end{center}

\begin{figure}[h]
\centering
   \begin{tikzpicture}[scale=0.8]
%%% LEFT
  \node[circle,thick,fill=black,inner sep=2pt]  (v1le) at (-12,-2) {};
  \node[circle,thick,fill=black,inner sep=2pt]  (v2le) at (-10,1) {};
   \node[circle,thick,fill=black,inner sep=2pt] (v3le) at (-8,-2) {};
     \node  (v1n) at (-12.3,-2.2) {$1$};
  \node  (v2n) at (-10,1.4) {$2$};
   \node (v3n) at (-7.7,-2.2) {$3$};
\node (NameLe) at (-10,-3.5) {\underline{Quiver $\Delta$}};
 %% edges
\draw[->,>=triangle 45] (v2le) -- node[left,font=\small]{$v_{12}$}   (v1le) ;
\draw[->,>=triangle 45] (v3le) -- node[above,font=\small]{$v_{13}$}  (v1le) ;
\draw[->,>=triangle 45] (v3le) -- node[right,font=\small]{$v_{23}$}  (v2le) ;
%%% MIDDLE 
  \node[circle,thick,fill=black,inner sep=2pt]  (v1m) at (-6,-2) {};
  \node[circle,thick,fill=black,inner sep=2pt]  (v2m) at (-4,1) {};
   \node[circle,thick,fill=black,inner sep=2pt] (v3m) at (-2,-2) {};
     \node  (v1mn) at (-6.3,-2.2) {$1$};
  \node  (v2mn) at (-4,1.4) {$2$};
   \node (v3mn) at (-1.7,-2.2) {$3$};
\node (NameLe) at (-4,-3.5) {\underline{Quiver $\overline{\Delta}$}};
 %% edges
\path[->,>=latex] (v2m) edge [bend right=15]  node[left,font=\small]{$v_{12}$}   (v1m) ;
\path[->,>=latex] (v3m)  edge [bend right=6] node[above,font=\small]{$v_{13}$}  (v1m) ;
\path[->,>=latex] (v3m)  edge [bend right=15] node[right,font=\small]{$v_{23}$}  (v2m) ;
\path[->,>=latex] (v1m) edge [bend right=6]  node[right,font=\small]{$v_{21}$}   (v2m) ;
\path[->,>=latex] (v1m)  edge [bend right=15] node[below,font=\small]{$v_{31}$}  (v3m) ;
\path[->,>=latex] (v2m)  edge [bend right=6] node[left,font=\small]{$v_{32}$}  (v3m) ;
%%% RIGHT 
  \node[circle,thick,fill=black,inner sep=3pt]  (v1) at (0.7,-2.2) {};
  \node[circle,thick,fill=black,inner sep=3pt]  (v2) at (3,1.1) {};
   \node[circle,thick,fill=black,inner sep=3pt] (v3) at (5.3,-2.2) {};
     \node  (v1r) at (0.35,-2.35) {$1$};
  \node  (v2r) at (3,1.5) {$2$};
   \node (v3r) at (5.65,-2.35) {$3$};
\node (NameLe) at (3,-3.5) {\underline{Quiver $\widetilde{\Delta}$}};
  %% edges  v_{ij}
\path[->,>=latex] (v2) edge [bend right=10]   (v1) ;
\path[->,>=latex] (v3) edge [bend right=6]    (v1) ;
\path[->,>=latex] (v3) edge [bend right=10]    (v2) ;
\path[->,>=latex] (v1) edge [bend right=6]     (v2) ;
\path[->,>=latex] (v1) edge [bend right=10]    (v3) ;
\path[->,>=latex] (v2) edge [bend right=6]    (v3) ;
%% edges w_{ij}
\path[->,dashed,>=latex] (v2) edge [bend left=20]   (v1) ;
\path[->,dashed,>=latex] (v3) edge [bend left=25]    (v1) ;
\path[->,dashed,>=latex] (v3) edge [bend left=20]    (v2) ;
\path[->,dashed,>=latex] (v1) edge [bend left=25]     (v2) ;
\path[->,dashed,>=latex] (v1) edge [bend left=20]    (v3) ;
\path[->,dashed,>=latex] (v2) edge [bend left=25]    (v3) ;
%% loops 
\draw[->,>=latex] (v1) to[out=130,in=250,looseness=15] (v1);
\draw[->,>=latex] (v2) to[out=30,in=150,looseness=15] (v2);
\draw[->,>=latex] (v3) to[out=50,in=290,looseness=15] (v3);
   \end{tikzpicture}
 \caption{The quivers $\Delta$, $\overline{\Delta}$, and $\widetilde{\Delta}$ used to introduce the algebra $\Bc(\Delta)$. In $\widetilde{\Delta}$, 
a loop based at the vertex $i$ represents $\gamma_i$, while a plain (resp. dashed) arrow from the vertex $i$ to the vertex $j$ represents $v_{ji}$ (resp. $w_{ji}$).} \label{fig:Quiver111}
\end{figure}
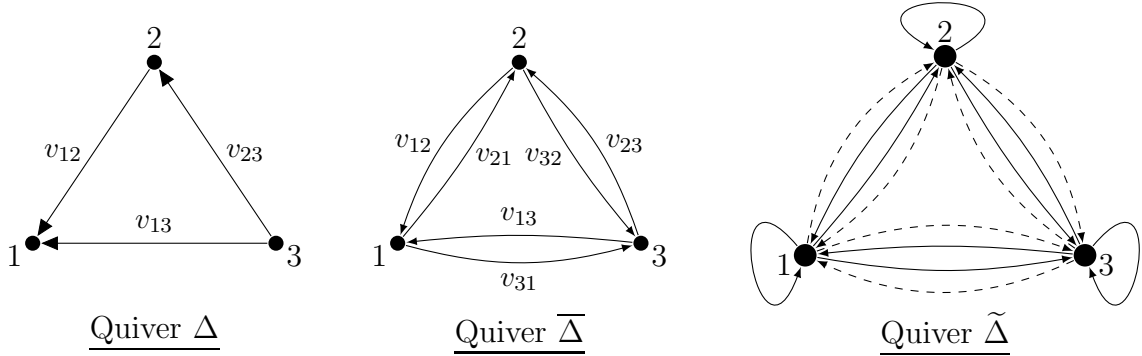

\vspace{-0.5cm}

\begin{center}
 \rule{12cm}{0.1pt}
\end{center}

Let $B=\kk e_1\oplus\kk e_2\oplus\kk e_3$.
Building on Remark \ref{rem:arrows-111-generators}, we can introduce a $B$-linear double bracket $\lr{-,-}$ on $\Bc (\Delta)$ by first specifying its expression on the arrows $v_{ij}$, for $i,j\in\{1,2,3\}$ and $i\neq j$. We start by setting 
\begin{small}
\begin{equation}
\label{double-bracket-111}
\begin{split}
\lr{v_{ij},v_{ij}}&=  \; \;\, 0\, ,			  
\\
\lr{v_{12},v_{13}}&= \; \;\,  \frac{1}{2}    v_{12}\otimes v_{13}\, ,
\\
\lr{v_{12},v_{32}}&=\; \;\,	 \frac{1}{2}    v_{32}\otimes v_{12}\, ,
%\\
%\lr{v_{21},v_{12}}&= e_1\otimes e_2+\frac{1}{2}v_{12}v_{21}\otimes e_2+\frac{1}{2}e_1\otimes v_{21}v_{12};
%\\
%\lr{v_{21},v_{21}}&= 0;
\\
\lr{v_{21},v_{31}}&= -\frac{1}{2}    v_{31}\otimes v_{21}\, ,
\\
\lr{v_{21},v_{23}}&= -\frac{1}{2}    v_{21}\otimes v_{23}\, ,
\\
%\lr{v_{13},v_{12}}&= -\frac{1}{2}v_{13}\otimes v_{12};
%\\
%\lr{v_{13},v_{21}}&= -\frac{1}{2}v_{21}v_{13}\otimes e_1-v_{23}\otimes e_1;
%\\
%\lr{v_{13},v_{13}}&= 0;
%\\
\lr{v_{13},v_{23}}&= \; \;\, \frac{1}{2}    v_{23}\otimes v_{13}\, ,
\\
\lr{v_{13},v_{32}}&=-\frac{1}{2}    e_3\otimes v_{13}v_{32}\, ,
\\
%\lr{v_{31},v_{12}}&=\frac{1}{2}e_1\otimes v_{31}v_{12}+e_1\otimes v_{32};
%\\
% \lr{v_{31},v_{21}}&=\frac{1}{2}v_{21}\otimes v_{31};
%\\
%\lr{v_{31},v_{13}}&=e_1\otimes e_3+e_1\otimes v_{32}v_{23}+\frac{1}{2}v_{13}v_{31}\otimes e_3+\frac{1}{2}e_1\otimes v_{31}v_{13};
%\\
%\lr{v_{31},v_{31}}&=0;
%\\
\lr{v_{31},v_{23}}&= \; \;\, \frac{1}{2}    v_{23}v_{31}\otimes e_3\, ,
\\
\lr{v_{31},v_{32}}&=-\frac{1}{2}    v_{31}\otimes v_{32}\, ,
%\\
%\lr{v_{23},v_{12}}&=\frac{1}{2}v_{12}v_{23}\otimes e_2-v_{13}\otimes e_2;
%\\
%\lr{v_{23},v_{21}}&=\frac{1}{2}v_{23}\otimes v_{21};
%\\
%\lr{v_{23},v_{13}}&=-\frac{1}{2}v_{13}\otimes v_{23};
%\\
%\lr{v_{23},v_{31}}&=-\frac{1}{2}e_3\otimes v_{23}v_{31};
%\\
%\lr{v_{23},v_{23}}&=0;
%\\
%\lr{v_{32},v_{12}}&=-\frac{1}{2}v_{12}\otimes v_{32};
%\\
%\lr{v_{32},v_{21}}&=e_2\otimes v_{31}-\frac{1}{2}e_2\otimes v_{32}v_{21};
%\\
%\lr{v_{32},v_{13}}&=\frac{1}{2}v_{13}v_{32}\otimes e_3;
%\\
%\lr{v_{32},v_{31}}&=\frac{1}{2}v_{32}\otimes v_{31};
%\\
%\lr{v_{32},v_{23}}&=e_2\otimes e_3+\frac{1}{2}v_{23}v_{32}\otimes e_3+\frac{1}{2}e_2\otimes v_{32}v_{23};
%\\
%\lr{v_{32},v_{32}}&=0.
\end{split}
\quad 
\begin{split}
\lr{v_{12},v_{21}}&=  -e_2\otimes e_1- \frac{1}{2}\Big(e_2\otimes v_{12}v_{21}+   v_{21}v_{12}\otimes e_1\Big)\, ,
\\
\lr{v_{12},v_{31}}&=  -\frac{1}{2}    v_{31}v_{12}\otimes e_1-v_{32}\otimes e_1\, ,
\\
\lr{v_{12},v_{23}}&=  - \frac{1}{2}    e_2\otimes v_{12}v_{23}+e_2\otimes v_{13}\, ,
\\
\lr{v_{21},v_{13}}&= \; \;\, \frac{1}{2}    e_1\otimes v_{21}v_{13}+e_1\otimes v_{23}\, ,
\\
\lr{v_{21},v_{32}}&= \; \;\, \frac{1}{2}    v_{32}v_{21}\otimes e_2-v_{31}\otimes e_2\, ,
\\
\lr{v_{13},v_{31}}&= -e_3\otimes e_1-v_{32}v_{23}\otimes e_1
\\
&\qquad -\frac{1}{2}\Big(e_3\otimes v_{13}v_{31}+  v_{31}v_{13}\otimes e_1\Big),
\\
\lr{v_{23},v_{32}}&=-e_3\otimes e_2-\frac{1}{2} \Big(   e_3\otimes v_{23}v_{32}+  v_{32}v_{23}\otimes e_3\Big),
\end{split}
\end{equation}
\end{small}%
so that using the cyclic antisymmetry \eqref{Eq:cycanti} and the identities \eqref{double-bracket-111}, we can define $\lr{-,-}$ on all the remaining pairs of arrows $v_{ij}$, which we can then extend to $\kk \overline{\Delta}$ by the Leibniz rules \eqref{Eq:outder}--\eqref{Eq:inder}.  
Note that \eqref{double-bracket-111} descends to the Boalch algebra $\Bc(\Delta)$ by Lemma \ref{lem:localisation-Bgamma}. 
Therefore, gathering the identities \eqref{eq:Boalch-eq-111} and  \eqref{Eq:cycanti}--\eqref{Eq:inder}, we can obtain the rest of the  expressions for the double bracket $\lr{-,-}$ when evaluated on the other elements of $\Bc(\Delta)$.
For the reader's convenience, we collect them in Appendix \ref{sec:the-complete-list}.

\begin{thm}
\label{thm:triangle}
Let $\Delta$ be the monochromatic triangle, with partition of the set of vertices $\{1\}\sqcup\{2\}\sqcup\{3\}$, and $B=\kk e_1\oplus\kk e_2\oplus\kk e_3$. We construct the associated Boalch algebra $\Bc(\Delta)$ as above, and define a $B$-linear double bracket on $\Bc(\Delta)$ from \eqref{double-bracket-111} by using 
the cyclic antisymmetry \eqref{Eq:cycanti} and the Leibniz rule \eqref{Eq:outder}. 
Furthermore, we consider the element
\begin{equation}
\Phi:=\gamma_1+\gamma_2+\gamma_3.
\label{eq:moment-map-thm-111}
\end{equation}
Then the triple $\big(\Bc(\Delta),\lr{-,-},\Phi\big)$ is a Hamiltonian double quasi-Poisson algebra.
\end{thm}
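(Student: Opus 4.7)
The plan is to verify, in turn, the three conditions defining a Hamiltonian double quasi-Poisson algebra: (i) that the prescription \eqref{double-bracket-111} extends to a well-defined $B$-linear double bracket on $\mathcal{B}(\Delta)$; (ii) that $\Phi=\gamma_1+\gamma_2+\gamma_3$ satisfies the multiplicative moment map axiom \eqref{Phim}; and (iii) that the bracket is quasi-Poisson in the sense of \eqref{qPabc}.

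For step (i), I would invoke Lemma \ref{lem:localisation-Bgamma}: $\mathcal{B}(\Delta)$ is a successive universal localization of $\kk\overline{\Delta}$ at the elements $\gamma_1, \gamma_2, \gamma_3$ described in Remark \ref{rem:arrows-111-generators}. Since any $B$-linear double bracket on an algebra extends uniquely through universal localization, the formulas \eqref{double-bracket-111}, together with cyclic antisymmetry \eqref{Eq:cycanti} and the Leibniz rules \eqref{Eq:outder}--\eqref{Eq:inder}, define an unambiguous $B$-linear double bracket on $\mathcal{B}(\Delta)$, whose values on the remaining generators $\gamma_i^{\pm 1}$ and $w_{ij}$ are forced by the derivation property applied to the explicit expressions of these elements in terms of the $v_{jk}$.

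For step (ii), I would work inductively, exploiting that $\gamma_3=e_3+v_{31}v_{13}+v_{32}v_{23}$ is polynomial in the $v$'s. Substituting this into $\lr{\gamma_3,v_{jk}}$ and using \eqref{double-bracket-111} together with the outer Leibniz rule reduces the moment map identity at the vertex $3$ to a finite computation which, after applying the relations \eqref{eq:Boalch-eq-111}, should match the right-hand side of \eqref{Phim} with $s=3$. Then, using \eqref{eq:Boalch-eq-111.e} and \eqref{eq:Boalch-eq-111.a} to write $\gamma_2=e_2+v_{21}v_{12}-w_{23}\gamma_3 w_{32}$ and $\gamma_1=e_1-w_{12}\gamma_2w_{21}-w_{13}\gamma_3w_{31}$, the axioms for $\gamma_2$ and then $\gamma_1$ follow by combining the already-verified case $s=3$, the induced double brackets of the $w_{ij}$ (obtainable from those of the $v_{ij}$, as in Lemma \ref{Lemma:OneArrow}), and the identity \eqref{PhimInv} applied to $\gamma_3^{-1}$.

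Step (iii) is the heart of the matter, and I expect it to be the main obstacle. By Remark \ref{rem:modified-Jacobi-only-gen} it suffices to test \eqref{qPabc} on triples of generators, and by the extension argument in step (i) one may restrict to triples of arrows $v_{ij}\in\overline{\Delta}$. Rather than grinding through all such triples by hand, I would appeal to the framework of Section \ref{sec:proof-quasi-Poisson-property}: isolate a structured class of $B$-linear double brackets on path algebras of doubled quivers that subsumes \eqref{double-bracket-111}, extract a finite list of combinatorial constraints on the coefficients which force \eqref{Eq:TripBr} to coincide with \eqref{eq:triple-bracket-E3} on all triples of arrows, and finally check that \eqref{double-bracket-111} satisfies those constraints. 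The difficulty is precisely this last verification: the counter-intuitive linear corrections (such as the $e_2\otimes v_{13}$ term in $\lr{v_{12},v_{23}}$) together with the non-uniform dependence of the brackets on the triple of indices prevent any short homogeneity- or symmetry-based reduction, so the general criterion from Section \ref{sec:proof-quasi-Poisson-property} is needed to organize the bookkeeping into a finite, checkable set of conditions whose verification produces the theorem.
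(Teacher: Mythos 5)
Your plan mirrors the paper's proof precisely: the paper also (i) defines the bracket on $\kk\overline{\Delta}$ via \eqref{double-bracket-111} and transports it through the localization of Lemma \ref{lem:localisation-Bgamma}, (ii) verifies \eqref{Phim} generator by generator, computing $\lr{\gamma_3,v_{jk}}$ from the polynomial identity \eqref{eq:Boalch-eq-111.i} and then $\lr{\gamma_2,v_{jk}}$, $\lr{\gamma_1,v_{jk}}$ from \eqref{eq:Boalch-eq-111.e} and \eqref{eq:Boalch-eq-111.a} with the relations \eqref{eq:Boalch-eq-111}, and (iii) delegates the quasi-Poisson property to the general coefficient framework of Section \ref{sec:proof-quasi-Poisson-property}, checked against the coefficients that encode \eqref{double-bracket-111}. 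One small slip: since $\gamma_3$ sits in the \emph{first} slot of $\lr{\gamma_3,v_{jk}}$, it is the inner (left) Leibniz rule \eqref{Eq:inder} that is invoked there, not the outer one \eqref{Eq:outder}.
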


\begin{proof}
We will prove that the double bracket \label{eq:double-bracket-111} is quasi-Poisson as part of a more general result, as described in Section \ref{sec:proof-quasi-Poisson-property}. To prove that \eqref{eq:moment-map-thm-111} is a moment map, we need to check that the multiplicative moment map condition~\eqref{Phim} holds by using~\eqref{eq:Boalch-eq-111}. Moreover, using the Leibniz rule \eqref{Eq:outder}, it is enough to show that~\eqref{Phim} is satisfied for generators of $\Bc(\Delta)$, which we take to be the arrows $v_{ij}$ by Lemma \ref{lem:localisation-Bgamma}.

%-------------------
% v_{12}
%------------------
Firstly, we shall prove that~\eqref{Phim} is satisfied for $v_{12}$.
By~\eqref{eq:Boalch-eq-111.i} and \eqref{Eq:inder}, 
\begin{align*}
\lr{\gamma_3,v_{12}}&=\lr{\big(e_3+v_{31}v_{13}+v_{32}v_{23}\big),v_{12}}
\\
&=\lr{v_{31},v_{12}}*v_{13}+v_{31}*\lr{v_{13},v_{12}}+\lr{v_{32},v_{12}}*v_{23}+v_{32}*\lr{v_{23},v_{12}}
\\
&=\frac{1}{2}v_{13}\otimes v_{31}v_{12}+v_{13}\otimes v_{32}-\frac{1}{2}v_{13}\otimes v_{31}v_{12}
\\
&\quad -\frac{1}{2}v_{12}v_{23}\otimes v_{32}+\frac{1}{2}v_{12}v_{23}\otimes v_{32}-v_{13}\otimes v_{32}=0.
\end{align*}
This agrees with \eqref{Phim} since $e_3 v_{12}=0=v_{12}e_3$. 
Similarly, using that  $\lr{v_{12},v_{12}}=0$ and $\lr{\gamma_3,v_{12}}=0$, by \eqref{eq:Boalch-eq-111.e} we have
\begin{align*}
\lr{\gamma_2,v_{12}}&=\lr{\big(e_2+v_{21}v_{12}-w_{23}\gamma_3w_{32}\big),v_{12}}
\\
&=\lr{v_{21},v_{12}}*v_{12}-w_{23}\gamma_3*\lr{w_{32},v_{12}}-\lr{w_{23},v_{12}}*\gamma_3w_{32}
\\
&=v_{12}\otimes e_2+\frac{1}{2}v_{12}\otimes\big(v_{21}v_{12}-w_{23}\gamma_3w_{32}\big)+\frac{1}{2}v_{12}\big(v_{21}v_{12}-w_{23}\gamma_3w_{32}\big)\otimes e_2
\\
&=\frac{1}{2}\big(v_{12}\otimes \gamma_2+v_{12}\gamma_2\otimes e_2\big),
\end{align*}
which is \eqref{Phim}. Finally, by \eqref{eq:Boalch-eq-111.a},
\begin{align*}
\lr{\gamma_1,v_{12}}&=\lr{\big(e_1-w_{12}\gamma_2w_{21}-w_{13}\gamma_3w_{31}\big),v_{12}}
\\
&=-w_{12}\gamma_2*\lr{w_{21},v_{12}}-w_{12}*\lr{\gamma_2,v_{12}}*w_{21}-\lr{w_{12},v_{21}}*\gamma_2w_{21}
\\
&\quad -w_{13}\gamma_3*\lr{w_{31},v_{12}}-\lr{w_{13},v_{12}}*\gamma_3w_{31}
\\
&=-e_1\otimes \big(w_{12}\gamma_2+w_{13}\gamma_3w_{32}\big)+\frac{1}{2}e_1\otimes \big(w_{12}\gamma_2w_{21}+w_{13}\gamma_3w_{31}\big)v_{12}+
\\
&\quad+ \frac{1}{2}\big(w_{12}\gamma_2w_{21}+w_{13}\gamma_3w_{31}\big)\otimes v_{12}
\\
&=\frac{1}{2}\big(v_{12}\otimes \gamma_2+v_{12}\gamma_2\otimes e_2\big),
\end{align*}
where we used \eqref{eq:Boalch-eq-111.b} and \eqref{eq:Boalch-eq-111.e}, showing \eqref{Phim} applied to $v_{12}$ holds. Since the proof of \eqref{Phim} applied to $v_{21}$ is quite similar, we leave it to the reader.

%-------------------
% v_{13}
%------------------

Next, we focus on the generator $v_{13}$. Since $\lr{v_{13},v_{13}}=0$, by~\eqref{eq:Boalch-eq-111.i} and \eqref{Eq:inder},
\begin{align*}
 \lr{\gamma_3,v_{13}}&=\lr{v_{31},v_{13}}*v_{13}+v_{32}*\lr{v_{23},v_{13}}+\lr{v_{32},v_{13}}*v_{23}
 \\
 &=v_{13}\otimes e_3+\frac{1}{2}v_{13}\big(v_{31}v_{13}+v_{32}v_{23}\big)\otimes e_3
 +\frac{1}{2}v_{13}\otimes\big( v_{31}v_{13}+ v_{32}v_{23}\big)
 \\
 &=\frac{1}{2}\big(v_{13}\otimes\gamma_3+v_{13}\gamma_3\otimes e_3\big).
\end{align*}

Similarly, by~\eqref{eq:Boalch-eq-111.e} and~\eqref{eq:Boalch-eq-111.f},
\begin{align*}
 \lr{\gamma_2,v_{13}}&=\lr{v_{21},v_{13}}*v_{12}+v_{21}*\lr{v_{12},v_{13}}
\\
 &\quad -w_{23}\gamma_3*\lr{w_{32},v_{13}}-w_{23}*\lr{\gamma_3,v_{13}}*w_{32}-\lr{w_{23},v_{13}}*\gamma_3w_{32}
 \\
 &=v_{12}\otimes v_{21}v_{13}+v_{12}\otimes v_{23}-v_{12}\otimes w_{23}\gamma_3
 \\
&=v_{12}\otimes v_{21}v_{13}+v_{12}\otimes (w_{23}\gamma_3-v_{21}v_{13})-v_{12}\otimes w_{23}\gamma_3=0.
\end{align*}

Finally, we state the multiplicative moment map condition for $\gamma_1$ and $v_{13}$. Applying ~\eqref{eq:Boalch-eq-111.a} and \eqref{eq:Boalch-eq-111.c},
\begin{align*}
 \lr{\gamma_1,v_{13}}&=-w_{12}\gamma_2*\lr{w_{21},v_{13}}-\lr{w_{12},v_{13}}*\gamma_2w_{21}
 \\
 &\quad -w_{13}\gamma_3*\lr{w_{31},v_{13}}-w_{13}*\lr{\gamma_3,v_{13}}*w_{31}-\lr{w_{13},v_{13}}*\gamma_3w_{31}
 \\
 &=\frac{1}{2}e_1\otimes w_{12}\gamma_2w_{21}v_{13}+\frac{1}{2}w_{12}\gamma_2w_{21}\otimes v_{13}-e_1\otimes w_{13}\gamma_3
 \\
 &\quad + \frac{1}{2}e_1\otimes w_{13}\gamma_3w_{31}v_{13} +\frac{1}{2}w_{13}\gamma_3w_{31}\otimes v_{13}
 \\
 &=-e_1\otimes v_{13}+\frac{1}{2}e_1\otimes\big(w_{12}\gamma_2w_{21}+w_{13}\gamma_3w_{31}\big)v_{13}+\frac{1}{2}\big(w_{12}\gamma_2w_{21}+w_{13}\gamma_3w_{31}\big)\otimes v_{13}
 \\
 &=-\frac{1}{2}\big(e_1\otimes \gamma_1v_{13}+\gamma_1\otimes v_{13}\big),
\end{align*}
as we wished.
Since the proof of \eqref{Phim} concerning $v_{31}$ is similar to the case that we just showed, we leave it to the reader.

%----------------
% v_{23}
%----------------
Now, we shall prove~\eqref{Phim} for $v_{23}$. To start with, by~\eqref{eq:Boalch-eq-111.i},
\begin{align*}
 \lr{\gamma_3,v_{23}}&=\lr{v_{31},v_{23}}*v_{13}+v_{31}*\lr{v_{13},v_{23}}+\lr{v_{32},v_{23}}*v_{23}
 \\
 &=v_{23}\otimes e_3+\frac{1}{2}v_{23}\otimes\big(v_{31}v_{13}+v_{32}v_{23}\big)+\frac{1}{2}v_{23}\big(v_{31}v_{13}+v_{32}v_{23}\big)\otimes e_3
 \\
 &=\frac{1}{2}\big(v_{23}\otimes\gamma_3+v_{23}\gamma_3\otimes e_3\big).
\end{align*}

Next, by~\eqref{eq:Boalch-eq-111.e} and~\eqref{eq:Boalch-eq-111.f},
\begin{align*}
& \lr{\gamma_2,v_{23}}
 \\
 &=v_{21}*\lr{v_{12},v_{23}}+\lr{v_{21},v_{23}}*v_{12}
 \\
 &\quad - w_{23}\gamma_3*\lr{w_{32},v_{23}}-w_{23}*\lr{\gamma_3,v_{23}}*w_{32}-\lr{w_{23},v_{23}}*\gamma_3w_{32}
 \\
 &=e_2\otimes (v_{21}v_{13}-w_{23}\gamma_3)+\frac{1}{2}e_2\otimes\big(w_{23}\gamma_3w_{32}-v_{21}v_{12}\big)v_{23}+\frac{1}{2}\big(w_{23}\gamma_3w_{32}-v_{21}v_{12}\big)\otimes v_{23}
 \\
 &=-\frac{1}{2}\big(e_2\otimes\gamma_2v_{23}+\gamma_2\otimes v_{23}\big).
\end{align*}

Finally, the last case to study deals with $\gamma_1$ and $v_{23}$:
\begin{align*}
 \lr{\gamma_1,v_{23}}&=-w_{12}\gamma_2*\lr{w_{21},v_{23}}-w_{12}*\lr{\gamma_2,v_{23}}*w_{21}-\lr{w_{12},v_{23}}*\gamma_2w_{21}
 \\
 &\quad -w_{13}\gamma_3*\lr{w_{31},v_{23}}-w_{13}*\lr{\gamma_3,v_{23}}*w_{31}-\lr{w_{13},v_{23}}*\gamma_3w_{31}
 \\
 &=-\frac{1}{2}w_{21}\otimes w_{12}\gamma_2v_{23}+\frac{1}{2}w_{21}\otimes w_{12}\gamma_2v_{23}+\frac{1}{2}\gamma_2w_{21}\otimes w_{12}v_{23}
 \\
 &\quad -\frac{1}{2}\gamma_2w_{21}\otimes w_{12}v_{23}+\frac{1}{2}v_{23}w_{31}\otimes w_{13}\gamma_3
 \\
 &\quad -\frac{1}{2} v_{23}w_{31}\otimes w_{13}\gamma_3-\frac{1}{2}v_{23}\gamma_3w_{31}\otimes w_{13}+\frac{1}{2}v_{23}\gamma_3w_{31}\otimes w_{13}
 \\
 &=0.
\end{align*}
Due to the similarities, the reader can check that the multiplicative moment map condition~\eqref{Phim} applied to $v_{32}$ holds.
\end{proof}

Now, under the notations of Theorem \ref{thm:triangle}, and 
given a dimension vector $d=(d_1,d_2,d_3)$ with $N:=d_1+d_2+d_3$ as in \ref{sec:KR-principle}, we consider the representation scheme $\Rep\big(\Bc(\Delta),d\big)$, which is acted on by the group $G_{d}=\prod_{s\in I}\GL_{d_s}(\kk)$, with $\kk$ an algebraically closed field of characteristic zero.  
Combining Theorem \ref{thm:triangle} with Proposition \ref{prop:double-quasi-Ham-red} and Theorem \ref{thm:KR-theorem}, we obtain the following two interesting corollaries, which match with Boalch's results:

\begin{cor}
Using the notation introduced in \ref{sec:fission-algebras}, the following holds:
\begin{enumerate}
\item[\textup{(i)}]
The fission algebra $\mathcal{F}^q(\Delta)$ admits an $H_0$-Poisson structure (in the sense of Definition~\ref{def:H0-Poisson-str}).
\item[\textup{(ii)}]
The $H_0$-Poisson structure on $\mathcal{F}^q(\Delta)$ induces a Poisson structure
on $\Big(\mathcal{F}^q(\Delta)\Big)^{\GL_{d}}_d$, the coordinate ring of the colored multiplicative quiver variety attached to $\Delta$.
\end{enumerate}
\label{cor:1}
\end{cor}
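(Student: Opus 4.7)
The plan is to deduce both parts of the corollary directly from Theorem~\ref{thm:triangle} by invoking, in turn, Proposition~\ref{prop:double-quasi-Ham-red} and Theorem~\ref{thm:KR-theorem}. No further calculations are required; the only work lies in matching the general machinery with the specific setup of the monochromatic triangle.

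For part (i), I would first observe that the element $q=\sum_{s\in\{1,2,3\}} q_s e_s$ belongs to $B^\times$ precisely because $q\in(\kk^\times)^I$, as required by the hypothesis of Proposition~\ref{prop:double-quasi-Ham-red}. Next I would verify that the ideal $(\Phi-q)$ appearing in that proposition coincides with the ideal $R_q$ of Definition~\ref{def:def-fission-algebras} that defines $\mathcal{F}^q(\Delta)$. Since $\Delta$ is monochromatic, each vertex $s$ belongs to a single color, so the moment map~\eqref{eq:moment-map-thm-111} decomposes as $\Phi=\sum_{s=1}^{3}\gamma_s$ with $\Phi_s=\gamma_s\in e_s\Bc(\Delta)e_s$. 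Cutting $\Phi-q$ by $e_s$ on both sides extracts the generator $\gamma_s-q_s e_s$, and conversely summing these recovers $\Phi-q$; hence both ideals coincide, so $\mathcal{F}^q(\Delta)\simeq \Bc(\Delta)/(\Phi-q)$. Proposition~\ref{prop:double-quasi-Ham-red}, applied to the Hamiltonian double quasi-Poisson algebra $(\Bc(\Delta),\lr{-,-},\Phi)$ produced by Theorem~\ref{thm:triangle}, then yields an $H_0$-Poisson structure on $\mathcal{F}^q(\Delta)$.

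For part (ii), I would feed the $H_0$-Poisson algebra from (i) into Theorem~\ref{thm:KR-theorem}(ii), which produces a unique Poisson bracket on the invariant ring $\big(\mathcal{F}^q(\Delta)\big)^{\GL_d}_d$ characterized by $\{\tr(a),\tr(b)\}=\tr\{\overline{a},\overline{b}\}$. It remains only to identify this invariant ring with the coordinate ring of the colored multiplicative quiver variety $\mathcal{M}(\Delta,q,d)$; this identification follows from the discussion in~\ref{sec:KR-principle} together with the Le Bruyn--Procesi description of the GIT quotient $\Rep(\mathcal{F}^q(\Delta),d)/\!\!/\GL_d$, as summarized in the bottom-right cell of Table~\ref{Table}.

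No serious obstacle is expected: once Theorem~\ref{thm:triangle} is in hand, the corollary is purely formal. The only subtlety worth spelling out is the compatibility between the single-element ideal $(\Phi-q)$ used in Proposition~\ref{prop:double-quasi-Ham-red} and the multi-generator ideal $R_q$ used in Definition~\ref{def:def-fission-algebras}, which is immediate from the idempotent decomposition as indicated above.
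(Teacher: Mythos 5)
Your argument is correct and matches the paper's approach exactly: the corollary is stated in the paper as an immediate consequence of combining Theorem~\ref{thm:triangle} with Proposition~\ref{prop:double-quasi-Ham-red} and Theorem~\ref{thm:KR-theorem}, with no further proof supplied. The extra step you include---checking that the single-element ideal $(\Phi-q)$ agrees with the multi-generator ideal $R_q$ via the idempotent decomposition $\Phi_s=\gamma_s$ for the monochromatic case---is a worthwhile clarification that the paper leaves implicit, and your reasoning for it is correct.
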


\begin{cor}
The Hamiltonian double quasi-Poisson algebra $\big(\mathcal{B}(\Delta),\lr{-,-},\Phi\big)$ induces a  Hamiltonian quasi-Poisson $\GL_d$-structure on $\Rep(\Bc(\Delta), d)$. 
\label{cor:2}
\end{cor}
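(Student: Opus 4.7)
The plan is that Corollary \ref{cor:2} should follow essentially immediately from Theorem \ref{thm:triangle} combined with the Kontsevich--Rosenberg transfer statement recorded as Theorem \ref{thm:KR-theorem}\,\textup{(i)}; there is no new structural work to do. The strategy is simply to set up the representation scheme, invoke the existing transfer results, and spell out what the induced objects are.

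First, I would fix the algebraically closed base field $\kk$ of characteristic zero, the $B$-algebra structure $B=\kk e_1\oplus\kk e_2\oplus\kk e_3\hookrightarrow\Bc(\Delta)$, and a dimension vector $d=(d_1,d_2,d_3)\in\N^3$ with $N=d_1+d_2+d_3$. Following \ref{sec:KR-principle}, this determines the representation scheme $\Rep(\Bc(\Delta),d)=\Spec(\Bc(\Delta)_d)$ together with the action of $\GL_d=\prod_{s=1}^{3}\GL_{d_s}(\kk)$. By Lemma \ref{lem:rep-non-empty} this scheme is non-empty (and is in fact smooth of dimension $2\sum_{a\in\Delta}d_{t(a)}d_{h(a)}$ on the locus where it is non-empty, since $\Bc(\Delta)$ is a localization of $\kk\overline{\Delta}$).

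Next, I would invoke Theorem \ref{thm:triangle}, which provides the $B$-linear double quasi-Poisson bracket $\lr{-,-}$ on $\Bc(\Delta)$ displayed in \eqref{double-bracket-111} together with the multiplicative moment map $\Phi=\gamma_1+\gamma_2+\gamma_3$ satisfying \eqref{Phim}. With these data in hand, Theorem \ref{thm:KR-theorem}\,\textup{(i)} applies verbatim: the formula
\[
 \{a_{ij},b_{uv}\}\;=\;\lr{a,b}^{\prime}_{uj}\,\lr{a,b}^{\prime\prime}_{iv}\,, \qquad a,b\in\Bc(\Delta),
\]
defines a quasi-Poisson bracket on the coordinate ring $\Bc(\Delta)_d$, and the matrix $\Phi_d=\gamma_{1,d}+\gamma_{2,d}+\gamma_{3,d}\in\GL_N(\Bc(\Delta)_d)$ (viewed block-diagonally via the idempotent decomposition $1=e_1+e_2+e_3$) is a group-valued moment map for the $\GL_d$-action, in the sense of \cite{AKSM02}. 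These two pieces of data are exactly what a Hamiltonian quasi-Poisson $\GL_d$-structure on $\Rep(\Bc(\Delta),d)$ consists of.

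The only mildly subtle point, and the one I would flag as requiring a line of justification, is that $\Phi_d$ indeed takes values in $\GL_d$ rather than in $\Mat_N$: this uses that each component $\Phi_s=\gamma_s$ is invertible in $e_s\Bc(\Delta)e_s$, a property built into the localization in Definition \ref{Def:Boalch-algebra} and used throughout \ref{sec:the-monochromatic-111}. I expect no genuine obstacle beyond that: all the hard computational work, namely verifying the quasi-Poisson property \eqref{qPabc} and the multiplicative moment map condition \eqref{Phim} for $\lr{-,-}$ on $\Bc(\Delta)$, has already been carried out in Theorem \ref{thm:triangle} (the moment map part explicitly, the quasi-Poisson part via the general mechanism developed in Section \ref{sec:proof-quasi-Poisson-property}). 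Consequently, the proof reduces to citing Theorem \ref{thm:triangle} and Theorem \ref{thm:KR-theorem}\,\textup{(i)}.
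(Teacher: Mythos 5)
Your proposal is correct and takes the same route as the paper, which derives Corollary~\ref{cor:2} (and Corollary~\ref{cor:1}) in one line by combining Theorem~\ref{thm:triangle} with Theorem~\ref{thm:KR-theorem}. The extra remarks you add — non-emptiness via Lemma~\ref{lem:rep-non-empty} and invertibility of $\Phi_d$ coming from the localization in Definition~\ref{Def:Boalch-algebra} — are sensible sanity checks but not part of the paper's (implicit) argument.
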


Note that Boalch \cite[Corollary 5.7]{B15} proves in particular that  $\Rep(\Bc(\Delta), d)$ carries a quasi-Hamiltonian $\GL_{d}$-structure (our $\Rep(\Bc(\Delta), d)$ corresponds to his ``space of invertible representations'') in the sense of \cite[Definition 2.2]{AMM98} (after complexification), that is, a triple consisting of a $\GL_{d}$-variety, an invariant 2-form, and a  $\GL_{d}$-valued moment map.
Hence, as emphasized in the introduction, we expect that the double Poisson bracket $\lr{-,-}$ on $\mathcal{B}(\Delta)$ obtained in Theorem \ref{thm:triangle} to be nondegenerate (see \cite[Theorem 7.1]{VdB2}), thus giving rise to a quasi-bisymplectic algebra \cite[\S6]{VdB2} that induces the quasi-Hamiltonian $\GL_{d}$-structure of  \cite[Corollary 5.7]{B15}, via \cite[Proposition 6.1]{VdB2}.
Indeed, we expect that Conjecture \ref{Conj:Main} can be updated by stating that for each colored quiver $\Upsilon$, the Boalch algebra $\Bc(\Upsilon)$ carries a quasi-bisymplectic structure (cf. also Remark~4.4 in \cite{Bo14}). 
We will explore this line of research in a separate work.

Further, another consequence of Theorem \ref{thm:triangle} involves pre-Calabi--Yau algebras. A pre-Calabi--Yau structure on a graded vector space is a solution of a certain Maurer--Cartan equation formulated in terms of the so-called (generalized) necklace bracket (see \cite{KTV}). Geometrically, a pre-Calabi-Yau structure can be seen as a noncommutative shifted Poisson structure, since Yeung \cite{Ye} proved that they induce shifted Poisson structures on the derived moduli stack of representations in analogy with the Kontsevich--Rosenberg principle.
Furthermore, Theorem 5.1 of \cite{FH20} states that double quasi-Poisson algebras induce pre-Calabi--Yau algebra structures.
Hence, this result, combined with Theorem \ref{thm:triangle}, gives a pre-Calabi-Yau algebra structure on the Boalch algebra
$\mathcal{B}(\Delta)$, which we expect to be nondegenerate, giving rise to a (right) Calabi-Yau structure. 
It would be interesting to show how this structure descends to the fission algebra $\mathcal{F}^q(\Delta)$. 
In this direction, let us note that some recent works have investigated (left) Calabi-Yau structures \cite{BCS21} and the 2-Calabi-Yau property \cite{KS} for multiplicative preprojective algebras and their differential graded versions. Since by Lemma \ref{L:AlgIso}, they are particular examples of fission algebras, it seems natural to search for analogous results in the case of colored quivers, with $\mathcal{F}^q(\Delta)$ the first new case to investigate.

\section{Proof of the quasi-Poisson property in Theorem \ref{thm:triangle}}
\label{sec:proof-quasi-Poisson-property}

The algebra $\Bc(\Delta)$ is obtained by localization of $\kk \overline{\Delta}$ due to Lemma \ref{lem:localisation-Bgamma}. Furthermore, the double bracket given in Theorem \ref{thm:triangle} can be directly defined on $\kk \overline{\Delta}$---see \eqref{double-bracket-111}. Thus, if we show that this double bracket on $\kk \overline{\Delta}$ is quasi-Poisson, it will also be the case on $\Bc(\Delta)$ by localization and we are done. We will prove the quasi-Poisson property on $\kk \overline{\Delta}$ in \ref{sec:proof-quasi-Poisson-Particular} as a particular case of a general construction that is explained in \ref{sec:proof-quasi-Poisson-General}. We expect that the general construction that is carried out in \ref{sec:proof-quasi-Poisson-General}--\ref{ss:strat4} can be useful to prove Conjecture \ref{Conj:Main} in the case of the monochromatic complete $n$-partite graph on $n$ vertices. 

\subsection{General conditions for a double quasi-Poisson bracket} \label{sec:proof-quasi-Poisson-General}

Fix $n\geq 2$ and let $K_n$ be the complete $n$-partite graph over $n$ vertices. Following \ref{ss:Gr}, we fix a total order on the vertices which allows us to identify them with $I_n=\{1,\ldots,n\}$. The induced colored quiver $Q_n$ has for double $\overline{Q}_n$, which can be seen as the quiver over the vertex set $I_n$ with arrows $v_{ij}:j\to i$ for each $i\neq j$. 

We define a double bracket on $\kk\overline{Q}_n$ as follows. For each $i=1,\ldots,n$, we introduce two skewsymmetric matrices $\alpha^{(i)},\beta^{(i)}$ whose entries along the $i$-th row and the $i$-th column are zero. That is, the entries satisfy 
\begin{align*}
 &\alpha^{(i)}_{jk}=-\alpha^{(i)}_{kj},\quad \alpha^{(i)}_{i k}=0=\alpha^{(i)}_{ji}\,, \quad \text{ for all }\ 1\leq j,k \leq n\,, \\
&\beta^{(i)}_{jk}=-\beta^{(i)}_{kj},\quad \beta^{(i)}_{i k}=0=\beta^{(i)}_{ji}\,, \quad \text{ for all }\ 1\leq j,k \leq n\,.
 \end{align*}
We also introduce for each $i=1,\ldots,n$ two matrices $\mu^{(i)},\nu^{(i)}$ whose entries along the diagonal, the $i$-th row and the $i$-th column are zero. 
 That is, the entries satisfy  
\begin{align*}
 &\mu^{(i)}_{jj}=0,\ \mu^{(i)}_{i j}=0=\mu^{(i)}_{ji}\,, \quad
 \nu^{(i)}_{jj}=0,\ \nu^{(i)}_{i j}=0=\nu^{(i)}_{ji}\,, \quad  \text{ for all }\ 1\leq j \leq n\,. 
 \end{align*} 
Finally, for any triple of strictly decreasing indices $i>j>k$, we choose an arbitrary $\kappa_j^{(i,k)}\in \kk$. We let  $\kappa_j^{(i,k)}=0$ whenever the condition $i>j>k$ is not satisfied. We then define for $i,j,k,l\in I_n$
\begin{align}
 \lr{v_{ij},v_{ij}}&=0, \label{vv0}\\
 \lr{v_{ij},v_{kl}}&=0,\quad \text{ for }\{i,j\}\cap\{k,l\}=\emptyset\,, \\
 \lr{v_{ij},v_{kj}}&=\alpha^{(j)}_{ik} v_{kj}\otimes v_{ij} \,, \\
 \lr{v_{ij},v_{il}}&=\beta^{(i)}_{jl} v_{ij} \otimes v_{il}\,, \\
 \lr{v_{ij},v_{jl}}&\stackrel{i\neq l}{=} \mu_{il}^{(j)} e_j \otimes v_{ij}v_{jl} + \nu_{il}^{(j)} e_j \otimes v_{il}, \\
 \lr{v_{ij},v_{ki}}&\stackrel{k\neq j}{=} -\mu_{kj}^{(i)} v_{ki}v_{ij} \otimes e_i - \nu_{kj}^{(i)} v_{kj} \otimes e_i \,, \\
 \lr{v_{ij},v_{ji}}&\stackrel{i> j}{=} e_j \otimes e_i + \frac12 v_{ji}v_{ij}\otimes e_i + \frac12 e_j \otimes v_{ij}v_{ji} + \sum_{i>a>j} \kappa_a^{(i,j)} e_j \otimes v_{ia}v_{ai}  \,, \\
 \lr{v_{ij},v_{ji}}&\stackrel{i< j}{=} -e_j \otimes e_i - \frac12 v_{ji}v_{ij}\otimes e_i - \frac12 e_j \otimes v_{ij}v_{ji} - \sum_{i<b<j} \kappa_b^{(j,i)} v_{jb}v_{bj} \otimes e_i \,, \label{vvopp}
\end{align}
which can be checked to be a double bracket on $\kk\overline{Q}_n$. In particular, if $v_{ab}$ appears on the left hand side with $a=b$, the right hand side vanishes; this is consistent with the fact that there is no generator $v_{aa}$. For later use, we note that the last two expressions can be gathered together as 
\begin{equation*}
 \begin{aligned}
  \lr{v_{ij},v_{ji}}=& \sgn(i-j)\left[ e_j \otimes e_i + \frac12 v_{ji}v_{ij}\otimes e_i + \frac12 e_j \otimes v_{ij}v_{ji}\right] \\
  &+ \sum_{i>a>j} \kappa_a^{(i,j)} e_j \otimes v_{ia}v_{ai} - \sum_{i<b<j} \kappa_b^{(j,i)} v_{jb}v_{bj} \otimes e_i\,,
\end{aligned}
\end{equation*}
where $\sgn$ is the sign function and we follow the convention that a sum over an empty set vanishes.

We want to compute the triple bracket \eqref{Eq:TripBr} on generators, which is given by 
\begin{align*}
\lr{v_{ij},v_{kl},v_{pq}} = \lr{v_{ij},\lr{v_{kl},v_{pq}}}_L+\tau_{(123)}\lr{v_{kl},\lr{v_{pq},v_{ij}}}_L+\tau_{(132)}\lr{v_{pq},\lr{v_{ij},v_{kl}}}_L \,.
\end{align*}
For the double bracket to be quasi-Poisson, we need to impose that the triple bracket coincides with \eqref{eq:triple-bracket-E3} for all indices, which will impose conditions on the coefficients in \eqref{vv0}--\eqref{vvopp}. Thus, we will compute the above triple bracket using \eqref{vv0}--\eqref{vvopp}, and equate it to the desired triple bracket 
\begin{equation*}
   \begin{aligned} \label{qPabc-vvv}
    \lr{v_{ij},v_{kl},v_{pq}}_{\qP}=&
\frac14 \sum_{s\in I} \Big(v_{pq} e_s v_{ij} \otimes e_s v_{kl} \otimes e_s  - v_{pq} e_s v_{ij} \otimes e_s \otimes v_{kl} e_s \Big) \\
&-\frac14 \sum_{s\in I} \Big( v_{pq} e_s \otimes v_{ij} e_s v_{kl} \otimes e_s - v_{pq} e_s \otimes v_{ij} e_s \otimes v_{kl} e_s\Big) \\
&-\frac14 \sum_{s\in I} \Big( e_s v_{ij} \otimes e_s v_{kl} \otimes e_s v_{pq} - e_s v_{ij} \otimes e_s \otimes v_{kl} e_s v_{pq}\Big)  \\
&+\frac14 \sum_{s\in I} \Big( e_s \otimes v_{ij} e_s v_{kl} \otimes e_s v_{pq} - e_s \otimes v_{ij} e_s \otimes v_{kl} e_s v_{pq} \Big)\,.
  \end{aligned}
\end{equation*}
We note from this expression that $\lr{v_{ij},v_{kl},v_{pq}}_{\qP}=0$ trivially if there is no index $s$ appearing simultaneously in the index sets $\{i,j\}$, $\{k,l\}$ and $\{p,q\}$. We also remark that since $v_{ab}=0$ if $a=b$, we only need to consider the cases where $i\neq j$, $k\neq l$ and $p\neq q$. For example, if $i=j$, we directly get $\lr{v_{ij},v_{kl},v_{pq}}=0$ and $\lr{v_{ij},v_{kl},v_{pq}}_{\qP}=0$. 

We will study the quasi-Poisson property on $\lr{v_{ij},v_{kl},v_{pq}}$ according to different cases. They depend on the cardinality of the intersection 
of the sets of ``first'' and ``second'' indices 
\begin{equation} \label{Eq:S}
 S=\{i,k,p\}\cap \{j,l,q\}
\end{equation}
as follows : 
\begin{enumerate}
 \item[]\underline{Case 1.} $S=\emptyset$ (considered in \ref{ss:strat1});
 \item[]\underline{Case 2.} $S=\{\star\}$ has cardinality $1$ (considered in \ref{ss:strat2}); 
 \item[]\underline{Case 3.} $S=\{\star,\ast\}$ has cardinality $2$ (considered in \ref{ss:strat3}); 
 \item[]\underline{Case 4.} $S=\{\star,\ast,\bullet\}$ has cardinality $3$ (considered in \ref{ss:strat4}).
\end{enumerate}
We recall that we will always assume that $i\neq j$, $k\neq l$ and $p\neq q$, though we will not always write these three conditions.

\subsection{Conditions obtained from Case 1} \label{ss:strat1}

\begin{lem} \label{Lem:strategy-1}
When $S$ given by \eqref{Eq:S} is empty, the quasi-Poisson property holds for $\lr{v_{ij},v_{kl},v_{pq}}$  if and only if the following two conditions are satisfied:
\begin{enumerate}
 \item[(i)] either $j=l=q$, or we have $i=k=p$ and 
 \begin{equation}  \label{Eq:strategy-1i}
  \beta_{jl}^{(i)}\beta_{lq}^{(i)}+\beta_{lq}^{(i)}\beta_{qj}^{(i)}+\beta_{qj}^{(i)}\beta_{jl}^{(i)}=-\frac14\,;
 \end{equation} 
  \item[(ii)] either  $i=k=p$, or we have $j=l=q$ and 
  \begin{equation} \label{Eq:strategy-1ii}
\alpha_{ik}^{(j)}\alpha_{kp}^{(j)}+\alpha_{kp}^{(j)}\alpha_{pi}^{(j)}+\alpha_{pi}^{(j)}\alpha_{ik}^{(j)}=-\frac14\,.
  \end{equation}
\end{enumerate}
\end{lem}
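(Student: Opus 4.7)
The plan is to verify both sides of the quasi-Poisson identity $\lr{v_{ij},v_{kl},v_{pq}}=\lr{v_{ij},v_{kl},v_{pq}}_{\qP}$ directly from the definitions, under the standing assumption $S=\emptyset$, and to identify the precise configurations of the indices that yield a nontrivial condition on the coefficients.

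First I would analyze the right-hand side. Each of the eight summands of \eqref{qPabc-vvv} requires the running index $s$ to be simultaneously equal to one of the first indices in $\{i,k,p\}$ and one of the second indices in $\{j,l,q\}$. Under $S=\emptyset$ this can occur only if all three first indices coincide (forcing $s=i=k=p$, which activates term~5 alone) or all three second indices coincide (forcing $s=j=l=q$, which activates term~4 alone). In every other configuration, including all ``partial coincidence'' cases such as $i=k\neq p$ with $l=q\neq j$, $\lr{v_{ij},v_{kl},v_{pq}}_{\qP}$ vanishes identically.

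Next I would compute the triple bracket \eqref{Eq:TripBr} from \eqref{vv0}--\eqref{vvopp}. The hypothesis $S=\emptyset$ prevents any occurrence of $\mu^{(\cdot)}$, $\nu^{(\cdot)}$ or $\kappa^{(\cdot,\cdot)}$ contributions, as well as the ``opposite'' case $\lr{v_{ab},v_{ba}}$, because each of these requires some first index to coincide with some second index. Hence only the $\alpha^{(\cdot)}$ and $\beta^{(\cdot)}$ brackets can contribute. A short case check then shows that when neither $i=k=p$ nor $j=l=q$ holds, at least one inner bracket in each of the three iterated expressions in \eqref{Eq:TripBr} vanishes, so both sides of the identity are zero and no condition on the coefficients is imposed.

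The substantive computation is the case $i=k=p$; the case $j=l=q$ is entirely symmetric, obtained by exchanging the roles of $\alpha^{(\cdot)}$ and $\beta^{(\cdot)}$. Here every pairwise double bracket among $v_{ij}, v_{il}, v_{iq}$ is of the shared-first-index type and involves only $\beta^{(i)}$, so expanding \eqref{Eq:TripBr} and applying $\tau_{(123)}$, $\tau_{(132)}$ yields
\begin{equation*}
\lr{v_{ij},v_{il},v_{iq}} = \bigl(\beta^{(i)}_{jl}\beta^{(i)}_{lq}+\beta^{(i)}_{lq}\beta^{(i)}_{qj}+\beta^{(i)}_{qj}\beta^{(i)}_{jl}\bigr)\,v_{ij}\otimes v_{il}\otimes v_{iq},
\end{equation*}
while term~5 of the $\qP$ side with $s=i$ contributes $-\frac{1}{4}\,v_{ij}\otimes v_{il}\otimes v_{iq}$. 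If moreover $j=l=q$, both sides vanish (the $\beta$ coefficients are zero on the diagonal, and term~4 cancels term~5 on the right), so the identity holds trivially; otherwise comparing coefficients produces \eqref{Eq:strategy-1i}, and the mirror argument yields \eqref{Eq:strategy-1ii}. I expect no serious obstacle here: the difficulty is purely organizational, namely the bookkeeping of which of the eight $\qP$ summands and which iterated brackets survive in each configuration of the indices.
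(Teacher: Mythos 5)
You take the same overall approach as the paper: expand both sides of the quasi-Poisson identity and compare coefficients. Your analysis of the right-hand side is correct (only term~4 survives when $j=l=q$, only term~5 survives when $i=k=p$, with signs $+\frac14$ and $-\frac14$), and so is the computation in the main case $i=k=p$. The observation that the $\mu$, $\nu$, $\kappa$ and ``opposite'' contributions are excluded by $S=\emptyset$ is also correct.

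However, there is a genuine gap in the claim that ``when neither $i=k=p$ nor $j=l=q$ holds, at least one inner bracket in each of the three iterated expressions in \eqref{Eq:TripBr} vanishes.'' This is false in the partial-coincidence cases. Take for instance $i=1$, $j=3$, $k=p=2$, $l=3$, $q=4$, so the triple is $\lr{v_{13},v_{23},v_{24}}$ and $S=\{1,2\}\cap\{3,4\}=\emptyset$. One finds
\begin{equation*}
\lr{v_{13},\lr{v_{23},v_{24}}}_L = \beta^{(2)}_{34}\alpha^{(3)}_{12}\,v_{23}\otimes v_{13}\otimes v_{24}\,,\qquad
\tau_{(132)}\lr{v_{24},\lr{v_{13},v_{23}}}_L = \alpha^{(3)}_{12}\beta^{(2)}_{43}\,v_{23}\otimes v_{13}\otimes v_{24}\,,
\end{equation*}
both individually nonzero (every composed bracket along the way is nonzero); their sum vanishes only because $\beta^{(2)}_{34}+\beta^{(2)}_{43}=0$ by skewsymmetry of $\beta^{(2)}$. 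The same phenomenon occurs whenever $k=p$ together with $j=l$, or $l=q$ together with $i=p$, and cyclic rotations thereof. The paper's proof handles these by writing out the full four-term expansion of each iterated bracket (keeping the mixed-delta terms $\delta_{kp}\delta_{jl}$ and $\delta_{lq}\delta_{ip}$) and then explicitly invoking the skewsymmetry of $\alpha^{(\cdot)}$ and $\beta^{(\cdot)}$ to collapse them. Your ``short case check'' as sketched would miss these cancellations and so does not actually establish that the triple bracket vanishes in the partial-coincidence cases; this part of the argument needs to be reworked.
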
  
\begin{proof}
We first compute that 
\begin{align*}
 \lr{v_{ij},\lr{v_{kl},v_{pq}}}_L=&\,\delta_{kp}\beta_{lq}^{(k)}\Big[ \delta_{ik} \beta_{jl}^{(k)} v_{ij}\otimes v_{kl}\otimes v_{pq} + \delta_{jl} \alpha_{ik}^{(l)} v_{kl} \otimes v_{ij} \otimes v_{pq} \Big] \\
 &+\delta_{lq}\alpha_{kp}^{(l)}\Big[ \delta_{ip} \beta_{jq}^{(i)} v_{ij}\otimes v_{pq}\otimes v_{kl} + \delta_{jq} \alpha_{ip}^{(j)} v_{pq} \otimes v_{ij} \otimes v_{kl} \Big]\,, \\
 \tau_{(123)}\lr{v_{kl},\lr{v_{pq},v_{ij}}}_L=& 
\, \delta_{ip}\beta_{qj}^{(i)}\Big[ \delta_{kp} \beta_{lq}^{(k)} v_{ij}\otimes v_{kl}\otimes v_{pq} + \delta_{lq} \alpha_{kp}^{(l)} v_{ij} \otimes v_{pq} \otimes v_{kl} \Big] \\
 &+\delta_{qj}\alpha_{pi}^{(j)}\Big[ \delta_{ki} \beta_{lj}^{(i)} v_{pq}\otimes v_{kl}\otimes v_{ij} + \delta_{lj} \alpha_{ki}^{(j)} v_{pq} \otimes v_{ij} \otimes v_{kl} \Big]\,, \\
\tau_{(132)}\lr{v_{pq},\lr{v_{ij},v_{kl}}}_L=&
\, \delta_{ik}\beta_{jl}^{(i)}\Big[ \delta_{pi} \beta_{qj}^{(i)} v_{ij}\otimes v_{kl}\otimes v_{pq} + \delta_{qj} \alpha_{pi}^{(j)} v_{pq} \otimes v_{kl} \otimes v_{ij} \Big] \\
 &+\delta_{jl}\alpha_{ik}^{(j)}\Big[ \delta_{pk} \beta_{ql}^{(k)} v_{kl}\otimes v_{ij}\otimes v_{pq} + \delta_{ql} \alpha_{pk}^{(l)} v_{pq} \otimes v_{ij} \otimes v_{kl} \Big]\,.
\end{align*}
 After some simplifications relying on the skewsymmetry rules $\alpha^{(a)}_{bc}=-\alpha^{(a)}_{cb}$ and $\beta^{(a)}_{bc}=-\beta^{(a)}_{cb}$, we get that 
 \begin{align*}
\lr{v_{ij},v_{kl},v_{pq}}=&\,\delta_{ik}\delta_{kp}  [\beta_{jl}^{(i)}\beta_{lq}^{(i)}+\beta_{lq}^{(i)}\beta_{qj}^{(i)}+\beta_{qj}^{(i)}\beta_{jl}^{(i)}]\, v_{ij} \otimes v_{kl} \otimes v_{pq} \\
&-\delta_{jl}\delta_{lq} [\alpha_{ik}^{(j)}\alpha_{kp}^{(j)}+\alpha_{kp}^{(j)}\alpha_{pi}^{(j)}+\alpha_{pi}^{(j)}\alpha_{ik}^{(j)}]\, 
v_{pq}\otimes v_{ij} \otimes v_{kl} \,.
 \end{align*}
We also compute 
\begin{align*}
 \lr{v_{ij},v_{kl},v_{pq}}_{\qP} = \frac14 \left(\delta_{jl}\delta_{lq} \,v_{pq}\otimes v_{ij} \otimes v_{kl} - \delta_{ik}\delta_{kp} \,v_{ij} \otimes v_{kl} \otimes v_{pq}   \right)\,.
\end{align*}
Both triple brackets vanish when $i=k=p$ and $j=l=q$, while in the remaining cases it suffices to equate the coefficients of the two triple brackets in order to find the claimed conditions. 
\end{proof}

\begin{rem}
As a special case of Lemma \ref{Lem:strategy-1}, we recover the easy result that there are no conditions to verify for $\lr{v_{ij},v_{ij},v_{ij}}=0=\lr{v_{ij},v_{ij},v_{ij}}_{\qP}$.
\end{rem}

\subsection{Conditions obtained from Case 2}  \label{ss:strat2}

The single element $\star$ appearing in the intersection can occur either 
\begin{enumerate}
 \item[]\underline{Case 2.1.} once in $\{i,k,p\}$ and once in  $\{j,l,q\}$;
 \item[]\underline{Case 2.2.} once in $\{i,k,p\}$ and twice in  $\{j,l,q\}$; 
 \item[]\underline{Case 2.3.} twice in $\{i,k,p\}$ and once in  $\{j,l,q\}$.
\end{enumerate}
As we assume that $i\neq j$, $k\neq l$ and $p\neq q$, there is no other case  because all the other possibilities would lead to having an element  $v_{ij},v_{kl},v_{pq}$ of the form $v_{\star\star}$. 

\subsubsection{Case 2.1.} From the cyclicity of the triple bracket given as (see \ref{sec:sec-double-quasi-Ham})
\begin{align*}
 \lr{-,-,-}=\tau_{(123)}\circ \lr{-,-,-}\circ \tau_{(123)}^{-1}\,,
\end{align*}
we can assume without loss of generality that either $j=k=\star$, or $j=p=\star$. 

\begin{lem} \label{Lem:strategy-2.1.a}
When $S$ given by \eqref{Eq:S} is such that $S=\{\star\}$ with $\star=j=k$, the quasi-Poisson property always holds for $\lr{v_{ik},v_{kl},v_{pq}}$. 
\end{lem}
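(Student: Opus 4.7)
The plan is to show that when $S=\{k\}$ with $\star = j = k$, \emph{both} sides of the quasi-Poisson identity vanish identically, so that no condition on the parameters $\alpha$, $\beta$, $\mu$, $\nu$ is required. The hypothesis $S=\{k\}$ means $i,p\notin \{k,l,q\}$ and $l,q\neq k$, though the coincidences $i=p$ and $l=q$ are still allowed and must be tracked.

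For the left-hand side, I would expand
\[
\lr{v_{ik},v_{kl},v_{pq}} = \lr{v_{ik},\lr{v_{kl},v_{pq}}}_L + \tau_{(123)}\lr{v_{kl},\lr{v_{pq},v_{ik}}}_L + \tau_{(132)}\lr{v_{pq},\lr{v_{ik},v_{kl}}}_L.
\]
The key structural observation is that $\lr{v_{ik},v_{kl}}$ is governed by the $\mu,\nu$-rule and therefore lies in $\kk e_k \otimes \Bc(\Delta)$; consequently the third summand vanishes identically because $\lr{v_{pq},e_k}=0$ by $B$-linearity. For the first two summands, the inner brackets $\lr{v_{kl},v_{pq}}$ and $\lr{v_{pq},v_{ik}}$ are non-zero precisely when $l=q$ (via the $\alpha$-rule) and $p=i$ (via the $\beta$-rule), respectively; iterating to the outer brackets using \eqref{vv0}--\eqref{vvopp}, both remaining summands contribute only in the common case $p=i$ and $q=l$. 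A direct computation, including the cyclic rotation $\tau_{(123)}(a_1\otimes a_2\otimes a_3)=a_3\otimes a_1\otimes a_2$ applied to the second summand, yields
\[
\lr{v_{ik},v_{kl},v_{pq}} = \bigl(\alpha^{(l)}_{ki}\beta^{(i)}_{kl}+\beta^{(i)}_{lk}\alpha^{(l)}_{ki}\bigr)\, v_{ik}\otimes v_{il}\otimes v_{kl},
\]
which is zero by the skewsymmetry $\beta^{(i)}_{lk}=-\beta^{(i)}_{kl}$.

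For the right-hand side, I would inspect each of the eight summands of $\lr{v_{ik},v_{kl},v_{pq}}_{\qP}$ in \eqref{qPabc-vvv} separately. Each requires an index $s$ satisfying multiple compatibility conditions of the form $e_s v_{ab}\neq 0$ or $v_{ab}e_s\neq 0$, forcing $s$ to be simultaneously a head and a tail of two of the arrows $v_{ik},v_{kl},v_{pq}$. Using $v_{ab}=e_av_{ab}e_b$ together with the constraint $S=\{k\}$, every such configuration forces an equality (such as $s=i=k$, $s=k=l$, or $s=p=k$) which is precluded by hypothesis. Hence the $\qP$-triple bracket also vanishes, and the identity \eqref{qPabc} holds trivially.

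The only subtlety is the careful bookkeeping of which coincidences $i=p$, $l=q$ survive under $S=\{k\}$, and the correct handling of $\tau_{(123)}$ when comparing the two non-trivial contributions to the LHS; apart from this, the argument is purely a direct verification against formulas \eqref{vv0}--\eqref{vvopp} and the definition \eqref{qPabc-vvv}.
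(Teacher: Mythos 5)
Your proof is correct and takes essentially the same route as the paper's: both compute the three cyclic summands of $\lr{v_{ik},v_{kl},v_{pq}}$, show that the first two contribute $\pm\delta_{ip}\delta_{lq}\,\alpha_{kp}^{(l)}\beta_{kq}^{(i)}\,v_{ik}\otimes v_{pq}\otimes v_{kl}$ and cancel, observe that the third vanishes, and conclude $\lr{v_{ik},v_{kl},v_{pq}}_{\qP}=0$ because no vertex index lies in all of $\{i,k\}$, $\{k,l\}$, $\{p,q\}$. Your structural justification for the vanishing of the third summand (that $\lr{v_{ik},v_{kl}}$ has first tensor factor $e_k$, so the outer bracket dies by $B$-linearity) is a slightly cleaner way to phrase what the paper asserts without comment.
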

\begin{proof}
The condition on $S$ implies that $i,p\neq k$, with  $l\neq i,p,k$ and $q \neq i,p,k$. 
Thus, we compute that 
 \begin{equation*}
  \lr{v_{ik},\lr{v_{kl},v_{pq}}}_L=-\tau_{(123)}\lr{v_{kl},\lr{v_{pq},v_{ik}}}_L =\delta_{ip}\delta_{lq} \alpha_{kp}^{(l)}\beta_{kq}^{(i)} \, v_{ik}\otimes v_{pq} \otimes v_{kl}  \,,
 \end{equation*}
and $\lr{v_{pq},\lr{v_{ik},v_{kl}}}_L=0$. 
Hence $\lr{v_{ik},v_{kl},v_{pq}}=0$. Since $\lr{v_{ij},v_{kl},v_{pq}}_{\qP}=0$, both triple brackets vanish. 
\end{proof}

\begin{lem} \label{Lem:strategy-2.1.b}
When $S$ given by \eqref{Eq:S} is such that $S=\{\star\}$ with $\star=j=p$, the quasi-Poisson property holds for $\lr{v_{ip},v_{kl},v_{pq}}$ 
if and only if the following condition is satisfied:
 \begin{equation}  \label{Eq:strategy-2.1.bi}
  \nu_{iq}^{(p)}\Big[\delta_{lq} (\alpha_{kp}^{(l)}+\alpha_{ki}^{(l)})+\delta_{ik} (\beta_{ql}^{(i)}+\beta_{lp}^{(i)})\Big]=0\,.
 \end{equation} 
\end{lem}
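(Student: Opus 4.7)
The approach follows the template established in Lemmas \ref{Lem:strategy-1} and \ref{Lem:strategy-2.1.a}: expand the three summands of the triple bracket \eqref{Eq:TripBr} for $\lr{v_{ip},v_{kl},v_{pq}}$ using \eqref{vv0}--\eqref{vvopp}, compare with the quasi-Poisson expression \eqref{qPabc-vvv}, and read off the required constraints on the structure constants. The hypothesis of Case~2.1 (with $\star=j=p$) forces both $l\neq p$ and $q\neq p$, together with $i,k\notin\{p,l,q\}$ and $i\neq p$. A direct inspection then shows that $\{i,p\}\cap\{k,l\}\cap\{p,q\}=\emptyset$, so $\lr{v_{ip},v_{kl},v_{pq}}_{\qP}=0$; the statement of the lemma therefore reduces to determining exactly when the left-hand side vanishes.

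First I would record the only potentially nonzero inner brackets under the hypotheses:
\begin{equation*}
\lr{v_{kl},v_{pq}}=\delta_{lq}\,\alpha^{(l)}_{kp}\,v_{pq}\otimes v_{kl},\quad
\lr{v_{pq},v_{ip}}=-\mu^{(p)}_{iq}\,v_{ip}v_{pq}\otimes e_p-\nu^{(p)}_{iq}\,v_{iq}\otimes e_p,\quad
\lr{v_{ip},v_{kl}}=\delta_{ik}\,\beta^{(i)}_{pl}\,v_{ip}\otimes v_{il}.
\end{equation*}
Hence only the flags $\delta_{lq}$ and $\delta_{ik}$ control nonvanishing contributions. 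Using the Leibniz rule, the second summand $\tau_{(123)}\lr{v_{kl},\lr{v_{pq},v_{ip}}}_L$ produces further double brackets $\lr{v_{kl},v_{ip}v_{pq}}$ and $\lr{v_{kl},v_{iq}}$; each again decomposes into pieces triggered by $\delta_{lq}$ or $\delta_{ik}$ via the same rules.

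Expanding each constituent and applying $\tau_{(123)}$ and $\tau_{(132)}$, I expect the $\mu^{(p)}_{iq}$-proportional contributions to cancel internally (the cross-terms from $\lr{v_{kl},v_{ip}v_{pq}}$ with coefficient $\delta_{ik}\beta^{(i)}_{lp}$ cancel against the corresponding piece of $\tau_{(132)}\lr{v_{pq},\lr{v_{ip},v_{kl}}}_L$ using $\beta^{(i)}_{lp}+\beta^{(i)}_{pl}=0$, and an analogous argument handles the $\delta_{lq}$ cancellations via skewsymmetry of $\alpha$). The $\nu^{(p)}_{iq}$-proportional terms, in contrast, do not cancel: they accumulate on tensors of the shape $e_p\otimes v_{iq}\otimes v_{kq}$ (when $l=q$) and $e_p\otimes v_{il}\otimes v_{iq}$ (when $i=k$), producing a single overall coefficient
\begin{equation*}
\nu^{(p)}_{iq}\big[\delta_{lq}\bigl(\alpha^{(l)}_{kp}+\alpha^{(l)}_{ki}\bigr)+\delta_{ik}\bigl(\beta^{(i)}_{ql}+\beta^{(i)}_{lp}\bigr)\big].
\end{equation*}
Equating this coefficient to zero (as dictated by the vanishing quasi-Poisson side) yields \eqref{Eq:strategy-2.1.bi}, and conversely the stated condition is manifestly sufficient for the two triple brackets to agree.

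The principal obstacle is the bookkeeping across the three subcases $(\delta_{lq},\delta_{ik})=(1,0),(0,1),(1,1)$. In the overlapping case $i=k$ and $l=q$, where the two $\delta$-contributions live a priori on distinct tensors but collapse onto the same one $e_p\otimes v_{iq}\otimes v_{iq}$, I must verify that the resulting coefficient coincides with \eqref{Eq:strategy-2.1.bi} specialised to this case and that no additional independent condition appears; this relies on the conventions $\alpha^{(l)}_{ll}=0$, $\beta^{(i)}_{ii}=0$ (skewsymmetry) and $\mu^{(i)}_{jj}=\nu^{(i)}_{jj}=0$. Careful tracking of the cyclic permutations $\tau_{(123)},\tau_{(132)}$ is also essential, since the order in which the subscripts of $\alpha$ and $\beta$ appear after cycling determines whether the contributions combine as the symmetric sums $\alpha^{(l)}_{kp}+\alpha^{(l)}_{ki}$ and $\beta^{(i)}_{ql}+\beta^{(i)}_{lp}$ of \eqref{Eq:strategy-2.1.bi} or as their skew counterparts.
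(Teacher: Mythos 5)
Your proposal follows the same route as the paper's proof: expand the three cyclic summands of $\lr{v_{ip},v_{kl},v_{pq}}$, observe that $\lr{v_{ip},v_{kl},v_{pq}}_{\qP}=0$, show that the $\mu^{(p)}_{iq}$-terms cancel, and read off the surviving $\nu^{(p)}_{iq}$-coefficient as the obstruction. The inner double brackets you record and the final coefficient agree with the paper, and your caution about the overlap $i=k$, $l=q$ is taken care of automatically by the conventions $\alpha^{(l)}_{ll}=\beta^{(i)}_{ii}=0$, just as you anticipate.

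One small inaccuracy in your sketch: the $\delta_{lq}$-flavored $\mu$-cancellation is \emph{not} ``analogous'' to the $\delta_{ik}$ one and does not invoke skewsymmetry of $\alpha$. The two terms in question --- the $\mu^{(p)}_{iq}$-part of $\lr{v_{ip},\lr{v_{kl},v_{pq}}}_L$ and the $v_{ip}\lr{v_{kl},v_{pq}}$-piece of $\tau_{(123)}\lr{v_{kl},\lr{v_{pq},v_{ip}}}_L$ --- both carry the literal coefficient $\delta_{lq}\alpha^{(l)}_{kp}\mu^{(p)}_{iq}$ (with opposite overall sign inherited from $\lr{v_{pq},v_{ip}}$) and cancel identically, with no skewsymmetry needed; only the $\delta_{ik}$-flavored $\mu$-cancellation between the $\lr{v_{kl},v_{ip}}v_{pq}$-piece and the whole third summand uses $\beta^{(i)}_{lp}+\beta^{(i)}_{pl}=0$. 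This does not affect the outcome, but you would discover it the moment you write the expansion down explicitly.
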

\begin{proof}
The condition on $S$ implies that $i,k\neq p$, with  $l\neq i,k,p$ and $q \neq i,k,p$. We compute 
\begin{align*}
\lr{v_{ip},\lr{v_{kl},v_{pq}}}_L=&
\delta_{lq}\alpha_{kp}^{(l)}\mu_{iq}^{(p)}\, e_p\otimes v_{ip}v_{pq}\otimes v_{kl}  
+\delta_{lq}\alpha_{kp}^{(l)}\nu_{iq}^{(p)}\, e_p\otimes v_{iq}\otimes v_{kl}\,, \\
\tau_{(123)}\lr{v_{kl},\lr{v_{pq},v_{ip}}}_L=& 
-\delta_{lq}\alpha_{kp}^{(l)}\mu_{iq}^{(p)}\, e_p\otimes v_{ip}v_{pq}\otimes v_{kl}  
-\delta_{lq}\alpha_{ki}^{(l)}\nu_{iq}^{(p)}\, e_p\otimes v_{iq}\otimes v_{kl} \\
&-\delta_{ik}\beta_{lp}^{(i)}\mu_{iq}^{(p)}\, e_p\otimes  v_{kl} \otimes v_{ip}v_{pq} 
-\delta_{ik}\beta_{lq}^{(i)}\nu_{iq}^{(p)}\, e_p\otimes v_{kl}\otimes v_{iq} \,,\\
\tau_{(132)}\lr{v_{pq},\lr{v_{ip},v_{kl}}}_L=&
-\delta_{ik}\beta_{pl}^{(i)}\mu_{iq}^{(p)}\, e_p\otimes  v_{kl} \otimes v_{ip}v_{pq} 
-\delta_{ik}\beta_{pl}^{(i)}\nu_{iq}^{(p)}\, e_p\otimes v_{kl}\otimes v_{iq} \,.
\end{align*}
After easy cancellations using the skewsymmetry of $\beta^{(i)}$, we obtain 
$$\lr{v_{ip},v_{kl},v_{pq}}= \nu_{iq}^{(p)}\Big[\delta_{lq} (\alpha_{kp}^{(l)}+\alpha_{ki}^{(l)})+\delta_{ik} (\beta_{ql}^{(i)}+\beta_{lp}^{(i)})\Big] \, e_p\otimes v_{il}\otimes v_{kq}\,. $$
Since $\lr{v_{ip},v_{kl},v_{pq}}_{\qP}=0$, the quasi-Poisson property holds if and only if \eqref{Eq:strategy-2.1.bi} is satisfied. 
\end{proof}

\subsubsection{Case 2.2.} Using the cyclicity of the triple bracket, we can assume without loss of generality that $i=l=q=\star$. 

\begin{lem} \label{Lem:strategy-2.2}
When $S$ given by \eqref{Eq:S} is such that $S=\{\star\}$ with $\star=i=l=q$, the quasi-Poisson property holds for $\lr{v_{ij},v_{ki},v_{pi}}$ 
if and only if the following two conditions are satisfied:
\begin{align}
 \label{Eq:strategy-2.2i}
  \nu_{pj}^{(i)}\Big[\alpha_{kp}^{(i)} + \mu_{kj}^{(i)}+\delta_{kp} \beta_{ij}^{(k)} \Big]&=0\,,\\
   \label{Eq:strategy-2.2ii}
  \alpha_{kp}^{(i)}\mu_{kj}^{(i)}-\alpha_{kp}^{(i)}\mu_{pj}^{(i)}-\mu_{pj}^{(i)}\mu_{kj}^{(i)}&=-\frac14\,.
\end{align} 
\end{lem}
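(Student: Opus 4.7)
The approach follows closely the pattern established in Lemmas \ref{Lem:strategy-2.1.a} and \ref{Lem:strategy-2.1.b}: I would expand each of the three $L$-bracket contributions to $\lr{v_{ij}, v_{ki}, v_{pi}}$ using the defining formulas \eqref{vv0}--\eqref{vvopp}, then compare with $\lr{v_{ij}, v_{ki}, v_{pi}}_{\qP}$ obtained from \eqref{qPabc-vvv}, and read off the conditions on the structure constants. Under the hypothesis $i=l=q$ with $S=\{i\}$ (so that $k,p\neq i$ and $j\neq i,k,p$), the three pairwise brackets needed are
\begin{align*}
\lr{v_{ki},v_{pi}} &= \alpha^{(i)}_{kp}\, v_{pi}\otimes v_{ki}\,,\\
\lr{v_{pi},v_{ij}} &= \mu^{(i)}_{pj}\, e_i\otimes v_{pi}v_{ij} + \nu^{(i)}_{pj}\, e_i\otimes v_{pj}\,,\\
\lr{v_{ij},v_{ki}} &= -\mu^{(i)}_{kj}\, v_{ki}v_{ij}\otimes e_i - \nu^{(i)}_{kj}\, v_{kj}\otimes e_i\,.
\end{align*}

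A clean simplification is that the middle contribution $\tau_{(123)}\lr{v_{ki},\lr{v_{pi},v_{ij}}}_L$ vanishes outright: the first tensor slot of $\lr{v_{pi},v_{ij}}$ is the idempotent $e_i\in B$, and any double bracket on a $B$-algebra kills $B$. The remaining two contributions demand the Leibniz expansion $\lr{v_{pi},v_{ki}v_{ij}} = \lr{v_{pi},v_{ki}}\,v_{ij} + v_{ki}\,\lr{v_{pi},v_{ij}}$, together with the auxiliary bracket $\lr{v_{pi},v_{kj}}$. The latter vanishes unless $k=p$, in which case it becomes $\lr{v_{pi},v_{pj}}=\beta^{(p)}_{ij}\,v_{pi}\otimes v_{pj}$, and this is precisely what produces the $\delta_{kp}\beta^{(k)}_{ij}$ term in \eqref{Eq:strategy-2.2i}. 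After applying $\tau_{(132)}$ and using the skewsymmetry $\alpha^{(i)}_{pk}=-\alpha^{(i)}_{kp}$, I expect the total to reduce to a combination of the two tensor monomials $v_{pi}v_{ij}\otimes e_i\otimes v_{ki}$ and $v_{pj}\otimes e_i\otimes v_{ki}$, with the $k=p$ subcase folded into the general form via the identifications $v_{ki}=v_{pi}$ and $v_{kj}=v_{pj}$.

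On the quasi-Poisson side, scanning the eight summands of \eqref{qPabc-vvv} under $l=q=i$ and the disjointness constraints shows that only the $s=i$ instance of the second summand survives, yielding $\lr{v_{ij},v_{ki},v_{pi}}_{\qP}=-\tfrac14\,v_{pi}v_{ij}\otimes e_i\otimes v_{ki}$. Matching the coefficient of $v_{pi}v_{ij}\otimes e_i\otimes v_{ki}$ on both sides should deliver \eqref{Eq:strategy-2.2ii}, while demanding that the coefficient of $v_{pj}\otimes e_i\otimes v_{ki}$ vanish should deliver \eqref{Eq:strategy-2.2i}. The main subtlety I anticipate is the uniform bookkeeping across the regimes $k=p$ and $k\neq p$: in the former $\alpha^{(i)}_{kp}$ vanishes while the new $\beta^{(k)}_{ij}$ contribution enters, so one must check that the $\delta_{kp}$ factor in \eqref{Eq:strategy-2.2i} correctly merges the two situations and that no further tensor monomial escapes the two already listed.
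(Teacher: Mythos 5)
Your proposal is correct and follows essentially the same route as the paper's proof: the middle contribution $\tau_{(123)}\lr{v_{ki},\lr{v_{pi},v_{ij}}}_L$ vanishes because both terms of $\lr{v_{pi},v_{ij}}=\mu_{pj}^{(i)}e_i\otimes v_{pi}v_{ij}+\nu_{pj}^{(i)}e_i\otimes v_{pj}$ carry $e_i\in B$ in the first tensor slot; only the $s=i$ instance of the term $-v_{pq}e_s v_{ij}\otimes e_s\otimes v_{kl}e_s$ survives on the quasi-Poisson side, giving $-\tfrac14\,v_{pi}v_{ij}\otimes e_i\otimes v_{ki}$; the auxiliary bracket $\lr{v_{pi},v_{kj}}$ contributes only when $k=p$, producing the $\delta_{kp}\beta_{ij}^{(k)}$ term; and the coefficient matching on the two monomials $v_{pi}v_{ij}\otimes e_i\otimes v_{ki}$ and $v_{pj}\otimes e_i\otimes v_{ki}$ (after folding the $k=p$ monomial into the latter) yields exactly \eqref{Eq:strategy-2.2ii} and \eqref{Eq:strategy-2.2i}.
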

\begin{proof}
The condition on $S$ implies that $j,k,p\neq i$, with  $j\neq k,p$. We compute 
\begin{align*}
\lr{v_{ij},\lr{v_{ki},v_{pi}}}_L=& 
-\alpha_{kp}^{(i)}\mu_{pj}^{(i)}\, v_{pi}v_{ij}\otimes e_i \otimes v_{ki}  
-\alpha_{kp}^{(i)}\nu_{pj}^{(i)}\, v_{pj}\otimes e_i \otimes v_{ki}\,, \\
\tau_{(132)}\lr{v_{pi},\lr{v_{ij},v_{ki}}}_L=& 
-\alpha_{pk}^{(i)}\mu_{kj}^{(i)}\, v_{pi}v_{ij}\otimes e_i \otimes v_{ki}  
-\mu_{pj}^{(i)}\mu_{kj}^{(i)}\, v_{pi}v_{ij}\otimes e_i \otimes v_{ki}  \\
&-\mu_{kj}^{(i)}\nu_{pj}^{(i)}\, v_{pj}\otimes e_i \otimes v_{ki}
-\delta_{kp}\beta_{ij}^{(k)}\nu_{kj}^{(i)}\, v_{kj}\otimes e_i \otimes v_{pi} \,, 
\end{align*}
while $\lr{v_{ki},\lr{v_{pi},v_{ij}}}_L=0$. Summing the above terms, we get $\lr{v_{ij},v_{ki},v_{pi}}$. Meanwhile, we can see that 
$$\lr{v_{ij},v_{ki},v_{pi}}_{\qP}=-\frac14 v_{pi}v_{ij}\otimes e_i \otimes v_{ki}\,.$$
Thus the two triple brackets are equal if and only if \eqref{Eq:strategy-2.2i} and \eqref{Eq:strategy-2.2ii} hold because these are the coefficients 
of the terms $v_{pj}\otimes e_i \otimes v_{ki}$ and $v_{pi}v_{ij}\otimes e_i \otimes v_{ki}$, respectively. 
\end{proof}

\subsubsection{Case 2.3.} Using the cyclicity of the triple bracket, we can assume without loss of generality that $j=k=p=\star$. 

\begin{lem} \label{Lem:strategy-2.3}
When $S$ given by \eqref{Eq:S} is such that $S=\{\star\}$ with $\star=j=k=p$, the quasi-Poisson property holds for $\lr{v_{ij},v_{jl},v_{jq}}$ 
if and only if the following two conditions are satisfied:
\begin{align}
 \label{Eq:strategy-2.3i}
  \nu_{il}^{(j)}\Big[\beta_{lq}^{(j)} + \mu_{iq}^{(j)}+\delta_{lq} \alpha_{ij}^{(l)} \Big]&=0\,,\\
   \label{Eq:strategy-2.3ii}
  \beta_{lq}^{(j)}\mu_{il}^{(j)}-\beta_{lq}^{(j)}\mu_{iq}^{(j)}+\mu_{iq}^{(j)}\mu_{il}^{(j)}&=\frac14\,.
\end{align} 
\end{lem}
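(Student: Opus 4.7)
The plan is to mirror the blueprint of the proof of Lemma \ref{Lem:strategy-2.2}, exploiting the formal duality between Case 2.3 (all three arrows share a common source $j$) and Case 2.2 (all three arrows share a common target $i$) under the cyclic antisymmetry \eqref{Eq:cycanti}. I would compute each of the three terms constituting $\lr{v_{ij},v_{jl},v_{jq}}$ directly from the definitions \eqref{vv0}--\eqref{vvopp}, sum them, compare with $\lr{v_{ij},v_{jl},v_{jq}}_{\qP}$, and read off the coefficient conditions.

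First, since $S=\{j\}$, the indices $i,l,q$ are pairwise distinct and all different from $j$, so every arrow and composition that appears is well-defined. The following three double brackets are the building blocks:
\begin{align*}
\lr{v_{jl},v_{jq}}&=\beta^{(j)}_{lq}\,v_{jl}\otimes v_{jq},\\
\lr{v_{ij},v_{jl}}&=\mu^{(j)}_{il}\,e_j\otimes v_{ij}v_{jl}+\nu^{(j)}_{il}\,e_j\otimes v_{il},\\
\lr{v_{ij},v_{jq}}&=\mu^{(j)}_{iq}\,e_j\otimes v_{ij}v_{jq}+\nu^{(j)}_{iq}\,e_j\otimes v_{iq},
\end{align*}
together with the bracket $\lr{v_{jq},v_{ij}}$ obtained from the last line via cyclic antisymmetry. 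In the sub-case $l=q$, the rule for $\lr{v_{jl},v_{jq}}$ collapses and must be replaced by $\lr{v_{jl},v_{lj}}$ as given in \eqref{vvopp}; this is exactly where the $\delta_{lq}\alpha^{(l)}_{ij}$ term in \eqref{Eq:strategy-2.3i} will enter.

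Next, I would apply $\lr{v_{ij},-}_L$, $\tau_{(123)}\lr{v_{jl},-}_L$, and $\tau_{(132)}\lr{v_{jq},-}_L$ to the appropriate nested double bracket, using the Leibniz rule \eqref{Eq:outder} and \eqref{Eq:inder} to expand each on the resulting two-tensors. A quick idempotent analysis of \eqref{qPabc-vvv} shows that the target triple bracket reduces to a single surviving term:
\begin{equation*}
\lr{v_{ij},v_{jl},v_{jq}}_{\qP}=\tfrac14\,e_j\otimes v_{ij}v_{jl}\otimes v_{jq},
\end{equation*}
coming exclusively from $s=j$. On the left-hand side, after simplification using the skewsymmetry of $\beta^{(j)}$ and the vanishing of $\alpha^{(j)},\beta^{(j)},\mu^{(j)},\nu^{(j)}$ along the $j$-row/column, the triple bracket should collapse to a linear combination of the independent tensor monomials $e_j\otimes v_{ij}v_{jl}\otimes v_{jq}$, $e_j\otimes v_{il}\otimes v_{jq}$, and (only when $l=q$) $e_j\otimes v_{ij}v_{jl}\otimes e_l$. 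Matching coefficients of $e_j\otimes v_{ij}v_{jl}\otimes v_{jq}$ will produce \eqref{Eq:strategy-2.3ii}, while matching the $\nu^{(j)}_{il}$-weighted contribution (incorporating the $\delta_{lq}$ special term) will produce \eqref{Eq:strategy-2.3i}.

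The main obstacle will be careful bookkeeping: tracking the tensor slot positions and signs through the cyclic permutations $\tau_{(123)}$ and $\tau_{(132)}$, correctly invoking \eqref{vvopp} in the $l=q$ corner case, and distinguishing which terms genuinely cancel versus which contribute to each coefficient. A symmetry remark: the condition obtained from the $\nu^{(j)}_{iq}$-weighted tensor $e_j\otimes v_{iq}\otimes v_{jl}$ ought to coincide with \eqref{Eq:strategy-2.3i} after relabeling $l\leftrightarrow q$ and using the skewsymmetry of $\beta^{(j)}$, so it yields no new constraint. Once these computations are done honestly, the lemma follows by the same logic as Lemma \ref{Lem:strategy-2.2}, closing the loop on the quasi-Poisson analysis for Case 2.
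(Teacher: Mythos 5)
Your overall plan is sound and mirrors the paper's proof of Lemma~\ref{Lem:strategy-2.2}: compute the three nested terms $\lr{v_{ij},\lr{v_{jl},v_{jq}}}_L$, $\tau_{(123)}\lr{v_{jl},\lr{v_{jq},v_{ij}}}_L$, $\tau_{(132)}\lr{v_{jq},\lr{v_{ij},v_{jl}}}_L$, note that the third one vanishes (its inner bracket is $B$-central in the first slot), identify $\lr{v_{ij},v_{jl},v_{jq}}_{\qP}=\tfrac14\,e_j\otimes v_{ij}v_{jl}\otimes v_{jq}$, and match coefficients of the two surviving monomials $e_j\otimes v_{ij}v_{jl}\otimes v_{jq}$ and $e_j\otimes v_{il}\otimes v_{jq}$ to obtain \eqref{Eq:strategy-2.3ii} and \eqref{Eq:strategy-2.3i} respectively. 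All of this matches the paper's strategy.

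However, your account of the $l=q$ corner case contains a concrete error that would mislead anyone carrying out the computation. When $l=q$ the bracket $\lr{v_{jl},v_{jq}}$ becomes $\lr{v_{jl},v_{jl}}=0$ by \eqref{vv0}; it is emphatically \emph{not} replaced by $\lr{v_{jl},v_{lj}}$, since $v_{jq}\big|_{l=q}=v_{jl}$ and not the opposite arrow $v_{lj}$, and rule \eqref{vvopp} plays no role here. The actual source of the $\delta_{lq}\alpha^{(l)}_{ij}$ contribution is the inner bracket $\lr{v_{jl},v_{iq}}$ arising inside $\lr{v_{jl},\lr{v_{jq},v_{ij}}}_L$: since $\lr{v_{jq},v_{ij}}=-\mu_{iq}^{(j)}v_{ij}v_{jq}\otimes e_j-\nu_{iq}^{(j)}v_{iq}\otimes e_j$, one must expand $\lr{v_{jl},v_{iq}}$, which vanishes for $l\neq q$ but equals $\alpha_{ji}^{(l)}v_{il}\otimes v_{jl}$ when $l=q$ by the $\lr{v_{ij},v_{kj}}$ rule. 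Combined with $\nu_{iq}^{(j)}=\nu_{il}^{(j)}$ and $-\alpha_{ji}^{(l)}=\alpha_{ij}^{(l)}$, this is what produces the $\delta_{lq}\alpha^{(l)}_{ij}$ term in \eqref{Eq:strategy-2.3i}. Relatedly, the third tensor monomial $e_j\otimes v_{ij}v_{jl}\otimes e_l$ you predict for $l=q$ never actually arises; when $l=q$ only the same two monomials (with $q$ replaced by $l$) survive.
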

\begin{proof}
The condition on $S$ implies that $i,l,q\neq j$, with  $i\neq l,q$. We compute 
\begin{align*}
\lr{v_{ij},\lr{v_{jl},v_{jq}}}_L=&
\beta_{lq}^{(j)} \mu_{il}^{(j)} e_j\otimes v_{ij}v_{jl}\otimes v_{jq} 
+\beta_{lq}^{(j)} \nu_{il}^{(j)}  e_j\otimes v_{il}\otimes v_{jq}  \,, \\
\tau_{(123)}\lr{v_{jl},\lr{v_{jq},v_{ij}}}_L=&  
\mu_{iq}^{(j)}(\mu_{il}^{(j)}-\beta_{lq}^{(j)}) e_j\otimes v_{ij}v_{jl}\otimes v_{jq} 
+\nu_{il}^{(j)} (\mu_{iq}^{(j)}-\delta_{lq}\alpha_{ji}^{(l)} )  e_j\otimes v_{il}\otimes v_{jq}  \,, 
\end{align*}
while $\lr{v_{jq},\lr{v_{ij},v_{jl}}}_L=0$. Summing all these terms, we get $\lr{v_{ij},v_{jl},v_{jq}}$. 
Meanwhile, we can see that 
$$\lr{v_{ij},v_{jl},v_{jq}}_{\qP}=\frac14 e_j \otimes v_{ij}v_{jl}\otimes  v_{jq}\,.$$
Thus the two triple brackets are equal if and only if \eqref{Eq:strategy-2.3i} and \eqref{Eq:strategy-2.3ii} hold by matching the coefficients   
of the terms $e_j\otimes v_{il}\otimes v_{jq}$ and $e_j \otimes v_{ij}v_{jl}\otimes  v_{jq}$. 
\end{proof}

\subsection{Conditions obtained from Case 3} \label{ss:strat3}

The distinct elements $\star,\ast$ appearing in the intersection $S$ can appear in the sets $\{i,k,p\}$ and $\{j,l,q\}$ according to the following cases:
\begin{enumerate}
 \item[\underline{Case 3.1.}] $\star$ and $\ast$ both appear exactly once in each set;
 \item[\underline{Case 3.2.}] $\star$ appears twice in $\{i,k,p\}$ and once in  $\{j,l,q\}$, while $\ast$ appears once in each set; 
 \item[\underline{Case 3.3.}] $\star$ appears once in $\{i,k,p\}$ and twice in  $\{j,l,q\}$, while $\ast$ appears once in each set; 
 \item[\underline{Case 3.4.}] $\star$ appears twice in $\{i,k,p\}$ and once in  $\{j,l,q\}$, while $\ast$ appears once in in $\{i,k,p\}$ and twice in  $\{j,l,q\}$. 
\end{enumerate}
As we assume that $i\neq j$, $k\neq l$ and $p\neq q$, there is no other case to consider. We derive the conditions obtained in these different cases in the next subsections.

\subsubsection{Case 3.1.}  \label{sss:strat3-1}
Because both $\star$ and $\ast$ appear once in each set, one of the elements must be of the form $v_{\star \ast}$. 
Using the cyclicity of the triple bracket, we can assume without loss of generality that we are considering one of the following cases: 
\begin{enumerate}
 \item[]\underline{Case 3.1.a.} $\lr{v_{ij},v_{ji},v_{pq}}$ with $p,q\neq i,j$;
 \item[]\underline{Case 3.1.b.} $\lr{v_{ij},v_{ki},v_{jq}}$ with  $k,q\neq i,j$ and $k\neq q$; 
 \item[]\underline{Case 3.1.c.} $\lr{v_{ij},v_{jl},v_{pi}}$ with  $p,l\neq i,j$ and $p\neq l$.  
\end{enumerate}

\begin{lem} \label{Lem:strategy-3.1.a}
In the \emph{\underline{Case 3.1.a}}, the quasi-Poisson property holds for $\lr{v_{ij},v_{ji},v_{pq}}$ 
if and only if the conditions given in one of the following three cases are satisfied:
\begin{enumerate}
 \item[\textup{(i)}] when $i<p<j<q$ or $q<i<p<j$, either $\kappa_p^{(j,i)}=0$ or 
 $$\mu_{jq}^{(p)}-\beta_{qj}^{(p)}=0\,, \quad \nu_{jq}^{(p)}=0\,;$$
 \item[\textup{(ii)}] when $i<q<j<p$ or $p<i<q<j$, either $\kappa_q^{(j,i)}=0$ or 
 $$\mu_{pj}^{(q)}+\alpha_{pj}^{(q)}=0\,, \quad \nu_{pj}^{(q)}=0\,;$$
 \item[\textup{(iii)}] when $i<p,q<j$ with $p\neq q$, we have 
  $$\kappa_q^{(j,i)}(\mu_{pj}^{(q)}+\alpha_{pj}^{(q)})=0\,, \quad \kappa_p^{(j,i)}(\mu_{jq}^{(p)}-\beta_{qj}^{(p)})=0\,, \quad 
  \kappa_p^{(j,i)}\nu_{jq}^{(p)}-\kappa_q^{(j,i)}\nu_{pj}^{(q)}=0\,.$$
\end{enumerate} 
\end{lem}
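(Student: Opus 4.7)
The plan is to reduce the triple bracket $\lr{v_{ij},v_{ji},v_{pq}}$ to a single summand. By the definition \eqref{Eq:TripBr}, it equals $\lr{v_{ij},\lr{v_{ji},v_{pq}}}_L + \tau_{(123)}\lr{v_{ji},\lr{v_{pq},v_{ij}}}_L + \tau_{(132)}\lr{v_{pq},\lr{v_{ij},v_{ji}}}_L$. Since $p,q\neq i,j$, the index pairs $\{i,j\}$ and $\{p,q\}$ are disjoint, and every formula in \eqref{vv0}--\eqref{vvopp} requires at least one shared index; thus $\lr{v_{ji},v_{pq}}=0$ and $\lr{v_{pq},v_{ij}}=0$, so only the last summand survives. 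On the other hand, every term in \eqref{qPabc-vvv} contains a factor $v_{pq}e_s$ or $e_s v_{pq}$, which forces $s\in\{p,q\}$, but the accompanying idempotent multiplications by $v_{ij}$ or $v_{ji}$ force $s\in\{i,j\}$; hence $\lr{v_{ij},v_{ji},v_{pq}}_{\qP}=0$. The quasi-Poisson property is therefore equivalent to the vanishing of $\tau_{(132)}\lr{v_{pq},\lr{v_{ij},v_{ji}}}_L$.

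I would first dispose of the case $i>j$: from \eqref{vvopp} every Sweedler component of $\lr{v_{ij},v_{ji}}$ then has first tensor factor equal to $e_j$ (including each $\kappa_a^{(i,j)}e_j\otimes v_{ia}v_{ai}$) or $v_{ji}v_{ij}$. Applying $\lr{v_{pq},-}$ and expanding via \eqref{Eq:outder} reduces everything to $\lr{v_{pq},v_{ij}}$ and $\lr{v_{pq},v_{ji}}$, both of which vanish, so the triple bracket vanishes automatically. This is consistent with the observation that the three sub-cases in the statement all force $i<j$ (through $i<p<j$, $i<q<j$, or $i<p,q<j$ respectively).

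For $i<j$, the same argument kills every Sweedler component of $\lr{v_{ij},v_{ji}}$ except the $\kappa$-summands $-\kappa_b^{(j,i)}v_{jb}v_{bj}\otimes e_i$ with $i<b<j$, of which only those with $b\in\{p,q\}$ survive. Expanding $\lr{v_{pq},v_{jb}v_{bj}}$ with \eqref{Eq:outder} and using \eqref{vv0}--\eqref{vvopp}, the $b=p$ piece evaluates to $(\beta_{qj}^{(p)}-\mu_{jq}^{(p)})v_{jp}v_{pq}\otimes v_{pj}-\nu_{jq}^{(p)}v_{jq}\otimes v_{pj}$, and the $b=q$ piece to $(\alpha_{pj}^{(q)}+\mu_{pj}^{(q)})v_{jq}\otimes v_{pq}v_{qj}+\nu_{pj}^{(q)}v_{jq}\otimes v_{pj}$. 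After tensoring each with $e_i$, multiplying by $-\kappa_b^{(j,i)}$, and applying $\tau_{(132)}$, the total is a linear combination of the three linearly independent tensors $v_{pj}\otimes e_i\otimes v_{jp}v_{pq}$, $v_{pq}v_{qj}\otimes e_i\otimes v_{jq}$, and $v_{pj}\otimes e_i\otimes v_{jq}$; setting each coefficient to zero yields exactly the three scalar equations $\kappa_p^{(j,i)}(\beta_{qj}^{(p)}-\mu_{jq}^{(p)})=0$, $\kappa_q^{(j,i)}(\alpha_{pj}^{(q)}+\mu_{pj}^{(q)})=0$, and $\kappa_p^{(j,i)}\nu_{jq}^{(p)}-\kappa_q^{(j,i)}\nu_{pj}^{(q)}=0$. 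The convention that $\kappa_b^{(j,i)}=0$ unless $i<b<j$ then splits these into the three disjoint alternatives of the statement: if only $p$ lies in $(i,j)$ then $\kappa_q^{(j,i)}=0$ and equations 2--3 collapse to condition~(i); symmetrically for (ii); when both $p,q\in(i,j)$ the three equations remain and give (iii). The main obstacle is keeping straight the tensor bookkeeping after $\tau_{(132)}$, especially the observation that the two $\nu$-terms land on the common basis tensor $v_{pj}\otimes e_i\otimes v_{jq}$, producing the coupled condition in (iii) rather than two independent vanishing statements.
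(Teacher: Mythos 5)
Your proof is correct and follows essentially the same route as the paper: you identify that only $\tau_{(132)}\lr{v_{pq},\lr{v_{ij},v_{ji}}}_L$ can contribute because the index pairs $\{i,j\}$ and $\{p,q\}$ are disjoint, observe that $\lr{v_{ij},v_{ji},v_{pq}}_{\qP}=0$, reduce the surviving term to the $\kappa$-summands with $b\in\{p,q\}\cap(i,j)$, compute $\lr{v_{pq},v_{jb}v_{bj}}$ via Leibniz, and then equate the coefficients of the three resulting basis tensors to zero. The only slight elaboration over the paper is your explicit dispatch of the $i>j$ case; the paper handles it implicitly since the sum $\sum_{i<b<j}$ is then empty.
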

\begin{proof}
 It is easy to notice that $\lr{v_{ij},\lr{v_{ji},v_{pq}}}_L=0$ and $\lr{v_{ji},\lr{v_{pq},v_{ij}}}_L=0$ because $i,j,p,q$ are pairwise distinct by assumption. Thus 
 \begin{align*}
&\lr{v_{ij},v_{ji},v_{pq}}=   \tau_{(132)}\lr{v_{pq},\lr{v_{ij},v_{ji}}}_L 
=-\tau_{(132)} \sum_{i<b<j} \kappa_b^{(j,i)} \, \lr{v_{pq},v_{jb}v_{bj}}\otimes e_i\\
=&\,\delta_{(i<p<j)}\kappa_p^{(j,i)}(\mu_{jq}^{(p)}-\beta_{qj}^{(p)}) \, v_{pj}\otimes e_i \otimes v_{jp}v_{pq}
+\delta_{(i<p<j)}\kappa_p^{(j,i)}\nu_{jq}^{(p)} \, v_{pj}\otimes e_i \otimes v_{jq} \\
&-\delta_{(i<q<j)} \kappa_q^{(j,i)} (\mu_{pj}^{(q)}+\alpha_{pj}^{(q)})\, v_{pq}v_{qj}\otimes e_i \otimes v_{jq}
-\delta_{(i<q<j)} \kappa_q^{(j,i)} \nu_{pj}^{(q)}\, v_{pj}\otimes e_i \otimes v_{jq}\,.
 \end{align*}
(Here and below, the symbol $\delta_{(a<b<c)}$ equals $+1$ if $a<b<c$, and is zero otherwise). At the same time, we easily get that 
$\lr{v_{ij},v_{ji},v_{pq}}_{\qP}=0$. The vanishing of the different coefficients in the above expansion for $\lr{v_{ij},v_{ji},v_{pq}}$ gives the claimed conditions. 
\end{proof}

\begin{lem} \label{Lem:strategy-3.1.b}
In the \emph{\underline{Case 3.1.b}}, the quasi-Poisson property holds for $\lr{v_{ij},v_{ki},v_{jq}}$ 
if and only if the following conditions are satisfied: 
\begin{enumerate}
 \item[\textup{(i)}] either $\nu_{iq}^{(j)}=0$, or $\mu_{kj}^{(i)}-\mu_{kq}^{(i)}=0$;
 \item[\textup{(ii)}]  either $\nu_{kj}^{(i)}=0$, or $\mu_{kq}^{(j)}-\mu_{iq}^{(j)}=0$;
 \item[\textup{(iii)}]  $ \nu_{kj}^{(i)}\nu_{kq}^{(j)}-\nu_{kq}^{(i)}\nu_{iq}^{(j)}=0$.
\end{enumerate} 
\end{lem}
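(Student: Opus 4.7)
The argument will mirror Lemma \ref{Lem:strategy-2.1.b}: compute the three summands of the triple bracket \eqref{Eq:TripBr} on $v_{ij}$, $v_{ki}$ and $v_{jq}$, compare with \eqref{qPabc-vvv}, and read off the resulting conditions on the coefficients.

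First I would observe that the combined hypotheses $k,q\neq i,j$, $k\neq q$ together with the standing assumptions $i\neq j$, $k\neq i$, $q\neq j$ force $i,j,k,q$ to be pairwise distinct. An immediate consequence is that $\lr{v_{ki},v_{jq}}=0$, since no pair of indices coincides and hence none of the patterns in \eqref{vv0}--\eqref{vvopp} activates; the summand $\lr{v_{ij},\lr{v_{ki},v_{jq}}}_L$ therefore drops out of the triple bracket.

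Next I would evaluate the inner brackets $\lr{v_{jq},v_{ij}}$ and $\lr{v_{ij},v_{ki}}$, which both fall under the $a=d$ pattern and each produce one $\mu$-term and one $\nu$-term. Applying the outer brackets against $v_{ki}$ and $v_{jq}$ via the Leibniz rule \eqref{Eq:outder}, every intermediate double bracket that arises either vanishes (when all four indices remain distinct) or reduces to another $\mu,\nu$ expression through the $b=c$ or $a=d$ pattern. After applying the twists $\tau_{(123)}$ and $\tau_{(132)}$, the total triple bracket collapses to a linear combination of exactly the three basis tensors $e_j\otimes e_i\otimes v_{ki}v_{iq}$, $e_j\otimes e_i\otimes v_{kj}v_{jq}$, and $e_j\otimes e_i\otimes v_{kq}$, whose coefficients are bilinear in the $\mu,\nu$ data of the vertices $i$ and $j$; crucially the leading $\mu_{kj}^{(i)}\mu_{iq}^{(j)}$ terms on the $v_{ki}v_{ij}v_{jq}$ tensor cancel between the two remaining summands.

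In parallel, $\lr{v_{ij},v_{ki},v_{jq}}_{\qP}$ must vanish identically: the pairwise distinctness of $i,j,k,q$ prevents any single idempotent $e_s$ from sandwiching all three arguments nontrivially in any of the eight summands of \eqref{qPabc-vvv}. Equating coefficients of the three basis tensors above then yields exactly conditions (i), (ii), (iii). The principal pitfall will be the careful bookkeeping of sub- and superscripts when reinvoking the $a=d$ and $b=c$ patterns inside the Leibniz expansions and then applying the twists; a single sign or index slip propagates instantly through the simplification. Beyond that, the computation is purely mechanical.
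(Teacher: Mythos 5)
Your plan matches the paper's proof step for step: you correctly identify that $\lr{v_{ki},v_{jq}}=0$ kills the first summand (since the index sets are disjoint), that the two remaining summands each split into a $\mu$-term and a $\nu$-term via the Leibniz rule, that the $\mu_{kj}^{(i)}\mu_{iq}^{(j)}\,e_j\otimes e_i\otimes v_{ki}v_{ij}v_{jq}$ contributions cancel, that $\lr{v_{ij},v_{ki},v_{jq}}_{\qP}=0$ because no idempotent can match all three arguments, and that vanishing of the coefficients of $e_j\otimes e_i\otimes v_{ki}v_{iq}$, $e_j\otimes e_i\otimes v_{kj}v_{jq}$, $e_j\otimes e_i\otimes v_{kq}$ gives exactly (i), (ii), (iii). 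This is the same argument as in the paper; carrying out the bookkeeping you describe yields the proof verbatim.
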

\begin{proof}
 As the indices $i,j,k,q$ are pairwise distinct, $\lr{v_{ij},\lr{v_{ki},v_{jq}}}_L=0$. Next, we compute 
 \begin{align*}
\tau_{(123)}  \lr{v_{ki},\lr{v_{jq},v_{ij}}}_L=& 
-\mu_{iq}^{(j)} \mu_{kj}^{(i)} \, e_j\otimes e_i \otimes v_{ki}v_{ij}v_{jq}
-\mu_{iq}^{(j)} \nu_{kj}^{(i)}\,  e_j\otimes e_i \otimes v_{kj}v_{jq} \\
&-\nu_{iq}^{(j)} \mu_{kq}^{(i)} \, e_j\otimes e_i \otimes v_{ki}v_{iq} 
-\nu_{iq}^{(j)} \nu_{kq}^{(i)} \, e_j\otimes e_i \otimes v_{kq}\,, \\
\tau_{(132)}\lr{v_{jq},\lr{v_{ij},v_{ki}}}_L=& 
\,\mu_{iq}^{(j)} \mu_{kj}^{(i)} \, e_j\otimes e_i \otimes v_{ki}v_{ij}v_{jq}
+\mu_{kq}^{(j)} \nu_{kj}^{(i)} \, e_j\otimes e_i \otimes v_{kj}v_{jq} \\
&+\nu_{iq}^{(j)} \mu_{kj}^{(i)}\,  e_j\otimes e_i \otimes v_{ki}v_{iq} 
+\nu_{kq}^{(j)} \nu_{kj}^{(i)} \, e_j\otimes e_i \otimes v_{kq}\,.
 \end{align*}
Summing these terms, we get  
\begin{align*}
 \lr{v_{ij},v_{ki},v_{jq}}=&
 \nu_{iq}^{(j)} (\mu_{kj}^{(i)} - \mu_{kq}^{(i)})\,  e_j\otimes e_i \otimes v_{ki}v_{iq} 
 +\nu_{kj}^{(i)} (\mu_{kq}^{(j)} - \mu_{iq}^{(j)})\, e_j\otimes e_i \otimes v_{kj}v_{jq} \\
 &+ (\nu_{kq}^{(j)} \nu_{kj}^{(i)}-\nu_{iq}^{(j)} \nu_{kq}^{(i)}) \, e_j\otimes e_i \otimes v_{kq}\,.
\end{align*}
We want the latter expression to be equal to $\lr{v_{ij},v_{ki},v_{jq}}_{\qP}=0$,  which is easily seen to be equivalent to the conditions (i)--(iii). 
\end{proof}

\begin{lem} \label{Lem:strategy-3.1.c}
In the \emph{\underline{Case 3.1.c}}, the quasi-Poisson property always holds for $\lr{v_{ij},v_{jl},v_{pi}}$. 
\end{lem}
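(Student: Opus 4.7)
The plan is to show that both triple brackets $\lr{v_{ij},v_{jl},v_{pi}}$ and $\lr{v_{ij},v_{jl},v_{pi}}_{\qP}$ vanish identically under the standing assumptions, so the quasi-Poisson identity holds automatically, without any condition on the structure constants $\alpha,\beta,\mu,\nu,\kappa$. Note that the hypotheses of Case 3.1.c force the four indices $i,j,l,p$ to be pairwise distinct, which is the key combinatorial input.

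To compute $\lr{v_{ij},v_{jl},v_{pi}}$ via \eqref{Eq:TripBr}, I would first evaluate each of the three inner double brackets using \eqref{vv0}--\eqref{vvopp}. The inner bracket $\lr{v_{jl},v_{pi}}$ vanishes because the index sets $\{j,l\}$ and $\{p,i\}$ are disjoint. The remaining two inner brackets $\lr{v_{pi},v_{ij}}$ and $\lr{v_{ij},v_{jl}}$ both fall under the ``chain'' pattern $\lr{v_{ab},v_{bc}}$ (applicable because $p\neq j$ and $i\neq l$), and so produce expressions whose first tensor factor is an idempotent $e_b\in B$. When these expressions are fed into the outer operations $\lr{v_{jl},-}_L$ and $\lr{v_{pi},-}_L$, $B$-linearity forces the result to be zero, since the double bracket of any generator with an idempotent vanishes. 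Hence $\lr{v_{ij},v_{jl},v_{pi}}=0$.

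For the comparison triple bracket $\lr{v_{ij},v_{jl},v_{pi}}_{\qP}$ in \eqref{qPabc-vvv}, I would go through the eight summands one by one. Each term contains an idempotent $e_s$ inserted adjacent to each of the three arrows, and non-vanishing forces $s$ to coincide simultaneously with a specific head or tail vertex of each of $v_{ij}$, $v_{jl}$, $v_{pi}$. Running through the eight cases, these constraints always demand that $s$ equal two distinct elements of $\{i,j,l,p\}$ at once (for instance $s=i$ from $v_{pi}e_s v_{ij}$ and $s=j$ from $e_s v_{jl}$ in the first summand), which is impossible by pairwise distinctness. Thus every term is zero and $\lr{v_{ij},v_{jl},v_{pi}}_{\qP}=0$.

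The only ``obstacle'' is the combinatorial bookkeeping of the eight idempotent constraints in \eqref{qPabc-vvv}; this is routine and requires no calculation involving the coefficients, which is precisely why the statement asserts that the quasi-Poisson property always holds in this case.
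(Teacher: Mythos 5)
Your proposal is correct and follows the same route as the paper, which simply asserts (in one sentence) that both triple brackets vanish because $i,j,l,p$ are pairwise distinct; you have usefully spelled out the mechanism (the inner bracket $\lr{v_{jl},v_{pi}}$ vanishes because $\{j,l\}\cap\{p,i\}=\emptyset$, the two chain-type inner brackets each produce a first tensor factor $e_i$ or $e_j$ which then dies under the outer bracket by $B$-linearity, and $\lr{-,-,-}_{\qP}$ vanishes because no index lies in all three sets $\{i,j\}$, $\{j,l\}$, $\{p,i\}$), but the argument is the same.
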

\begin{proof}
By assumption, $i,j,l,p$ are distinct, and it is easy to see that $\lr{v_{ik},v_{kl},v_{pq}}=0$ as well as $\lr{v_{ij},v_{kl},v_{pq}}_{\qP}=0$. 
\end{proof}

\subsubsection{Case 3.2.}   \label{sss:strat3-2}
Recall that $\star$ appears twice in $\{i,k,p\}$. Using the cyclicity of the triple bracket, we can assume without loss of generality that we are considering one of the following cases : 
\begin{enumerate}
 \item[]\underline{Case 3.2.a.} $\lr{v_{ij},v_{ik},v_{ji}}$ with $i,j,k$ distinct;
 \item[]\underline{Case 3.2.b.} $\lr{v_{ij},v_{ji},v_{ik}}$ with  $i,j,k$ distinct.  
\end{enumerate}

\begin{lem} \label{Lem:strategy-3.2.a}
In the \emph{\underline{Case 3.2.a}}, the quasi-Poisson property holds for $\lr{v_{ij},v_{ik},v_{ji}}$
if and only if the conditions given in one of the following three cases are satisfied: 
\begin{enumerate}
 \item[\textup{(i)}] when $i<j$ and $k\neq i,j$, we have 
 \begin{align*}
  \kappa_b^{(j,i)} (\mu_{jk}^{(i)} + \beta_{jk}^{(i)})&=0\,, \quad \text{ for all }i<b<j\,, \\
  \frac12 (\mu_{jk}^{(i)} + \beta_{jk}^{(i)})- \mu_{jk}^{(i)} \beta_{jk}^{(i)}&=\frac14\,, \\
  \nu_{jk}^{(i)} \left(\mu_{ik}^{(j)}+\frac12\right)&=0\,, \\
  \mu_{jk}^{(i)} + \beta_{jk}^{(i)} - \nu_{jk}^{(i)}\nu_{ik}^{(j)}&=0\,;
 \end{align*}
 \item[\textup{(ii)}]  when $k<j<i$ or $j<i<k$, we have 
  \begin{align*}
  \kappa_a^{(i,j)} (\mu_{jk}^{(i)} + \mu_{ak}^{(i)})&=0\,, \quad \text{ for all }j<a<i\,, \\
  \kappa_a^{(i,j)} (\beta_{jk}^{(i)} + \beta_{ka}^{(i)})&=0\,, \quad \text{ for all }j<a<i\,, \\
  \kappa_a^{(i,j)} \nu_{ak}^{(i)}&=0\,, \quad \text{ for all }j<a<i\,, \\
  \frac12 (\mu_{jk}^{(i)} + \beta_{jk}^{(i)})+ \mu_{jk}^{(i)} \beta_{jk}^{(i)}&=-\frac14\,, \\
  \nu_{jk}^{(i)} \left(\mu_{ik}^{(j)}-\frac12\right)&=0\,, \\
  \mu_{jk}^{(i)} + \beta_{jk}^{(i)} + \nu_{jk}^{(i)}\nu_{ik}^{(j)}&=0\,;
 \end{align*}
 \item[\textup{(iii)}] when $j<k<i$, we have 
   \begin{align*}
  \kappa_a^{(i,j)} (\mu_{jk}^{(i)} + \mu_{ak}^{(i)}+\frac12 \delta_{ak})&=0\,, \quad \text{ for all }j<a<i\,, \\
  \kappa_a^{(i,j)} (\beta_{jk}^{(i)} + \beta_{ka}^{(i)}+\frac12 \delta_{ak})&=0\,, \quad \text{ for all }j<a\leq k\,, \\
  \kappa_a^{(i,j)} (\beta_{jk}^{(i)} + \beta_{ka}^{(i)})+\kappa_a^{(i,k)}\kappa_{k}^{(i,j)}&=0\,, \quad \text{ for all }k<a<i\,, \\
  \kappa_a^{(i,j)} \nu_{ak}^{(i)}&=0\,, \quad \text{ for all }j<a<i\,, \\
  \frac12 (\mu_{jk}^{(i)} + \beta_{jk}^{(i)})+ \mu_{jk}^{(i)} \beta_{jk}^{(i)}&=-\frac14\,, \\
  \nu_{jk}^{(i)} \left(\mu_{ik}^{(j)}-\frac12\right)&=0\,, \\
  \mu_{jk}^{(i)} + \beta_{jk}^{(i)} + \nu_{jk}^{(i)}\nu_{ik}^{(j)}+\kappa_k^{(i,j)}&=0\,. 
 \end{align*}
\end{enumerate}
\end{lem}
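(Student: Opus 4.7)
The plan is to compute $\lr{v_{ij},v_{ik},v_{ji}}$ from its definition \eqref{Eq:TripBr} and match it coefficient-by-coefficient against $\lr{v_{ij},v_{ik},v_{ji}}_{\qP}$ given by \eqref{qPabc-vvv}, where under our index pattern the only surviving summation indices are $s\in\{i,j\}$.

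First I would evaluate each inner double bracket using \eqref{vv0}--\eqref{vvopp}. The three relevant pieces are $\lr{v_{ik},v_{ji}}=-\mu^{(i)}_{jk}\,v_{ji}v_{ik}\otimes e_i-\nu^{(i)}_{jk}\,v_{jk}\otimes e_i$ (an instance of the $\lr{v_{ij},v_{ki}}$-rule), $\lr{v_{ij},v_{ik}}=\beta^{(i)}_{jk}\,v_{ij}\otimes v_{ik}$ (an instance of the $\beta$-rule), and $\lr{v_{ji},v_{ij}}$, which by the cyclic antisymmetry \eqref{Eq:cycanti} changes sign with $\sgn(i-j)$ and carries a $\kappa$-sum over the open integer interval between $i$ and $j$. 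Applying the outer double brackets $\lr{v_{ij},-}$, $\lr{v_{ik},-}$ and $\lr{v_{ji},-}$ through the products in each inner result, using \eqref{Eq:outder}--\eqref{Eq:inder}, yields an expansion that is bilinear in the parameters $(\alpha,\beta,\mu,\nu,\kappa)$. Each elementary double bracket appearing in these Leibniz expansions is then simplified again via \eqref{vv0}--\eqref{vvopp}, in particular the pieces $\lr{v_{ij},v_{jk}}$ of $\mu,\nu$-type and the intra-sum pieces $\lr{v_{ij},v_{ia}v_{ai}}$ or $\lr{v_{ik},v_{ja}v_{aj}}$ that must themselves be expanded by Leibniz.

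The three sub-cases stem from the interaction of $k$ with the $\kappa$-sum of $\lr{v_{ji},v_{ij}}$. In case~(i), with $i<j$, the sum is indexed by $b\in(i,j)$ and each $\kappa^{(j,i)}_{b}$ simply multiplies a uniform factor $\mu^{(i)}_{jk}+\beta^{(i)}_{jk}$ produced by the derivation acting on $v_{jb}v_{bj}$; this gives a uniform linear condition for every $b$ in the range, together with the standard quadratic equations in $(\mu,\beta,\nu)$ coming from the non-$\kappa$ part and the sign convention $\sgn(i-j)=-1$. In case~(ii), with $i>j$ but $k\notin(j,i)$, the $\kappa$-sum over $a\in(j,i)$ stays decoupled from the index $k$, yielding three independent families of $\kappa$-conditions alongside the quadratic equations obtained under $\sgn(i-j)=+1$. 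The delicate case is~(iii): when $j<k<i$, the value $a=k$ is in the $\kappa$-range, so the Leibniz expansion of $\lr{v_{ik},v_{ia}v_{ai}}$ at $a=k$ produces an extra $\kappa^{(i,k)}_{a}\kappa^{(i,j)}_{k}$ cross-contribution, as well as a shift of the leading $(\mu,\beta,\nu)$-equations by $\kappa^{(i,j)}_{k}$.

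The final step is to collect all contributions into independent tensor monomials---the linear ones such as $e_j\otimes e_i\otimes v_{ji}v_{ik}$ and $v_{jk}\otimes e_i\otimes e_i$, together with the product-type monomials coming from the $\kappa$-expansion such as $v_{ji}v_{ik}\otimes e_i\otimes e_i$---and match them against the corresponding coefficients of $\lr{v_{ij},v_{ik},v_{ji}}_{\qP}$. The main obstacle will be the combinatorial bookkeeping: keeping track of independent tensor monomials, reducing redundancies via the skewsymmetries $\alpha^{(\cdot)}_{\bullet\star}=-\alpha^{(\cdot)}_{\star\bullet}$ and $\beta^{(\cdot)}_{\bullet\star}=-\beta^{(\cdot)}_{\star\bullet}$, and correctly isolating the cascade term that links $\kappa^{(i,j)}_{k}$ to $\kappa^{(i,k)}_{a}$ in case~(iii). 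This cross term is the non-trivial new phenomenon distinguishing this lemma from Lemmas \ref{Lem:strategy-1}--\ref{Lem:strategy-3.1.c}, and verifying that it appears with exactly the claimed sign and range is what makes case~(iii) the subtle one.
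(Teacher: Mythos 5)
Your overall strategy---expand the cyclic triple bracket \eqref{Eq:TripBr} using the inner brackets $\lr{v_{ik},v_{ji}}$, $\lr{v_{ij},v_{ik}}$, $\lr{v_{ji},v_{ij}}$, push the outer brackets through via Leibniz, and match independent tensor monomials against \eqref{qPabc-vvv}---is exactly the paper's approach, and your evaluation of the three inner brackets is correct, including the correct relabelling $\lr{v_{ik},v_{ji}}=-\mu^{(i)}_{jk}v_{ji}v_{ik}\otimes e_i-\nu^{(i)}_{jk}v_{jk}\otimes e_i$, the $\beta$-rule for $\lr{v_{ij},v_{ik}}$, and the sign-dependent $\kappa$-sum in $\lr{v_{ji},v_{ij}}$. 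Your identification of where the three sub-cases branch (whether $k$ lies in the open interval between $i$ and $j$), the uniform factor $\mu_{jk}^{(i)}+\beta_{jk}^{(i)}$ multiplying each $\kappa_b^{(j,i)}$ in case (i), and the cascade cross-term $\kappa_a^{(i,k)}\kappa_k^{(i,j)}$ together with the $\kappa_k^{(i,j)}$ shift in case (iii) are all correct and match the paper's computation.

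However, two of the concrete details you wrote down would derail the calculation if taken at face value. First, under the index pattern $a=v_{ij}$, $b=v_{ik}$, $c=v_{ji}$, the only term of $\lr{a,b,c}_{\qP}$ that survives in \eqref{qPabc-vvv} is the one with $s=i$ (the idempotent must be a tail of $v_{ji}$, a head of $v_{ij}$ and a head of $v_{ik}$ simultaneously, which forces $s=i$, and among the eight summands only the first one is consistent), giving $\lr{v_{ij},v_{ik},v_{ji}}_{\qP}=\frac14\,v_{ji}v_{ij}\otimes v_{ik}\otimes e_i$; your statement that $s$ runs over $\{i,j\}$ is wrong. Second, the $B$-bimodule structure of the output forces every tensor monomial in $\lr{v_{ij},v_{ik},v_{ji}}$ to lie in $e_jAe_j\otimes e_iAe_k\otimes e_iAe_i$, consistent with the target $v_{ji}v_{ij}\otimes v_{ik}\otimes e_i$ and with the paper's monomials $v_{jb}v_{bj}\otimes v_{ik}\otimes e_i$, $e_j\otimes v_{ij}v_{jk}\otimes e_i$, $e_j\otimes v_{ik}\otimes e_i$, etc.; the example monomials you list ($e_j\otimes e_i\otimes v_{ji}v_{ik}$, $v_{jk}\otimes e_i\otimes e_i$, $v_{ji}v_{ik}\otimes e_i\otimes e_i$) violate this structure and cannot arise. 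Tracking the idempotent signature of each tensor slot is precisely what organizes the bookkeeping and tells you which coefficients must be grouped, so you should correct this before carrying out the coefficient match.
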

\begin{proof}
 As $i,j,k$ are pairwise distinct, we can compute that 
 \begin{align*}
&\lr{v_{ij},\lr{v_{ik},v_{ji}}}_L=-\frac12 \sgn(i-j) \mu_{jk}^{(i)} e_j \otimes v_{ij}v_{ji}v_{ik}\otimes e_i
-\mu_{jk}^{(i)} \sum_{i>a>j} \kappa_a^{(i,j)} e_j \otimes v_{ia}v_{ai}v_{ik} \otimes e_i \\
&\qquad -\mu_{jk}^{(i)}\left(\frac12 \sgn(i-j) + \beta_{jk}^{(i)}\right) v_{ji}v_{ij} \otimes v_{ik}\otimes e_i 
+\mu_{jk}^{(i)} \sum_{i<b<j} \kappa_b^{(j,i)} v_{jb}v_{bj} \otimes v_{ik} \otimes e_i \\
&\qquad -\nu_{jk}^{(i)}\mu_{ik}^{(j)} e_j \otimes v_{ij}v_{jk} \otimes e_i 
-\left(\sgn(i-j)\mu_{jk}^{(i)} + \nu_{jk}^{(i)}\nu_{ik}^{(j)} \right) e_j \otimes v_{ik} \otimes e_i\,, 
 \end{align*}
%%%
 \begin{align*}
\tau_{(123)}&\lr{v_{ik},\lr{v_{ji},v_{ij}}}_L\\
=& -\frac12 \sgn(j-i) \mu_{jk}^{(i)} e_j \otimes v_{ij}v_{ji}v_{ik}\otimes e_i
- \sum_{i>a>j}\kappa_a^{(i,j)} \mu_{ak}^{(i)} e_j \otimes v_{ia}v_{ai}v_{ik} \otimes e_i \\
&-\frac12 \delta_{(j<k<i)}\kappa_k^{(i,j)} e_j\otimes v_{ik}v_{ki}v_{ik} \otimes e_i 
+\frac12 \sgn(j-i) \beta_{kj}^{(i)} e_j\otimes v_{ik} \otimes v_{ij}v_{ji} \\
&- \sum_{i>a>j} \kappa_a^{(i,j)} \beta_{ka}^{(i)} e_j \otimes v_{ik} \otimes v_{ia}v_{ai} 
-\frac12 \delta_{(j<k<i)}\kappa_k^{(i,j)}  e_j\otimes v_{ik} \otimes v_{ik}v_{ki} \\
&-\delta_{(j<k<i)}\kappa_k^{(i,j)} \sum_{i>c>k} \kappa_c^{(i,k)} e_j\otimes v_{ik} \otimes v_{ic}v_{ci}
-\frac12 \sgn(j-i) \nu_{jk}^{(i)} e_j\otimes v_{ij}v_{jk}\otimes e_i \\
&-\sum_{i>a>j} \kappa_a^{(i,j)} \nu_{ak}^{(i)} e_j \otimes v_{ia}v_{ak} \otimes e_i 
- \delta_{(j<k<i)}\kappa_k^{(i,j)} e_j \otimes v_{ik}\otimes e_i\,, 
 \end{align*}
 %%%
 \begin{align*}
\tau_{(132)}&\lr{v_{ji},\lr{v_{ij},v_{ik}}}_L\\
=& \,\frac12\sgn(j-i) \beta_{jk}^{(i)} e_j \otimes v_{ik} \otimes v_{ij}v_{ji} 
-\beta_{jk}^{(i)} \sum_{i>a>j} \kappa_a^{(i,j)} e_j \otimes v_{ik} \otimes v_{ia}v_{ai} \\
&+\frac12 \sgn(j-i) \beta_{jk}^{(i)} v_{ji}v_{ij} \otimes v_{ik} \otimes e_i 
+\beta_{jk}^{(i)} \sum_{j>b>i} \kappa_b^{(j,i)} v_{jb}v_{bj} \otimes v_{ik}\otimes e_i \\  
&+\sgn(j-i) \beta_{jk}^{(i)} e_j \otimes v_{ik} \otimes e_i \,.
 \end{align*}
Summing the expressions when $i<j$, we can get 
\begin{align*}
&\lr{v_{ij},v_{ik},v_{ji}} \qquad \qquad \qquad [\text{ subcase (i) where }i<j]\\
=&\sum_{i<b<j}\kappa_b^{(j,i)} (\mu_{jk}^{(i)} + \beta_{jk}^{(i)}) v_{jb}v_{bj} \otimes v_{ik} \otimes e_i 
  +\left(\frac12 (\mu_{jk}^{(i)} + \beta_{jk}^{(i)})- \mu_{jk}^{(i)} \beta_{jk}^{(i)}\right) v_{ji}v_{ij} \otimes v_{ik}\otimes e_i \\
&-\nu_{jk}^{(i)} \left(\mu_{ik}^{(j)}+\frac12\right) e_j \otimes v_{ij}v_{jk} \otimes e_i 
  +(\mu_{jk}^{(i)} + \beta_{jk}^{(i)} - \nu_{jk}^{(i)}\nu_{ik}^{(j)}) e_j \otimes v_{ik}\otimes e_i\,.
\end{align*}
Doing the same for $i>j$ when $k\notin \{j+1,\ldots,i-1\}$, 
\begin{align*}
&\lr{v_{ij},v_{ik},v_{ji}} \qquad \qquad \qquad [\text{ subcase (ii) where }k<j<i\text{ or }j<i<k] \\
=& 
-\sum_{j<a<i} \kappa_a^{(i,j)} (\mu_{jk}^{(i)} + \mu_{ak}^{(i)}) e_j \otimes v_{ia} v_{ai}v_{ik} \otimes e_i
 -\sum_{j<a<i} \kappa_a^{(i,j)} (\beta_{jk}^{(i)} + \beta_{ka}^{(i)}) e_j \otimes v_{ik} \otimes v_{ia}v_{ai} \\
&-\sum_{j<a<i} \kappa_a^{(i,j)} \nu_{ak}^{(i)} e_j \otimes v_{ia}v_{ak} \otimes e_i 
-\left(\frac12 (\mu_{jk}^{(i)} + \beta_{jk}^{(i)})+ \mu_{jk}^{(i)} \beta_{jk}^{(i)}\right) v_{ji}v_{ij} \otimes v_{ik} \otimes e_i \\
&-\nu_{jk}^{(i)} \left(\mu_{ik}^{(j)}-\frac12\right) e_j \otimes v_{ij}v_{jk} \otimes e_i
 -( \mu_{jk}^{(i)} + \beta_{jk}^{(i)} + \nu_{jk}^{(i)}\nu_{ik}^{(j)}) e_j \otimes v_{ik} \otimes e_i\,.
\end{align*}
In the remaining case, we can write that 
\begin{align*}
&\lr{v_{ij},v_{ik},v_{ji}} \qquad \qquad \qquad [\text{ subcase (iii) where }j<k<i] \\
=& 
-\sum_{j<a<i} \kappa_a^{(i,j)} (\mu_{jk}^{(i)} + \mu_{ak}^{(i)}+\frac12 \delta_{ak}) e_j \otimes v_{ia} v_{ai}v_{ik} \otimes e_i \\
& -\sum_{j<a\leq k} \kappa_a^{(i,j)} (\beta_{jk}^{(i)} + \beta_{ka}^{(i)}+\frac12 \delta_{ak}) e_j \otimes v_{ik} \otimes v_{ia}v_{ai}  \\
&  -\sum_{k<a<i} \big(\kappa_a^{(i,j)} (\beta_{jk}^{(i)} + \beta_{ka}^{(i)})+\kappa_a^{(i,k)}\kappa_k^{(i,j)} \big) e_j \otimes v_{ik} \otimes v_{ia}v_{ai}\\
&-\sum_{j<a<i} \kappa_a^{(i,j)} \nu_{ak}^{(i)} e_j \otimes v_{ia}v_{ak} \otimes e_i 
-\left(\frac12 (\mu_{jk}^{(i)} + \beta_{jk}^{(i)})+ \mu_{jk}^{(i)} \beta_{jk}^{(i)}\right) v_{ji}v_{ij} \otimes v_{ik} \otimes e_i \\
&-\nu_{jk}^{(i)} \left(\mu_{ik}^{(j)}-\frac12\right) e_j \otimes v_{ij}v_{jk} \otimes e_i
 -( \mu_{jk}^{(i)} + \beta_{jk}^{(i)} + \nu_{jk}^{(i)}\nu_{ik}^{(j)}+\kappa_k^{(i,j)}) e_j \otimes v_{ik} \otimes e_i\,.
\end{align*}
Putting these different expressions to be equal to 
\begin{align*}
 \lr{v_{ij},v_{ik},v_{ji}}_{\qP}=\frac14 \,v_{ji}v_{ij}\otimes v_{ik}\otimes e_i\,,
\end{align*}
is equivalent to the claimed conditions.
\end{proof}

\begin{lem} \label{Lem:strategy-3.2.b}
In the \emph{\underline{Case 3.2.b}}, the quasi-Poisson property holds for $\lr{v_{ij},v_{ji},v_{ik}}$
if and only if the conditions given in one of the following three cases are satisfied: 
\begin{enumerate}
 \item[\textup{(i)}] when $i>j$ and $k\neq i,j$, we have 
 \begin{align*}
&\frac12 (\mu_{jk}^{(i)} + \beta_{jk}^{(i)})+ \mu_{jk}^{(i)} \beta_{jk}^{(i)}=-\frac14\,, \quad \text{ and }\quad 
\nu_{jk}^{(i)} \left(\beta_{jk}^{(i)}+\frac12\right)=0\,;
 \end{align*}
 \item[\textup{(ii)}]   when $k<i<j$ or $i<j<k$, we have 
 \begin{align*}
&\frac12 (\mu_{jk}^{(i)} + \beta_{jk}^{(i)})- \mu_{jk}^{(i)} \beta_{jk}^{(i)}=\frac14\,, \quad \text{ and }\quad 
  \nu_{jk}^{(i)} \left(\beta_{jk}^{(i)}-\frac12\right)=0\,;
 \end{align*}
 \item[\textup{(iii)}] when $i<k<j$, we have 
 \begin{align*}
\frac12 (\mu_{jk}^{(i)} + \beta_{jk}^{(i)})- \mu_{jk}^{(i)} \beta_{jk}^{(i)}&=\frac14\,, \\
  \nu_{jk}^{(i)} \left(\beta_{jk}^{(i)}-\frac12\right)+\kappa_k^{(j,i)}\nu_{ij}^{(k)}&=0\,,\\
  \kappa_k^{(j,i)} (\alpha_{ij}^{(k)}+\mu_{ij}^{(k)})&=0\,.
 \end{align*}
\end{enumerate}
\end{lem}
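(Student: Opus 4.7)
The plan is to mirror the strategy used in Lemma \ref{Lem:strategy-3.2.a}, namely to expand the three cyclic contributions to
\[
\lr{v_{ij},v_{ji},v_{ik}} = \lr{v_{ij},\lr{v_{ji},v_{ik}}}_L+\tau_{(123)}\lr{v_{ji},\lr{v_{ik},v_{ij}}}_L+\tau_{(132)}\lr{v_{ik},\lr{v_{ij},v_{ji}}}_L
\]
using the formulas \eqref{vv0}--\eqref{vvopp}, the Leibniz rule \eqref{Eq:outder}, and the $B$-linearity of the double bracket. Since $i,j,k$ are pairwise distinct, a first simplification is the vanishing of the first contribution: by \eqref{vv0}--\eqref{vvopp} we have $\lr{v_{ji},v_{ik}} = \mu_{jk}^{(i)} e_i\otimes v_{ji}v_{ik} + \nu_{jk}^{(i)}e_i\otimes v_{jk}$, whose \emph{first} tensor component lies in $B$, so $\lr{v_{ij},\lr{v_{ji},v_{ik}}}_L = 0$.

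For the second contribution, I would use $\lr{v_{ik},v_{ij}} = \beta_{kj}^{(i)}v_{ik}\otimes v_{ij}$ to reduce it to $\beta_{kj}^{(i)}\tau_{(123)}(\lr{v_{ji},v_{ik}}\otimes v_{ij})$, and then plug in the expression for $\lr{v_{ji},v_{ik}}$. For the third contribution, I would expand $\lr{v_{ij},v_{ji}}$ into its five summands from \eqref{vvopp}, discard the summands whose first tensor factor lies in $B$ (which vanish by $B$-linearity of $\lr{v_{ik},-}$), and apply the Leibniz rule to the remaining products $v_{ji}v_{ij}$ and $v_{jb}v_{bj}$. Among the resulting pieces, the factors $\lr{v_{ik},v_{jb}}$ and $\lr{v_{ik},v_{bj}}$ survive only when $b=k$, so the $\kappa$-sum contributes exclusively in subcase (iii), where $i<k<j$ forces $b=k$ to fall in the summation range $i<b<j$. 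Separately, I would compute $\lr{v_{ij},v_{ji},v_{ik}}_{\qP} = \tfrac14\, v_{ij}\otimes e_i\otimes v_{ji}v_{ik}$ by direct application of \eqref{qPabc-vvv}, noting that only the summand with $s=i$ in the sixth line of \eqref{qPabc-vvv} has matching idempotents.

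After applying $\tau_{(123)}$ and $\tau_{(132)}$ and using the skewsymmetry relations for $\beta^{(i)}$, all surviving summands can be collected as linear combinations of the three tensor monomials
\[
v_{ij}\otimes e_i \otimes v_{ji}v_{ik}, \qquad v_{ij}\otimes e_i \otimes v_{jk}, \qquad v_{ik}v_{kj}\otimes e_i \otimes v_{jk},
\]
which are linearly independent in $\Bc(\Delta)^{\otimes 3}$. Equating $\lr{v_{ij},v_{ji},v_{ik}}$ with $\lr{v_{ij},v_{ji},v_{ik}}_{\qP}$ thus reduces to three scalar identities, one per monomial. The coefficient of the first monomial always yields the quadratic condition on $\mu_{jk}^{(i)}, \beta_{jk}^{(i)}$, with sign determined by $\sgn(i-j)$; the coefficient of the second produces the linear relation involving $\nu_{jk}^{(i)}$ (and, when $i<k<j$, an extra term $\kappa_k^{(j,i)}\nu_{ij}^{(k)}$ coming from Term 5); and the coefficient of the third monomial gives, only in subcase (iii), the condition $\kappa_k^{(j,i)}(\alpha_{ij}^{(k)}+\mu_{ij}^{(k)})=0$.

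The main obstacle is purely combinatorial bookkeeping: tracking exactly which of the five summands of $\lr{v_{ij},v_{ji}}$ contribute in each subcase (controlled by $\sgn(i-j)$ and the location of $k$ relative to the open interval between $i$ and $j$), and verifying that the extra $\kappa$-term appears precisely in subcase (iii). The subcases (i) and (ii) then follow by setting the $\kappa$-sum to zero (vacuously, since either the summation range is empty when $i>j$, or $k\notin(i,j)$ prevents $b=k$ from surviving), yielding the simpler pairs of conditions stated in the lemma, while subcase (iii) retains the full three conditions.
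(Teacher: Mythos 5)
Your proposal is correct and follows essentially the same approach as the paper's proof: you identify the vanishing of $\lr{v_{ij},\lr{v_{ji},v_{ik}}}_L$ for the same reason, compute the $\tau_{(123)}$ contribution identically, expand $\lr{v_{ij},v_{ji}}$ into its summands and filter by which have a first tensor factor outside $B$, locate the $\kappa$-contribution precisely in subcase (iii) via the condition $b=k$ and $i<k<j$, and equate coefficients against $\lr{v_{ij},v_{ji},v_{ik}}_{\qP}=\tfrac14\, v_{ij}\otimes e_i\otimes v_{ji}v_{ik}$ on the same three independent tensor monomials. The reasoning and case structure coincide with the paper's computation.
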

\begin{proof}
  As $i,j,k$ are pairwise distinct, we can compute that $\lr{v_{ij},\lr{v_{ji},v_{ik}}}_L=0$ and then 
  \begin{align*}
\tau_{(123)}\lr{v_{ji},\lr{v_{ik},v_{ij}}}_L=&
\beta_{kj}^{(i)} \mu_{jk}^{(i)} v_{ij}\otimes e_i \otimes v_{ji}v_{ik}
+\beta_{kj}^{(i)} \nu_{jk}^{(i)} v_{ij}\otimes e_i \otimes v_{jk}\,, \\
\tau_{(132)}\lr{v_{ik},\lr{v_{ij},v_{ji}}}_L=&\frac12 \sgn(i-j) (\beta_{kj}^{(i)}-\mu_{jk}^{(i)}) v_{ij}\otimes e_i \otimes v_{ji}v_{ik} \\
&-\delta_{(i<k<j)}\kappa_k^{(j,i)} (\alpha_{ij}^{(k)}+\mu_{ij}^{(k)}) v_{ik}v_{kj} \otimes e_i \otimes v_{jk} \\
&-\left( \frac12 \sgn(i-j)\nu_{jk}^{(i)}+\delta_{(i<k<j)}\kappa_k^{(j,i)}\nu_{ij}^{(k)}\right) v_{ij}\otimes e_i\otimes v_{jk}\,.
  \end{align*}
  We thus get that 
  \begin{align*}
\lr{v_{ij},v_{ji},v_{ik}} =& 
\left( \frac12 \sgn(j-i) (\mu_{jk}^{(i)}+\beta_{jk}^{(i)})-\mu_{jk}^{(i)}\beta_{jk}^{(i)} \right) v_{ij}\otimes e_i \otimes v_{ji}v_{ik} \\
&-\delta_{(i<k<j)}\kappa_k^{(j,i)} (\alpha_{ij}^{(k)}+\mu_{ij}^{(k)}) v_{ik}v_{kj} \otimes e_i \otimes v_{jk} \\
&-\left( \nu_{jk}^{(i)} \Big(\beta_{jk}^{(i)}+ \frac12 \sgn(i-j) \Big)+\delta_{(i<k<j)}\kappa_k^{(j,i)}\nu_{ij}^{(k)}\right) v_{ij}\otimes e_i\otimes v_{jk}\,.
  \end{align*}
We then see that this triple bracket coincides with 
$$\lr{v_{ij},v_{ji},v_{ik}}_{\qP} = \frac14 \,v_{ij}\otimes e_i \otimes v_{ji}v_{ik}\,,$$
if and only if the claimed conditions are satisfied. 
\end{proof}

\subsubsection{Case 3.3.}  \label{sss:strat3-3}
Recall that $\star$ appears twice in $\{j,l,q\}$. Using the cyclicity of the triple bracket, we can assume without loss of generality that we are considering one of the following cases: 
\begin{enumerate}
 \item[]\underline{Case 3.3.a.} $\lr{v_{ij},v_{kj},v_{ji}}$ with $i,j,k$ distinct;
 \item[]\underline{Case 3.3.b.} $\lr{v_{ij},v_{ji},v_{kj}}$ with  $i,j,k$ distinct.  
\end{enumerate}

\begin{lem} \label{Lem:strategy-3.3.a}
In the \emph{\underline{Case 3.3.a}}, the quasi-Poisson property holds for $\lr{v_{ij},v_{kj},v_{ji}}$ 
if and only if the conditions given in one of the following three cases are satisfied: 
\begin{enumerate}
 \item[\textup{(i)}] when $j>i$ and $k\neq i,j$, we have 
 \begin{align*}
&\frac12 (\mu_{ki}^{(j)} + \alpha_{ki}^{(j)})+ \mu_{ki}^{(j)} \alpha_{ki}^{(j)}=-\frac14\,, \quad \text{ and }\quad 
\nu_{ki}^{(j)} \left(\alpha_{ki}^{(j)}+\frac12\right)=0\,;
 \end{align*}
 \item[\textup{(ii)}]   when $k<j<i$ or $j<i<k$, we have 
 \begin{align*}
&\frac12 (\mu_{ki}^{(j)} + \alpha_{ki}^{(j)})- \mu_{ki}^{(j)} \alpha_{ki}^{(j)}=\frac14\,, \quad \text{ and }\quad 
\nu_{ki}^{(j)} \left(\alpha_{ki}^{(j)}-\frac12\right)=0\,;
 \end{align*}
 \item[\textup{(iii)}] when $j<k<i$, we have 
 \begin{align*}
\frac12 (\mu_{ki}^{(j)} + \alpha_{ki}^{(j)})- \mu_{ki}^{(j)} \alpha_{ki}^{(j)}&=\frac14\,, \\
\nu_{ki}^{(j)} \left(\alpha_{ki}^{(j)}-\frac12\right)+ \kappa_k^{(i,j)}\nu_{ij}^{(k)}&=0\,,\\
  \kappa_k^{(i,j)} (\beta_{ij}^{(k)}+\mu_{ij}^{(k)})&=0\,.
 \end{align*}
\end{enumerate}
\end{lem}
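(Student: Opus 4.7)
The plan is to mirror the strategy of Lemma \ref{Lem:strategy-3.2.b} almost verbatim, exploiting the fact that Case 3.3.a is formally dual to Case 3.2.b under the involution that swaps heads with tails (so that $\alpha^{(\bullet)}$ plays the role that $\beta^{(\bullet)}$ played there, with indices of the form $ki$ at the common vertex $j$ replacing the indices $jk$ at the common vertex $i$). Concretely, I would compute the three cyclic pieces of the triple bracket
\begin{equation*}
 \lr{v_{ij},v_{kj},v_{ji}} = \lr{v_{ij},\lr{v_{kj},v_{ji}}}_L + \tau_{(123)} \lr{v_{kj},\lr{v_{ji},v_{ij}}}_L + \tau_{(132)} \lr{v_{ji},\lr{v_{ij},v_{kj}}}_L,
\end{equation*}
using only the formulas \eqref{vv0}--\eqref{vvopp}, and set the result equal to the explicit expression for $\lr{v_{ij},v_{kj},v_{ji}}_{\qP}$ obtained from \eqref{qPabc-vvv}, which for these indices collapses to a single term $\tfrac14 \,v_{ji}\otimes v_{kj}\otimes e_j$ (or the analogous position obtained from the skewsymmetry pattern), supported only at the idempotent $s=j$.

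Of the three cyclic pieces, the first vanishes by the same kind of index accounting as in Case 3.2.b: the inner bracket $\lr{v_{kj},v_{ji}}$ produces terms proportional to $e_j \otimes v_{kj}v_{ji}$ or $e_j \otimes v_{ki}$, and outer-bracketing against $v_{ij}$ gives nothing (since the first tensor slot is $e_j$ and $v_{ij}$ does not meet $e_j$ on either side except through the sandwich already accounted for). The second piece uses $\lr{v_{ji},v_{ij}}$, so it produces the opposite-pair structure of \eqref{vvopp} with its $\sgn$-prefactor and its $\kappa$-sum over $a$ strictly between $\min(i,j)$ and $\max(i,j)$. The third piece uses the simpler $\lr{v_{ij},v_{kj}}=\alpha^{(j)}_{ik} v_{kj}\otimes v_{ij}$ and then brackets $v_{ji}$ against $v_{kj}\otimes v_{ij}$.

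After collecting, the triple bracket $\lr{v_{ij},v_{kj},v_{ji}}$ decomposes into a sum of linearly independent tensors whose coefficients are, respectively: (a) a quadratic expression in $\mu^{(j)}_{ki}$ and $\alpha^{(j)}_{ki}$ multiplied by $\tfrac12\sgn(j-i)$, which gives the quadratic identity on $\mu^{(j)}_{ki}+\alpha^{(j)}_{ki}$; (b) a linear expression in $\nu^{(j)}_{ki}$ with coefficient $\alpha^{(j)}_{ki}\pm\tfrac12$, together with a contribution from the $\kappa$-sum which is nonzero only when an intermediate index $k$ satisfies $i<k<j$ (or $j<k<i$ according to the orientation); and (c) a coefficient of $\kappa_k^{(i,j)}(\beta^{(k)}_{ij}+\mu^{(k)}_{ij})$ coming from how the outer $v_{ji}$ brackets against the $\kappa$-tail of $\lr{v_{ji},v_{ij}}$. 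Equating these coefficients to those of $\lr{v_{ij},v_{kj},v_{ji}}_{\qP}$ yields exactly the three subcases (i), (ii), (iii) of the statement, corresponding to whether no intermediate index $k$ lies between $\min(i,j)$ and $\max(i,j)$, lies strictly between them in the first orientation, or lies strictly between in the second.

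The main obstacle, as in Case 3.2.b, is the bookkeeping in subcase (iii): one must carefully track which $\kappa_k^{(i,j)}$ terms survive, separating the contribution that multiplies $\nu_{ij}^{(k)}$ (adding to the $\nu^{(j)}_{ki}(\alpha^{(j)}_{ki}-\tfrac12)$ term) from the contribution proportional to $\beta^{(k)}_{ij}+\mu^{(k)}_{ij}$. No new algebraic idea is required beyond what was already developed in the proof of Lemma \ref{Lem:strategy-3.2.b}; the rest is routine expansion using the Leibniz rule and the skew-symmetries of $\alpha$ and $\beta$.
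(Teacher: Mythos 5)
Your approach is essentially the same as the paper's: the paper's proof of this lemma consists of a single line stating that one proceeds ``in a way similar to Lemma 4.9'' (Case 3.2.b), then writes down the resulting expansion of $\lr{v_{ij},v_{kj},v_{ji}}$ and matches coefficients against $\lr{v_{ij},v_{kj},v_{ji}}_{\qP}$, which is precisely your plan. One concrete detail worth fixing: a direct application of \eqref{qPabc-vvv} gives $\lr{v_{ij},v_{kj},v_{ji}}_{\qP}=-\tfrac14\, e_j\otimes v_{ij}\otimes v_{kj}v_{ji}$ (the only surviving summand is $s=j$ in the last line), not $\tfrac14\, v_{ji}\otimes v_{kj}\otimes e_j$ as you guessed, so your hedge ``or the analogous position'' is doing real work there; in carrying out the expansion this term must land in the correct tensor slots or the coefficient-matching will be off. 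Your observation that the first cyclic piece $\lr{v_{ij},\lr{v_{kj},v_{ji}}}_L$ vanishes is correct, since the inner bracket has $e_j$ in the first tensor slot and the double bracket is $B$-bilinear. The case split should also be phrased with slightly more care: subcase (i) corresponds to $j>i$ (regardless of where $k$ sits), while subcases (ii) and (iii) both have $j<i$ and differ according to whether $k$ lies strictly between $j$ and $i$, since that is exactly when $\kappa_k^{(i,j)}$ can be nonzero in the opposite-pair expansion of $\lr{v_{ji},v_{ij}}$.
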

\begin{proof}
 In a way similar to Lemma \ref{Lem:strategy-3.2.b}, we get 
 \begin{align*}
\lr{v_{ij},v_{kj},v_{ji}}=&\left(\frac12 \sgn(j-i) (\mu_{ki}^{(j)}+\alpha_{ki}^{(j)}) + \mu_{ki}^{(j)}\alpha_{ki}^{(j)} \right)  e_j \otimes v_{ij} \otimes v_{kj} v_{ji} \\
&+\delta_{(j<k<i)} \kappa_k^{(i,j)} (\mu_{ij}^{(k)}+\beta_{ij}^{(k)}) e_j  \otimes v_{ik} v_{kj} \otimes v_{ki} \\
&+ \left(\nu_{ki}^{(j)}\Big(\alpha_{ki}^{(j)} + \frac12 \sgn(j-i) \Big) + \delta_{(j<k<i)} \kappa_k^{(i,j)} \nu_{ij}^{(k)} \right) e_j \otimes v_{ij} \otimes v_{ki}\,.
 \end{align*}
This coincides with 
$$\lr{v_{ij},v_{kj},v_{ji}}_{\qP}=-\frac14 \, e_j \otimes v_{ij} \otimes v_{kj}v_{ki}\,,$$
if and only if the stated conditions are fulfilled. 
\end{proof}

\begin{lem} \label{Lem:strategy-3.3.b}
In the \emph{\underline{Case 3.3.b}}, the quasi-Poisson property holds for $\lr{v_{ij},v_{ji},v_{kj}}$ 
if and only if the conditions given in one of the following three cases are satisfied: 
\begin{enumerate}
 \item[\textup{(i)}] when $i>j$ and $k\neq i,j$, we have 
 \begin{align*}
  \kappa_a^{(i,j)} (\mu_{ki}^{(j)} + \alpha_{ki}^{(j)})&=0\,, \quad \text{ for all }j<a<i\,, \\
  \frac12 (\mu_{ki}^{(j)} + \alpha_{ki}^{(j)})- \mu_{ki}^{(j)} \alpha_{ki}^{(j)}&=\frac14\,, \\
  \nu_{ki}^{(j)} \left(\mu_{kj}^{(i)}+\frac12\right)&=0\,, \\
  \mu_{ki}^{(j)} + \alpha_{ki}^{(j)} - \nu_{ki}^{(j)}\nu_{kj}^{(i)}&=0\,;
 \end{align*}
 \item[\textup{(ii)}]   when $k<i<j$ or $i<j<k$, we have 
  \begin{align*}
  \kappa_b^{(j,i)} (\mu_{ki}^{(j)} - \mu_{kb}^{(j)})&=0\,, \quad \text{ for all }i<b<j\,, \\
  \kappa_b^{(j,i)} (\alpha_{ki}^{(j)} - \alpha_{kb}^{(j)})&=0\,, \quad \text{ for all }i<b<j\,, \\
  \kappa_b^{(j,i)} \nu_{kb}^{(j)}&=0\,, \quad \text{ for all }i<b<j\,, \\
  \frac12 (\mu_{ki}^{(j)} + \alpha_{ki}^{(j)})+ \mu_{ki}^{(j)} \alpha_{ki}^{(j)}&=-\frac14\,, \\
  \nu_{ki}^{(j)} \left(\mu_{kj}^{(i)}-\frac12\right)&=0\,, \\
  \mu_{ki}^{(j)} + \alpha_{ki}^{(j)} + \nu_{ki}^{(j)}\nu_{kj}^{(i)}&=0\,;
 \end{align*}
 \item[\textup{(iii)}] when $i<k<j$, we have 
   \begin{align*}
  \kappa_b^{(j,i)} (\mu_{ki}^{(j)} - \mu_{kb}^{(j)}+\frac12 \delta_{kb})&=0\,, \quad \text{ for all }i<b<j\,, \\
  \kappa_b^{(j,i)} (\alpha_{ki}^{(j)} - \alpha_{kb}^{(j)}+\frac12 \delta_{kb})&=0\,, \quad \text{ for all }i<b\leq k\,, \\
  \kappa_b^{(j,i)} (\alpha_{ki}^{(j)} - \alpha_{kb}^{(j)}) +\kappa_b^{(j,k)}\kappa_{k}^{(j,i)}&=0\,, \quad \text{ for all }k<b<j\,, \\
  \kappa_b^{(j,i)} \nu_{kb}^{(j)}&=0\,, \quad \text{ for all }i<b<j\,, \\
  \frac12 (\mu_{ki}^{(j)} + \alpha_{ki}^{(j)})+ \mu_{ki}^{(j)} \alpha_{ki}^{(j)}&=-\frac14\,, \\
  \nu_{ki}^{(j)} \left(\mu_{kj}^{(i)}-\frac12\right)&=0\,, \\
  \mu_{ki}^{(j)} + \alpha_{ki}^{(j)} + \nu_{ki}^{(j)}\nu_{kj}^{(i)}+\kappa_k^{(j,i)}&=0\,.
 \end{align*}
\end{enumerate}
\end{lem}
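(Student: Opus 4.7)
The plan is to mimic the direct calculation carried out in the proof of Lemma \ref{Lem:strategy-3.2.a}, this time applied to the triple bracket $\lr{v_{ij},v_{ji},v_{kj}}$ with $i,j,k$ pairwise distinct. By~\eqref{qPabc-vvv}, the target of the comparison is
\[
 \lr{v_{ij},v_{ji},v_{kj}}_{\qP}=-\tfrac14\, e_j \otimes v_{ij} \otimes v_{ji}v_{kj},
\]
since $j$ is the unique index lying in all three of the index sets $\{i,j\}$, $\{j,i\}$ and $\{k,j\}$.

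First I would expand the three summands of~\eqref{Eq:TripBr}. From \eqref{vv0}--\eqref{vvopp}, the inner double bracket $\lr{v_{ji},v_{kj}}$ falls into the $\lr{v_{ij},v_{ki}}$ template (arrows chaining through the common vertex $j$) and equals $-\mu_{ki}^{(j)}v_{kj}v_{ji}\otimes e_j-\nu_{ki}^{(j)}v_{ki}\otimes e_j$, while $\lr{v_{kj},v_{ij}}$ falls into the $\lr{v_{ij},v_{kj}}$ template (common tail $j$) and equals $\alpha_{ki}^{(j)}\,v_{ij}\otimes v_{kj}$. The third inner bracket $\lr{v_{ij},v_{ji}}$ is given by~\eqref{vvopp} and already splits according to the sign of $i-j$, together with the $\kappa$-sum indexed by the vertices strictly between $i$ and $j$. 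Applying the Leibniz rule~\eqref{Eq:outder}--\eqref{Eq:inder} to pair these expressions with the remaining outer generator, and then applying the cyclic permutations $\tau_{(123)}$ and $\tau_{(132)}$ as prescribed, yields the expanded form of the triple bracket.

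Equating this expansion with $\lr{v_{ij},v_{ji},v_{kj}}_{\qP}$ and identifying coefficients of each distinct tensor monomial then delivers the conditions (i)--(iii). The three subcases correspond to the position of $k$ relative to the interval with endpoints $i$ and $j$: in subcase~(i), $i>j$ and the $\kappa$-sum in $\lr{v_{ij},v_{ji}}$ is empty, so the analysis is essentially that of Lemma~\ref{Lem:strategy-3.2.a}(i); in subcase~(ii), $i<j$ with $k$ outside the open interval $(i,j)$, so the $\kappa$-sum contributes linear $\kappa$-terms but no quadratic coupling; in subcase~(iii), $i<k<j$, the index $k$ sits in the support of the $\kappa$-sum, which produces the extra contributions that distinguish this case.

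The main obstacle is precisely the bookkeeping of subcase~(iii). The key subtlety is that when the summation index $b$ in $-\sum_{i<b<j}\kappa_b^{(j,i)}v_{jb}v_{bj}\otimes e_i$ coincides with $k$, expanding $\lr{v_{kj},v_{jk}v_{kj}}$ by the Leibniz rule requires the inner bracket $\lr{v_{kj},v_{jk}}$, which by~\eqref{vvopp} carries its own $\kappa$-sum indexed by vertices strictly between $k$ and $j$; this is the source of the quadratic coupling $\kappa_b^{(j,k)}\kappa_k^{(j,i)}$ appearing in the identity valid for $k<b<j$, and it also explains why the ranges $i<b\leq k$ and $k<b<j$ must be treated separately in the list of conditions. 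All remaining manipulations are routine Leibniz expansions directly analogous to those already tabulated in the proof of Lemma~\ref{Lem:strategy-3.2.a}.
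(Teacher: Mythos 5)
Your target expression for the quasi-Poisson triple bracket is incorrect, and this is a substantive gap. Working directly from formula~\eqref{qPabc-vvv} with $a=v_{ij}$, $b=v_{ji}$, $c=v_{kj}$ and filtering by the idempotents ($v_{ij}=e_iv_{ij}e_j$, $v_{ji}=e_jv_{ji}e_i$, $v_{kj}=e_kv_{kj}e_j$, with $i,j,k$ pairwise distinct), the only surviving term has $s=j$ and comes from $-\tfrac14\,v_{pq}e_s\otimes v_{ij}e_sv_{kl}\otimes e_s$, giving
\begin{equation*}
 \lr{v_{ij},v_{ji},v_{kj}}_{\qP}=-\tfrac14\, v_{kj}\otimes v_{ij}v_{ji}\otimes e_j\,,
\end{equation*}
whereas you write $-\tfrac14\,e_j\otimes v_{ij}\otimes v_{ji}v_{kj}$. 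Note that $v_{ji}v_{kj}\in e_jAe_i\cdot e_kAe_j=0$ because $i\neq k$, so the expression you propose is \emph{identically zero}; you are effectively demanding that the triple bracket vanish. Equating the expanded left-hand side to $0$ rather than to $-\tfrac14\,v_{kj}\otimes v_{ij}v_{ji}\otimes e_j$ changes the coefficient of the monomial $v_{kj}\otimes v_{ij}v_{ji}\otimes e_j$ from $-\tfrac14$ to $0$, and would produce, e.g., $\tfrac12(\mu_{ki}^{(j)}+\alpha_{ki}^{(j)})-\mu_{ki}^{(j)}\alpha_{ki}^{(j)}=0$ in subcase (i) instead of the correct $=\tfrac14$. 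So the polynomial conditions you would derive do not match the lemma.

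A smaller inaccuracy: you assert that in subcase (i) ($i>j$) ``the $\kappa$-sum in $\lr{v_{ij},v_{ji}}$ is empty.'' It is not: by~\eqref{vvopp}, for $i>j$ one has the sum $\sum_{i>a>j}\kappa_a^{(i,j)}\,e_j\otimes v_{ia}v_{ai}$, and indeed the statement of the lemma records the condition $\kappa_a^{(i,j)}(\mu_{ki}^{(j)}+\alpha_{ki}^{(j)})=0$ for all $j<a<i$ precisely in subcase (i). What distinguishes subcase (i) is not the absence of the $\kappa$-sum but the fact that for $i>j$ the expression $\lr{v_{ij},v_{ji}}$ takes the first branch of~\eqref{vvopp} (terms of shape $e_j\otimes v_{ia}v_{ai}$ rather than $v_{jb}v_{bj}\otimes e_i$), which after the Leibniz expansion and cyclic shuffles produces a different set of tensor monomials than the $i<j$ branch.

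Apart from these two points, the structure of your argument --- direct expansion of~\eqref{Eq:TripBr} as in Lemma~\ref{Lem:strategy-3.2.a}, identification of the inner double brackets $\lr{v_{ji},v_{kj}}=-\mu_{ki}^{(j)}v_{kj}v_{ji}\otimes e_j-\nu_{ki}^{(j)}v_{ki}\otimes e_j$ and $\lr{v_{kj},v_{ij}}=\alpha_{ki}^{(j)}v_{ij}\otimes v_{kj}$, and in subcase (iii) the use of the Leibniz rule on $\lr{v_{kj},v_{jk}v_{kj}}$ to expose the inner $\kappa$-sum of $\lr{v_{kj},v_{jk}}$ and hence the quadratic coupling $\kappa_b^{(j,k)}\kappa_k^{(j,i)}$ for $k<b<j$ --- is exactly the paper's approach and is sound. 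Fixing the $\qP$ target and removing the erroneous ``empty $\kappa$-sum'' claim would align the proposal with the paper's proof.
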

\begin{proof}
  As $i,j,k$ are pairwise distinct, we can compute that 
 \begin{align*}
&\lr{v_{ij},\lr{v_{ji},v_{kj}}}_L=-\frac12 \sgn(i-j) \mu_{ki}^{(j)} v_{kj}v_{ji}v_{ij}\otimes e_i \otimes e_j 
+\mu_{ki}^{(j)} \sum_{i<b<j} \kappa_b^{(j,i)}  v_{kj}v_{jb}v_{bj} \otimes e_i \otimes e_j \\
&\qquad -\mu_{ki}^{(j)}\left(\frac12 \sgn(i-j) - \alpha_{ki}^{(j)}\right) v_{kj}\otimes  v_{ij}v_{ji} \otimes  e_j 
-\mu_{ki}^{(j)} \sum_{i>a>j} \kappa_a^{(i,j)}  v_{kj}\otimes v_{ia}v_{ai} \otimes e_j \\
&\qquad +\nu_{ki}^{(j)}\mu_{kj}^{(i)} v_{ki}v_{ij} \otimes e_i \otimes e_j  
-\left(\sgn(i-j)\mu_{ki}^{(j)} - \nu_{ki}^{(j)}\nu_{kj}^{(i)} \right)  v_{kj} \otimes e_i \otimes e_j \,, 
 \end{align*}
  %%%
  \begin{align*}
 \tau_{(123)}&\lr{v_{ji},\lr{v_{kj},v_{ij}}}_L\\
 =& 
 \,\frac12\sgn(j-i) \alpha_{ki}^{(j)}   v_{kj} \otimes v_{ij}v_{ji} \otimes e_j  
 -\alpha_{ki}^{(j)} \sum_{i>a>j} \kappa_a^{(i,j)}  v_{kj} \otimes v_{ia}v_{ai} \otimes e_j   \\
 &+\frac12 \sgn(j-i) \alpha_{ki}^{(j)}  v_{kj} \otimes e_i \otimes v_{ji}v_{ij}  
 +\alpha_{ki}^{(j)} \sum_{i<b<j} \kappa_b^{(j,i)}  v_{kj} \otimes e_i \otimes v_{jb}v_{bj} \\  
 &+\sgn(j-i) \alpha_{ki}^{(j)} v_{kj} \otimes e_i \otimes e_j \,,
  \end{align*}
%%%
  \begin{align*}
 \tau_{(132)}&\lr{v_{kj},\lr{v_{ij},v_{ji}}}_L\\
 =&
 -\frac12 \sgn(j-i) \mu_{ki}^{(j)}  v_{kj}v_{ji}v_{ij}\otimes e_i \otimes e_j 
 - \sum_{i<b<j} \kappa_b^{(j,i)} \mu_{kb}^{(j)}  v_{kj}v_{jb}v_{bj} \otimes e_i \otimes e_j \\
 &+\frac12 \delta_{(i<k<j)}\kappa_k^{(j,i)} v_{kj}v_{jk}v_{kj} \otimes e_i \otimes e_j
 -\frac12 \sgn(j-i) \alpha_{ki}^{(j)}  v_{kj}\otimes e_i \otimes v_{ji}v_{ij} \\
 &- \sum_{i<b<j} \kappa_b^{(j,i)} \alpha_{kb}^{(j)} v_{kj} \otimes e_i \otimes v_{jb}v_{bj} 
 +\frac12 \delta_{(i<k<j)}\kappa_k^{(j,i)}   v_{kj} \otimes e_i \otimes v_{jk}v_{kj} \\
 &+\delta_{(i<k<j)}\kappa_k^{(j,i)} \sum_{k<c<j} \kappa_c^{(j,k)}  v_{kj} \otimes e_i \otimes v_{jc}v_{cj}
 -\frac12 \sgn(j-i) \nu_{ki}^{(j)}  v_{ki}v_{ij}\otimes e_i \otimes e_j \\
 &-\sum_{i<b<j} \kappa_b^{(j,i)} \nu_{kb}^{(j)}  v_{kb}v_{bj} \otimes e_i \otimes e_j  
 + \delta_{(i<k<j)}\kappa_k^{(j,i)} v_{kj}\otimes e_i \otimes e_j\,. 
  \end{align*}
  %%%
Summing the expressions when $i>j$, we can get 
\begin{align*}
&\lr{v_{ij},v_{ji},v_{kj}} \qquad \qquad \qquad [\text{ subcase (i) where }i>j]\\
=&
- \sum_{i>a>j}\kappa_a^{(i,j)} (\mu_{ki}^{(j)} + \alpha_{ki}^{(j)}) v_{kj} \otimes v_{ia}v_{ai} \otimes e_j 
   -\left(\frac12 (\mu_{ki}^{(j)} + \alpha_{ki}^{(j)})- \mu_{ki}^{(j)} \alpha_{ki}^{(j)}\right) v_{kj}\otimes  v_{ij}v_{ji} \otimes e_j \\
 &+\nu_{ki}^{(j)} \left(\mu_{kj}^{(i)}+\frac12\right) v_{ki}v_{ij} \otimes e_i \otimes e_j 
-(\mu_{ki}^{(j)} + \alpha_{ki}^{(j)} - \nu_{ki}^{(j)}\nu_{kj}^{(i)})  v_{kj}\otimes e_i \otimes e_j\,.
\end{align*}
Doing the same for $i<j$ when $k\notin \{i+1,\ldots,j-1\}$, 
\begin{align*}
&\lr{v_{ij},v_{ji},v_{kj}} \qquad \qquad \qquad [\text{ subcase (ii) where }k<i<j\text{ or }i<j<k] \\
=& 
 \sum_{i<b<j}\kappa_b^{(j,i)} (\mu_{ki}^{(j)} - \mu_{kb}^{(j)}) v_{kj} v_{jb}v_{bj} \otimes e_i \otimes e_j 
+\sum_{i<b<j}\kappa_b^{(j,i)} (\alpha_{ki}^{(j)} - \alpha_{kb}^{(j)}) v_{kj} \otimes e_i \otimes v_{jb}v_{bj}  \\
&-\sum_{i<b<j}\kappa_b^{(j,i)} \nu_{kb}^{(j)}  v_{kb}v_{bj} \otimes e_{i} \otimes e_j  
+\left(\frac12 (\mu_{ki}^{(j)} + \alpha_{ki}^{(j)})+ \mu_{ki}^{(j)} \alpha_{ki}^{(j)}\right) v_{kj}\otimes  v_{ij}v_{ji} \otimes e_j \\
 &+\nu_{ki}^{(j)} \left(\mu_{kj}^{(i)}-\frac12\right) v_{ki}v_{ij} \otimes e_i \otimes e_j 
+(\mu_{ki}^{(j)} + \alpha_{ki}^{(j)} + \nu_{ki}^{(j)}\nu_{kj}^{(i)})  v_{kj}\otimes e_i \otimes e_j\,.
\end{align*}
In the remaining case, we can write that 
\begin{align*}
&\lr{v_{ij},v_{ji},v_{kj}} \qquad \qquad \qquad [\text{ subcase (iii) where }i<k<j] \\
=& 
 \sum_{i<b<j}\kappa_b^{(j,i)} \left(\mu_{ki}^{(j)} - \mu_{kb}^{(j)}+\frac12 \delta_{kb}\right) v_{kj} v_{jb}v_{bj} \otimes e_i \otimes e_j  \\
&+\sum_{i<b \leq k}\kappa_b^{(j,i)} \left(\alpha_{ki}^{(j)} - \alpha_{kb}^{(j)} +\frac12 \delta_{kb} \right) v_{kj} \otimes e_i \otimes v_{jb}v_{bj}  \\
&+\sum_{k<b<j}\left( \kappa_b^{(j,i)} (\alpha_{ki}^{(j)} - \alpha_{kb}^{(j)}) + \kappa_b^{(j,k)}\kappa_k^{(j,i)} \right) v_{kj} \otimes e_i \otimes v_{jb}v_{bj}  \\
&-\sum_{i<b<j}\kappa_b^{(j,i)} \nu_{kb}^{(j)}  v_{kb}v_{bj} \otimes e_{i} \otimes e_j  
+\left(\frac12 (\mu_{ki}^{(j)} + \alpha_{ki}^{(j)})+ \mu_{ki}^{(j)} \alpha_{ki}^{(j)}\right) v_{kj}\otimes  v_{ij}v_{ji} \otimes e_j \\
 &+\nu_{ki}^{(j)} \left(\mu_{kj}^{(i)}-\frac12\right) v_{ki}v_{ij} \otimes e_i \otimes e_j 
+(\mu_{ki}^{(j)} + \alpha_{ki}^{(j)} + \nu_{ki}^{(j)}\nu_{kj}^{(i)} + \kappa_k^{(j,i)})  v_{kj}\otimes e_i \otimes e_j\,.
\end{align*}
Putting these different expressions to be equal to 
\begin{align*}
 \lr{v_{ij},v_{ji},v_{kj}}_{\qP}=-\frac14 \,v_{kj} \otimes v_{ij}v_{ji}\otimes  e_j\,,
\end{align*}
is equivalent to the claimed conditions.
\end{proof}

\subsubsection{Case 3.4.}
It suffices to consider the following case. 
\begin{lem} \label{Lem:strategy-3.4}
The quasi-Poisson property holds for $\lr{v_{ij},v_{ij},v_{ji}}$  if and only if 
the conditions given in one of the following two cases are satisfied: 
\begin{enumerate}
 \item[\textup{(i)}] when $i<j$, we have for all $i<b<j$ that 
 \begin{align*}
  &\kappa_b^{(j,i)}\Big(\alpha_{ib}^{(j)}-\frac12\Big)=0\,, \quad 
  \kappa_b^{(j,i)}\Big(\mu_{ib}^{(j)}+\frac12\Big)=0\,, \quad 
  \kappa_b^{(j,i)}\nu_{ib}^{(j)}=0\,;
 \end{align*}
 \item[\textup{(ii)}]  when $i>j$, we have for all $i>a>j$ that 
 \begin{align*}
  &\kappa_a^{(i,j)}\Big(\beta_{aj}^{(i)} - \frac12\Big)=0\,, \quad 
  \kappa_a^{(i,j)}\Big(\mu_{aj}^{(i)} + \frac12\Big)=0\,, \quad 
  \kappa_a^{(i,j)}\nu_{aj}^{(i)}=0\,.
 \end{align*}
 \end{enumerate}
\end{lem}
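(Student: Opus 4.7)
The strategy is to compute both sides of $\dgal{v_{ij},v_{ij},v_{ji}}=\dgal{v_{ij},v_{ij},v_{ji}}_{\qP}$ directly and equate coefficients of independent tensor monomials. Since $\dgal{v_{ij},v_{ij}}=0$ by \eqref{vv0}, the last summand in the definition \eqref{Eq:TripBr} of the triple bracket vanishes, so
\begin{equation*}
\dgal{v_{ij},v_{ij},v_{ji}} = \dgal{v_{ij},\dgal{v_{ij},v_{ji}}}_L + \tau_{(123)}\dgal{v_{ij},\dgal{v_{ji},v_{ij}}}_L\,,
\end{equation*}
and using the cyclic antisymmetry $\dgal{v_{ji},v_{ij}}=-\tau_{(12)}\dgal{v_{ij},v_{ji}}$ both surviving terms are expressed in terms of the single double bracket $\dgal{v_{ij},v_{ji}}$ given by \eqref{vvopp}. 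For the right-hand side, using \eqref{qPabc-vvv} only the indices $s=i,j$ can contribute (since $e_sv_{ij}$ and $v_{ij}e_s$ vanish unless $s\in\{i,j\}$). A direct bookkeeping, after discarding the terms killed by identities such as $v_{ij}e_i=0=e_jv_{ij}$ and $v_{ij}v_{ij}=0$, gives an explicit expression with only two nonzero tensor monomials (\emph{e.g.} $v_{ji}v_{ij}\otimes v_{ij}\otimes e_i$ and $e_j\otimes v_{ij}\otimes v_{ij}v_{ji}$, each with coefficient $\pm\tfrac14$).

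Next, one computes $\dgal{v_{ij},-}$ applied to each summand of $\dgal{v_{ij},v_{ji}}$. By $B$-linearity $\dgal{v_{ij},e_i}=0=\dgal{v_{ij},e_j}$; by the Leibniz rule \eqref{Eq:outder} together with $\dgal{v_{ij},v_{ij}}=0$ one gets $\dgal{v_{ij},v_{ji}v_{ij}}=\dgal{v_{ij},v_{ji}}v_{ij}$ and $\dgal{v_{ij},v_{ij}v_{ji}}=v_{ij}\dgal{v_{ij},v_{ji}}$; the remaining brackets $\dgal{v_{ij},v_{ia}},\dgal{v_{ij},v_{ai}},\dgal{v_{ij},v_{jb}},\dgal{v_{ij},v_{bj}}$ are read off from \eqref{vv0}--\eqref{vvopp}, producing the coefficients $\alpha_{ai}^{(j)},\beta_{aj}^{(i)},\mu_{aj}^{(i)},\nu_{aj}^{(i)},\alpha_{ib}^{(j)},\beta_{ib}^{(j)},\mu_{ib}^{(j)},\nu_{ib}^{(j)}$. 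The Leibniz rule then splits each $\dgal{v_{ij},v_{ia}v_{ai}}$ and $\dgal{v_{ij},v_{jb}v_{bj}}$ into two pieces, and the cyclic permutation $\tau_{(123)}$ rotates the resulting tensor factors.

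Splitting into the two subcases according to the sign of $i-j$, which governs the leading coefficients in \eqref{vvopp} and restricts which of the sums $\sum_{i>a>j}$ or $\sum_{i<b<j}$ is nonempty, one obtains the full expansion of $\dgal{v_{ij},v_{ij},v_{ji}}$ as a linear combination of monomials such as $v_{jb}v_{bj}\otimes v_{ij}\otimes e_i$, $v_{jb}\otimes v_{ij}\otimes v_{bi}$, $e_j\otimes v_{ij}\otimes v_{jb}v_{bi}$, etc. (in case $i<j$), or their analogues with $a$ in place of $b$ and roles of the tensor slots swapped (in case $i>j$). Most of these monomials are absent from $\dgal{v_{ij},v_{ij},v_{ji}}_{\qP}$, so their coefficients must vanish; the two monomials that do appear in the $\qP$-side must have the prescribed coefficient $\pm\tfrac14$. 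The coefficients of the monomials indexed by an intermediate vertex $b$ (resp.~$a$) are precisely $\kappa_b^{(j,i)}(\alpha_{ib}^{(j)}-\tfrac12)$, $\kappa_b^{(j,i)}(\mu_{ib}^{(j)}+\tfrac12)$, $\kappa_b^{(j,i)}\nu_{ib}^{(j)}$ (resp.~$\kappa_a^{(i,j)}(\beta_{aj}^{(i)}-\tfrac12)$, $\kappa_a^{(i,j)}(\mu_{aj}^{(i)}+\tfrac12)$, $\kappa_a^{(i,j)}\nu_{aj}^{(i)}$); demanding their vanishing yields the three stated conditions in each subcase, while the remaining ``main'' coefficients cancel automatically against the $\qP$-side thanks to the factor $\sgn(i-j)$ in \eqref{vvopp}.

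The main difficulty is purely combinatorial: keeping track of the many tensor monomials produced by the Leibniz rule and by the two cyclic shifts, and correctly identifying which are linearly independent in $\kk\overline{Q}_n^{\otimes 3}$. Once this bookkeeping is organized, comparison of coefficients is mechanical and gives exactly the displayed equations.
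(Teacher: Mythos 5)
Your proposal is correct and follows essentially the same route as the paper: drop the summand $\tau_{(132)}\lr{v_{ji},\lr{v_{ij},v_{ij}}}_L$ which vanishes since $\lr{v_{ij},v_{ij}}=0$, expand the two remaining summands of the triple bracket using \eqref{vvopp} and the Leibniz rule, split by $\sgn(i-j)$, and compare coefficients with $\lr{v_{ij},v_{ij},v_{ji}}_{\qP}=\tfrac14\big(v_{ji}v_{ij}\otimes v_{ij}\otimes e_i - e_j\otimes v_{ij}\otimes v_{ij}v_{ji}\big)$. As you indicate, the leading $\pm\tfrac14$ monomials match automatically, some intermediate terms (such as $\pm\tfrac12\,e_j\otimes v_{ij}\otimes e_i$ and $\pm\tfrac14\,e_j\otimes v_{ij}v_{ji}v_{ij}\otimes e_i$) cancel between the two summands, and the residual monomials indexed by an intermediate vertex give exactly the three stated vanishing conditions in each subcase.
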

\begin{proof}
 We compute 
\begin{align*}
 &\lr{v_{ij},\lr{v_{ij},v_{ji}}}_L
 =\frac14 v_{ji}v_{ij}\otimes v_{ij} \otimes e_i 
 -\sum_{i<b<j} \kappa_b^{(j,i)} \Big(\alpha_{ib}^{(j)}-\frac12\Big) v_{jb} v_{bj} \otimes v_{ij} \otimes e_i \\
&\quad +\frac14 e_j \otimes v_{ij}v_{ji}v_{ij} \otimes e_i 
 -\sum_{i<b<j} \kappa_b^{(j,i)} \mu_{ib}^{(j)} e_j \otimes v_{ij} v_{jb} v_{bj} \otimes  e_i +\frac12 e_j \otimes v_{ij} \otimes e_i  \\
&\quad +\frac12 \sum_{i>a>j} \kappa_a^{(i,j)} e_j \otimes v_{ia}v_{ai}v_{ij} \otimes e_i 
 - \sum_{i<b<j} \kappa_b^{(j,i)} \nu_{ib}^{(j)} e_j \otimes v_{ib}v_{bj} \otimes e_i \,, \\
 \tau_{(123)}&\lr{v_{ij},\lr{v_{ji},v_{ij}}}_L
 =-\frac14 e_j \otimes v_{ij} \otimes v_{ij}v_{ji} 
 -\sum_{i>a>j} \kappa_a^{(i,j)} \Big(\beta_{ja}^{(i)}+\frac12\Big) e_j \otimes v_{ij} \otimes v_{ia} v_{ai}  \\
&\quad -\frac14 e_j \otimes v_{ij}v_{ji}v_{ij} \otimes e_i 
 +\sum_{i>a>j} \kappa_a^{(i,j)} \mu_{aj}^{(i)} e_j \otimes v_{ia} v_{ai} v_{ij} \otimes  e_i -\frac12 e_j \otimes v_{ij} \otimes e_i  \\
&\quad +\frac12 \sum_{i<b<j} \kappa_b^{(j,i)} e_j \otimes v_{ij}v_{jb}v_{bj} \otimes e_i 
 + \sum_{i>a>j} \kappa_a^{(i,j)} \nu_{aj}^{(i)} e_j \otimes v_{ia}v_{aj} \otimes e_i \,,
\end{align*}
 while $\lr{v_{ji},\lr{v_{ij},v_{ij}}}_L=0$.
We can then write  
\begin{align*}
&\lr{v_{ij},v_{ij},v_{ji}}\stackrel{i<j}{=}  
\frac14 ( v_{ji}v_{ij}\otimes v_{ij} \otimes e_i - e_j \otimes v_{ij} \otimes  v_{ij}v_{ji} ) \\
&\quad -\sum_{i<b<j} \kappa_b^{(j,i)}\left(\Big(\alpha_{ib}^{(j)}-\frac12\Big) v_{jb} v_{bj} \otimes v_{ij} \otimes e_i 
+ \Big(\mu_{ib}^{(j)}+\frac12\Big) e_j \otimes v_{ij}v_{jb}v_{bj} \otimes e_i \right) \\
&\quad - \sum_{i<b<j} \kappa_b^{(j,i)} \nu_{ib}^{(j)} e_j \otimes v_{ib}v_{bj} \otimes e_i \,,\\
&\lr{v_{ij},v_{ij},v_{ji}}\stackrel{i>j}{=}
\frac14 ( v_{ji}v_{ij}\otimes v_{ij} \otimes e_i - e_j \otimes v_{ij} \otimes  v_{ij}v_{ji} ) \\
&\quad +\sum_{i>a>j} \kappa_a^{(i,j)} \left( \Big(\beta_{aj}^{(i)}-\frac12\Big) e_j \otimes v_{ij} \otimes v_{ia} v_{ai} 
 +\Big(\mu_{aj}^{(i)}+\frac12\Big) e_j \otimes v_{ia}  v_{ai} v_{ij}\otimes e_i \right) \\ 
&\quad   + \sum_{i>a>j} \kappa_a^{(i,j)} \nu_{aj}^{(i)} e_j \otimes v_{ia}v_{aj} \otimes e_i \,.
\end{align*}
These expressions are equal to
\begin{align*}
 \lr{v_{ij},v_{ij},v_{ji}}_{\qP}=\frac14 ( v_{ji}v_{ij}\otimes v_{ij} \otimes e_i - e_j \otimes v_{ij} \otimes  v_{ij}v_{ji} )\,,
\end{align*}
if and only if the claimed identities are satisfied. 
\end{proof}

\subsection{Conditions obtained from Case 4} \label{ss:strat4}

The distinct elements $\star,\ast,\bullet$ appearing in the intersection $S$ all appear once in each of the sets $\{i,k,p\}$ and $\{j,l,q\}$. It suffice to consider the following cases 
\begin{enumerate}
 \item[]\underline{Case 4.1.} $\lr{v_{ik},v_{kp},v_{pi}}$ with $i,k,p$ distinct;
 \item[]\underline{Case 4.2.} $\lr{v_{ip},v_{ki},v_{pk}}$ with $i,k,p$ distinct. 
\end{enumerate}

 It is easy to derive the following result. 
\begin{lem} \label{Lem:strategy-4.1}
In the \emph{\underline{Case 4.1}}, the quasi-Poisson property always holds for $\lr{v_{ik},v_{kp},v_{pi}}$. 
\end{lem}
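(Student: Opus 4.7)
The plan is to establish \eqref{qPabc} for the triple $(v_{ik},v_{kp},v_{pi})$ by showing that \emph{both} sides vanish identically, so no relation between the parameters is forced.

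For the left-hand side, I would first compute the three constituent inner double brackets. Since $i,k,p$ are pairwise distinct, each of the pairs $(v_{kp},v_{pi})$, $(v_{pi},v_{ik})$, $(v_{ik},v_{kp})$ consists of two arrows sharing a single vertex, i.e.\ is of the form $\lr{v_{ab},v_{bc}}$ with $a\neq c$. Applying the formula for this case from \eqref{double-bracket-111} yields
\[
\lr{v_{kp},v_{pi}}=\mu_{ki}^{(p)}\, e_p\otimes v_{kp}v_{pi}+\nu_{ki}^{(p)}\, e_p\otimes v_{ki},
\]
and analogous expressions for the other two pairs (with $e_i$ and $e_k$ in the first tensor slot, respectively). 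The crucial observation is that every term in each of these three double brackets carries an idempotent of $B$ in its \emph{first} tensor factor. Since the operation $\lr{v_{xy},-}_L$ acts only on the first slot and is $B$-linear (so that $\lr{v_{xy},e_s}=0$), each of the three summands of the triple bracket \eqref{Eq:TripBr} vanishes. Hence $\lr{v_{ik},v_{kp},v_{pi}}=0$.

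For the right-hand side, inspection of \eqref{eq:triple-bracket-E3} shows that $\lr{v_{ik},v_{kp},v_{pi}}_{\qP}=0$ outright. Indeed, using $v_{ab}=e_a v_{ab}e_b$, each of the eight summands at a given index $s$ requires $e_s$ to couple non-trivially to two of the three arrows simultaneously; for instance $e_sv_{ik}\neq 0$ forces $s=i$ while $e_sv_{kp}\neq 0$ forces $s=k$. Since $i,k,p$ are pairwise distinct, no single value of $s$ can satisfy the constraints arising in any of the eight terms, so the whole sum collapses. Thus \eqref{qPabc} holds trivially, with no constraints imposed on $(\alpha,\beta,\mu,\nu,\kappa)$. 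There is no real obstacle in this case: the cyclic structure of the three arrows prevents both the nested brackets and the quasi-Poisson correction from producing anything nonzero, which is why this case contributes no conditions to the list assembled in Section~\ref{sec:proof-quasi-Poisson-General}.
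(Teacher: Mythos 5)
Your proof is correct and complete; the paper itself omits the proof, asserting only that ``it is easy to derive the following result,'' so you are supplying the detail rather than deviating from a given argument. Your verification that both sides vanish is accurate: for the left-hand side, each of the three inner double brackets $\lr{v_{kp},v_{pi}}$, $\lr{v_{pi},v_{ik}}$, $\lr{v_{ik},v_{kp}}$ is of the form $\lr{v_{ab},v_{bc}}$ with $a\neq c$ (pairwise distinctness gives $k\neq i$, $p\neq k$, $i\neq p$), and the corresponding formula places an idempotent in the first tensor factor of every summand, so $\lr{v_{xy},-}_L$ annihilates each term by $B$-bilinearity. For the right-hand side, the index-clash analysis you perform on the eight summands of $\lr{-,-,-}_{\qP}$ is exactly right: with $a=v_{ik}$, $b=v_{kp}$, $c=v_{pi}$, every term requires a single $s$ to equal two distinct vertices at once. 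The structural observation about the first tensor factor is a nice way to see the vanishing of \eqref{Eq:TripBr} without expanding anything.
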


We finally arrive at the last set of conditions in the \underline{Case 4.2}. Note that by cyclicity of the triple bracket, we only need to consider that either $i<k<p$ or $i<p<k$. 
\begin{lem} \label{Lem:strategy-4.2}
In the \emph{\underline{Case 4.2}}, the quasi-Poisson property holds for $\lr{v_{ip},v_{ki},v_{pk}}$
if and only if the conditions given in one of the following two cases are satisfied: 
\begin{enumerate}
 \item[\textup{(i)}] when $i<k<p$, we have:
 \begin{align*}
 &\nu_{ik}^{(p)}+\nu_{kp}^{(i)}-\nu_{pi}^{(k)}=0\,, \qquad 
 \nu_{ik}^{(p)} \left(\mu_{pi}^{(k)}+\frac12 \right)=0\,, \\
 & \nu_{pi}^{(k)} \left(\mu_{kp}^{(i)}-\frac12 \right)=0\,, \qquad \nu_{ik}^{(p)} \left(\mu_{kp}^{(i)}-\frac12 \right)=0\,,   \\  
 &\nu_{pi}^{(k)} \left(\mu_{ik}^{(p)}+\frac12 \right)=0\,, \qquad \nu_{kp}^{(i)} \left(\mu_{ik}^{(p)}+\frac12 \right)=0\,,  \\
 &\nu_{kp}^{(i)} \left(\mu_{pi}^{(k)}-\frac12 \right) + \nu_{pi}^{(k)}\kappa_k^{(p,i)}=0\,, \\
 &\nu_{pi}^{(k)} \kappa_a^{(p,i)}=0\,, \qquad \nu_{ik}^{(p)} \kappa_a^{(k,i)}=0\,, \quad \text{ for all }\ i<a<k\,, \\
 &\nu_{pi}^{(k)} \kappa_c^{(p,i)} - \nu_{kp}^{(i)} \kappa_c^{(p,k)}=0\,, \quad \text{ for all }\ k<c<p\,;
 \end{align*}
 \item[\textup{(ii)}] when $i<p<k$, we have:
  \begin{align*}
 &\nu_{ik}^{(p)}-\nu_{kp}^{(i)}-\nu_{pi}^{(k)}=0\,, \qquad 
 \nu_{pi}^{(k)} \left(\mu_{ik}^{(p)}+\frac12 \right)=0\,, \\
 & \nu_{pi}^{(k)} \left(\mu_{kp}^{(i)}-\frac12 \right)=0\,, \qquad \nu_{ik}^{(p)} \left(\mu_{kp}^{(i)}-\frac12 \right)=0\,,   \\  
 &\nu_{ik}^{(p)} \left(\mu_{pi}^{(k)}+\frac12 \right)=0\,, \qquad \nu_{kp}^{(i)} \left(\mu_{pi}^{(k)}+\frac12 \right)=0\,,  \\
 &\nu_{kp}^{(i)} \left(\mu_{ik}^{(p)}-\frac12 \right) + \nu_{ik}^{(p)}\kappa_p^{(k,i)}=0\,, \\
 &\nu_{pi}^{(k)} \kappa_b^{(p,i)}=0\,, \qquad \nu_{ik}^{(p)} \kappa_b^{(k,i)}=0\,, \quad \text{ for all }\ i<b<p\,, \\
 &\nu_{ik}^{(p)} \kappa_d^{(k,i)} - \nu_{kp}^{(i)} \kappa_d^{(k,p)}=0\,, \quad \text{ for all }\ p<d<k\,.
 \end{align*}
\end{enumerate} 
\end{lem}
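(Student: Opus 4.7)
\textbf{Proof proposal for Lemma \ref{Lem:strategy-4.2}.} The plan is to compute the triple bracket $\lr{v_{ip},v_{ki},v_{pk}}$ term by term using the double-bracket formulas \eqref{vv0}--\eqref{vvopp}, compare with $\lr{v_{ip},v_{ki},v_{pk}}_{\qP}$, and read off the conditions. As a preliminary observation, $\lr{v_{ip},v_{ki},v_{pk}}_{\qP}=0$: each of the eight terms in \eqref{qPabc-vvv} requires one of the idempotents $e_s$ to satisfy two incompatible index conditions (since $i,k,p$ are pairwise distinct), hence every summand vanishes. So the quasi-Poisson property is equivalent to the triple bracket itself being zero.

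First I would compute the three inner brackets. They all fall into the pattern $\lr{v_{ab},v_{bc}}$ or $\lr{v_{ab},v_{ca}}$ with one shared index:
\begin{align*}
\lr{v_{ki},v_{pk}} &= -\mu_{pi}^{(k)}\, v_{pk}v_{ki}\otimes e_k - \nu_{pi}^{(k)}\, v_{pi}\otimes e_k\,, \\
\lr{v_{pk},v_{ip}} &= -\mu_{ik}^{(p)}\, v_{ik}v_{pk}\otimes e_p - \nu_{ik}^{(p)}\, v_{ik}\otimes e_p\,,\\
\lr{v_{ip},v_{ki}} &=  \mu_{kp}^{(i)}\, e_i\otimes v_{ki}v_{ip} + \nu_{kp}^{(i)}\, e_i\otimes v_{kp}\,,
\end{align*}
(up to sign and cyclic rewriting). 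Then I would apply the outer bracket with the remaining $v_{\bullet\bullet}$, using the Leibniz rules \eqref{Eq:outder}--\eqref{Eq:inder}. This generates four types of terms: (a) ``bulk'' $\mu\mu$- and $\mu\nu$-contributions coming from brackets $\lr{v_{ab},v_{bc}}$ of the outer arrow with one of the factors; (b) brackets of the form $\lr{v_{ab},v_{ba}}$ of an arrow with its opposite, which yield the idempotent pieces together with $\kappa$-sums; (c) the explicit $\nu\mu$- and $\nu\nu$-pieces coming from the $\nu$-part of the inner bracket; (d) mixed $\nu\kappa$-terms. After applying $\tau_{(123)}$ and $\tau_{(132)}$ and summing, the $\mu\mu$-bulk pieces cancel pairwise by the cyclic antisymmetry of $\mu$.

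The heart of the argument is then bookkeeping: the surviving terms split into products $e_\star\otimes e_\ast \otimes (\text{length-}2\text{ word})$ (and cyclic permutations), into single-arrow terms $e_\star \otimes e_\ast \otimes v_{\bullet\bullet}$, and into $\kappa$-dependent terms supported on $v_{ab}v_{ba}$-factors. Setting each coefficient to zero yields the list of relations in the lemma. The condition $\nu_{ik}^{(p)}+\nu_{kp}^{(i)}-\nu_{pi}^{(k)}=0$ arises from matching the coefficient of the ``constant'' length-zero tensor (obtained from the triple product of $e$'s); the conditions of the form $\nu^{(\cdot)}_{\cdot\cdot}(\mu^{(\cdot)}_{\cdot\cdot}\pm 1/2)=0$ come from matching coefficients of $v_{ab}v_{bc}$-type tensors; and the $\nu\kappa$-conditions come from the appearance of $\kappa$-sums in $\lr{v_{ab},v_{ba}}$, whose range of summation is dictated by the relative order of $i,k,p$.

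The main obstacle will be the case analysis: the formula for $\lr{v_{ab},v_{ba}}$ depends on $\sgn(a-b)$, and the $\kappa$-sums involve only indices strictly between $\min(a,b)$ and $\max(a,b)$. The two cyclically inequivalent orderings $i<k<p$ and $i<p<k$ therefore produce slightly different $\kappa$-contributions; in particular, when one of the intermediate indices $a$ in a $\kappa$-sum coincides with the third vertex, an additional relation of the form $\nu_{kp}^{(i)}(\mu_{pi}^{(k)}-\tfrac12)+\nu_{pi}^{(k)}\kappa_k^{(p,i)}=0$ (respectively $\nu_{kp}^{(i)}(\mu_{ik}^{(p)}-\tfrac12)+\nu_{ik}^{(p)}\kappa_p^{(k,i)}=0$) is produced. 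I would handle the two subcases separately, being careful to track the signs introduced by $\sgn(a-b)$ and by the cyclic permutations $\tau_{(123)}$ and $\tau_{(132)}$. No conceptually new idea is needed beyond those already used in Lemmas \ref{Lem:strategy-3.2.a}--\ref{Lem:strategy-3.3.b}; the computation is essentially a decorated version of the same bookkeeping exercise.
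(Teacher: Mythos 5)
Your proposal follows essentially the same route as the paper: compute the three cyclic summands of $\lr{v_{ip},v_{ki},v_{pk}}$ directly from \eqref{vv0}--\eqref{vvopp}, observe that $\lr{v_{ip},v_{ki},v_{pk}}_{\qP}=0$, split into the two orderings $i<k<p$ and $i<p<k$, and read off the conditions by setting every surviving coefficient to zero. One cautionary remark: the $\mu\mu$-bulk pieces do cancel pairwise as you say, but not because $\mu$ has any cyclic antisymmetry (it does not; $\mu^{(s)}$ is an arbitrary matrix with zero diagonal and zero $s$-th row and column) — rather, the cancellation comes from the opposite signs carried by the inner versus outer Leibniz-rule expansions in the different cyclic summands, so you should track that sign structure explicitly rather than appeal to a symmetry of $\mu$. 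Also watch the inner-bracket bookkeeping: as written, $\lr{v_{pk},v_{ip}}$ should have $v_{ip}v_{pk}$ (not $v_{ik}v_{pk}$), and your displayed $\lr{v_{ip},v_{ki}}$ is actually $\lr{v_{ki},v_{ip}}$; these are precisely the slips your hedge about ``sign and cyclic rewriting'' alludes to, but they need to be fixed before the coefficient comparison.
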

\begin{proof}
 Let us first compute the terms appearing in $\lr{v_{ip},v_{ki},v_{pk}}$ for arbitrary $i,k,p$ which are pairwise distinct. We have 
 \begin{align*}
&\lr{v_{ip},\lr{v_{ki},v_{pk}}}_L=
\mu_{pi}^{(k)} \mu_{kp}^{(i)} v_{pk}v_{ki}v_{ip}\otimes e_i \otimes e_k 
- \mu_{pi}^{(k)} \mu_{ik}^{(p)} e_p \otimes v_{ip}v_{pk}v_{ki} \otimes e_k \\ 
&\qquad +\mu_{pi}^{(k)} \nu_{kp}^{(i)} v_{pk}v_{kp}\otimes e_i \otimes e_k 
- \mu_{pi}^{(k)} \nu_{ik}^{(p)} e_p \otimes v_{ik}v_{ki} \otimes e_k \\
&\qquad+\nu_{pi}^{(k)} \sum_{i<b<p} \kappa_b^{(p,i)} v_{pb}v_{bp}\otimes e_i \otimes e_k 
- \nu_{pi}^{(k)} \sum_{i>a>p} \kappa_a^{(i,p)} e_p \otimes v_{ia}v_{ai} \otimes e_k\\
&\qquad-\frac12 \nu_{pi}^{(k)} \sgn(i-p) v_{pi}v_{ip} \otimes e_i \otimes e_k 
-\frac12 \nu_{pi}^{(k)}\sgn(i-p) e_p \otimes v_{ip}v_{pi}\otimes e_k \\
&\qquad-\nu_{pi}^{(k)}\sgn(i-p) e_p \otimes e_i \otimes e_k\,; \\
&\tau_{(123)}\lr{v_{ki},\lr{v_{pk},v_{ip}}}_L=
\mu_{ik}^{(p)} \mu_{pi}^{(k)} e_p \otimes v_{ip}v_{pk}v_{ki} \otimes e_k 
- \mu_{ik}^{(p)} \mu_{kp}^{(i)} e_p \otimes e_i \otimes v_{ki}v_{ip}v_{pk} \\ 
&\qquad +\mu_{ik}^{(p)} \nu_{pi}^{(k)} e_p \otimes v_{ip}v_{pi} \otimes e_k 
- \mu_{ik}^{(p)} \nu_{kp}^{(i)} e_p \otimes e_i \otimes v_{kp}v_{pk}  \\
&\qquad+\nu_{ik}^{(p)} \sum_{k<b'<i} \kappa_{b'}^{(i,k)}  e_p \otimes v_{ib'}v_{b'i} \otimes e_k 
- \nu_{ik}^{(p)} \sum_{k>a'>i} \kappa_{a'}^{(k,i)} e_p \otimes e_i \otimes v_{ka'}v_{a'k} \\
&\qquad-\frac12  \nu_{ik}^{(p)} \sgn(k-i) e_p \otimes v_{ik}v_{ki} \otimes e_k 
-\frac12  \nu_{ik}^{(p)}\sgn(k-i) e_p \otimes  e_i \otimes v_{ki}v_{ik} \\
&\qquad- \nu_{ik}^{(p)}\sgn(k-i) e_p \otimes e_i \otimes e_k\,; \\
&\tau_{(132)}\lr{v_{pk},\lr{v_{ip},v_{ki}}}_L=
\mu_{kp}^{(i)} \mu_{ik}^{(p)} e_p \otimes e_i \otimes  v_{ki}v_{ip}v_{pk}   
- \mu_{kp}^{(i)} \mu_{pi}^{(k)}   v_{pk}v_{ki}v_{ip} \otimes e_i \otimes e_k  \\ 
&\qquad +\mu_{kp}^{(i)} \nu_{ik}^{(p)} e_p \otimes e_i \otimes  v_{ki}v_{ik}   
- \mu_{kp}^{(i)} \nu_{pi}^{(k)}   v_{pi}v_{ip} \otimes e_i \otimes e_k  \\
&\qquad+\nu_{kp}^{(i)} \sum_{p<b''<k} \kappa_{b''}^{(k,p)}  e_p \otimes e_i \otimes v_{kb''}v_{b''k} 
- \nu_{kp}^{(i)}  \sum_{p>a''>k} \kappa_{a''}^{(p,k)}  v_{pa''}v_{a''p} \otimes e_i \otimes e_k \\
&\qquad-\frac12  \nu_{kp}^{(i)}  \sgn(p-k) e_p \otimes e_i \otimes v_{kp}v_{pk} 
-\frac12  \nu_{kp}^{(i)} \sgn(p-k) v_{pk}v_{kp}\otimes  e_i \otimes  e_k  \\
&\qquad- \nu_{kp}^{(i)} \sgn(p-k) e_p \otimes e_i \otimes e_k\,.
 \end{align*}
When $i<k<p$, we get by summing these terms 
\begin{align*}
 &\lr{v_{ip},v_{ki},v_{pk}}=- (\nu_{ik}^{(p)} + \nu_{kp}^{(i)} - \nu_{pi}^{(k)} ) e_p \otimes e_i \otimes e_k \\
 &\quad + \nu_{kp}^{(i)} \left(\mu_{pi}^{(k)}-\frac12 \right) v_{pk}v_{kp} \otimes e_i \otimes e_k 
 - \nu_{pi}^{(k)} \left(\mu_{kp}^{(i)}-\frac12 \right) v_{pi}v_{ip} \otimes e_i \otimes e_k \\ 
 &\quad + \nu_{pi}^{(k)} \sum_{i< b \leq k} \kappa_b^{(p,i)} v_{pb}v_{bp} \otimes e_i \otimes e_k 
 +  \sum_{k< c < p} (\nu_{pi}^{(k)} \kappa_c^{(p,i)} - \nu_{kp}^{(i)} \kappa_c^{(p,k)}  ) v_{pc}v_{cp} \otimes e_i \otimes e_k \\
 &\quad - \nu_{ik}^{(p)} \left(\mu_{pi}^{(k)}+\frac12 \right) e_p \otimes  v_{ik}v_{ki} \otimes e_k 
 + \nu_{pi}^{(k)} \left(\mu_{ik}^{(p)}+\frac12 \right) e_p \otimes v_{ip}v_{pi}  \otimes e_k \\
 &\quad - \nu_{kp}^{(i)} \left(\mu_{ik}^{(p)}+\frac12 \right)e_p \otimes e_i \otimes v_{kp}v_{pk} 
+ \nu_{ik}^{(p)} \left(\mu_{kp}^{(i)}-\frac12 \right)e_p \otimes e_i \otimes v_{ki}v_{ik} \\
 &\quad - \nu_{ik}^{(p)} \sum_{i<b<k}\kappa_b^{(k,i)} e_p \otimes e_i \otimes v_{kb}v_{bk}\,. 
\end{align*}
(Note that in the first sum that appears there is a term with $v_{pk}v_{kp} \otimes e_i \otimes e_k$).

When $i<p<k$, we get by summing these terms 
\begin{align*}
 &\lr{v_{ip},v_{ki},v_{pk}}=- (\nu_{ik}^{(p)} - \nu_{kp}^{(i)} - \nu_{pi}^{(k)} ) e_p \otimes e_i \otimes e_k \\
 &\quad + \nu_{kp}^{(i)} \left(\mu_{pi}^{(k)}+\frac12 \right) v_{pk}v_{kp} \otimes e_i \otimes e_k 
 - \nu_{pi}^{(k)} \left(\mu_{kp}^{(i)}-\frac12 \right) v_{pi}v_{ip} \otimes e_i \otimes e_k \\ 
 &\quad + \nu_{pi}^{(k)} \sum_{i< b <p} \kappa_b^{(p,i)} v_{pb}v_{bp} \otimes e_i \otimes e_k  \\
 &\quad - \nu_{ik}^{(p)} \left(\mu_{pi}^{(k)}+\frac12 \right) e_p \otimes  v_{ik}v_{ki} \otimes e_k 
 + \nu_{pi}^{(k)} \left(\mu_{ik}^{(p)}+\frac12 \right) e_p \otimes v_{ip}v_{pi}  \otimes e_k \\
 &\quad - \nu_{kp}^{(i)} \left(\mu_{ik}^{(p)}-\frac12 \right)e_p \otimes e_i \otimes v_{kp}v_{pk} 
+ \nu_{ik}^{(p)} \left(\mu_{kp}^{(i)}-\frac12 \right)e_p \otimes e_i \otimes v_{ki}v_{ik} \\
 &\quad - \nu_{ik}^{(p)} \sum_{i<b\leq p}\kappa_b^{(k,i)} e_p \otimes e_i \otimes v_{kb}v_{bk}
 + \sum_{p<d<k} (\nu_{kp}^{(i)}\kappa_d^{(k,p)} - \nu_{ik}^{(p)}\kappa_d^{(k,i)}) e_p \otimes e_i \otimes v_{kd}v_{dk}\,. 
\end{align*}
(Note that in the penultimate sum that appears there is a term with $e_p \otimes e_i \otimes v_{kp}v_{pk}$).

Since $\lr{v_{ip},v_{ki},v_{pk}}_{\qP}=0$, the quasi-Poisson property is equivalent to the vanishing of all the coefficients that appear, which is in turn equivalent to the claimed conditions. 
\end{proof}

\subsection{Checking the conditions for the double bracket of Theorem \ref{thm:triangle}}  \label{sec:proof-quasi-Poisson-Particular}

Recall that the (Boalch) algebra $\Bc(\Delta)$ corresponding to the monochromatic triangle is a localization of the path algebra  $\kk\overline{Q}_3=\kk \overline{\Delta}$. Its double bracket given in Theorem \ref{thm:triangle} can be defined on $\kk\overline{Q}_3$ directly, where it takes the form  \eqref{vv0}--\eqref{vvopp} for the coefficients given in Table \ref{Tab1}. 

\begin{table}[ht]
\centering
\begin{tabular}{c|c|c|c|c|c}
 $\alpha_{13}^{(2)}=+\frac12$ &$\alpha_{12}^{(3)}=+\frac12$ &$\alpha_{23}^{(1)}=-\frac12$ &
 $\alpha_{31}^{(2)}=-\frac12$ &$\alpha_{21}^{(3)}=-\frac12$ &$\alpha_{32}^{(1)}=+\frac12$\\ &&&&&\\ 
 $\beta_{13}^{(2)}=-\frac12$ &$\beta_{12}^{(3)}=-\frac12$ &$\beta_{23}^{(1)}=+\frac12$ 
  &$\beta_{31}^{(2)}=+\frac12$ &$\beta_{21}^{(3)}=+\frac12$ &$\beta_{32}^{(1)}=-\frac12$ \\ &&&&&\\
  $\mu_{13}^{(2)}=-\frac12$ &$\mu_{12}^{(3)}=-\frac12$ &$\mu_{23}^{(1)}=+\frac12$
&$\mu_{31}^{(2)}=-\frac12$ &$\mu_{21}^{(3)}=-\frac12$ &$\mu_{32}^{(1)}=+\frac12$ \\ &&&&&\\
   $\nu_{13}^{(2)}=+1$ &$\nu_{12}^{(3)}=0$ &$\nu_{23}^{(1)}=+1$ 
&$\nu_{31}^{(2)}=+1$ &$\nu_{21}^{(3)}=0$ &$\nu_{32}^{(1)}=+1$ \\ &&&&&\\
  &  & &$\kappa^{(3,1)}_{2}=+1$ &  &  
\end{tabular}
 \caption{Non-zero coefficients of the double bracket of the form \eqref{vv0}--\eqref{vvopp} on $\kk\overline{Q}_3$ (with $n=3$) which descends to the double bracket from Theorem \ref{thm:triangle} after localization.}
\label{Tab1}
\end{table}
All the coefficients that do not appear in this table are taken to be zero, in agreement with the properties of the coefficients given in \eqref{vv0}--\eqref{vvopp}. Therefore, if these coefficients satisfy the different conditions derived in the previous subsections, we get a double quasi-Poisson bracket on $\kk\overline{Q}_3$. Since localization preserves the quasi-Poisson property, this will induce that the double bracket given in Theorem \ref{thm:triangle} is quasi-Poisson. 

\medskip

In the remainder of this subsection, we check that the different conditions that have been derived in \ref{ss:strat1}--\ref{ss:strat4} are satisfied when $n=3$ with the coefficients from Table \ref{Tab1}.

\begin{rem}
 An important task consists in classifying all solutions to the conditions obtained in \ref{ss:strat1}--\ref{ss:strat4}. 
Indeed, this would yield potential candidates for the double quasi-Poisson bracket solving Conjecture \ref{Conj:Main} for the monochromatic $n$-partite graph on $n$ vertices. 
A classification for $n=2$ is available in \cite[\S4.2]{F19} (though the conjecture is settled in that case, cf. \ref{sec:the-11-monochromatic-case}). 
The use of a computer algebra software seems relevant; for the interested reader, let us mention Leray's code written in \texttt{Singular} for checking the double Poisson property on a quasi-free DGA \cite[Ann.C]{Ler}. 

Finally, note that there always exist solutions to these conditions. 
Starting with $n$ disjoint copies of the bipartite case and performing fusion as in \cite[\S5.3]{VdB1}, we end up with a double quasi-Poisson bracket with all $\kappa_j^{(i,k)},\nu_j^{(i,k)}=0$ and  
$\alpha_j^{(i,k)},\beta_j^{(i,k)},\mu_j^{(i,k)} \in \{\pm \frac12\}$. 
\end{rem}

\subsubsection{Conditions from Lemma \ref{Lem:strategy-1}}

Let us check that \eqref{Eq:strategy-1i} is always satisfied, and a similar argument for \eqref{Eq:strategy-1ii} holds. 

Assume that $i=k=p$, and that $j,l,q$ are distinct from $i$ and not all the same. 
Since $n=3$, two of the indices $j,l,q$ are the same. 
If $j=l$ is distinct from $q$, \eqref{Eq:strategy-1i} becomes 
$$-(\beta_{jq}^{(i)})^2=\beta_{jj}^{(i)}\beta_{jq}^{(i)}+\beta_{jq}^{(i)}\beta_{qj}^{(i)}+\beta_{qj}^{(i)}\beta_{jj}^{(i)}=-\frac14\,,$$
by skewsymmetry of $\beta^{(i)}$. As  $\beta_{jq}^{(i)}\in \{\pm 1/2\}$ for all distinct indices $i,j,q$, this equality is always satisfied. It is easy to deal with all the other cases in the same way.

\subsubsection{Conditions from Lemma \ref{Lem:strategy-2.1.b}}

Since $i,p,q$ are pairwise distinct and we have at most $n=3$ different indices, \eqref{Eq:strategy-2.1.bi} can only occur with $l=q$ and $i=k$, in which case the condition becomes  
$$\text{either }\ \nu_{iq}^{(p)}=0\,, \quad \text{ or }\  \alpha_{ip}^{(q)}+\beta_{qp}^{(i)}=0\,.$$
It is easy to check that this condition is satisfied for the coefficients given in Table \ref{Tab1} with arbitrary distinct $i,p,q$.

\subsubsection{Conditions from Lemma \ref{Lem:strategy-2.2}}
 
Since we have at most three distinct indices,  and $i\neq j,k,p$ with $j\neq k,p$, we only have to consider the cases where $k=p$.
The condition \eqref{Eq:strategy-2.2ii} becomes 
$$-(\mu_{kj}^{(i)})^{2}=-\frac14\,,$$
which is trivially satisfied since $\mu_{kj}^{(i)}\in \{\pm 1/2\}$ whenever $i,j,k$ are distinct. 
The other condition \eqref{Eq:strategy-2.2i} reads 
$$\nu_{kj}^{(i)}\Big[ \mu_{kj}^{(i)}+ \beta_{ij}^{(k)} \Big]=0\,.$$
Since $\nu_{- -}^{(3)}=0$, we only need to check that $\mu_{kj}^{(i)}=-\beta_{ij}^{(k)}$ whenever $i\neq 3$, and this is easy to see from  Table \ref{Tab1}. 

\subsubsection{Conditions from Lemma \ref{Lem:strategy-2.3}} The discussion is similar to the case of Lemma \ref{Lem:strategy-2.2}.

\subsubsection{Conditions from the Lemmae in \texorpdfstring{\ref{sss:strat3-1}}{A.4.1}} There is nothing to check in all these cases because the conditions rely on the existence of four different indices, which is not possible when $n=3$.

\subsubsection{Conditions from Lemma \ref{Lem:strategy-3.2.a}} We only check the case (iii), and leave the other cases to the reader. For $j<k<i$ to happen with $n=3$, we need $(i,j,k)=(3,1,2)$. The first and second equations can only happen with $a=k=2$, when 
\begin{align*}
 &\kappa_a^{(i,j)} \left(\mu_{jk}^{(i)} + \mu_{ak}^{(i)}+\frac12 \delta_{ak}\right)=1 \left(-\frac12 + 0 + \frac12\right)=0\,, \\
&\kappa_a^{(i,j)} \left(\beta_{jk}^{(i)} + \beta_{ka}^{(i)}+\frac12 \delta_{ak}\right) = 1 \left(-\frac12 + 0 + \frac12\right)=0\,,
\end{align*}
hence they are both satisfied. The third equation can not occur since it requires $a,j,k,i$ to be distinct while we have at most $3$ indices. 
The fourth equation appears with $a=k=2$ only, and it vanishes identically as $\nu_{kk}^{(i)}=0$.  
The remaining three equations are satisfied as 
\begin{align*}
 &\frac12 (\mu_{jk}^{(i)} + \beta_{jk}^{(i)})+ \mu_{jk}^{(i)} \beta_{jk}^{(i)}=\frac12 \left(-\frac12-\frac12\right) + \frac14 =-\frac14\,, \\
 &\nu_{jk}^{(i)} \left(\mu_{ik}^{(j)}-\frac12\right)=0 \left(\frac12 - \frac12 \right)=0\,, \\
 &\mu_{jk}^{(i)} + \beta_{jk}^{(i)} + \nu_{jk}^{(i)}\nu_{ik}^{(j)}+\kappa_k^{(i,j)}=-\frac12 -\frac12 + 0 + 1 =0\,. 
\end{align*}

\subsubsection{Conditions from Lemma \ref{Lem:strategy-3.2.b}}

We only check the case (iii), which occurs for $(i,j,k)=(1,3,2)$. The different identities are 
 \begin{align*}
&\frac12 (\mu_{jk}^{(i)} + \beta_{jk}^{(i)})- \mu_{jk}^{(i)} \beta_{jk}^{(i)}=\frac12\left(\frac12 -\frac12\right)- \frac{-1}{4}=\frac14\,, \\
  &\nu_{jk}^{(i)} \left(\beta_{jk}^{(i)}-\frac12\right)+\kappa_k^{(j,i)}\nu_{ij}^{(k)}=1 \left(-\frac12 - \frac12 \right)+1 =0\,,\\
  &\kappa_k^{(j,i)} (\alpha_{ij}^{(k)}+\mu_{ij}^{(k)})=1 \left(\frac12+\frac{-1}{2} \right)=0\,. 
 \end{align*}

\subsubsection{Conditions from Lemmae \ref{Lem:strategy-3.3.a} and \ref{Lem:strategy-3.3.b}} This is similar to checking the conditions from Lemmae  \ref{Lem:strategy-3.2.b} and \ref{Lem:strategy-3.2.a}, respectively.

\subsubsection{Conditions from Lemma \ref{Lem:strategy-3.4}} 

The only conditions to check are when there exists a nonzero symbol $\kappa_{-}^{(- -)}$. In the case at hand, this only occurs for $\kappa_{2}^{(3,1)}=+1$. Thus, we only need to check the identities in (i) for $(i,j,b)=(1,3,2)$ and in (ii) for $(i,j,b)=(3,1,2)$, which is straightforward.

\subsubsection{Conditions from Lemma \ref{Lem:strategy-4.2}} The case i) only occurs with $(i,k,p)=(1,2,3)$, while the case (ii) only occurs with $(i,k,p)=(1,3,2)$. The different conditions are easily verified for these particular indices (the last two can be omitted each time, because they require to have four distinct indices).

%%%%%%%%%%%%%%%%%%%%%%%%%%%%%%%%%%%%%%%%%%%%%%%%%%%%%%%%%%%%%%%%%%%%%%%%%%%
%%%%%%%%%%%%%%%%%%%%%%%%%%%%%%%%%%%%%%%%%%%%%%%%%%%%%%%%%%%%%%%%%%%%%%%%%%%
%%%%%%%%%%%%%%%%%%%%%%% NEW SECTION %%%%%%%%%%%%%%%%%%%%%%%%%%%%%%%%%%%%%%%
%%%%%%%%%%%%%%%%%%%%%%%%%%%%%%%%%%%%%%%%%%%%%%%%%%%%%%%%%%%%%%%%%%%%%%%%%%%
%%%%%%%%%%%%%%%%%%%%%%%%%%%%%%%%%%%%%%%%%%%%%%%%%%%%%%%%%%%%%%%%%%%%%%%%%%%

% \newpage

\appendix

\section{Remaining expressions for the double bracket on \texorpdfstring{$\Bc(\Delta)$}{B(Delta)}} 
\label{sec:the-complete-list}
\allowdisplaybreaks

As a consequence of Lemma \ref{lem:localisation-Bgamma} we get that the arrows $\{v_{ij}, v_{ji}\}_{1\leq i<j\leq 3}$ are generators of the Boalch algebra $\Bc(\Delta)$ in the case of the monochromatic triangle. Therefore, it is sufficient to define the double quasi-Poisson bracket $\lr{-,-}$ on such arrows, as done in \eqref{double-bracket-111}.
Nevertheless,  for completeness, we provide the rest of the expressions of the double quasi-Poisson bracket from Theorem \ref{thm:triangle} involving at least one arrow $w_{ij}$. In their determination we used the cyclic antisymmetry \eqref{Eq:cycanti}, the Leibniz rules \eqref{Eq:outder} and \eqref{Eq:inder}, and the identities \eqref{eq:Boalch-eq-111}.

\subsection{The brackets of the form \texorpdfstring{$\lr{w_{ij},v_{kl}}$}{{{w,v}}} } \hfill 
\label{sec:A1}
\allowdisplaybreaks

Using \eqref{Eq:inder} and \eqref{eq:Boalch-eq-111} we can give the complete list:
{\tiny
\begin{equation*}
\begin{split}
\lr{w_{12},v_{12}}&= -  \frac{1}{2}\Big(v_{12}\otimes w_{12}+w_{12}\otimes v_{12}\Big);			  
\\
\lr{w_{12},v_{21}}&=  \;\;\, \frac{1}{2}\Big(v_{21}w_{12}\otimes e_1+e_2\otimes w_{12}v_{21}\Big)-e_2\otimes e_1;     
\\
\lr{w_{12},v_{13}}&= -\frac{1}{2}w_{12}\otimes v_{13};
\\
\lr{w_{12},v_{31}}&=  \;\;\,  \frac{1}{2}v_{31}w_{12}\otimes e_1;
\\
\lr{w_{12},v_{23}}&=  \;\;\,  \frac{1}{2}e_3\otimes w_{12}v_{23};
\\
\lr{w_{12},v_{32}}&=-\frac{1}{2}v_{32}\otimes w_{12};	 
\\
\lr{w_{21},v_{12}}&= \;\;\, e_1\otimes e_2-\frac{1}{2}\Big(v_{12}w_{21}\otimes e_2+e_1\otimes w_{21}v_{12}\Big);
\\
\lr{w_{21},v_{21}}&=  \;\;\, \frac{1}{2}\Big(w_{21}\otimes v_{21}+v_{21}\otimes w_{21}\Big);
\\
\lr{w_{21},v_{13}}&= -\frac{1}{2}e_1\otimes w_{21}v_{13};
\\
\lr{w_{21},v_{31}}&=  \;\;\, \frac{1}{2}v_{31}\otimes w_{21};
\\
\lr{w_{21},v_{23}}&=  \;\;\,  \frac{1}{2}w_{21}\otimes v_{23};
\\
\lr{w_{21},v_{32}}&= 	-\frac{1}{2}v_{32}w_{21}\otimes e_2;	
\\
\lr{w_{13},v_{12}}&=- \frac{1}{2}w_{13}\otimes v_{12};
\\
\lr{w_{13},v_{21}}&= \;\;\, \frac{1}{2}v_{21}w_{13}\otimes e_1-w_{23}\otimes e_1;
\\
\lr{w_{13},v_{13}}&= -\frac{1}{2}\Big(w_{13}\otimes v_{13}+v_{13}\otimes w_{13}\Big);
\\
\lr{w_{13},v_{31}}&= \;\;\, \frac{1}{2}\Big(v_{31}w_{13}\otimes e_1+e_3\otimes w_{13}v_{31} \Big)-e_3\otimes e_1;
\\
\lr{w_{13},v_{23}}&= -\frac{1}{2}v_{23}\otimes w_{13};
\\
\lr{w_{13},v_{32}}&=  \;\;\, \frac{1}{2}e_3\otimes w_{13}v_{32};
\end{split}
\quad
\begin{split}
 \lr{w_{31},v_{12}}&=-\frac{1}{2}e_1\otimes w_{31}v_{12}+e_1\otimes w_{32};
\\
 \lr{w_{31},v_{21}}&=\;\;\, \frac{1}{2}v_{21}\otimes w_{31};
\\
\lr{w_{31},v_{13}}&=\;\;\, e_1\otimes e_3-\frac{1}{2}\Big(v_{13}w_{31}\otimes e_3+e_1\otimes w_{31}v_{13}\Big);
\\
\lr{w_{31},v_{31}}&= \;\;\, \frac{1}{2}\Big(w_{31}\otimes v_{31}+v_{31}\otimes w_{31}\Big);
\\
\lr{w_{31},v_{23}}&=-\frac{1}{2}v_{23}w_{31}\otimes e_3;
\\
\lr{w_{31},v_{32}}&= \;\;\, \frac{1}{2}w_{31}\otimes v_{32};
\\
\lr{w_{23},v_{12}}&=\;\;\, \frac{1}{2}v_{12}w_{23}\otimes e_2;
\\
\lr{w_{23},v_{21}}&=-\frac{1}{2}w_{23}\otimes v_{21};
\\
\lr{w_{23},v_{13}}&=-\frac{1}{2}v_{13}\otimes w_{23};
\\
\lr{w_{23},v_{31}}&=\;\;\, \frac{1}{2}e_3\otimes w_{23}v_{31}-e_3\otimes v_{21};
\\
\lr{w_{23},v_{23}}&=-\frac{1}{2}\Big(v_{23}\otimes w_{23}+w_{23}\otimes v_{23}\Big);
\\
\lr{w_{23},v_{32}}&=  \;\;\, \frac{1}{2}\Big(e_3\otimes w_{23}v_{32}+v_{32}w_{23}\otimes e_2\Big) -e_3\otimes e_2;
\\
\lr{w_{32},v_{12}}&=\;\;\, \frac{1}{2}v_{12}\otimes w_{32};
\\
\lr{w_{32},v_{21}}&=-\frac{1}{2}e_2\otimes w_{32}v_{21};
\\
\lr{w_{32},v_{13}}&=-\frac{1}{2}v_{13}w_{32}\otimes e_3+v_{12}\otimes e_3;
\\
\lr{w_{32},v_{31}}&=\;\;\, \frac{1}{2}w_{32}\otimes v_{31};
\\
\lr{w_{32},v_{23}}&=\;\;\, e_2\otimes e_3-\frac{1}{2}\Big(v_{23}w_{32}\otimes e_3+e_2\otimes w_{32}v_{23}\Big);
\\
\lr{w_{32},v_{32}}&=  \;\;\, \frac{1}{2}\Big(w_{32}\otimes v_{32}+v_{32}\otimes w_{32}\Big).
\end{split}
\end{equation*}
}

\subsection{The brackets of the form \texorpdfstring{$\lr{v_{kl}, w_{ij}}$}{{{v,w}}} } \hfill 
\label{sec:A2}

\allowdisplaybreaks
The brackets $\lr{v_{kl}, w_{ij}}$ are obtained from the brackets $\lr{w_{ij}, v_{kl}}$ appearing in \ref{sec:A2} by applying the antisymmetry \eqref{Eq:cycanti}. 
For instance, 
$$\lr{v_{31},w_{32}}=-\tau_{(12)} \lr{w_{32},v_{31}}=-\frac{1}{2}v_{31}\otimes w_{32}\,.$$

\subsection{The brackets of the form \texorpdfstring{$\lr{w_{ij}, w_{kl}}$}{{{w,w}}} }  \hfill
\label{sec:A3}

\allowdisplaybreaks
For $i,j\in\{1,2,3\}$ and $i\neq j$, we have $\lr{w_{ij},w_{ij}}= 0$. In addition,

{\tiny
\begin{equation*}
\begin{split}
\lr{w_{12},w_{21}}&= \;\;\,  \frac{1}{2}\Big(e_2\otimes w_{12}w_{21}+ w_{21}w_{12}\otimes e_1\Big)
\\
&\qquad \quad  - \gamma^{-1}_2\otimes\gamma_1;
\\
\lr{w_{12},w_{13}}&=  -\frac{1}{2} w_{12}\otimes w_{13};
\\
\lr{w_{12},w_{31}}&= \;\;\,  \frac{1}{2}w_{31}w_{12}\otimes e_1;
\\
\lr{w_{12},w_{23}}&= \;\;\,  \frac{1}{2}    e_2\otimes w_{12}w_{23}-e_2\otimes w_{13};
\\
\lr{w_{12},w_{32}}&=	 -\frac{1}{2}w_{32}\otimes w_{12};
\\
%\lr{w_{21},w_{12}}&= \gamma_1\otimes \gamma^{-1}_2-\frac{1}{2}w_{12}w_{21}\otimes e_2-\frac{1}{2}e_1\otimes w_{21}w_{12};
%\lr{w_{21},w_{21}}&= 0;
%\\
\lr{w_{21},w_{13}}&= -\frac{1}{2}e_1\otimes w_{21}w_{13};
\\
\lr{w_{21},w_{31}}&= \;\;\, \frac{1}{2}w_{31}\otimes w_{21};
\end{split}
\quad
\begin{split}
\lr{w_{21},w_{23}}&= \;\;\, \frac{1}{2}    w_{21}\otimes w_{23};
\\
\lr{w_{21},w_{32}}&= -\frac{1}{2}    w_{32}w_{21}\otimes e_2+w_{31}\otimes e_2;		
%\\
%\lr{w_{13},w_{12}}&= \frac{1}{2}w_{13}\otimes w_{12};
%\\
%\lr{w_{13},w_{21}}&= \frac{1}{2}    w_{21}w_{13}\otimes e_1;
\\
%\lr{w_{13},w_{13}}&= 0;
%\\
\lr{w_{13},w_{31}}&=\;\;\, \frac{1}{2}\Big(    e_3\otimes w_{13}w_{31}+w_{31}w_{13}\otimes e_1\Big) +\gamma^{-1}_3\otimes w_{13}\gamma_3w_{31}
\\
&\qquad  \quad -\gamma^{-1}_3\otimes e_1;
\\
\lr{w_{13},w_{23}}&= -\frac{1}{2}    w_{23}\otimes w_{13};
\\
\lr{w_{13},w_{32}}&=\;\;\, \frac{1}{2}e_3\otimes w_{13}w_{32}-\gamma^{-1}_3\otimes w_{12}\gamma_2;
%\\
% \lr{w_{31},w_{21}}&=-\frac{1}{2}    w_{21}\otimes w_{31};
%\\
%\lr{w_{31},w_{13}}&=e_1\otimes \gamma^{-1}_3-w_{13}\gamma_3w_{31}\otimes\gamma^{-1}_3-\frac{1}{2}w_{13}w_{31}\otimes e_3-\frac{1}{2}e_1\otimes w_{31}w_{13};
%\\
%\lr{w_{31},w_{31}}&=0;
\\
\lr{w_{31},w_{23}}&=-\frac{1}{2}w_{23}w_{31}\otimes e_3+\gamma_2w_{21}\otimes \gamma^{-1}_3;
\\
\lr{w_{31},w_{32}}&=\;\;\, \frac{1}{2}w_{31}\otimes w_{32};
%\\
%\lr{w_{23},w_{12}}&=-\frac{1}{2}w_{12}w_{23}\otimes e_2+w_{13}\otimes e_2;
%\\
%\lr{w_{23},w_{21}}&=-\frac{1}{2}w_{23}\otimes w_{21};
%\\
%\lr{w_{23},w_{13}}&=\frac{1}{2}w_{13}\otimes w_{23};
%\\
%\lr{w_{23},w_{31}}&=\frac{1}{2}e_3\otimes w_{23}w_{31}-\gamma^{-1}_3\otimes \gamma_2w_{21};
%\\
%\lr{w_{23},w_{23}}&=0;
\\
\lr{w_{23},w_{32}}&=\;\;\, \frac{1}{2}\Big(e_3\otimes w_{23}w_{32}+w_{32}w_{23}\otimes e_2\Big) -\gamma^{-1}_3\otimes \gamma_2.
%\\
%\lr{w_{32},w_{12}}&=\frac{1}{2}w_{12}\otimes w_{32};
%\\
%\lr{w_{32},w_{21}}&=\frac{1}{2}e_2\otimes w_{32}w_{21}-e_2\otimes w_{31};
%\\
%\lr{w_{32},w_{13}}&=-\frac{1}{2}w_{13}w_{32}\otimes e_3+w_{12}\gamma_2\otimes \gamma^{-1}_3;
%\\
%\lr{w_{32},w_{31}}&=-\frac{1}{2}w_{32}\otimes w_{31};
%\\
%\lr{w_{32},w_{23}}&=\gamma_2\otimes \gamma^{-1}_3-\frac{1}{2}w_{23}w_{32}\otimes e_3-\frac{1}{2}e_2\otimes w_{32}w_{23};
%\\
% \lr{w_{32},w_{32}}&=0.
 \end{split}
 \end{equation*}
 }%
 The reader can obtain the 15 missing expressions by applying the antisymmetry \eqref{Eq:cycanti} to these identities. For example, $\lr{w_{23},w_{21}}=-\tau_{(12)}\lr{w_{21},w_{23}}=-\frac{1}{2}w_{23}\otimes w_{21}$.

%%%%%%%%%%%%%%%%%%%%%%%%%%%%%%%%%%%%%%%%%%%%%%%%%%%%%%%%%%%%%%%%%%%%%%%%%%%
%%%%%%%%%%%%%%%%%%%%%%%%%%%%%%%%%%%%%%%%%%%%%%%%%%%%%%%%%%%%%%%%%%%%%%%%%%%
%%%%%%%%%%%%%%%%%%% BIBLIO %%%%%%%%%%%%%%%%%%%%%%%%%%%%%%%%%%%%%%%%%%%%%%%%
%%%%%%%%%%%%%%%%%%%%%%%%%%%%%%%%%%%%%%%%%%%%%%%%%%%%%%%%%%%%%%%%%%%%%%%%%%%

%%%%%%%%%%%%%%%%%%%%%%%%%%%%%%%%%%%%%%%%%%%%%%%%%%%%%%%%%%%%%%%%%%%%%%%%%%%
%%%%%%%%%%%%%%%%%%%%%%%%%%%%%%%%%%%%%%%%%%%%%%%%%%%%%%%%%%%%%%%%%%%%%%%%%%%
%%%%%%%%%%%%%%%%%%%%%%% NEW SECTION %%%%%%%%%%%%%%%%%%%%%%%%%%%%%%%%%%%%%%%
%%%%%%%%%%%%%%%%%%%%%%%%%%%%%%%%%%%%%%%%%%%%%%%%%%%%%%%%%%%%%%%%%%%%%%%%%%%
%%%%%%%%%%%%%%%%%%%%%%%%%%%%%%%%%%%%%%%%%%%%%%%%%%%%%%%%%%%%%%%%%%%%%%%%%%%


\begin{thebibliography}{11}

    \bibitem{AMM98}
Alekseev, A., Malkin, A., \and Meinrenken E.: 
{\it Lie group valued moment maps}.
   J. Differential Geom. \textbf{48} (1998), no. 3, 445--495;
   \href{https://arxiv.org/abs/dg-ga/9707021}{arXiv:dg-ga/9707021}.
   
 \bibitem{AKSM02}
Alekseev, A., Kosmann-Schwarzbach Y., Meinrenken, E.: 
{\it Quasi-Poisson manifolds}.
 Canad. J. Math. \textbf{54} (2002), no. 1, 3--29;
 \href{https://arxiv.org/abs/math/0006168}{arXiv:math/0006168}.

\bibitem{BKR}
Berest, Y., Khachatryan, G., Ramadoss, A.:
{\it Derived representation schemes and cyclic homology}.
Adv. Math. \textbf{245} (2013), 625--689;
\href{https://arxiv.org/abs/1112.1449}{arXiv:1112.1449}.

\bibitem{BFR}
 Berest, Y., Felder, G., Ramadoss, A.:
 {\it Derived representation schemes and noncommutative geometry}.
 In {\it Expository lectures on representation theory}, 113--162, Contemp. Math., \textbf{607}, Amer. Math. Soc., Providence, RI, 2014;
 \href{https://arxiv.org/abs/1304.5314}{arXiv:1304.5314}.

\bibitem{Bergman}
Bergman, G.: 
{\it Coproducts and some universal ring constructions}, 
Trans. Amer. Math. Soc. \textbf{200} (1974), 33--88.

 \bibitem{BK16}
Bezrukavnikov, R., Kapranov, M.:
{\it Microlocal sheaves and quiver varieties}.
 Ann. Fac. Sci. Toulouse Math. (6) \textbf{25} (2016), no. 2-3, 473--516;
\href{https://arxiv.org/abs/1506.07050}{arXiv:1506.07050}.

 
   \bibitem{Bo01}
  Boalch, P.:
  {\it Symplectic manifolds and isomonodromic deformations}.
  Adv. Math. \textbf{163} (2001), no. 2, 137--205;
  \href{https://arxiv.org/abs/2002.00052}{arXiv:2002.00052}.
 
 
   \bibitem{Bo07}
  Boalch, P.:
 {\it Quasi-Hamiltonian geometry of meromorphic connections}.
 Duke Math. J. \textbf{139} (2007), no. 2, 369--405;
 \href{https://arxiv.org/abs/math/0203161}{arXiv:math/0203161}.
 

 
  \bibitem{Bo13}
  Boalch, P.:
{\it Geometry of moduli spaces of meromorphic connections on curves, Stokes data, wild nonabelian Hodge theory, hyperk\"ahler manifolds, isomonodromic deformations, Painlev\'e equations, and relations to Lie theory}.
Habilitation memoir, Universit\'{e} Paris-Sud 12/12/12, 40 pp.;
\href{https://arxiv.org/abs/1305.6593}{arXiv:1305.6593}.

 \bibitem{Bo14}
  Boalch, P.:
{\it Geometry and braiding of Stokes data; Fission and wild character varieties},
 Ann. of Math. \textbf{179} (2014), 301--365;
 \href{https://arxiv.org/abs/1111.6228}{arXiv:1111.6228}.
 
  \bibitem{Bo14a}
  Boalch, P.:
 {\it Poisson varieties from Riemann surfaces}.
 Indag. Math. (N.S.) \textbf{25} (2014), no. 5, 872--900;
 \href{https://arxiv.org/abs/1309.7202}{arXiv:1309.7202}.


 \bibitem{B15}  Boalch, P.: {\it Global Weyl groups and a new theory of multiplicative quiver varieties}. 
 Geom. Topol. \textbf{19} (2015), no. 6, 3467--3536;  
 \href{https://arxiv.org/abs/1307.1033}{arXiv:1307.1033}.

  \bibitem{B18}  Boalch, P.: {\it Wild character varieties, points on the Riemann sphere and Calabi's examples}. 
Representation theory, special functions and Painlev\'e equations--RIMS 2015, 67--94, Adv. Stud. Pure Math., \textbf{76}, Math. Soc. Japan, Tokyo (2018); 
 \href{https://arxiv.org/abs/1501.00930}{arXiv:1501.00930}.
 
   \bibitem{BEF} 
 Braverman, A, Etingof, P., Finkelberg, M.:
 {\it Cyclotomic double affine Hecke algebras. With an appendix by Hiraku Nakajima and Daisuke Yamakawa}. Ann. Sci. Éc. Norm. Supér. (4) \textbf{53} (2020), no. 5, 1249--1312;
 \href{https://arxiv.org/abs/1611.10216}{arXiv:1611.10216}.
 
\bibitem{BCS21}
Bozec, T., Calaque, D., Scherotzke, S.: {\it Calabi-Yau structures for multiplicative preprojective algebras}. 
J. Noncommut. Geom. \textbf{17}, (2023), no. 3, 783--810; 
\href{https://arxiv.org/abs/2102.12336}{arXiv:2102.12336}. 
 
\bibitem{CF1}  Chalykh, O., Fairon, M.: {\it Multiplicative quiver varieties and generalised Ruijsenaars-Schneider models}. 
J. Geom. Phys. \textbf{121} (2017), 413--437;  
\href{https://arxiv.org/abs/1704.05814}{arXiv:1704.05814}. 
 
 \bibitem{Cohn}
Cohn, P. M.: {\it The affine scheme of a general ring},
 Lecture Notes in Math. \textbf{753}, Springer, Berlin, 1979, pp. 197--211.
 
 
 \bibitem{CB11} 
  Crawley-Boevey, W.:
{\it Poisson structures on moduli spaces of representations}.
J. Algebra \textbf{325} (2011), 205--215;
\href{https://arxiv.org/abs/math/0506268}{arXiv:math/0506268}.


 \bibitem{CBEG07} 
Crawley-Boevey, W., Etingof, P.,  Ginzburg, V.:
{\it Noncommutative geometry and quiver algebras}. 
Adv. Math. \textbf{209} (2007), no. 1, 274--336;
\href{https://arxiv.org/abs/math/0502301}{arXiv:math/0502301}.


\bibitem{CBShaw}  
Crawley-Boevey, W., Shaw, P.: {\it Multiplicative preprojective algebras, middle convolution and the Deligne-Simpson problem}. 
Adv. Math. \textbf{201} (2006), no. 1, 180--208; 
\href{https://arxiv.org/abs/math/0404186}{arXiv:math/0404186}.

\bibitem{CQ}  
Cuntz, J., Quillen, D.:
{\it Algebra extensions and nonsingularity}.
 J. Amer. Math. Soc. \textbf{8} (1995), 251--289.

\bibitem{EOR07}
Etingof, P., Oblomkov, A., Rains, E.:
{\it Generalized double affine Hecke algebras of rank 1 and quantized del Pezzo surfaces},
 Adv. Math. \textbf{212} (2007), no. 2, 749--796; 
 \href{https://arxiv.org/abs/math/0406480}{arXiv:math/0406480}.
 
\bibitem{F}
Fairon, M.: {\it Spin versions of the complex trigonometric Ruijsenaars--Schneider model from cyclic quivers},
 J. Integrable Syst. \textbf{4} (2019), no. 1, xyz008, 55 pp;
\href{https://arxiv.org/abs/1811.08717}{arXiv:1811.08717}.

\bibitem{F19} 
Fairon, M.: {\it Double quasi-Poisson brackets: fusion and new examples}. 
Algebr. Represent. Theor. \textbf{24} (2021), no. 4, 911--958; 
\href{https://arxiv.org/abs/1905.11273}{arXiv:1905.11273}.

\bibitem{F20}
 Fairon, M.: {\it Morphisms of double (quasi-)Poisson algebras and action-angle duality of integrable systems}. 
Ann. Henri Lebesgue. \textbf{5} (2022), 179--262; 
\href{https://arxiv.org/abs/2008.01409}{arXiv:2008.01409}.

\bibitem{FF} 
Fairon, M., Fern\'andez, D.:
{\it Euler continuants in noncommutative quasi-Poisson geometry}. 
Forum Math. Sigma \textbf{10}, Paper No. e88, 54p. (2022); 
\href{https://arxiv.org/abs/2105.04858}{arXiv:2105.04858}.

\bibitem{FH20}
Fern\'andez, D., Herscovich, E.: {\it Double quasi-Poisson algebras are pre-Calabi-Yau}. 
Int. Math. Res. Not. \textbf{2022}, no. 23, 18291--18345 (2022); 
\href{https://arxiv.org/abs/2002.10495}{arXiv:2002.10495}.

\bibitem{KS}
Kaplan, D., Schedler, T.: {\it Multiplicative preprojective algebras are 2-Calabi-Yau}. 
Algebra Number Theory \textbf{17} (2023), no. 4, 831--883;  
\href{https://arxiv.org/abs/1905.12025}{arXiv:1905.12025}.

\bibitem{KR00}
Kontsevich M., Rosenberg, A.:
\emph{Noncommutative smooth spaces}.
In \emph{The Gelfand Mathematical Seminars, 1996--1999}, Gelfand Math. Sem., pages 85--108. Birkh\"auser Boston, Boston, MA, 2000.

\bibitem{KTV}
Kontsevich, M., Takeda, A., Vlassopoulos, Y.:
{\it Pre-Calabi-Yau algebras and topological quantum field theories}.
Eur. J. Math. \textbf{11} (2025), no. 1, Article 15;
\href{https://arxiv.org/abs/2112.14667}{arxiv:2112.14667}.

\bibitem{Ler}
Leray, J.:
\emph{Approche fonctorielle et combinatoire de la propérade des algèbres double Poisson}.
Université d'Angers, 2017. Thesis (Ph.D.); \href{https://theses.hal.science/tel-01719403v1/}{archives-ouvertes:tel-01719403}.

\bibitem{MT14}
Massuyeau, G., Turaev, V.: 
\emph{Quasi-Poisson structures on representation spaces of surfaces}.
Int. Math. Res. Not. IMRN (2014), no. 1, 1--64; 
\href{https://arxiv.org/pdf/1205.4898.pdf}{arXiv:1205.4898}.

\bibitem{VdB1} Van den Bergh, M.: {\it Double Poisson algebras}. Trans. Amer. Math. Soc., \textbf{360} (2008), no. 11, 5711--5769;
\href{https://arxiv.org/abs/math/0410528}{arXiv:math/0410528}.

\bibitem{VdB2}
Van den Bergh, M.:
\emph{Non-commutative quasi-Hamiltonian spaces}
In \emph{Poisson geometry in mathematics and physics}. Contemp. Math., vol. \textbf{450}, Amer. Math. Soc., Providence, RI, pp. 273--299, 2008;
\href{https://arxiv.org/abs/math/0703293}{arXiv:math/0703293}.
        
\bibitem{Ya}
Yamakawa, D.:
\emph{Geometry of multiplicative preprojective algebra}.
Int. Math. Res. Pap. IMRP 2008, Art. ID rpn008, 77pp;
\href{https://arxiv.org/abs/0710.2649}{arXiv:0710.2649}.

\bibitem{Ye}
Yeung, W.-K.:
{\it Pre-Calabi-Yau structures and moduli of representations}.
Preprint. \href{https://arxiv.org/abs/1802.05398}{arxiv:1802.05398}.


  \end{thebibliography}
\end{document}